\documentclass[11pt, reqno, psamsfonts]{amsart}
\pdfoutput=1

\usepackage{amssymb}
\usepackage{amsthm}
\usepackage{amsmath}
\usepackage{latexsym}
\usepackage[T1]{fontenc}
\usepackage[utf8]{inputenc}
\usepackage[russian, french, english]{babel}
\usepackage{graphicx}
\usepackage{wrapfig}
\usepackage[justification=centering, labelfont=bf]{caption}
\usepackage{subcaption}
\usepackage{mathtools}
\usepackage[hidelinks]{hyperref}
\usepackage{tikz-cd}
\usepackage{amsbsy}
\usepackage{enumitem}
\usepackage{mathrsfs}
\usepackage{multicol}
\usepackage[titletoc]{appendix}
\usepackage[shortcuts]{extdash}
\usepackage[foot]{amsaddr}
\usepackage[oldstylenums]{kpfonts}
\usepackage{framed}
\usepackage{array}
\usepackage{tikz}
\usetikzlibrary{shapes,snakes}
\usetikzlibrary{arrows.meta}
\usepackage[spacing=true,kerning=true,babel=true]{microtype}

\makeatletter
\setcounter{tocdepth}{1}

\renewcommand{\tocsection}[3]{%
	\indentlabel{\@ifnotempty{#2}{\ignorespaces#1 #2.\quad}}\bfseries#3}
\renewcommand{\tocsubsection}[3]{%
	\indentlabel{\@ifnotempty{#2}{\ignorespaces#1 #2\quad}}#3}

\newcommand\@dotsep{4.5}
\def\@tocline#1#2#3#4#5#6#7{\relax
	\ifnum #1>\c@tocdepth 
	\else
	\par \addpenalty\@secpenalty\addvspace{#2}%
	\begingroup \hyphenpenalty\@M
	\@ifempty{#4}{%
		\@tempdima\csname r@tocindent\number#1\endcsname\relax
	}{%
	\@tempdima#4\relax
}%
\parindent\z@ \leftskip#3\relax \advance\leftskip\@tempdima\relax
\rightskip\@pnumwidth plus1em \parfillskip-\@pnumwidth
#5\leavevmode\hskip-\@tempdima{#6}\nobreak
\leaders\hbox{$\m@th\mkern \@dotsep mu\hbox{.}\mkern \@dotsep mu$}\hfill
\nobreak
\hbox to\@pnumwidth{\@tocpagenum{\ifnum#1=1\fi#7}}\par
\nobreak
\endgroup
\fi}
\AtBeginDocument{%
	\expandafter\renewcommand\csname r@tocindent0\endcsname{0pt}
}
\def\l@subsection{\@tocline{2}{0pt}{2.5pc}{5pc}{}}
\makeatother

\usepackage[left=2.3cm,right=2.3cm,top=2.3cm,bottom=2.3cm,bindingoffset=0cm]{geometry}

\usepackage[backend=biber, style=alphabetic, sorting=nyt, maxnames=100]{biblatex}
\addbibresource{MLLL_bib.bib}

\title[Measurable Lov\'{a}sz Local Lemma]{Measurable Versions of the Lov\'{a}sz Local Lemma and Measurable Graph Colorings}
\date{}
\author{Anton~Bernshteyn}
\address{Department of Mathematics, University of Illinois at Urbana--Champaign, IL, USA and Department of Mathematical Sciences, Carnegie Mellon University, Pittsburgh, PA, USA}
\email{abernsht@math.cmu.edu}
\thanks{This research is partially supported by the Illinois Distinguished Fellowship.}

\newtheorem{theo}{Theorem}[section]
\newtheorem{prop}[theo]{Proposition}
\newtheorem{lemma}[theo]{Lemma}
\newtheorem{corl}[theo]{Corollary}
\newtheorem{conj}[theo]{Conjecture}
\newtheorem{claim}{Claim}[theo]

\newcommand*{\myproofname}{Proof}
\newenvironment{claimproof}[1][\myproofname]{\begin{proof}[#1]}{\end{proof}}

\theoremstyle{definition}
\newtheorem{defn}[theo]{Definition}
\newtheorem{exmp}[theo]{Example}
\newtheorem{remk}[theo]{Remark}
\newtheorem*{remk*}{Remark}

\theoremstyle{remark}

\newcommand{\0}{\varnothing}
\newcommand{\set}[1]{\{#1\}}
\newcommand{\proj}{\mathrm{proj}}
\newcommand{\dom}{\mathrm{dom}}
\newcommand{\im}{\mathrm{im}}
\newcommand{\supp}{\mathrm{supp}}
\renewcommand{\Top}{\mathbf{Top}}
\newcommand{\HF}{\mathbf{HF}}
\newcommand{\Free}{\mathbf{Free}}
\newcommand{\acts}{\curvearrowright}
\newcommand{\N}{\mathbb{N}}
\newcommand{\Z}{\mathbb{Z}}
\newcommand{\F}{\mathbb{F}}
\newcommand{\R}{\mathbb{R}}

\newcommand{\LS}{\mathcal{L}}
\newcommand{\ap}{{}^{\operatorname{ap}}}

\renewcommand{\P}{\mathbf{Prob}}
\renewcommand{\U}{\mathbf{Set}}
\newcommand{\Stab}{\mathbf{Stab}}
\newcommand{\id}{\operatorname{id}}
\renewcommand{\epsilon}{\varepsilon}
\renewcommand{\phi}{\varphi}
\renewcommand{\theta}{\vartheta}
\renewcommand{\tilde}{\widetilde}
\renewcommand{\sh}{\partial}
\newcommand{\Oracle}{\mathbb{O}}
\newcommand{\powerset}[1]{\operatorname{Pow}(#1)}
\newcommand{\symdif}{\triangle}
\newcommand{\iso}{\mathcal{I}}
\newcommand{\concat}{{^\smallfrown}}
\newcommand{\defeq}{\coloneqq}
\renewcommand{\leq}{\leqslant}
\renewcommand{\geq}{\geqslant}
\newcommand{\Def}{\mathbf{Def}}
\newcommand{\B}{\mathscr{B}}
\newcommand{\fins}[1]{{[{#1}]^{<\infty}}}
\newcommand{\finf}[2]{{[{#1} \to {#2}]^{<\infty}}}

\newcommand{\Nbhd}{{\mathbf{Nbhd}}}
\renewcommand{\D}{\operatorname{d}}
\newcommand{\code}{\mathrm{code}}

\let\S\mathbbS
\let\prg\temp

\numberwithin{equation}{section}

\makeatletter
\newcommand{\neutralize}[1]{\expandafter\let\csname c@#1\endcsname\count@}
\makeatother

\newenvironment{theobis}[1]
{
	\neutralize{theo}\phantomsection
	\begin{theo}}
	{\end{theo}}

\newenvironment{theocopy}[1]
{
	\neutralize{theo}\phantomsection
	\begin{theo}}
	{\end{theo}}

\newenvironment{corlcopy}[1]
{
	\neutralize{theo}\phantomsection
	\begin{corl}}
	{\end{corl}}

\newcommand{\bemph}[1]{{\upshape#1}} 
\newcommand{\ep}[1]{\bemph{(}#1\bemph{)}} 

\renewbibmacro{in:}{}

\renewbibmacro*{volume+number+eid}{%
	\printfield{volume}%
	\setunit*{\addnbspace}
	\printfield{number}%
	\setunit{\addcomma\space}%
	\printfield{eid}}

\DeclareFieldFormat[article]{volume}{\textbf{#1}\space}
\DeclareFieldFormat[article]{number}{\mkbibparens{#1}}

\DeclareFieldFormat{journaltitle}{#1,}
\DeclareFieldFormat[thesis]{title}{\mkbibemph{#1}\addperiod}
\DeclareFieldFormat[article, unpublished, thesis]{title}{\mkbibemph{#1},}
\DeclareFieldFormat[book]{title}{\mkbibemph{#1}\addperiod}
\DeclareFieldFormat[unpublished]{howpublished}{#1, }

\DeclareFieldFormat{pages}{#1}

\DeclareFieldFormat[article]{series}{Ser.~#1\addcomma}

\pagestyle{plain}

\setlength{\footskip}{1.5\baselineskip}

\begin{document}
		\maketitle
		
		\begin{abstract}
			In this paper we investigate the extent to which the Lov\'asz Local Lemma (an important tool in probabilistic combinatorics) can be adapted for the measurable setting. In most applications, the Lov\'asz Local Lemma is used to produce a function $f \colon X \to Y$ with certain properties, where $X$ is some underlying combinatorial structure and $Y$ is a (typically finite) set. Can this function $f$ be chosen to be Borel or $\mu$-measurable for some probability Borel measure $\mu$ on $X$ (assuming that $X$ is a standard Borel space)? In the positive direction, we prove that if the set of constraints put on $f$ is, in a certain sense, ``locally finite,'' then there is always a Borel choice for $f$ that is ``$\varepsilon$-close'' to satisfying these constraints, for any $\varepsilon > 0$. Moreover, if the combinatorial structure on~$X$ is ``induced'' by the $[0;1]$-shift action of a countable group~$\Gamma$, then, even without any local finiteness assumptions, there is a Borel choice for~$f$ which satisfies the constraints on an invariant conull set (i.e., with $\epsilon = 0$). A direct corollary of our results is an upper bound on the measurable chromatic number of the graph $G_n$ generated by the shift action of the free group $\mathbb{F}_n$ that is asymptotically tight up to a factor of at most $2$ (which answers a question of Lyons and Nazarov). On the other hand, our result for structures induced by measure-preserving group actions is, at least for amenable groups, sharp in the following sense: a probability measure-preserving action of a countably infinite amenable group satisfies the measurable version of the Lov\'asz Local Lemma if and only if it admits a factor map to the $[0;1]$-shift action. To prove this, we combine the tools of the Ornstein--Weiss theory of entropy for actions of amenable groups with concepts from computability theory, specifically, Kolmogorov complexity.
			
			\bigskip
			
			\noindent \emph{Key words and phrases}: descriptive combinatorics, graph coloring, measurable, Lov\'asz Local Lemma.
		\end{abstract}
		
		\tableofcontents
		
		\newpage
		
		\section{Introduction}
		
		\subsection{Graph colorings in the Borel and measurable settings}\label{subsection:graphcol}
		
		\mbox{}
		
		\smallskip
		
		\noindent In this paper we investigate the extent to which some classical results in finite combinatorics can be transferred to the measurable setting. Our main object of study will be the so-called Lov\'asz Local Lemma, which is discussed in some detail in the next subsection. Here we give a ``preview'' of particular applications that our general techniques can provide.
		
		Let us start with some definitions.\footnote{Graph-theoretic notation used in descriptive set theory deviates somewhat from the standard in finite combinatorics. For instance, a graph $G$ is identified with its edge set; the notation $E(G)$, common in finite combinatorics, would be in conflict with $E_G$---the equivalence relation whose classes are the connected components of $G$.} A \emph{graph}~$G$ with vertex set $X$ (or a graph \emph{on} $X$) is a symmetric irreflexive binary relation on $X$. In particular, unless stated otherwise, graphs in this paper are undirected and simple. Two vertices $x$, $y \in X$ are \emph{adjacent} in $G$ if $x\,G\,y$. A subset $X' \subseteq X$ is \emph{$G$-invariant} if no vertex in~$X'$ is adjacent to a vertex in $X \setminus X'$. A \emph{connected component} of $G$ is an inclusion-minimal nonempty $G$-invariant subset of $X$. If $X' \subseteq X$, then $G \vert X' \defeq G \cap (X')^2$ denotes the \emph{subgraph} of $G$ \emph{induced} by $X'$ (or the \emph{restriction} of $G$ to $X'$). The \emph{degree} of a vertex $x \in X$ (notation:~$\deg_G(x)$ or simply $\deg(x)$) is the cardinality of the set $G_x \defeq \{y \in X\,:\, x \,G\, y\}$. The \emph{maximum degree} of $G$ (notation:~$\Delta(G)$) is the supremum of $\deg(x)$ over all $x \in X$. A graph $G$ is said to be \emph{locally countable} if $\Delta(G) \leq \aleph_0$ and \emph{locally finite} if $\deg(x) < \aleph_0$ for all $x \in X$. The \emph{girth} of $G$ (notation:~$g(G)$) is the length of the shortest cycle in~$G$ (if $G$ is acyclic, $g(G) = \infty$ by definition). A \emph{proper \ep{vertex} coloring} of $G$ is a map $f \colon X \to Y$, where $Y$ is a set of \emph{colors}, such that $f(x) \neq f(y)$ whenever $x\,G\,y$. The \emph{chromatic number} of $G$ (notation:~$\chi(G)$) is the smallest cardinality of a set $Y$ such that $G$ admits a proper coloring~$f \colon X \to Y$.
		
		We will be interested in the properties of Borel graphs; see \cite{KechrisMarks} for a comprehensive survey of the topic. A~graph~$G$ on a standard Borel space $X$ is \emph{Borel} if it is a Borel subset of $X^2$. An important source of Borel graphs are Borel group actions. Let $\Gamma$ be a countable group acting by Borel automorphisms on a standard Borel space $X$ (in this paper we only consider left group actions). Denote this action by $\alpha \colon \Gamma \acts X$. Let $S \subseteq \Gamma$ be a generating set and define the graph $G (\alpha, S)$ on $X$ via
		$$
		x \,G(\alpha,S)\, y \,\vcentcolon\Longleftrightarrow\, x \neq y \text{ and } \gamma \cdot x =y \text{ for some } \gamma \in S \cup S^{-1}.
		$$
		Then $G(\alpha,S)$ is locally countable and Borel.
		
		For a Borel graph $G$ on $X$, its \emph{Borel chromatic number} (notation:~$\chi_{\operatorname{B}}(G)$) is the smallest cardinality of a standard Borel space $Y$ such that $G$ admits a Borel proper coloring $f \colon X \to Y$. Borel chromatic numbers were first introduced and systematically studied by Kechris, Solecki, and Todorcevic~\cite{KechrisSoleckiTodorcevic}. Clearly, $\chi(G) \leq \chi_{\operatorname{B}}(G)$. One of the starting points of Borel combinatorics is the observation that this inequality can be strict. In fact, Kechris, Solecki, and Todorcevic~\cite[Example~3.1]{KechrisSoleckiTodorcevic} gave an example of an \emph{acyclic} locally countable Borel graph $G$ such that $\chi_{\operatorname{B}}(G) = 2^{\aleph_0}$ (note that if $G$ is acyclic, then $\chi(G) \leq 2$). On the other hand, they showed~\cite[Proposition~4.6]{KechrisSoleckiTodorcevic} that if $\Delta(G)$ is finite, then $\chi_{\operatorname{B}}(G) \leq \Delta(G)+1$, in analogy with the finite case.
		
		The bound $\chi(G) \leq \Delta(G)+1$ is rather weak; in fact, Brooks's theorem in finite combinatorics asserts that $\chi(G) \leq \Delta(G)$ for all $G$ apart from a few natural exceptions~\cite[Theorem~14.4]{BondyMurty}. As it turns out, there is no hope for any result along these lines in the Borel setting: Marks~\cite[Theorem~1.3]{Marks} showed that the Borel chromatic number of an acyclic Borel graph $G$ with maximum degree $d\in\N$ can attain the value $d+1$ (and, in fact, \emph{any} value between $2$ and $d+1$).
		
		Marks's results indicate that the Borelness requirement is too restrictive to allow any interesting analogs of classical coloring results. It is reasonable, therefore, to try asking for somewhat less. For instance, we can only require that ``most'' of the graph should be colored, in an appropriate sense of the word ``most.'' Natural candidates for such a notion of largeness are Baire category and measure. If $\tau$ is a Polish topology on $X$ that is compatible with the Borel structure on $X$, then the \emph{$\tau$-Baire-measurable chromatic number} of $G$ is defined as follows:
		\[
		\chi_\tau(G) \defeq \min \{\chi_{\operatorname{B}}(G \vert X')\,:\, X' \text{ is a $\tau$-comeager $G$-invariant Borel subset of }X\}.
		\]
		Similarly, if $\mu$ is a probability Borel measure on $X$, then the \emph{$\mu$-measurable chromatic number} of $G$ is defined to be
		\[
		\chi_\mu(G) \defeq \min \{\chi_{\operatorname{B}}(G \vert X')\,:\, X' \text{ is a $\mu$-conull $G$-invariant Borel subset of }X\}.
		\]
		Like $\chi_{\operatorname{B}}(G)$, both $\chi_\tau(G)$ and $\chi_\mu(G)$ can exceed $\chi(G)$, even for locally finite acyclic graphs. A simple example is the graph $G \defeq G(\alpha, \{1\})$, where $\alpha \colon \Z \acts \S^1$ is an irrational rotation action of $\Z$ on the unit circle~$\S^1$. Each component of $G$ is a bi\=/infinite path, so $G$ is acyclic; but an easy ergodicity argument reveals that $\chi_\tau(G)$, $\chi_\mu(G) > 2$, where~$\tau$ is the usual topology and $\mu$ is the Lebesgue probability measure on $\S^1$. (Since $\Delta(G) = 2$, \cite[Proposition~4.6]{KechrisSoleckiTodorcevic} yields $\chi_\tau(G) =\chi_\mu(G) = \chi_{\operatorname{B}}(G) = 3$.)
		
		Nevertheless, Conley and Miller~\cite[Theorem~B]{ConleyMiller} showed that $\chi_\tau(G)$ cannot differ from $\chi(G)$ ``too much''; namely, they proved that for a locally finite Borel graph $G$ on a standard Borel space $X$, if $\chi(G)$ is finite, then $\chi_\tau(G) \leq 2\chi(G)-1$ with respect to any compatible Polish topology $\tau$ on~$X$. In particular, if $G$ is acyclic (or, more generally, $\chi(G) \leq 2$), then $\chi_\tau(G) \leq 3$.
		
		Our main focus will be on $\mu$-measurable chromatic numbers (and $\mu$\=/measurable analogs of other combinatorial parameters). Here the situation is more intriguing than with Baire-measurable chromatic numbers. Conley, Marks, and Tucker-Drob~\cite[Theorem~1.2]{ConleyMarksTuckerDrob} recently proved a $\mu$\=/measurable analog of Brooks's theorem for graphs with maximum degree at least $3$ (the example of an irrational rotation action shows that Brooks's theorem for graphs with maximum degree $2$ does not hold in the measurable setting). In particular, $\chi_\mu(G)$ can be \emph{strictly less} than $\chi_{\operatorname{B}}(G)$.
		
		On the other hand, in contrast to Baire-measurable chromatic numbers, $\chi_\mu(G)$ cannot be bounded above by \emph{any} function of~$\chi(G)$. An important class of examples where the difference between $\chi_\mu(G)$ and $\chi(G)$ gets arbitrarily large comes from shift actions of free groups. For a countable group $\Gamma$ and a set~$A$, the \emph{shift action} 
		of $\Gamma$ on $A^\Gamma$ (or the \emph{$A$-shift action}) is defined as follows: For all $\gamma$, $\delta \in \Gamma$ and~$x \in A^\Gamma$,
		\[
		(\gamma \cdot x)(\delta) \defeq x(\delta \gamma).
		\]
		Let $S$ be a finite set and let $\F(S)$ be the free group over $S$. Let $\alpha \colon \F(S) \acts [0;1]^{\F(S)}$ be the shift action of $\F(S)$ on $[0;1]^{\F(S)}$ and set $G \defeq G(\alpha, S)$. Let $\lambda$ denote the Lebesgue measure on $[0;1]$ (we will use this notation throughout). Off of a $\lambda^{\F(S)}$-null set, the action $\sigma_{\F(S)}$ is free, so every connected component of $G$ is an infinite $2|S|$-regular tree and hence is $2$-colorable. However, as Lyons and Nazarov~\cite{LyonsNazarov} pointed out, a result of Frieze and \L uczak~\cite{FriezeLuczak} implies that $\chi_{\lambda^{\F(S)}}(G) \geq |S|/\ln(2|S|)$ for sufficiently large $|S|$ (see also~\cite[Theorem~5.44]{KechrisMarks}, where this lower bound is established for arbitrary $S$). In particular, $\chi_{\lambda^{\F(S)}}(G) \to \infty$ as $|S| \to \infty$. Note that the group $\F(S)$ for $|S| \geq 2$ is nonamenable; in fact, Conley and Kechris~\cite{ConleyKechris} mention that there are no known examples of graphs~$G$ induced by probability measure-preserving actions of amenable groups such that $\chi_\mu(G) > \chi(G)+1$ (see~\cite[Problem~5.19]{KechrisMarks}).
		
		Note that the best known upper bound on $\chi_{\lambda^{\F(S)}}(G)$ is $2|S|$ (given by the measurable Brooks's theorem of Conley--Marks--Tucker-Drob), so the orders of magnitude of the lower and upper bounds are different. Lyons and Nazarov~\cite{LyonsNazarov} asked what the correct value of $\chi_{\lambda^{\F(S)}}(G)$ should be. As an immediate corollary of one of our main results (namely Theorem~\ref{theo:Thm1}), we can show that $|S|/\ln(|S|)$ is the right order. In fact, we have the following general theorem:
		
		\begin{theo}\label{theo:col_large_g}
			Let $\Gamma$ be a countable group with a finite generating set $S \subseteq \Gamma$. Denote $d \defeq |S \cup S^{-1}|$. Let $\alpha \colon \Gamma \acts (X, \mu)$ be a measure-preserving action of $\Gamma$ on a standard probability space~$(X, \mu)$ and set $G \defeq G(\alpha,S)$. Suppose that $\alpha$ factors to the shift action $\Gamma \acts ([0;1]^\Gamma, \lambda^\Gamma)$. If $g(G) \geq 4$, then $\chi_\mu(G) = O(d/\ln d)$; furthermore, if $g(G) \geq 5$, then $\chi_\mu(G) \leq (1+o(1))d/\ln d$.
		\end{theo}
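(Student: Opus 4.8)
The plan is to deduce Theorem~\ref{theo:col_large_g} from Theorem~\ref{theo:Thm1} by importing, essentially verbatim, the Lov\'asz-Local-Lemma proofs of the analogous bounds for the ordinary chromatic number. Recall that a theorem of Kim asserts that every graph $H$ with $\Delta(H) \leq d$ and $g(H) \geq 5$ satisfies $\chi(H) \leq (1+o(1))d/\ln d$, while Johansson's theorem gives $\chi(H) = O(d/\ln d)$ already under $g(H) \geq 4$; expositions of both can be found in the monograph of Molloy and Reed. What matters here is not the numerical bound but the anatomy of its proof. In both cases the coloring is assembled over a bounded number $t = t(d)$ of \emph{rounds}: in round $i$ each still-uncolored vertex is independently assigned a uniformly random color from its current list, tentative colors surviving a conflict check are made permanent, lists are updated, and one applies the \emph{symmetric} Lov\'asz Local Lemma to a family of ``failure'' events --- ``vertex $v$ loses too much of its list'', ``$v$ has too many activated neighbors'', and so on. Crucially, each such event is measurable with respect to the round-$i$ random choices made inside a ball of bounded radius about a single vertex; since $\Delta(H) \leq d$, the event-dependency graph has degree at most some $D = D(d)$, and the concentration estimates (Talagrand's inequality) used to bound the failure probabilities are themselves statements about finitely many of these independent local choices. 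The finite analysis establishes the LLL criterion with room to spare --- in particular with whatever polynomial slack in $d$ the hypothesis of Theorem~\ref{theo:Thm1} requires --- and, because all events and random variables involved are local, it goes through unchanged for any countable graph of maximum degree at most $d$ with the stated girth.

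Now I turn to making this measurable. Let $G \defeq G(\alpha, S)$; off a $\mu$-null $G$-invariant Borel set the action $\alpha$ is free (it factors onto the a.e.-free shift action) and $\Delta(G) \leq d$, so we discard that set. The hypothesis that $\alpha$ factors onto $\Gamma \acts ([0;1]^\Gamma, \lambda^\Gamma)$ is exactly the setting of Theorem~\ref{theo:Thm1}. To supply each round with its own independent randomness, fix a Borel isomorphism $[0;1] \to [0;1]^{\{1,\dots,t\}}$ carrying $\lambda$ to $\lambda^{\{1,\dots,t\}}$; composed with the factor map it produces, in a $\Gamma$-equivariant Borel fashion, $t$ mutually independent i.i.d.\ uniform fields on $X$, one per round. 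For $i = 1, \dots, t$ we regard the round-$i$ failure events as an instance of the constraint-satisfaction problem of Theorem~\ref{theo:Thm1}: the constraints are $G$-local (each depends on the round-$i$ field within a ball of bounded $G$-radius), $\Gamma$-invariant, of bounded multiplicity, and satisfy the LLL condition with slack, so Theorem~\ref{theo:Thm1} yields a $\mu$-conull $G$-invariant Borel set $X_i$ and a Borel choice of round-$i$ colors on $X_i$ avoiding every round-$i$ failure event. Set $X^{\ast} \defeq X_1 \cap \dots \cap X_t$: this is a $\mu$-conull $G$-invariant Borel set on which all $t$ rounds succeed. By construction of the procedure, the subgraph still uncolored after round $t$ has maximum degree $o(d/\ln d)$ (resp.\ $O(d/\ln d)$), it is a Borel subgraph of $G\vert X^{\ast}$, so by \cite[Proposition~4.6]{KechrisSoleckiTodorcevic} it admits a Borel proper coloring with $o(d/\ln d)$ (resp.\ $O(d/\ln d)$) further colors; combining, we obtain on $X^{\ast}$ a Borel proper coloring of $G\vert X^{\ast}$ with $(1+o(1))d/\ln d$ colors when $g(G) \geq 5$ and $O(d/\ln d)$ colors when $g(G) \geq 4$, which is the assertion.

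The routine but genuinely necessary part is the verification underlying the first paragraph: one must organize the proofs of the Kim and Johansson bounds so that (a) the number of rounds is finite for each fixed $d$, (b) every bad event of every round --- including the events underlying the Talagrand-type concentration bounds --- depends only on the current round's random field restricted to a ball of $G$-radius bounded independently of the vertex, and (c) the resulting LLL condition carries the slack that Theorem~\ref{theo:Thm1} demands. None of this needs new ideas --- it is precisely why those proofs are ``local'' --- but it does require walking through them with the locality bookkeeping in mind, and this, together with checking that the cleanup step after round $t$ stays within the color budget, is the main (if unsurprising) obstacle.
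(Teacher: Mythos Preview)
Your overall strategy---run the Kim/Johansson iterative nibble procedure inside the framework of Theorem~\ref{theo:Thm1}---is exactly what the paper does, and the spirit of your outline is correct. Two adjustments are needed, one cosmetic and one substantive.

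The cosmetic point: you do not need to split off independent random fields from the factor map, and Theorem~\ref{theo:Thm1} does not require any slack beyond the ordinary General~LLL criterion. Theorem~\ref{theo:Thm1} is the LLL~Game: Player~I (you) supplies a correct instance $\B_i$ at each step, and Player~II (the theorem) returns a measurable solution $f_i$; the crucial feature is that $\B_i$ need only be invariant under equivariant bijections that additionally preserve $f_0, \ldots, f_{i-1}$. Your round-$i$ failure events are \emph{not} $\Gamma$-invariant for $i > 1$---they depend on the current lists, which are functions of earlier rounds---but that is precisely what the game accommodates. The ``randomness'' in round~$i$ is the unknown $f_i$ being solved for; you should not pre-specify it.

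The substantive gap: your cleanup step is not what the Kim/Johansson arguments actually deliver. In the Molloy--Reed presentations that the paper follows, the output of the iterations is \emph{not} that the uncolored subgraph has maximum degree $o(d/\ln d)$; what is maintained is that every uncolored vertex retains a list of size at least some $\ell$ and each color in that list is available to at most $\ell/8$ of its neighbors. The proof then finishes with a further application of the~LLL (the list-coloring lemma \cite[Theorem~4.3]{MolloyReed}, restated in the paper as Lemma~\ref{lemma:prob_col}), not with the greedy Borel $(\Delta+1)$-bound of \cite[Proposition~4.6]{KechrisSoleckiTodorcevic}. Since the lists at that stage are determined by the earlier $f_i$'s, this final~LLL instance is again only invariant under isomorphisms preserving them, and so it too must be played as a move in the LLL~Game rather than as a standalone application. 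Replace your last step accordingly and the argument goes through.
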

		\begin{corl}\label{corl:colfree}
			Let $S$ be a finite set of size $k$, let $\alpha \colon \F(S)\acts [0;1]^{\F(S)}$ be the $[0;1]$-shift action of the free group $\F(S)$, and let $G \defeq G(\alpha,S)$. Then
			\begin{equation}\label{eq:colfree}
			(1-o(1))\frac{k}{\ln k} \leq \chi_{\lambda^\Gamma}(G) \leq (2 + o(1))\frac{k}{\ln k}.
			\end{equation}
		\end{corl}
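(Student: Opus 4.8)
The plan is to read off both inequalities in~\eqref{eq:colfree} with essentially no extra work, since the upper bound is a direct specialization of Theorem~\ref{theo:col_large_g} and the lower bound is the Frieze--\L uczak estimate recalled in Subsection~\ref{subsection:graphcol}; the only remaining task is to rewrite $\ln(2k)$ as $(1+o(1))\ln k$.

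For the upper bound I would apply Theorem~\ref{theo:col_large_g} with $\Gamma = \F(S)$ and $\alpha$ equal to the $[0;1]$-shift action itself, which trivially factors to $\F(S) \acts ([0;1]^{\F(S)}, \lambda^{\F(S)})$ via the identity. Here $d = |S \cup S^{-1}| = 2k$. To check the girth hypothesis, note that off a $\lambda^{\F(S)}$-null $G$-invariant Borel set the shift action of $\F(S)$ is free, so on that set $G$ is a disjoint union of $2k$-regular trees; in particular $G$ is acyclic there, so $g(G) = \infty \geq 5$, and replacing $[0;1]^{\F(S)}$ by this conull invariant Borel set leaves $\chi_{\lambda^{\F(S)}}(G)$ unchanged. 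Theorem~\ref{theo:col_large_g} then gives $\chi_{\lambda^{\F(S)}}(G) \leq (1 + o(1))\,d/\ln d = (1 + o(1))\,(2k)/\ln(2k)$, and since $\ln(2k) \geq \ln k$ this is at most $(2 + o(1))\,k/\ln k$.

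For the lower bound I would invoke the estimate $\chi_{\lambda^{\F(S)}}(G) \geq k/\ln(2k)$, valid for all sufficiently large $k$ by a result of Frieze and \L uczak~\cite{FriezeLuczak} (see also~\cite[Theorem~5.44]{KechrisMarks}); ``sufficiently large'' suffices for an asymptotic statement. As $\ln(2k) = \ln k + \ln 2 = (1 + o(1))\ln k$, this yields $\chi_{\lambda^{\F(S)}}(G) \geq (1 - o(1))\,k/\ln k$, and combining the two estimates gives~\eqref{eq:colfree}.

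There is no real obstacle here: all the content sits in Theorem~\ref{theo:col_large_g} and in the classical random-graph lower bound. The only points needing (routine) verification are that the shift action of a free group is essentially free --- which makes the acyclicity, hence the hypothesis $g(G) \geq 5$, automatic --- and the elementary asymptotic $\ln(2k) = (1+o(1))\ln k$ used to move the factor $2$ out of the logarithm. The residual factor-$2$ gap between the two bounds in~\eqref{eq:colfree} is to be expected: writing $d = 2k$, the local-lemma upper bound is of order $d/\ln d$ while the lower bound $k/\ln(2k)$ is of order $d/(2\ln d)$, which is the asymptotic chromatic number of a random $d$-regular graph.
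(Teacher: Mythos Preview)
Your proposal is correct and matches the paper's approach: the corollary is stated without proof in the paper precisely because it is immediate from Theorem~\ref{theo:col_large_g} (for the upper bound) together with the Frieze--\L uczak lower bound already recalled in Subsection~\ref{subsection:graphcol}. Your handling of the details---observing that $d = |S \cup S^{-1}| = 2k$, restricting to the free part to obtain $g(G) = \infty \geq 5$, and the asymptotic $\ln(2k) = (1+o(1))\ln k$---is exactly what is needed.
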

		
		Note that, by a result of Bowen~\cite[Theorem~1.1]{Bowen}, any two nontrivial\footnote{Here, a probability measure $\nu$ is said to be \emph{nontrivial} if it is not concentrated on a single point.} shift actions of $\F(S)$, where $|S| \geq 2$, admit factor maps to each other, so \eqref{eq:colfree} holds for any such action as well.
		
		Another extensively studied graph parameter is the so-called chromatic index of a graph. Let $G$ be a graph with vertex set $X$. An \emph{edge coloring} of $G$ is a map $f \colon G \to Y$ such that for all $(x, y) \in G$, $f(x,y) = f(y,x)$. An edge coloring $f$ is \emph{proper} if for all $x$, $y$, $z \in X$ with $x \,G\, y$, $y\,G\,z$, and $x \neq z$, $f(x,y) \neq f(y,z)$. The \emph{chromatic index} of $G$ (notation: $\chi'(G)$) is the smallest cardinality of a set $Y$ such that $G$ admits a proper edge coloring $f \colon G \to Y$. Clearly, $\chi'(G) \geq \Delta(G)$, since all the edges incident to a given vertex have to receive distinct colors. A celebrated theorem of Vizing~\cite[Theorem~17.4]{BondyMurty} asserts that this bound is almost tight; namely, for a finite graph $G$, $\chi'(G) \leq \Delta(G)+1$.
		
		Naturally, for a Borel graph $G$ on a standard Borel space $X$, its \emph{Borel chromatic index} $\chi'_{\operatorname{B}}(G)$ is the smallest cardinality of a standard Borel space $Y$ such that $G$ admits a Borel proper edge coloring $f \colon G \to Y$ (where $G$ inherits its Borel structure from~$X^2$). Clearly, $\chi'(G) \leq \chi'_{\operatorname{B}}(G)$. Marks~\cite[Theorem~1.4]{Marks} showed that the Borel chromatic index of an acyclic Borel graph $G$ with maximum degree $d \in \N$ can be as large as $2d-1$ (and this bound is tight---finding a proper edge coloring of a graph with maximum degree $d$ is equivalent to finding a proper \emph{vertex} coloring of an auxiliary graph with maximum degree $2d-2$).
		
		One can define the $\mu$-measurable chromatic index of a Borel graph $G$ by analogy with its $\mu$\=/measurable chromatic number; namely,
		\[
		\chi'_\mu(G) \defeq \min\{\chi'_{\operatorname{B}}(G \vert X') \,:\, X' \text{ is a $\mu$-conull $G$-invariant Borel subset of }X\}.
		\]
		Cs\'oka, Lippner, and~Pikhurko~\cite[Theorem~1.4]{CLP} proved that Vizing's theorem holds measurably for locally finite bipartite graphs and that $\chi'_\mu(G) \leq \Delta(G) + o(\Delta(G))$ in general, provided that the measure $\mu$ is $G$\=/invariant. Theorem~\ref{theo:Thm1} gives a different proof of the second part of this result for graphs induced by shift actions (with a slightly worse lower order term); moreover, it implies the following ``list version'':
		\begin{theo}\label{theo:FreeEdge}
			For every $d \in \N$, there exists $k = d + o(d)$ such that the following holds. Let~$\Gamma$ be a countable group with a finite generating set $S \subseteq \Gamma$ such that $|S \cup S^{-1}| = d$. For each $\gamma \in S \cup S^{-1}$, let $L(\gamma)$ be a finite set such that $L(\gamma) = L(\gamma^{-1})$ and $|L(\gamma)| \geq k$ for all $\gamma \in S \cup S^{-1}$. Let $\alpha \colon \Gamma \acts (X, \mu)$ be a measure-preserving action of $\Gamma$ on a standard probability space $(X, \mu)$ and let $G \defeq G(\alpha,S)$. Suppose that $\alpha$ factors to the shift action $\Gamma \acts ([0;1]^\Gamma, \lambda^\Gamma)$. Then there exists a $\Gamma$-invariant $\mu$-conull Borel subset $X' \subseteq X$ and a Borel proper edge coloring $f$ of $G \vert X'$ such that for all $x \in X'$, $f(x, \gamma \cdot x) \in L(\gamma)$.
		\end{theo}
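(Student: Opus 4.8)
\emph{Strategy and reduction to the shift.} The plan is to deduce the theorem from Theorem~\ref{theo:Thm1} (the measurable Lov\'asz Local Lemma for shift actions) by running, in the measurable category, the R\"odl nibble (semi\=/random) method used to prove that a finite graph of maximum degree $\Delta$ can be properly edge\=/colored from arbitrary lists of size $\Delta+o(\Delta)$ assigned to its edges (Kahn's theorem). Fix $\epsilon>0$; it suffices to produce a threshold $d_0=d_0(\epsilon)\in\N$ such that the conclusion holds whenever $d\geq d_0$ and $|L(\gamma)|\geq(1+\epsilon)d$ for all $\gamma$, since for the finitely many $d<d_0$ one may instead take $k(d)=2d-1$ (lists of size $\geq 2d-1$ always suffice, as the line graph of $G$ is a Borel graph of maximum degree $\leq 2d-2$; see the finishing step below), and then a routine diagonalization, letting $\epsilon\to 0$, yields a single function $k=k(d)=d+o(d)$. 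Let $\sigma\colon\Gamma\acts[0;1]^\Gamma$ be the shift, and let $\pi\colon(X,\mu)\to([0;1]^\Gamma,\lambda^\Gamma)$ be a factor map, i.e.\ a Borel $\Gamma$-equivariant map with $\pi_*\mu=\lambda^\Gamma$ defined on a $\Gamma$-invariant $\mu$-conull Borel set. If $g$ is a Borel proper list edge coloring of $G(\sigma,S)$ (with the lists $L$) on a $\Gamma$-invariant $\lambda^\Gamma$-conull Borel set $Y'\subseteq[0;1]^\Gamma$, then by equivariance of $\pi$ the map $f(x,\gamma\cdot x)\defeq g(\pi(x),\gamma\cdot\pi(x))$ is a Borel proper list edge coloring of $G(\alpha,S)$ on the $\Gamma$-invariant $\mu$-conull Borel set $\pi^{-1}(Y')$. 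Intersecting throughout with conull invariant sets on which the relevant actions are free, we may therefore assume $(X,\mu,\alpha)=([0;1]^\Gamma,\lambda^\Gamma,\sigma)$ and that $G\defeq G(\sigma,S)$ is the simple locally finite graph in which (off a null set) the neighbors of $x$ are the distinct points $\gamma\cdot x$, $\gamma\in S\cup S^{-1}$, so $\Delta(G)\leq d$. A proper list edge coloring is then the same as a Borel choice of color $c_\gamma(x)\in L(\gamma)$ for each $x$ and $\gamma\in S\cup S^{-1}$ subject to $c_\gamma(x)=c_{\gamma^{-1}}(\gamma\cdot x)$ and $c_\gamma(x)\neq c_\delta(x)$ whenever $\gamma\neq\delta$; each such constraint involves $c$ only at $x$ and its $G$-neighbors, and so is of the ``local'' form to which Theorem~\ref{theo:Thm1} applies.

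\emph{The measurable nibble.} The heart of the proof is to produce, for some $N=N(\epsilon)\in\N$, a Borel partial proper list edge coloring $\phi$ of $G$, defined on a $\Gamma$-invariant conull Borel set, such that the Borel subgraph $H$ of still\=/uncolored edges satisfies $\Delta(H)\leq\epsilon'd$ while every uncolored edge $e$ retains a Borel set $L'(e)$ of at least $\epsilon''d$ admissible colors, for suitable constants $0<2\epsilon'<\epsilon''$ depending only on $\epsilon$. This is precisely the output of $N$ rounds of the R\"odl nibble, and each round is carried out by one application of Theorem~\ref{theo:Thm1}: using a fixed Borel isomorphism $[0;1]\cong[0;1]^\N$, hence $[0;1]^\Gamma\cong([0;1]^\Gamma)^\N$, reserve the $i$-th coordinate system for round $i$; in round $i$ every still\=/uncolored edge proposes a uniformly random color from its \emph{full} list $L(\gamma)$ (so the round-$i$ randomness is governed by a genuine product measure, independent of the outcomes of rounds $1,\dots,i-1$), and activates it unless an adjacent edge colored in an earlier round, or of higher priority in round $i$ (priorities read off from the proposals), conflicts with it. The ``bad events'' are the one\=/vertex deviation events: that a vertex $v$ has too few or too many incident edges newly colored, or that an edge at $v$ loses too few or too many colors from its list. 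Each such event depends on the coloring $c$ and the round-$i$ proposals only within a ball of bounded radius about $v$ (a set of at most $\sum_{j\leq r}d^j$ vertices), and, by standard concentration inequalities (Chernoff, McDiarmid), has probability exponentially small in $d$ while conflicting with only $d^{O(1)}$ other bad events; hence for $d\geq d_0(\epsilon)$ the Local Lemma inequality of Theorem~\ref{theo:Thm1} holds with room to spare and yields the required Borel round-$i$ coloring on a conull invariant set. The remaining bookkeeping---the evolution of degrees and residual list sizes over the $N(\epsilon)$ rounds---is purely numerical and uniform in $\Gamma$.

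\emph{Finishing and conclusion.} It remains to properly list\=/edge\=/color $H$ from the lists $L'$. Since $\Delta(H)\leq\epsilon'd$, the line graph of $H$ is a Borel graph of maximum degree $D\leq 2\epsilon'd-2<\epsilon''d-1\leq|L'(e)|-1$, i.e.\ we are list\=/coloring a bounded\=/degree Borel graph from Borel lists strictly larger than its maximum degree. Fix a Borel proper $(D+1)$-coloring of the line graph of $H$ (by \cite[Proposition~4.6]{KechrisSoleckiTodorcevic}) and use it as a schedule: in stage $j$, every edge whose line\=/graph color is $j$ takes the least color in $L'(e)$ not used by an adjacent edge already colored at an earlier stage---such a color exists because $e$ has at most $D<|L'(e)|$ adjacent edges. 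This is a Borel procedure with finitely many stages. Combining $\phi$ with this coloring and intersecting the finitely many conull invariant Borel sets built along the way produces the desired $X'$ and $f$.

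\emph{The main obstacle.} The delicate step is the measurable nibble. One must pin down the nibble parameters and verify, for each one\=/vertex deviation event, simultaneously that it depends on only boundedly many shift coordinates (so that Theorem~\ref{theo:Thm1} is applicable) and that the resulting probability and dependency estimates satisfy the Local Lemma inequality uniformly in $d\geq d_0(\epsilon)$ and uniformly over all $\Gamma$ and $S$ with $|S\cup S^{-1}|=d$; one must also track the degrees and residual list sizes through the $N(\epsilon)$ rounds to arrive at the stated $\epsilon',\epsilon''$. None of this uses anything special about $\Gamma$: it is exactly the finite\=/combinatorial nibble, every step of which is a local application of the Local Lemma or a local concentration estimate, which is why Theorem~\ref{theo:Thm1} is enough to perform it measurably. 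The one genuinely new point requiring care is the coupling of successive rounds: round $i$ uses fresh randomness (the $i$-th copy of the shift system) but its constraint system refers to the partial coloring from rounds $1,\dots,i-1$ as fixed $\Gamma$-equivariant side data, so one must either check that Theorem~\ref{theo:Thm1} accommodates such side data---which enters only through finitely many coordinates near each vertex---or, alternatively, package all $N$ rounds of proposals into a single codomain equipped with the product of uniform measures, phrase the deviation events absolutely, and invoke Theorem~\ref{theo:Thm1} once.
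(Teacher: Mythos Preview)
Your outline is essentially the paper's own argument: transfer Kahn's nibble proof of Theorem~\ref{theo:Kahn} to the measurable setting by replacing each application of the~LLL with an invocation of Theorem~\ref{theo:Thm1}, and finish greedily (the paper instead uses Lemma~\ref{lemma:prob_col} for the final step, but your greedy schedule via a Borel $(\Delta+1)$-coloring of the line graph works just as well once the residual lists exceed the residual line-graph degree).

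The one point on which you seem to be working harder than necessary is your ``main obstacle.'' You worry whether Theorem~\ref{theo:Thm1} ``accommodates side data'' from earlier rounds, and you propose either reserving separate coordinate systems $[0;1]^\Gamma\cong([0;1]^\Gamma)^\N$ or packaging all $N$ rounds into a single instance. But recall what Theorem~\ref{theo:Thm1} actually asserts: Player~II has a winning strategy in the~LLL Game over $\HF(\LS(\alpha,\mu))$. The~LLL Game (Definition~\ref{defn:LLLGame}) is \emph{precisely} the mechanism for iterated applications in which each new instance $\B_i$ may depend on the solutions $f_0,\ldots,f_{i-1}$ of the earlier rounds---formally, $\B_i$ need only be invariant under $\iso_\alpha[f_0,\ldots,f_{i-1}]$, not under all of $\iso_\alpha$. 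So the ``coupling of successive rounds'' is already built into the statement you are citing; no additional argument is required. Your reserved-coordinates idea is in fact how the paper \emph{proves} Theorem~\ref{theo:Thm1} (see Lemma~\ref{lemma:expansion}), not something you need to re-derive in order to \emph{apply} it. Your single-shot alternative, by contrast, does not work as stated: the round-$i$ deviation events depend on the partial coloring output by rounds $1,\ldots,i-1$, which is the solver's \emph{choice} and not a fixed function of the earlier proposals, so the combined instance is not well-defined until those choices are made.
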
		
		
		One can further relax the conditions on a coloring to allow a small (but positive) margin of error. Let $G$ be a graph with vertex set $X$. For a map $f \colon X \to Y$, define the \emph{defect} set~$\Def(f) \subseteq X$ by
		\[
		x \in \Def(f) \,\vcentcolon\Longleftrightarrow\, f(x) = f(y) \text{ for some } y \in G_x.
		\]
		In other words, a vertex $x$ belongs to $\Def(f)$ if and only if it shares a color with a  neighbor. If the graph $G$ is Borel, then a Borel map $f \colon X \to Y$ is a \emph{$(\mu, \epsilon)$-approximately proper Borel coloring} of~$G$ if $\mu(\Def(f)) \leq \epsilon$.\footnote{Note that the set $\Def(f)$ is analytic (and hence universally measurable), so this definition makes sense. If~$G$ is locally countable, then $\Def(f)$ is actually Borel.} The \emph{$\mu$-approximate chromatic number} of $G$ (notation:~$\ap\chi_\mu(G)$) is the smallest cardinality of a standard Borel space $Y$ such that for every $\epsilon > 0$, there is a $(\mu, \epsilon)$-approximately proper Borel coloring $f \colon X \to Y$ of $G$. Approximate chromatic numbers were studied extensively by Conley and Kechris~\cite{ConleyKechris}. 
		In particular, they proved that if $G$ is induced by a measure-preserving action of a countable \emph{amenable} group, then its $\mu$-approximate chromatic number is essentially determined by the ordinary chromatic number; more precisely, for such $G$,
		$$
		\ap\chi_\mu(G) = \min\{\chi(G \vert X')\,:\, X' \text{ is a $\mu$-conull $G$-invariant Borel subset of }X\}.
		$$
		However, the lower bound $\ap\chi_{\lambda^{\F(S)}}(G(\alpha, S)) \geq |S|/\ln(2|S|)$, where $\alpha \colon \F(S) \acts [0;1]^{\F(S)}$ is the shift action of the free group $\F(S)$ over a finite set $S$, still holds.
		
		For an edge coloring $f \colon G \to Y$, let $\Def'(f) \subseteq X$ be given by
		\[
		x \in \Def'(f) \,\vcentcolon\Longleftrightarrow\, \begin{array}{l}
		\exists y \in G_x\, \exists z \in G_y\, (z \neq x \text{ and } f(x, y) = f(y, z)); \quad \text{ or }\\
		\exists y \in G_x \,\exists z \in G_x\, (z \neq y \text{ and } f(x, y) = f(x, z)).
		\end{array}
		\]
		In other words, $x \in \Def'(f)$ if and only if $x$ is incident to an edge that shares an endpoint with another edge of the same color. The \emph{$\mu$-approximate chromatic index} $\ap\chi'_\mu(G)$ of a Borel graph~$G$ is defined similarly to $\ap\chi_\mu(G)$. As a corollary of our other general result (namely Theorem~\ref{theo:approxLLL}), Theorems~\ref{theo:col_large_g} and \ref{theo:FreeEdge} can be generalized to arbitrary locally finite Borel graphs in the context of approximate colorings.
		
		\begin{theo}\label{theo:approxVertex}
			Let $G$ be a Borel graph on a standard Borel space $X$ and suppose that $\Delta(G) = d \in \N$. Let $\mu$ be a probability Borel measure on $X$. If $g(G) \geq 4$, then $\ap\chi_\mu(G) = O(d/\ln d)$; furthermore, if $g(G) \geq 5$, then $\ap\chi_\mu(G) \leq (1 + o(1)) d/\ln d$.
		\end{theo}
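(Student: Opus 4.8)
The plan is to derive this from the approximate measurable Lov\'asz Local Lemma, Theorem~\ref{theo:approxLLL}, essentially as Theorem~\ref{theo:col_large_g} is derived from Theorem~\ref{theo:Thm1}. The combinatorial core is classical: the theorem of Johansson that a graph of maximum degree $d$ and girth at least $4$ (equivalently, triangle-free) has chromatic number $O(d/\ln d)$, and that of Kim that girth at least $5$ already forces chromatic number at most $(1+o(1))d/\ln d$. Both are proved by the semi-random (``nibble''/``wasteful colouring'') method, whose only nonconstructive ingredient is a number $T = T(d)$ of applications of the symmetric Lov\'asz Local Lemma. What makes the measurable version go through is that, since $\Delta(G) = d < \infty$, every one of these applications is to a \emph{locally finite} constraint structure of the kind governed by Theorem~\ref{theo:approxLLL}, and the Local Lemma criterion is satisfied in the classical argument with room to spare --- which is exactly what lets ``there is a choice obeying all constraints'' degrade to ``there is a \emph{Borel} choice obeying all but a $\mu$-fraction $\epsilon$ of them.'' In contrast to Theorem~\ref{theo:Thm1}, Theorem~\ref{theo:approxLLL} demands nothing of the action or structure underlying $G$; this is why the shift hypothesis of Theorem~\ref{theo:col_large_g} can be dropped here, at the cost of permitting $\epsilon > 0$.

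Concretely, I would package the entire procedure --- the nibble rounds, iterated if necessary down to a leftover graph of constant maximum degree, followed by a Borel list colouring of that leftover (which exists exactly, by a list-colouring refinement of \cite[Proposition~4.6]{KechrisSoleckiTodorcevic}) --- as a single instance. Each vertex $v$ carries a variable in a fixed finite set $Q = Q(d)$ encoding its intended activations and colour choices over all $T$ rounds, distributed according to the appropriate product measure (whether $v$ actually participates in a given round is a function of the variables in a bounded ball about $v$, so this encoding is legitimate). Crucially, one takes the bad event $B_v$ to be the \emph{robustified} event ``some round invariant of the procedure fails at some vertex within $G$-distance $T+1$ of $v$'': this depends on only finitely many variables (a ball of radius $O(T)$, finite because $\Delta(G) = d$), and the padding by distance $T+1$ is arranged precisely so that, defining $f$ to be the procedure's output at vertices where $B_v$ fails and arbitrary elsewhere, one has $\Def(f) \subseteq \{v : B_v \text{ occurs}\}$ even though $\mu$ need not be $G$-invariant. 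Applying Theorem~\ref{theo:approxLLL} with error parameter $\epsilon$ then yields a Borel assignment of the variables with $\mu(\{v : B_v \text{ occurs}\}) \leq \epsilon$, hence a Borel colouring $f \colon X \to Y$ with $|Y| = O(d/\ln d)$ (resp.\ $(1+o(1))d/\ln d$ when $g(G) \geq 5$) and $\mu(\Def(f)) \leq \epsilon$. As $\epsilon > 0$ is arbitrary, $\ap\chi_\mu(G) \leq |Y|$.

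The main obstacle is verifying the Local Lemma criterion for these amalgamated, robustified bad events: one must bound the \emph{unconditional} probability that a round invariant fails somewhere in a radius-$O(T)$ ball --- rather than, as in the usual presentation, the probability that a single round invariant fails \emph{conditioned} on all earlier rounds having been typical --- and this bound, times the dependency degree (which after merging $T$ rounds is only $d^{O(T)}$), must stay below $e^{-1}$. This is the Johansson--Kim nibble analysis, re-run with the standard inductive remark that atypical conditioning occurs only with a probability that telescopes to something far smaller; it survives the $d^{O(T)}$ inflation because $T(d)$ grows merely poly-logarithmically in $d$ while the per-round failure probabilities are exponentially small in a fixed power of $d$. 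The remaining points --- choosing $Q(d)$ and the product measure so that the whole process is a Borel function of a Borel family of finite-valued variables, producing the Borel list colouring of the constant-degree leftover, and tracking how the girth hypotheses enter exactly as in the classical theorems --- are routine. Running the same construction on the line-graph colouring process (with $\Def'$ replacing $\Def$) upgrades Theorem~\ref{theo:FreeEdge} to a statement about the $\mu$-approximate chromatic index of an arbitrary locally finite Borel graph.
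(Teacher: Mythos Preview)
Your amalgamation is a valid route, but it is not how the paper organises the argument, and it incurs a cost the paper's framework is designed to avoid. The paper (Appendix~\ref{app:KJK}) simply substitutes Theorem~\ref{theo:approxLLL} for each of the separate LLL applications in the Kim--Johansson proofs, with error parameter $\epsilon/T$ at each round, and takes the union of the defect sets. The danger you correctly anticipate---that round~$i$'s instance is only correct where the earlier rounds were typical, and that the $G$-neighbourhood of a $\mu$-small set need not be $\mu$-small when $\mu$ is not $G$-invariant---is handled by the $\HF(X)$ formalism rather than by amalgamation. One lets the domain of each round-$i$ bad event be a finite set in $\HF(X)$ whose \emph{underlying set} $\U(\cdot)$ is padded out to whatever radius all later rounds will need, while the event itself constrains only the genuine round-$i$ variables; tagging the dummy domain elements by the event they belong to ensures that padding adds no edges to the dependency graph, so the LLL correctness of each round is exactly the Kim--Johansson criterion, unmodified. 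Since $\Def_\B$ in Theorem~\ref{theo:approxLLL} is computed via underlying sets, the round-$i$ defect is then automatically thick enough for round~$i{+}1$ to be a correct instance on its complement. Your ``main obstacle''---the $d^{O(T)}$ blow-up of the dependency degree and the need to compare $T(d)$ against the per-round failure probabilities---simply does not arise. Your single-instance packaging does work, because the Kim--Johansson concentration bounds are of order $\exp(-d^{\Omega(1)})$ while $T(d)$ is polylogarithmic, but that is an additional quantitative verification the iterated route does not require.

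A small correction: the nibble does not naturally terminate in a leftover of constant maximum degree, and forcing it to would eventually break the LLL hypothesis as the degree drops. The final step in both Kim's and Johansson's proofs is itself an LLL application (Lemma~\ref{lemma:prob_col}); include it as one more round---either one more invocation of Theorem~\ref{theo:approxLLL} or one more layer in your amalgamated instance---rather than appealing to an exact greedy Borel colouring.
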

		
		\begin{theo}\label{theo:approxEdge}
			Let $G$ be a Borel graph on a standard Borel space $X$ and suppose that $\Delta(G) = d \in \N$. Let $\mu$ be a probability Borel measure on $X$. Then $\ap\chi'_\mu(G) = d + o(d)$.
		\end{theo}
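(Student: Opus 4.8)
The plan is to run, inside the measurable framework, the semi-random (``nibble'') argument that proves $\chi'(H) \leq \Delta(H) + o(\Delta(H))$ for finite graphs $H$, feeding the whole procedure into Theorem~\ref{theo:approxLLL} in a single shot. Write $d \defeq \Delta(G)$; the lower bound $\ap\chi'_\mu(G) \geq \Delta(G)$ is routine (a maximum-degree vertex not lying in $\Def'(f)$ forces $d$ colours), so only the upper bound is at issue. Since the colour set must be fixed before $\epsilon$ is seen, I would take $Y \defeq [k_0] \sqcup [m]$ (a disjoint union), where $k_0 \defeq d + \lceil d/\ln d\rceil$ ``nibble colours'' and $m \defeq 2\lceil d/\ln d\rceil + 1$ ``cleanup colours,'' so that $|Y| = d + O(d/\ln d) = d + o(d)$ regardless of $\epsilon$.

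Fix $\epsilon > 0$. Consider the random $T$-round nibble that uses the palette $[k_0]$ only: with a small constant activation probability and $T = T(d)$ rounds ($T$ of order $\ln\ln d$ suffices, since a $(1+\tfrac{1}{\ln d})d$-colour palette lets each round shrink the uncoloured degree at every vertex by a constant factor while that degree stays $\geq d/\ln d \gg \operatorname{polylog}(d)$), each round being: every still-uncoloured edge independently becomes active with that probability and, if active, attempts a colour chosen uniformly among those still free at both of its endpoints, keeping it unless a coincident edge attempts the same colour this round or already carries it. The classical nibble analysis gives, for this random process, $\mathrm{Pr}[\deg_U(v) > d/\ln d] \leq \exp(-\Omega(d/\ln d))$ for every vertex $v$, where $U$ is the set of edges still uncoloured after all $T$ rounds. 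I would now present this $T$-round process --- the choice of all activations and colour-attempts at once, drawn from a product measure on the edges --- as a single instance of Theorem~\ref{theo:approxLLL}, with one constraint per vertex $v$, namely ``$\deg_U(u) \leq d/\ln d$ for every $u$ in the closed neighbourhood of $v$.'' Each such constraint is determined by the process's choices on edges within bounded distance of $v$ (bounded because $G$ is locally finite and $T$ depends only on $d$), so the family is locally finite, and by the estimate above the numerical hypothesis of the Local Lemma holds with enormous slack. Theorem~\ref{theo:approxLLL} then supplies a \emph{Borel} realisation of the whole process in which the set of vertices violating their constraint --- which is exactly $B \defeq \{v : \deg_U(v) > d/\ln d\}$ together with all of its neighbours --- has $\mu$-measure at most $\epsilon$. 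It is crucial here that $\mu$ need not be $G$-invariant: one cannot run the nibble first and enlarge a small exceptional set by its neighbourhood afterwards, so the neighbourhood is built into the constraint from the start.

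The cleanup is now a deterministic Borel step. Let $U^{\flat} \subseteq U$ be the Borel set of uncoloured edges both of whose endpoints have $U$-degree $\leq d/\ln d$; its maximum degree is $\leq d/\ln d$, so its line graph has maximum degree $\leq 2d/\ln d - 2$, and by \cite[Proposition~4.6]{KechrisSoleckiTodorcevic} (applied to the line graph) there is a Borel proper edge colouring of $U^{\flat}$ using at most $2\lceil d/\ln d\rceil - 1$ of the fresh colours; colour every edge of $U \setminus U^{\flat}$ with one more fresh colour. Together with the partial proper colouring produced by the nibble this is a Borel edge colouring $f \colon G \to Y$. If $v$ lies outside the $\mu$-small set above, then $v$ and all of its neighbours have $U$-degree $\leq d/\ln d$, so every uncoloured edge at $v$ lies in $U^{\flat}$; hence the edges at $v$ are pairwise distinctly coloured (those inherited from the nibble are distinct because that colouring is proper, those in $U^{\flat}$ are distinct because its colouring is proper, and the two palettes are disjoint), and likewise any edge $vw$ at $v$ has a colour different from every edge at $w$ --- so $v \notin \Def'(f)$. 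Thus $\mu(\Def'(f)) \leq \epsilon$ with a colour set of size $d + o(d)$ fixed independently of $\epsilon$, which is the theorem.

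The step I expect to be the main obstacle is the probabilistic core: verifying the concentration estimate $\mathrm{Pr}[\deg_U(v) > d/\ln d] \leq \exp(-\Omega(d/\ln d))$ for the unconditioned $T$-round process (the usual nibble bookkeeping --- induct on the round and union over a bounded ball around $v$ to absorb the cross-dependence between a vertex's degree and its neighbours' degrees --- but one must check that the numbers survive $T$ growing, if only like $\ln\ln d$, with $d$), and confirming that this process genuinely fits the ``locally finite'' template of Theorem~\ref{theo:approxLLL} with the numerical condition to spare. Everything after that --- the cleanup and the verification that $\Def'(f)$ is contained in the exceptional set --- is bookkeeping.
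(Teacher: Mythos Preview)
Your approach is correct and takes a genuinely different route from the paper's.

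The paper derives Theorem~\ref{theo:approxEdge} by following Kahn's proof (as presented in \cite[Chapter~14]{MolloyReed}) step by step, replacing each of the finitely many applications of the~LLL there by an application of the Approximate~LLL (Theorem~\ref{theo:approxLLL}) with error budget $\epsilon/T$; since the round-$t$ instance is Borel and hereditarily locally finite once the round-$(t-1)$ output is a Borel map, this goes through directly, and the defect sets accumulate additively. The final completion step in the paper's scheme is another Approximate~LLL call (via the list-colouring Lemma~\ref{lemma:prob_col}), not a greedy step.

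You instead package all $T$ rounds of the nibble as a single product-measure experiment and invoke Theorem~\ref{theo:approxLLL} once, with one constraint per vertex asserting that the process terminates well throughout its closed neighbourhood; your cleanup is then purely Borel via \cite[Proposition~4.6]{KechrisSoleckiTodorcevic}. This buys conceptual simplicity (one LLL call, no bookkeeping of accumulating defect sets, no need to re-verify correctness of the round-$t$ instance on the complement of earlier defects) at the cost of a larger dependency radius---$O(T)$ rather than $O(1)$---and, as you correctly flag, a ``compound'' concentration estimate that chains the per-round bounds via a union over a ball of radius $O(T)$. Since $T = O(\ln\ln d)$ and the per-round failure probability is $\exp(-d^{1-o(1)})$, the factor $d^{O(T)} = \exp(O(\ln d \cdot \ln\ln d))$ from the union bound and from the dependency degree is absorbed with room to spare; this is routine but is indeed not literally ``the classical nibble analysis,'' so your caution there is well placed. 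Your observation that the closed neighbourhood must be built into the constraint from the start (because $\mu$ need not be $G$-invariant) is exactly right and is the one point where a careless transcription of the finite argument would fail; the same device is implicitly needed in the paper's iterated approach as well, since the round-$t$ instance is only correct at $v$ when the round-$(t-1)$ output is ``nice'' in a neighbourhood of $v$.
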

		
		\subsection{The Lov\'asz Local Lemma and its applications}\label{subsection:LLL}
		
		\mbox{}
		
		\smallskip
		
		\noindent The Lov\'asz Local Lemma (the LLL for short) is a powerful probabilistic tool developed by Erd\H os and Lov\'asz~\cite{ErdosLovasz}. We refer to~\cite[Chapter~5]{AS00} for background on the Lov\'asz Local Lemma and its applications in combinatorics; several other classical applications can be found, e.g., in~\cite{MolloyReed}.
		
		Given sets $X$ and $Y$,
		\[
		\begin{array}{rll}
		\text{--} & \fins{X} & \text{denotes the set of all finite subsets of $X$;}\\
		\text{--} & \finf{X}{Y} & \text{denotes the set of all partial functions $\phi \colon X \rightharpoonup Y$ with $\dom(\phi) \in \fins{X}$.}
		\end{array}
		\]
		Let $X$ be a set and consider any $S \in \fins{X}$. Even though $X$ itself is just a set with no additional structure, $[0;1]^S$ is a standard Borel space equipped with the Lebesgue probability measure $\lambda^S$. We refer to the Borel subsets $B \subseteq [0;1]^S$ as \emph{bad events} over $X$. Every bad event is a subset of $\finf{X}{[0;1]}$. If $B \subseteq [0;1]^S$ is a nonempty bad event, then we call $S$ the \emph{domain} of $B$ and write $\dom(B) \defeq S$; since $B$ is nonempty, $S$ is determined uniquely. Set $\dom(\0) \defeq \0$. The \emph{probability} of a bad event $B$ is
		\[
			\mathbb{P}[B] \defeq \lambda^{\dom(B)}(B).
		\]
		A function $f \colon X \to [0;1]$ \emph{avoids} a bad event $B$ if there is no $w \in B$ with $w \subseteq f$. An~\emph{instance \ep{of the LLL}} over $X$ is a set $\B$ of bad events over $X$. A~\emph{solution} to an instance $\B$ is a map $f \colon X \to [0;1]$ that avoids all $B \in \B$. For an instance $\B$ and a bad event $B \in \B$, the \emph{neighborhood} of $B$ in $\B$ is
		\[
			\Nbhd_\B(B) \defeq \set{B' \in \B \setminus \set{B} \,:\, \dom(B') \cap \dom(B) \neq \0}.
		\]
		The \emph{degree} of $B$ in $\B$ is	
		\[
			\deg_\B(B) \defeq |\Nbhd_\B(B)|.
		\]
		Let
		\[
			p(\B) \defeq \sup_{B \in \B} \mathbb{P}[B] \qquad \text{and} \qquad d(\B) \defeq \sup_{B \in \B} \deg_\B(B).
		\]
		An instance $\B$ is \emph{correct for the Symmetric LLL} (the SLLL for short) if
		\[
			e\cdot p(\B) \cdot (d(\B) + 1) < 1,
		\]
		where $e = 2.71\ldots$ denotes the base of the natural logarithm.
		
		\begin{theo}[{Erd\H os--Lov\'asz~\cite{ErdosLovasz}; \textbf{Symmetric Lov\'asz Local Lemma---finite case}}]\label{theo:SLLL_fin}
			Let $\B$ be an instance of the~LLL over a finite set $X$. If $\B$ is correct for the~SLLL, then $\B$ has a solution.
		\end{theo}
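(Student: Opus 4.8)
\noindent The plan is to reduce Theorem~\ref{theo:SLLL_fin} to the \emph{General} (asymmetric) Lov\'asz Local Lemma and to prove the latter by the standard conditional-probability induction. Concretely, I would first establish the following statement: \emph{if $\B$ is a finite instance of the LLL over $X$ and there are reals $x_B \in [0;1)$, $B \in \B$, with $\mathbb{P}[B] \leq x_B \prod_{B' \in \Nbhd_\B(B)}(1-x_{B'})$ for every $B \in \B$, then $\B$ has a solution.} Granting this, the symmetric case is immediate: writing $d \defeq d(\B)$, $p \defeq p(\B)$ and assuming $d \geq 1$ (if $d = 0$ then every $\Nbhd_\B(B) = \0$, so one may instead take $x_B \defeq e \cdot p \in [0;1)$ and note $\mathbb{P}[B] \leq p \leq e p = x_B$), set $x_B \defeq 1/(d+1)$ for all $B$. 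Using $\deg_\B(B) \leq d$ and the elementary inequality $\big(1 - \tfrac{1}{d+1}\big)^{d} > 1/e$ (a consequence of $\ln(1-t) > -t/(1-t)$ for $t \in (0,1)$),
\[
	x_B \prod_{B' \in \Nbhd_\B(B)}(1 - x_{B'}) \;\geq\; \frac{1}{d+1}\left(1 - \frac{1}{d+1}\right)^{d} \;>\; \frac{1}{e(d+1)} \;>\; p \;\geq\; \mathbb{P}[B],
\]
the penultimate inequality being exactly the hypothesis $e \cdot p \cdot (d+1) < 1$. Hence the family $(x_B)$ witnesses the General LLL, and a solution exists.

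\noindent To prove the General LLL, I would work in the probability space $\big([0;1]^X, \lambda^X\big)$: for $f$ distributed according to $\lambda^X$ and a bad event $B$, let $\mathcal{A}_B \defeq \{f : f|_{\dom(B)} \in B\}$ be the (Borel) event that $f$ fails to avoid $B$, so that $\mathbb{P}[\mathcal{A}_B] = \mathbb{P}[B]$ and, crucially, $\mathcal{A}_B$ depends only on the coordinates in $\dom(B)$; in particular, if $\dom(B') \cap \dom(B) = \0$ for all $B'$ in some family, then $\mathcal{A}_B$ is independent of any Boolean combination of the corresponding $\mathcal{A}_{B'}$. A solution is precisely an element of $\bigcap_{B \in \B} \overline{\mathcal{A}_B}$, so it suffices to prove $\mathbb{P}\big[\bigcap_{B \in \B}\overline{\mathcal{A}_B}\big] > 0$. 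The heart of the argument is the claim that \emph{for every $B \in \B$ and every $\mathcal{S} \subseteq \B \setminus \{B\}$ with $\mathbb{P}\big[\bigcap_{B' \in \mathcal{S}}\overline{\mathcal{A}_{B'}}\big] > 0$, one has $\mathbb{P}\big[\mathcal{A}_B \mid \bigcap_{B' \in \mathcal{S}}\overline{\mathcal{A}_{B'}}\big] \leq x_B$}, proved by induction on $|\mathcal{S}|$.

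\noindent For $\mathcal{S} = \0$ this is just $\mathbb{P}[\mathcal{A}_B] = \mathbb{P}[B] \leq x_B$. For the inductive step, split $\mathcal{S} = \mathcal{S}_1 \sqcup \mathcal{S}_2$ with $\mathcal{S}_1 \defeq \mathcal{S} \cap \Nbhd_\B(B)$, and put $E \defeq \bigcap_{B' \in \mathcal{S}_1}\overline{\mathcal{A}_{B'}}$, $F \defeq \bigcap_{B' \in \mathcal{S}_2}\overline{\mathcal{A}_{B'}}$. If $\mathcal{S}_1 = \0$, then every $B' \in \mathcal{S}_2$ satisfies $\dom(B') \cap \dom(B) = \0$, so $\mathcal{A}_B$ is independent of $F$ and $\mathbb{P}[\mathcal{A}_B \mid F] = \mathbb{P}[B] \leq x_B$. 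Otherwise, since $\mathbb{P}[F] > 0$ (as $F \supseteq \bigcap_{B'\in\mathcal{S}}\overline{\mathcal{A}_{B'}}$),
\[
	\mathbb{P}[\mathcal{A}_B \mid E \cap F] \;=\; \frac{\mathbb{P}[\mathcal{A}_B \cap E \mid F]}{\mathbb{P}[E \mid F]} \;\leq\; \frac{\mathbb{P}[\mathcal{A}_B \mid F]}{\mathbb{P}[E \mid F]} \;=\; \frac{\mathbb{P}[B]}{\mathbb{P}[E \mid F]},
\]
using monotonicity for the inequality and independence of $\mathcal{A}_B$ and $F$ for the last equality. For the denominator, enumerate $\mathcal{S}_1 = \{B_1, \dots, B_k\}$ and apply the chain rule: $\mathbb{P}[E \mid F] = \prod_{i=1}^{k} \mathbb{P}\big[\overline{\mathcal{A}_{B_i}} \mid \bigcap_{j < i}\overline{\mathcal{A}_{B_j}} \cap F\big]$. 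Each conditioning event here contains $E \cap F$, hence has positive probability, and is an intersection $\bigcap_{B' \in \mathcal{T}}\overline{\mathcal{A}_{B'}}$ with $|\mathcal{T}| = (i-1) + |\mathcal{S}_2| < |\mathcal{S}|$; so the inductive hypothesis gives $\mathbb{P}\big[\mathcal{A}_{B_i} \mid \cdots\big] \leq x_{B_i}$, and therefore $\mathbb{P}[E \mid F] \geq \prod_{i=1}^{k}(1 - x_{B_i}) \geq \prod_{B' \in \Nbhd_\B(B)}(1 - x_{B'})$ (every factor lies in $(0;1]$ and $\mathcal{S}_1 \subseteq \Nbhd_\B(B)$). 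Combining with $\mathbb{P}[B] \leq x_B \prod_{B' \in \Nbhd_\B(B)}(1-x_{B'})$ yields $\mathbb{P}[\mathcal{A}_B \mid E \cap F] \leq x_B$, completing the induction. Finally, enumerating $\B = \{C_1, \dots, C_n\}$ and using the chain rule together with the claim — every partial intersection $\bigcap_{j \leq i}\overline{\mathcal{A}_{C_j}}$ has positive probability by induction on $i$, since $\mathbb{P}\big[\overline{\mathcal{A}_{C_{i+1}}} \mid \bigcap_{j \leq i}\overline{\mathcal{A}_{C_j}}\big] \geq 1 - x_{C_{i+1}} > 0$ — one gets $\mathbb{P}\big[\bigcap_{i=1}^{n}\overline{\mathcal{A}_{C_i}}\big] \geq \prod_{i=1}^{n}(1 - x_{C_i}) > 0$, as $\B$ is finite; hence some $f$ avoids every bad event, i.e.\ is a solution.

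\noindent I expect the inductive claim on conditional probabilities to be the only genuine obstacle: the right move — isolating the neighbors $\mathcal{S}_1$ and conditioning the remaining events $\mathcal{S}_2$ away, so that the product-measure independence of $\mathcal{A}_B$ and $F$ can be exploited — is not forced, and one must keep track of the fact that all conditioning events remain of positive probability (which holds automatically because each contains the final intersection $\bigcap_{B' \in \mathcal{S}}\overline{\mathcal{A}_{B'}}$). By contrast, the reduction to the asymmetric form, the inequality $\big(1 - 1/(d+1)\big)^d > 1/e$, and the concluding chain-rule estimate are all routine.
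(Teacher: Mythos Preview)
Your proposal is correct and follows exactly the classical route the paper points to: the paper does not prove Theorem~\ref{theo:SLLL_fin} in-text but cites \cite[Corollary~5.1.2]{AS00} for the reduction ``correct for the SLLL $\Rightarrow$ correct for the GLLL'' via $x_B = 1/(d+1)$ and $(1-1/(d+1))^d > 1/e$, and \cite[Lemma~5.1.1]{AS00} for the General LLL itself---precisely the conditional-probability induction you wrote out. So your argument is essentially the paper's (referenced) proof, made explicit.

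One minor remark: the paper's own machinery in Section~\ref{section:MoserTardos} gives a different, algorithmic proof of the finite (and infinite) GLLL via the Moser--Tardos process and the pile-counting bound of Theorem~\ref{theo:MoserTardos}, which is what the paper actually relies on for its later measurable results; but for the bare finite statement of Theorem~\ref{theo:SLLL_fin}, your classical probabilistic proof is entirely appropriate and is what the cited references contain.
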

		
		The Symmetric LLL was introduced by Erd\H os and Lov\'asz (with $4$ in place of $e$) in their seminal paper~\cite{ErdosLovasz}; the constant was later improved by Lov\'asz (the sharpened version first appeared in~\cite{S}). Theorem~\ref{theo:SLLL_fin} is a special case of the~SLLL in the so-called \emph{variable framework} (the~name is due to Kolipaka and Szegedy~\cite{KolipakaSzegedy}), which encompasses most typical applications (with a notable exception of the ones concerning random permutations, see, e.g., \cite{ES}). For the full statement of the~SLLL, see~\cite[Corollary~5.1.2]{AS00} (deducing Theorem~\ref{theo:SLLL_fin} from \cite[Corollary~5.1.2]{AS00} is routine; see, e.g.,~\cite[41]{MolloyReed}).
		
		Theorem~\ref{theo:SLLL_fin} can be also extended to instances $\B$ with $d(\B) = \infty$, provided that for $B \in \B$, $\mathbb{P}[B]$ decays sufficiently fast as $|\dom(B)|$ increases. An instance $\B$ is \emph{correct for the General LLL} (the~GLLL for short), or simply \emph{correct}, if the neighborhood of each $B \in \B$ is countable, and there exists a function $\omega \colon \B \to [0;1)$ such that for all $B \in \B$,
		\[
		\mathbb{P}[B] \leq \omega(B) \prod_{B' \in \Nbhd_\B(B)} (1 - \omega(B')).
		\]
		
		\begin{theo}[{\textbf{General Lov\'asz Local Lemma---finite case}; \cite[Lemma~5.1.1]{AS00}}]\label{theo:GLLL_fin}
			Let $\B$ be an instance of the~LLL over a finite set $X$. If $\B$ is correct for the~GLLL, then $\B$ has a solution.
		\end{theo}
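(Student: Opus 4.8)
The plan is to recast the statement in the classical dependency-graph framework of \cite[Lemma~5.1.1]{AS00}, dispose of the mutual-independence requirement by Fubini, and reduce the (a~priori infinite) instance to its finite subinstances by continuity of measure. Work in the standard probability space $([0;1]^X,\lambda^X)$; for $B\in\B$ set $A_B\defeq\set{f\in[0;1]^X:f\restriction\dom(B)\in B}$, so that $A_B$ is Borel, $\lambda^X(A_B)=\mathbb{P}[B]$, and $f$ avoids $B$ exactly when $f\notin A_B$. A solution of $\B$ is thus precisely a point of $\bigcap_{B\in\B}\big([0;1]^X\setminus A_B\big)$, and it suffices to show this set is nonempty; I will in fact show it has positive $\lambda^X$-measure.

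First I would normalize the correctness witness $\omega\colon\B\to[0;1)$ by replacing it with $\omega'$, where $\omega'(B)\defeq\omega(B)$ if $\mathbb{P}[B]>0$ and $\omega'(B)\defeq 0$ otherwise; this is still a witness, since for $\mathbb{P}[B]>0$ the right-hand side of the correctness inequality only increases, while for $\mathbb{P}[B]=0$ it reads $0\leq 0$. Now finiteness of $X$ enters decisively. For each $v\in X$, the bad events whose domain contains $v$ are pairwise neighbours, hence lie inside the (countable, by correctness) neighbourhood of any one of them. Moreover, the only bad event of empty domain consistent with correctness is $\0$ itself. Hence $\B=\set{\0}\cup\bigcup_{v\in X}\set{B\in\B:v\in\dom(B)}$ is a finite union of countable sets, so $\B$ is countable. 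The same cliques yield summability: if some $B_0$ with $v\in\dom(B_0)$ has $\omega'(B_0)>0$, then $\mathbb{P}[B_0]>0$, and the correctness inequality for $B_0$ forces $\prod_{B':\,v\in\dom(B')}(1-\omega'(B'))>0$, i.e.\ $\sum_{B':\,v\in\dom(B')}\omega'(B')<\infty$; summing over the finitely many $v\in X$ gives $\sum_{B\in\B}\omega'(B)<\infty$, and therefore $\prod_{B\in\B}(1-\omega'(B))>0$.

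Next, enumerate $\B=\set{B_1,B_2,\ldots}$ and fix $n$. The restriction $\omega'\restriction\set{B_1,\dots,B_n}$ still witnesses correctness of this finite subinstance, because passing to a subinstance only shrinks neighbourhoods and every factor $1-\omega'(B')$ lies in $(0;1]$; and whenever $\dom(B_i)\cap\dom(B_j)=\0$ the events $A_{B_i}$ and $A_{B_j}$ depend on disjoint blocks of coordinates, so by the product structure of $\lambda^X$ each $A_{B_i}$ is mutually independent of $\set{A_{B_j}:j\leq n,\ \dom(B_j)\cap\dom(B_i)=\0}$. Hence the neighbourhood relation is a valid dependency graph, and \cite[Lemma~5.1.1]{AS00} gives $\lambda^X\big(\bigcap_{i\leq n}([0;1]^X\setminus A_{B_i})\big)\geq\prod_{i\leq n}(1-\omega'(B_i))$. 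Since the Borel sets $\bigcap_{i\leq n}([0;1]^X\setminus A_{B_i})$ decrease to $\bigcap_{B\in\B}([0;1]^X\setminus A_B)$, continuity of $\lambda^X$ from above yields $\lambda^X\big(\bigcap_{B\in\B}([0;1]^X\setminus A_B)\big)\geq\prod_{B\in\B}(1-\omega'(B))>0$, as desired.

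The only genuinely delicate step is the second paragraph. The correctness witness supplied by the hypothesis carries no summability a~priori---it may well be positive on infinitely many null bad events, making $\prod_{B}(1-\omega(B))=0$---so one must first zero it out on null events and then exploit the finiteness of $X$ (which caps the number of possible domains and turns the ``countable neighbourhood'' hypothesis into a ``countable clique'' condition at each vertex) to force $\sum_B\omega'(B)<\infty$. Once that is in hand, the rest---the Borel and measure bookkeeping, the Fubini argument for mutual independence, continuity of measure, and the appeal to the finite General LLL---is routine.
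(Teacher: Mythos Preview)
Your proof is correct. The paper itself does not give a proof of this statement: it is stated with a citation to \cite[Lemma~5.1.1]{AS00} and treated as known. (The paper reserves its Moser--Tardos machinery for the genuinely infinite version, Theorem~\ref{theo:LLLvbls}.)

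That said, your proof is doing real work that the bare citation glosses over, and it is worth noting what that work is. The statement in \cite[Lemma~5.1.1]{AS00} concerns a \emph{finite} family of events, whereas in the paper's framework an instance $\B$ over a finite $X$ may still contain infinitely many bad events (countably many, once correctness is assumed). You correctly identify this gap and close it: the normalization $\omega'\equiv 0$ on null events, the clique argument at each vertex to force $\sum_B\omega'(B)<\infty$, and the continuity-of-measure passage from finite subinstances to the full $\B$ are exactly what is needed. The paper's later proof of Theorem~\ref{theo:LLLvbls} via Moser--Tardos would of course also specialize to give this, but your direct reduction to the classical statement is more economical for the finite-$X$ case.
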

		
		A standard calculation (see~\cite[proof of Corollary~5.1.2]{AS00}) shows that if an instance $\B$ is correct for the~SLLL, then it is also correct for the~GLLL, hence the name ``General LLL.''
		
		\begin{remk}\label{remk:empty}
			If $\B$ is a correct instance of the~LLL, then we may assume that $\dom(B) \neq \0$ for all $B \in \B$. Indeed, there are only two bad events with empty domain: $\0$ and $\set{\0}$. The event $\0$ is always avoided, so it does not matter if $\0 \in \B$ or not. On the other hand, $\set{\0}$ cannot be avoided; in particular, if $\B$ is correct, then $\set{\0} \not \in \B$.
		\end{remk}
		
		\begin{remk}\label{remk:other_spaces}
			The definition of bad events can be naturally extended to include subsets of~$\finf{X}{Y}$ for standard probability spaces $(Y, \nu)$ other than $([0;1], \lambda)$; indeed, in standard combinatorial applications, $Y$ is often a finite set. However, any standard probability space $(Y, \nu)$ can be ``simulated'' by $([0;1], \lambda)$, in the sense that there exists a Borel map $\varphi \colon [0;1] \to Y$ such that $\varphi_\ast(\lambda) = \nu$. As far as the~LLL is concerned, a set $B \subseteq \finf{X}{Y}$ can be replaced by its ``pullback'' $\varphi^\ast(B) \subseteq \finf{X}{[0;1]}$ defined via
			\[
				w \in \varphi^{\ast}(B) \,\vcentcolon\Longleftrightarrow\, \varphi \circ w \in B.
			\]
			Therefore, no generality is lost when only working with subsets of $\finf{X}{[0;1]}$.
		\end{remk}
		
		Theorems~\ref{theo:SLLL_fin} and~\ref{theo:GLLL_fin} also hold in the case when the ground set $X$ is infinite. In most applications, one may assume that each bad event $B$ is an open subset of $[0;1]^{\dom(B)}$ and obtain infinitary analogs of the~LLL through standard compactness arguments (see, e.g., \cite[Theorem~5.2.2]{AS00}). Yet, a different proof is required in general. Kun~\cite[Lemma~13]{Kun} showed that the infinite version of the~LLL can be derived using the effective approach developed by Moser and Tardos~\cite{MoserTardos}.
		
		\begin{theo}[{{Kun~\cite[Lemma~13]{Kun}}; \textbf{General Lov\'asz Local Lemma---infinite version}}]\label{theo:LLLvbls}
			Let $\B$ be an instance of the~LLL over an arbitrary set $X$. If $\B$ is correct for the~GLLL, then $\B$ has a solution.
		\end{theo}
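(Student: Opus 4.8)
The plan is to prove Theorem~\ref{theo:LLLvbls} by running the Moser--Tardos resampling algorithm forever, as in Kun's argument. I would first reduce to a countable instance. Form the \emph{dependency graph} on the vertex set $\B$ by joining $B$ to $B'$ whenever $\dom(B) \cap \dom(B') \neq \0$; correctness for the GLLL makes this graph locally countable, so its connected components are countable, and bad events lying in distinct components have disjoint domains. Hence the sets $X_C \defeq \bigcup_{B \in C}\dom(B)$, as $C$ ranges over the components, are pairwise disjoint, the coordinates outside $\bigcup_C X_C$ are unconstrained (put $f \defeq 0$ there), and since each $B \in \B$ is a bad event over a single $X_C$ it suffices to solve each component separately; a component, with the restriction of the same witness function $\omega$, is still correct for the GLLL, because the neighborhood of a bad event in it coincides with its neighborhood in $\B$. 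So from now on $X$ and $\B$ are countable; fix enumerations $X = \{x_0, x_1, \dots\}$ and $\B = \{B_0, B_1, \dots\}$, and by Remark~\ref{remk:empty} assume $\dom(B) \neq \0$ for all $B \in \B$.

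Next I would set up the algorithm on a probability space carrying independent uniform $[0;1]$-valued random variables $U_x^{(k)}$, $x \in X$, $k \in \N$. Put $f_0(x) \defeq U_x^{(0)}$; given $f_t$, stop if $f_t$ avoids every $B \in \B$, and otherwise let $B^{(t)}$ be the least-indexed bad event \emph{not} avoided by $f_t$ and obtain $f_{t+1}$ from $f_t$ by replacing, for each $x \in \dom(B^{(t)})$, the value $f_t(x)$ by the next unused sample from the list $U_x^{(1)}, U_x^{(2)}, \dots$. Let $N_B$ be the total number of steps $t$ with $B^{(t)} = B$ (a priori possibly $\infty$) and $M_x \defeq \sum_{B \ni x} N_B$ the total number of times coordinate $x$ gets resampled. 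The core input is the Moser--Tardos witness-tree estimate, which in this setting gives, for every $B \in \B$,
\[
\mathbb{E}[N_B] \;\leq\; \sum_{\tau}\,\prod_{v \in \tau}\mathbb{P}[B_v],
\]
the sum running over all finite witness trees $\tau$ rooted at $B$, with each vertex $v$ labelled by a bad event $B_v$ (and the root labelled $B$). When $\mathbb{P}[B] = 0$ every summand carries the zero factor $\mathbb{P}[B]$, so $N_B = 0$ almost surely: null bad events are never resampled. When $\mathbb{P}[B] > 0$, the GLLL inequality forces $\prod_{B' \in \Nbhd_\B(B)}(1 - \omega(B')) > 0$, hence $\sum_{B' \in \Nbhd_\B(B)}\omega(B') < \infty$, and the standard summation over witness trees yields $\mathbb{E}[N_B] \leq \omega(B)/(1 - \omega(B)) < \infty$; in either case $N_B < \infty$ almost surely.

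From this I would deduce that almost surely every coordinate is resampled finitely often, so that $f \defeq \lim_t f_t$ exists. Fix $x$. If no bad event of positive probability has $x$ in its domain, then every $B \ni x$ is null, so $M_x = \sum_{B \ni x} N_B = 0$ almost surely. Otherwise pick $B_0 \ni x$ with $\mathbb{P}[B_0] > 0$; every other $B \ni x$ shares the coordinate $x$ with $B_0$, hence lies in $\Nbhd_\B(B_0)$, so $\sum_{B \ni x}\omega(B) \leq \omega(B_0) + \sum_{B' \in \Nbhd_\B(B_0)}\omega(B') < \infty$, and therefore (as $\omega(B) \to 0$ along $B \ni x$) also $\sum_{B \ni x}\omega(B)/(1-\omega(B)) < \infty$, giving $\mathbb{E}[M_x] \leq \sum_{B \ni x}\mathbb{E}[N_B] < \infty$. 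Since $X$ is countable, almost surely $M_x < \infty$ for all $x$ at once; fix such an outcome, and then $(f_t(x))_t$ is eventually constant for every $x$, so $f = \lim_t f_t \colon X \to [0;1]$ is well defined.

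Finally I would verify that $f$ avoids every $B \in \B$. If not, let $B_i$ be the least-indexed bad event violated by $f$. The finitely many coordinates in $\dom(B_0) \cup \dots \cup \dom(B_i)$ are each resampled finitely often, so there is a step $T$ after which none of them is ever resampled; thus $f_t \vert \dom(B_j) = f \vert \dom(B_j)$ for all $j \leq i$ and $t > T$. By minimality of $i$ no $B_j$ with $j < i$ is violated by $f$, hence none is violated by $f_t$ for $t > T$, while $B_i$ is; so at any step $t > T$ the least-indexed violated bad event is $B_i$ itself, forcing $B^{(t)} = B_i$ and a resampling of a coordinate of $\dom(B_i)$ after $T$ --- a contradiction. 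The main obstacle, and the reason the compactness proof of the open case does not extend, is precisely this: under the GLLL hypothesis neighborhoods may be infinite and bad events may be arbitrary (in particular null) Borel sets, and one must exploit the dichotomy ``a positive-probability bad event has a summable neighborhood; a null bad event is never resampled'' both to get the limit $f$ to exist and to get it to satisfy all the constraints. $\blacksquare$
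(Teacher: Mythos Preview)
Your proof is correct and follows essentially the same Moser--Tardos route as the paper: reduce to a countable instance via the locally countable dependency graph, use the witness-tree (in the paper's language, ``pile'') estimate to show that every coordinate is resampled only finitely often almost surely, and take the pointwise limit. The only cosmetic differences are that you run the sequential least-index version of the algorithm and verify the limit is a solution by a least-index contradiction, whereas the paper resamples a maximal disjoint family at each step and uses that maximality directly (Proposition~\ref{prop:infinitestep}); the summability dichotomy you isolate (null events are never resampled; positive-probability events have summable neighborhoods) is exactly the content of the paper's Corollary~\ref{corl:summation}.
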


		Since the Moser--Tardos theory will play a crucial role in our investigation, we present its main tools, including a proof of Theorem~\ref{theo:LLLvbls}, in Section~\ref{section:MoserTardos}.
		
		As a simple example, let $H$ be a $k$-uniform hypergraph with vertex set $X$, i.e., a collection of $k$\=/element subsets of $X$, called the \emph{edges} of $H$. A \emph{proper $2$-coloring} of~$H$ is a map $f \colon X \to 2$ such that every edge $S \in H$ contains vertices of both colors. For $S \in H$, let $w_{S, 0}$, $w_{S, 1} \colon S \to 2$ denote the constant $0$ and $1$ functions respectively and define $B_S \defeq \set{w_{S, 0}, w_{S, 1}}$. Set
		\[
			\B \defeq \set{B_S \,:\, S \in H}.
		\]
		As explained in Remark~\ref{remk:other_spaces}, $\B$ can be viewed as an instance over $X$. The proper $2$-colorings of $H$ are precisely the solutions to $\B$. It is straightforward to check the conditions under which $\B$ is correct for the SLLL, and, after an easy calculation, one recovers the following theorem due to Erd\H{o}s and Lov\'{a}sz, which historically was the first application of the LLL:
		
		\begin{theo}[Erd\H os--Lov\'asz~\cite{ErdosLovasz}]\label{theo:hypcol}
			Let $H$ be a $k$-uniform hypergraph and suppose that every edge of~$H$ intersects at most $d$ other edges. If $e(d+1)\leq 2^{k-1}$, then $H$ is $2$-colorable.\footnote{The best currently known bound that guarantees $2$-colorability of $H$ is $d \leq c (k/\ln k)^{1/2} 2^k$ for some positive absolute constant $c$, due to Radhakrishnan and Srinivasan~\cite[Theorem~4.2]{RS}. Their proof also relies on the LLL.}
		\end{theo}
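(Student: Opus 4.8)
The plan is to encode the proper $2$-colorings of $H$ as the solutions of an instance of the LLL over the vertex set $X$ and then invoke the infinitary Lov\'asz Local Lemma, Theorem~\ref{theo:LLLvbls}. As in the setup preceding the statement, for each edge $S \in H$ let $w_{S,0}, w_{S,1} \colon S \to 2$ be the constant functions, put $B_S \defeq \set{w_{S,0}, w_{S,1}}$, and $\B \defeq \set{B_S \,:\, S \in H}$. Following Remark~\ref{remk:other_spaces}, I would fix a Borel map $\varphi \colon [0;1] \to 2$ with $\varphi_\ast(\lambda)$ the uniform measure on $2$ (say $\varphi(t) = 0$ for $t < 1/2$ and $\varphi(t) = 1$ for $t \geq 1/2$) and replace each $B_S$ by its pullback $\varphi^\ast(B_S) \subseteq \finf{X}{[0;1]}$; this turns $\B$ into a genuine instance over $X$, and a map $f \colon X \to [0;1]$ avoids $\varphi^\ast(B_S)$ precisely when $\varphi \circ f$ is nonconstant on $S$. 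Hence the solutions of $\B$ are exactly the maps $f$ with $\varphi \circ f$ a proper $2$-coloring of $H$, and it suffices to check that $\B$ is correct.

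Next I would compute the two relevant parameters. For each $S \in H$ the pullback $\varphi^\ast(B_S)$ is a nonempty subset of $[0;1]^S$, so $\dom(\varphi^\ast(B_S)) = S$ and
\[
\mathbb{P}[\varphi^\ast(B_S)] \;=\; \lambda^S(\varphi^\ast(B_S)) \;=\; 2 \cdot 2^{-k} \;=\; 2^{1-k},
\]
whence $p(\B) = 2^{1-k}$. Moreover $\varphi^\ast(B_{S'}) \in \Nbhd_\B(\varphi^\ast(B_S))$ if and only if $S' \neq S$ and $S' \cap S \neq \0$, i.e.\ iff the edge $S'$ meets the edge $S$; by hypothesis there are at most $d$ such edges $S'$, so $\deg_\B(\varphi^\ast(B_S)) \leq d$ for every $S$, giving $d(\B) \leq d$ and, in particular, that every neighborhood in $\B$ is finite (hence countable, as required by Theorem~\ref{theo:LLLvbls}). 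To verify correctness, set $\omega(\varphi^\ast(B_S)) \defeq 1/(d+1)$ for all $S$; then, using the elementary estimate $(1 - 1/(d+1))^{d} \geq 1/e$,
\[
\omega(\varphi^\ast(B_S)) \prod_{B' \in \Nbhd_\B(\varphi^\ast(B_S))} \bigl(1 - \omega(B')\bigr) \;\geq\; \frac{1}{d+1}\left(1 - \frac{1}{d+1}\right)^{d} \;\geq\; \frac{1}{e(d+1)} \;\geq\; 2^{1-k} \;=\; \mathbb{P}[\varphi^\ast(B_S)],
\]
the last inequality being the hypothesis $e(d+1) \leq 2^{k-1}$. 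Thus $\B$ is correct for the GLLL, and Theorem~\ref{theo:LLLvbls} yields a solution, i.e.\ a proper $2$-coloring of $H$.

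I do not expect a genuine obstacle: once the variable-framework encoding is in place, this is the routine calculation alluded to before the statement. The one point that deserves a moment's care is the boundary of the inequality: the hypothesis is the \emph{weak} inequality $e(d+1) \leq 2^{k-1}$, whereas the definition of ``correct for the SLLL'' in the excerpt is \emph{strict}, so I would not apply Theorem~\ref{theo:SLLL_fin} directly but instead verify GLLL-correctness as above, where the estimate $(1-1/(d+1))^d \geq 1/e$ supplies exactly the slack needed at equality. Alternatively, one may observe that $e$ is irrational while $d+1$ and $2^{k-1}$ are positive integers, so $e(d+1) \leq 2^{k-1}$ is in fact strict and $\B$ is correct for the SLLL; either route feeds into Theorem~\ref{theo:LLLvbls}.
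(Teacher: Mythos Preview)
Your proposal is correct and follows essentially the same approach as the paper: set up the instance $\B = \{B_S : S \in H\}$ via Remark~\ref{remk:other_spaces}, compute $p(\B) = 2^{1-k}$ and $d(\B) \leq d$, and invoke the Local Lemma. Your extra care with the boundary case (using GLLL with $\omega \equiv 1/(d+1)$, or alternatively the irrationality of $e$ to recover strict inequality for the SLLL) is a nice touch that the paper's sketch glosses over.
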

		
		To illustrate the types of results one can obtain using the~LLL, 
		we describe a few other applications below.
		
		\subsubsection*{Kim's and Johansson's theorems}
		
		\mbox{}
		
		\smallskip
		
		\noindent Let $G$ be a ``sparse'' graph, in that it does not contain any ``short'' cycles. Can one show that $\chi(G)$ is much smaller than $\Delta(G)$, the bound given by Brooks's theorem? It is well-known that there exist $d$\=/regular graphs with arbitrarily large girth and with chromatic number at least $(1/2-o(1))d/\ln d$. After a series of partial results by a number of researchers~(see~\cite[Section~4.6]{JensenToft} for a survey), Kim~\cite{Kim} proved an upper bound that (asymptotically) exceeds the lower bound only by a factor of $2$:
		
		\begin{theo}[Kim~\cite{Kim}; see also~{\cite[Chapter~12]{MolloyReed}}]\label{theo:Kim}
			Let $G$ be a graph with maximum degree $d \in \N$. If $g(G) \geq 5$, then $\chi(G) \leq (1+o(1)) d/ \ln d$.
		\end{theo}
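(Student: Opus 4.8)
The plan is to prove this via the \emph{semi-random (R\"odl nibble) method} combined with the Lov\'asz Local Lemma, following the exposition in \cite[Chapter~12]{MolloyReed}. Rather than colour $G$ in one shot, we construct a proper colouring from a fixed palette of $C \defeq \lceil (1 + \epsilon) d / \ln d \rceil$ colours over many rounds, permanently colouring only an $o(1)$-fraction of the currently uncoloured vertices each round. Throughout we maintain, for every still-uncoloured vertex $v$: a \emph{list} $L(v)$ of colours not used on any already-coloured neighbour of $v$; and, for each $c \in L(v)$, a counter $T(v,c)$ equal to the number of uncoloured neighbours $u$ of $v$ with $c \in L(u)$. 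After round $i$ we will guarantee three ``pseudorandomness'' invariants uniformly over all uncoloured $v$ and all $c \in L(v)$: $|L(v)| \geq \ell_i$; the number of uncoloured neighbours of $v$ is at most $d_i$; and $T(v,c) \leq t_i$, where $\ell_0 = C$, $d_0 = t_0 = d$, and the triples $(\ell_i, d_i, t_i)$ obey explicit recursions. The mechanism of a single round is: activate each uncoloured vertex independently with a small probability $p$; assign each activated vertex a colour drawn uniformly at random from its list; then deactivate every vertex that received the same colour as one of its neighbours. Vertices still activated become permanently coloured; lists and counters are updated. Since a vertex is only permanently coloured with a colour avoided by all its neighbours, and lists never contain colours present on coloured neighbours, the colouring produced is always proper.

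A routine expected-value computation shows that, under the invariants after round $i$, each colour $c$ leaves $L(v)$ with probability roughly $1 - (1 - \Theta(p / \ell_i))^{T(v,c)}$, each uncoloured neighbour of $v$ becomes coloured with probability $\approx p$, and so on; this produces the definitions of $\ell_{i+1}$, $d_{i+1}$, $t_{i+1}$ as slightly pessimistic versions of these conditional expectations. The design goal of the recursions is that the ratios $t_i / \ell_i$ and $d_i / \ell_i$ --- which start at $\Theta(\ln d)$ --- eventually drop below $1$ while $\ell_i$ stays polynomially large in $d$; the process is run for a suitable number $N = N(d)$ of rounds until this happens. At that point every uncoloured vertex $v$ has strictly more available colours than uncoloured neighbours, so the residual uncoloured graph can be finished greedily in arbitrary vertex order: when we reach $v$, fewer than $|L(v)|$ colours are forbidden by neighbours coloured in this final phase.

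The two places where genuine work is required are \emph{concentration} and the \emph{Lov\'asz Local Lemma}. For a fixed round, $|L_{i+1}(v)|$, the new uncoloured degree of $v$, and each $T_{i+1}(v,c)$ are functions of the independent activation/colour choices made at vertices within bounded distance of $v$; the hypothesis $g(G) \geq 5$ --- which forces the neighbours of any vertex to be pairwise non-adjacent and to have no common neighbour besides that vertex --- is exactly what makes these functions depend on nearly disjoint sets of choices in a Lipschitz, ``certifiable'' way, so that Talagrand's inequality (or a bounded-differences martingale argument) gives deviation from the mean by $O(\sqrt{\ell_i}\,\mathrm{polylog}(d))$ except with probability $d^{-\omega(1)}$. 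Building this slack into the invariants costs only a $(1 + o(1))$-factor in total. Then, for each round, we form an LLL instance whose bad events are, for each uncoloured $v$: ``$|L_{i+1}(v)| < \ell_{i+1}$'', ``the new uncoloured degree of $v$ exceeds $d_{i+1}$'', and ``$T_{i+1}(v,c) > t_{i+1}$ for some $c$''. Each such event is determined by the choices of vertices within a bounded distance of $v$, hence shares variables with at most $d^{O(1)}$ other bad events, while having probability $d^{-\omega(1)}$; so the instance is correct for the Symmetric~LLL, and Theorem~\ref{theo:SLLL_fin} (or Theorem~\ref{theo:LLLvbls} if $X$ is infinite) yields an outcome of the round meeting all three invariants, which lets the induction continue.

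The main obstacle is the concentration-and-bookkeeping step: one must choose $p$, the per-round error margins, and the stopping time $N$ so that (a) the recursions for $(\ell_i, d_i, t_i)$ remain valid; (b) $\ell_i$ never shrinks so much that the failure probability $d^{-\omega(1)}$ stops dominating the dependency degree $d^{O(1)}$; and (c) the accumulated multiplicative errors amount to only a $(1 + o(1))$-factor, so that $C = (1 + \epsilon) d / \ln d$ suffices with $\epsilon \to 0$ as $d \to \infty$. The girth hypothesis is precisely what makes the underlying random variables ``spread out'' enough for (a) and the concentration estimates; note that strengthening $g(G) \geq 4$ to $g(G) \geq 5$ is what removes the positive correlations (caused by two neighbours of $v$ sharing a third common neighbour) that would otherwise only yield a bound of the form $O(d/\ln d)$ with a constant larger than $1$. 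Full details are in \cite{Kim} and \cite[Chapter~12]{MolloyReed}.
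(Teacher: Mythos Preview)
The paper does not actually prove Theorem~\ref{theo:Kim}; it is quoted as a known result of Kim, with the reader referred to \cite{Kim} and \cite[Chapter~12]{MolloyReed}. Appendix~\ref{app:KJK} only sketches the general iterative-LLL/semi-random framework (illustrated on a weaker statement) and explicitly declines to reproduce the full argument. Your outline is a faithful summary of precisely that framework as presented in \cite[Chapter~12]{MolloyReed}, so in substance you and the paper are pointing to the same proof.

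One small discrepancy worth flagging: you finish the residual graph greedily once ``$d_i/\ell_i < 1$.'' In the version the paper has in mind (and in \cite[Chapter~12]{MolloyReed}), the iterations are run until $t_i/\ell_i$ is small---not until $d_i < \ell_i$---and the final completion is done via another application of the LLL (the paper records this as Lemma~\ref{lemma:prob_col}: if every colour in $L(x)$ is shared by at most $|L(x)|/8$ neighbours, a proper $L$-colouring exists). Kim's recursions are designed to drive $t_i/\ell_i$ down, and it is not clear that $d_i/\ell_i$ ever drops below $1$; so your greedy finish, as stated, may not be justified by the invariants you actually maintain. Replacing the greedy step by the LLL-based completion of Lemma~\ref{lemma:prob_col} (triggered once $t_i \leq \ell_i/8$) fixes this and matches the argument the paper is invoking.
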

		
		Shortly after, Johansson~\cite{Johansson} reduced the girth requirement and extended Kim's result (modulo a constant factor) to triangle-free graphs.
		
		\begin{theo}[Johansson~\cite{Johansson}; see also~{\cite[Chapter~13]{MolloyReed}}]\label{theo:Johansson}
			Let $G$ be a graph with maximum degree $d \in \N$. If $g(G) \geq 4$, then $\chi(G) = O\left(d/\ln d\right)$.
		\end{theo}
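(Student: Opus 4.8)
The plan is to deduce the theorem from its natural list-colouring strengthening and to prove the latter by the semi-random (``nibble'') method, invoking the Lov\'asz Local Lemma once per round. Concretely, I would establish that there is an absolute constant $C$ such that whenever $G$ is triangle-free with $\Delta(G) \leq d$ and $v \mapsto L(v)$ assigns to each vertex a list with $|L(v)| \geq \ell_0 \defeq Cd/\ln d$, the graph $G$ admits a proper $L$-colouring; taking all lists equal to a common set of $\lceil \ell_0 \rceil$ colours then gives $\chi(G) = O(d/\ln d)$. (Note that Kim's theorem, Theorem~\ref{theo:Kim}, does not apply directly: a triangle-free graph may contain arbitrarily many $4$-cycles, so $g(G)$ need not exceed $4$, and there is no evident girth-increasing reduction.)

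The iteration is set up as follows. Beginning from the empty partial colouring, at stage $i$ I maintain a proper partial $L$-colouring together with, for each still-uncoloured vertex $v$, the \emph{residual list} $L_i(v)$ of colours in $L(v)$ not yet used on a neighbour of $v$, and the \emph{residual degree} $t_i(v)$, the number of uncoloured neighbours of $v$. The invariant carried along is $|L_i(v)| \geq \ell_i$ and $t_i(v) \leq t_i$ for deterministic sequences chosen so that $\ell_i$ stays within a constant factor of $\ell_0$ while the ratio $t_i/\ell_i$ shrinks geometrically; once $t_i < \ell_i$ the process stops, and since the graph spanned by the uncoloured vertices then has maximum degree below every residual list size, the colouring is finished greedily along any well-ordering (this last step is valid even when $G$ is infinite). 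A single round proceeds thus: each uncoloured vertex independently, with a small probability $p_i \asymp \ell_i / t_i$, becomes ``active'' and picks a colour uniformly from $L_i(v)$; an active vertex keeps its colour, and becomes permanently coloured, unless some neighbour picked the same colour. For fixed uncoloured $v$ one computes the expected values of the residual list size and residual degree after the round and shows each is sharply concentrated about its mean by a Talagrand-type inequality --- the fate of a colour in $L_i(v)$, or of a neighbour of $v$, depends only on the random choices made within distance $2$ of $v$. Hence the bad event ``the invariant fails at $v$'' has probability $\ll d^{-5}$ and depends only on the (finitely many) random coordinates attached to vertices within distance $2$ of $v$; two such events are independent unless the corresponding vertices lie within distance $4$, so in the terminology of Section~\ref{subsection:LLL} the instance $\B$ consisting of these bad events has $d(\B) < d^5$ and $p(\B) \ll d^{-5}$, whence $e \cdot p(\B) \cdot (d(\B)+1) < 1$ and $\B$ is correct for the Symmetric, hence for the General, LLL. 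Theorem~\ref{theo:LLLvbls} then furnishes an outcome of the round preserving every invariant.

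The step I expect to be the genuine obstacle --- and the only point where triangle-freeness is used --- is the single-round colour-loss estimate. A colour $c \in L_i(v)$ is lost precisely when some neighbour $u$ of $v$ is coloured with $c$, and a crude union bound shows only that the expected fraction of $L_i(v)$ lost is at most $p_i t_i / \ell_i$, i.e., no smaller than the expected fraction of uncoloured neighbours of $v$ that get coloured; since $t_i \geq \ell_i$ throughout most of the process, this would push the ratio $t_i/\ell_i$ in the wrong direction and the argument would collapse. The real content of the theorem is that in a triangle-free graph the colour loss at $v$ in one round is smaller by a factor of order $\ln d$, so that the \emph{total} colour loss at $v$ over the whole process is $O(d/\ln d) = \ell_0$ while the total degree loss is $\Theta(d)$. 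Proving this requires genuinely exploiting that $N(v)$ is an independent set --- so that distinct neighbours of $v$ interact only through their common neighbours, never directly --- and is carried out either by Johansson's iterated-entropy argument (which tracks the entropy of a uniformly random proper colouring of a neighbourhood rather than bare list sizes) or by Molloy's entropy-compression encoding (in which a forced uncolouring of a vertex records recoverable information about \emph{all} the colours then present on its independent neighbourhood). Once this estimate is available, the choice of the parameters $p_i, \ell_i, t_i$, the verification that the resulting recursions close, the concentration bounds, and the greedy endgame are all routine; the whole difficulty is concentrated in that one inequality.
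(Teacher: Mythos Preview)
Your outline is correct and matches the approach the paper points to. Note, though, that the paper does \emph{not} prove Theorem~\ref{theo:Johansson}; it is quoted as a known result and discussed only at a sketch level in Appendix~\ref{app:KJK}, with the reader referred to \cite[Chapter~13]{MolloyReed} for the actual argument. Your plan --- iterated partial colourings via the semi-random method, one LLL application per round, with the process terminating once residual degrees drop below residual list sizes --- is exactly the scheme the paper summarises there. Two minor points of comparison. First, the paper's final completion step (Lemma~\ref{lemma:prob_col}) is itself an LLL application rather than a greedy sweep; this matters for the measurable versions the paper cares about, but for the classical statement your greedy finish is fine. Second, you are right to flag the single-round colour-loss estimate as the crux, and right that tracking bare list sizes does not suffice: Johansson's proof in \cite{Johansson} and \cite[Chapter~13]{MolloyReed} maintains instead, for each uncoloured vertex, a \emph{probability distribution} on its residual list and controls the entropy of that distribution from round to round (your ``iterated-entropy argument''). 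The Molloy entropy-compression route you also mention is the one from \cite{Mol17}, which the paper explicitly notes (in the footnote after Theorem~\ref{theo:Johansson}) cannot be adapted to its measurable framework --- so if you intend your outline to feed into Theorems~\ref{theo:col_large_g} or \ref{theo:approxVertex}, you should commit to the Johansson/Molloy--Reed version rather than the 2017 one.
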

		
		The proofs of Theorems~\ref{theo:Kim} and~\ref{theo:Johansson} are examples of a particular general approach to coloring problems. The key idea is to iterate applications of the~LLL so that on each stage, the~LLL produces only a partial coloring of~$G$---but this coloring is also made to satisfy some additional requirements. These requirements allow the process to be repeated, until finally the uncolored part of the graph becomes so sparse that a single application of the LLL (or a basic greedy algorithm) can finish the proof. Dealing with such iterated applications of the~LLL will be one of the major difficulties we will have to face in Section~\ref{section:groups}. An interested reader is referred to \cite{MolloyReed} for an excellent exposition of both proofs; we also discuss them briefly in Appendix~\ref{app:KJK} (omitting most of the details).\footnote{Recently, Molloy~\cite{Mol17} showed that the bound $\chi(G) \leq (1+o(1))\Delta(G)/\ln\Delta(G)$ from Theorem~\ref{theo:Kim} holds for triangle-free graphs as well. Unfortunately, the proof techniques used in~\cite{Mol17} cannot be adapted using our machinery.}
		
		\subsubsection*{Kahn's theorem}
		
		\mbox{}
		
		\smallskip
		
		\noindent As mentioned in Subsection~\ref{subsection:graphcol}, Vizing's theorem asserts that if $\Delta(G)$ is finite, then $\chi'(G) \leq \Delta(G)+1$. There are several known proofs of Vizing's theorem, none of them using the~LLL.
		
		An important generalization of graph coloring, so-called \emph{list coloring}, was introduced independently by Vizing~\cite{Vizing} and Erd\H os, Rubin, and Taylor~\cite{ERT}. Let $G$ be a graph with vertex set $X$. A \emph{list assignment} for $G$ is a function $L \colon X \to \powerset{Y}$, where $Y$ is a set and $\powerset{Y}$ denotes its powerset. An \emph{$L$-coloring} of $G$ is a map $f \colon X \to Y$ such that $f(x) \in L(x)$ for all $x \in X$. The \emph{list chromatic number} of~$G$ (notation: $\chi_\ell(G)$) is the smallest $k$ such that $G$ admits a proper $L$-coloring whenever $|L(x)| \geq k$ for all $x \in X$. Clearly, $\chi_\ell(G) \geq \chi(G)$ since if $L(x) = Y$ for all $x \in X$, then an $L$-coloring is simply a coloring with color set $Y$. Perhaps surprisingly, this inequality can be strict; in fact, there can be no upper bound on $\chi_\ell(G)$ in terms of $\chi(G)$, as there exist bipartite graphs with arbitrarily large list chromatic numbers.
		
		\emph{List edge colorings} and the \emph{list chromatic index} $\chi'_\ell(G)$ of a graph $G$ are defined similarly, \emph{mutatis mutandis}. The following conjecture is one of the major open problems in graph theory:
		\begin{conj}[\textbf{List Edge Coloring Conjecture};~{\cite[Conjecture~17.8]{BondyMurty}}]\label{conj:list}
			For every finite graph~$G$, \[\chi_\ell'(G) = \chi_\ell(G).\]
		\end{conj}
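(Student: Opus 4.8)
This is the List Edge Coloring Conjecture, asserting that the list chromatic index equals the chromatic index, $\chi'_\ell(G) = \chi'(G)$, for every finite graph $G$; it is one of the central open problems in graph theory, so what follows is a plan of attack with the known obstructions made explicit. Since $\chi'_\ell(G) \geq \chi'(G)$ is immediate and $\chi'(G) \in \{\Delta(G), \Delta(G)+1\}$ for simple $G$ by Vizing's theorem, the problem reduces to the following: given a list assignment $L$ with $|L(e)| \geq \chi'(G)$ for every edge $e$, construct a proper $L$-edge-coloring of $G$. I would split the argument by the sizes of the lists relative to $\Delta \defeq \Delta(G)$. When there is multiplicative slack, $|L(e)| \geq (1+\epsilon)\Delta$, the semi-random (``nibble'') method applies: in each round color a random $\Theta(\epsilon)$-fraction of the still-uncolored edges, each with a uniformly random color from its current list, and control the two relevant statistics --- the surviving list of each uncolored edge and the number of uncolored edges at each vertex --- by verifying the hypotheses of the Symmetric Lov\'asz Local Lemma (Theorem~\ref{theo:SLLL_fin}) so that, with positive probability, both stay concentrated around their expectations; after $O_\epsilon(\log \Delta)$ rounds the uncolored subgraph is sparse enough to be finished greedily. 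This is Kahn's theorem, and it yields $\chi'_\ell(G) \leq \Delta + o(\Delta)$, which settles the conjecture only in the (essentially vacuous, for simple graphs) case $\chi'(G) \geq (1+\epsilon)\Delta$.

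The hard regime is $|L(e)| = \chi'(G) \in \{\Delta, \Delta+1\}$ with no slack at all, and here the main tool is the kernel method. One looks for an orientation $D$ of the line graph $L(G)$ that is \emph{kernel-perfect} (every induced subdigraph has an independent dominating set) with $d^+_D(e) \leq \chi'(G)-1$ for every edge $e$; given such a $D$, Galvin's lemma produces an $L$-edge-coloring by a greedy argument that repeatedly selects a ``kernel'' compatible with the remaining lists. For bipartite $G$ this succeeds --- $\chi'(G) = \Delta$ by K\"onig's theorem, and a proper $\Delta$-edge-coloring together with the two-sidedness of the bipartition gives an orientation with $d^+_D(e) \leq \Delta-1$ whose kernel-perfectness comes from a Gale--Shapley stable-matching argument (equivalently, from the perfectness of the line graph of a bipartite graph) --- and the Borodin--Kostochka--Woodall refinement of this idea pushes the general bound down to $\chi'_\ell(G) \leq \lfloor 3\Delta/2 \rfloor$. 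The plan for the full conjecture would therefore be to replace the bipartite stable-matching ingredient by a construction valid for the line graph of an \emph{arbitrary} simple graph: orientations of $L(G)$ with out-degree at most $\chi'(G)-1$ are easy to produce (since $\Delta(L(G)) \leq 2\Delta-2$), so what is really needed is that some such orientation can be taken kernel-perfect.

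That last point is exactly where every approach stalls, and it is the reason the conjecture is open: the nibble method surrenders an additive $o(\Delta)$ it cannot recover; the kernel method has no known extension beyond graphs whose line graphs admit a kernel-perfect orientation of small out-degree, and it is not known whether the line graph of every simple graph does; and the Vizing-style fan/Kempe-chain recoloring that proves $\chi'(G) \leq \Delta+1$ in the non-list setting falls apart under heterogeneous lists, since alternating between two ``colors'' no longer traces a path or cycle and need not terminate. The single most promising step to aim at --- and the one I expect to be genuinely hard, since it is currently unknown --- is the kernel-perfect-orientation statement for arbitrary simple graphs highlighted above; establishing it would subsume Galvin's theorem and close the conjecture in one stroke.
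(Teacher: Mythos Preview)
This statement is a \emph{conjecture}, explicitly labeled as such in the paper and described there as ``one of the major open problems in graph theory.'' The paper does not attempt a proof; it cites Kahn's theorem (Theorem~\ref{theo:Kahn}) as an asymptotic step toward it. So there is no proof in the paper to compare your proposal against.

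You correctly recognize this and, rather than claiming a proof, give a survey of the known partial results and approaches (Kahn's nibble, Galvin's kernel method for bipartite graphs, the Borodin--Kostochka--Woodall bound) together with an honest account of where each stalls. That is the appropriate response to an open problem. Your identification of the kernel-perfect-orientation question for line graphs of arbitrary simple graphs as the key missing ingredient is reasonable, though of course it is itself open and possibly as hard as the conjecture.

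One minor point: the statement as printed in the paper reads $\chi_\ell'(G) = \chi_\ell(G)$, which is almost certainly a typo for $\chi_\ell'(G) = \chi'(G)$; you interpreted it correctly.
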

		
		As a step towards settling Conjecture~\ref{conj:list}, Kahn~\cite{Kahn} proved the following asymptotic version of Vizing's theorem for list colorings:
		
		\begin{theo}[Kahn~\cite{Kahn}; see also~{\cite[Chapter~14]{MolloyReed}}]\label{theo:Kahn}
			Let $G$ be a graph with maximum degree $d \in \N$. Then $\chi'_\ell(G) = d + o(d)$.
		\end{theo}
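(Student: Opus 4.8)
The plan is to reduce Theorem~\ref{theo:Kahn} to its nontrivial half. Since the edges incident to a common vertex form a clique, and a clique whose vertices all share one list of size below $d$ cannot be colored, one gets $\chi'_\ell(G) \geq \chi'(G) \geq \Delta(G) = d$ for free; so it remains to show that for every $\epsilon > 0$ there is $d_0$ such that, whenever $\Delta(G) = d \geq d_0$ and each edge $e$ of $G$ carries a list $L(e)$ with $|L(e)| \geq (1+\epsilon)d$, the graph $G$ admits a proper $L$-edge-coloring. Fix such an $\epsilon$ and a small auxiliary constant $p = p(\epsilon)$.

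The engine is the semi-random method (the R\"odl nibble) --- the same iterated use of the LLL that drives the proof of Kim's Theorem~\ref{theo:Kim}; see~\cite[Chapter~14]{MolloyReed} for full details. I would build a proper partial $L$-edge-coloring of $G$ in rounds $i = 1, 2, \dots$, maintaining after round~$i$ an invariant with parameters $\ell_i, t_i, D_i = \Theta_\epsilon(d)$: writing $L_f$ for the set of colors of $L(f)$ not yet used on an edge sharing an endpoint with the still-uncolored edge~$f$, I would require (i) $|L_f| \geq \ell_i$ for every uncolored~$f$; (ii) for every uncolored~$f$ and every $c \in L_f$, at most $t_i$ uncolored edges adjacent to $f$ contain $c$ in their residual list; and (iii) every uncolored~$f$ has at most $D_i$ uncolored neighbors. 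Initially $\ell_0 = \lceil(1+\epsilon)d\rceil$ and $t_0 = D_0 = 2d$, so $\ell_0/t_0 \approx (1+\epsilon)/2$.

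In one round I would have each uncolored edge~$f$ independently pick a uniformly random color $\sigma(f) \in L_f$ and independently \emph{activate} with probability $p$; an activated edge keeps its chosen color permanently unless some adjacent edge also activated with the same chosen color. A first-moment computation should give, for the edges still uncolored after the round, expected residual list size $\approx \ell_i e^{-pt_i/\ell_i}$, expected color-degree $\approx t_i e^{-p}e^{-pt_i/\ell_i}$, and expected uncolored-degree $\approx D_i(1 - pe^{-pt_i/\ell_i})$; so $\ell/t$ gets multiplied by $\approx e^{p} > 1$, $\ell$ itself only by a factor bounded away from $0$ in terms of $\epsilon$, and, once $t/\ell$ has dropped below~$1$, the uncolored-degree~$D$ shrinks strictly faster than $\ell$. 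Each of these three quantities at~$f$ depends only on the (independent) choices at edges within distance~$2$ of~$f$, and changing one such choice perturbs it by $O(1)$, so Talagrand's inequality should give deviations of order $o(\ell_i)$ (using $\ell_i = \Omega_\epsilon(d) \to \infty$). Declaring an uncolored edge \emph{bad} when one of its three parameters strays too far from its expectation, each bad event would depend on boundedly many others and have probability super-polynomially small in~$d$, so the Symmetric Lov\'asz Local Lemma (Theorem~\ref{theo:SLLL_fin}, or Theorem~\ref{theo:LLLvbls} if $G$ is infinite, all bad events having finite domain) would supply an outcome with no bad edge, along which I update $\ell_{i+1}, t_{i+1}, D_{i+1}$ to the concentrated values.

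Since $t_i/\ell_i$ stays $O(1)$, $\ell$ contracts by a bounded factor per round, $\ell/t$ and eventually $\ell/D$ grow geometrically, so after $R = O_\epsilon(1)$ rounds every uncolored edge~$f$ has $D_R < \ell_R \leq |L_f|$ while $\ell_R = \Omega_\epsilon(d)$; a single greedy pass over the remaining uncolored edges then finishes --- each such edge~$f$ has fewer than $|L_f|$ uncolored neighbors, so a color of $L_f$ avoiding its already-colored neighbors in this pass is always available --- yielding a proper $L$-edge-coloring of all of $G$ and hence $\chi'_\ell(G) \leq (1+\epsilon)d$. The hard part, and what makes this strictly harder than the vertex-coloring argument behind Kim's Theorem~\ref{theo:Kim}, is the per-round analysis: near an edge $f = uv$ the line graph of $G$ is a pair of cliques --- the edges at $u$ and the edges at $v$ --- glued along~$f$, so the events ``$f'$ retains color~$c$'' for the edges $f'$ through a common endpoint are strongly positively correlated, and obtaining both the correct expected list-loss $\approx \ell_i(1 - e^{-pt_i/\ell_i})$ and its concentration will require Kahn's trick of analyzing the two endpoints of~$f$ separately, exploiting the essentially bipartite local structure there, together with a concentration inequality robust to bounded dependence. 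Forcing this list-loss estimate to hold simultaneously and uniformly over all uncolored edges --- precisely the role the LLL plays in each round --- is the crux.
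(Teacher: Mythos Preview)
The paper does not give its own proof of Theorem~\ref{theo:Kahn}; it is quoted as a known result, with a reference to Kahn's original paper and to \cite[Chapter~14]{MolloyReed}, and Appendix~\ref{app:KJK} only sketches the general semi-random/iterated-LLL paradigm. Your outline is exactly that paradigm --- the R\"odl-nibble-style sequence of partial colorings, each round controlled by the Symmetric LLL with Talagrand-type concentration --- and so is in line with what the paper points to.

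One small point of divergence worth noting: the paper's Appendix~\ref{app:KJK} explicitly says that in the proofs of Theorems~\ref{theo:Kim}, \ref{theo:Johansson}, and~\ref{theo:Kahn} the final step is \emph{not} a greedy completion but another application of the LLL, namely Lemma~\ref{lemma:prob_col} (the Reed--Sudakov-type list-coloring lemma: if each color in each list has local degree at most $|L|/8$, a proper $L$-coloring exists). Your greedy finish is also legitimate \emph{provided} you really can drive the uncolored degree $D_R$ strictly below the residual list size $\ell_R$; in the standard treatments one instead stops when the ratio $t_R/\ell_R$ is small (say $\leq 1/8$) and invokes the lemma, which is a milder target. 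Either endpoint is fine, but since the paper singles out Lemma~\ref{lemma:prob_col} as the intended finishing move --- and this matters for the paper's purposes, because that lemma is itself an LLL application whose instance depends on the lists $L$ produced by the earlier rounds, and hence motivates the LLL Game of Section~\ref{section:groups} --- you may want to align with that.
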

		
		Note that, in contrast to Vizing's theorem, Kahn's proof is based on the~LLL; in fact, it is similar to the proofs of Kim's and Johansson's theorems in that it uses iterated applications of the~LLL to produce partial colorings with some additional properties. Note that Kahn's theorem yields an LLL-based proof of the bound $\chi'(G) = d+o(d)$ for \emph{ordinary} edge colorings as well.
		
		\subsubsection*{Nonrepetitive and acyclic colorings}
		
		\mbox{}
		
		\smallskip
		
		\noindent The~LLL can be also applied to produce upper bounds on more ``exotic'' types of chromatic numbers. Here we only mention two examples. A nonempty finite sequence $s$ is \emph{nonrepetitive} if it cannot be decomposed as $s = u \concat v \concat v \concat w$ for some finite sequences $u$, $v$, $w$ with $v \neq \0$ (here $\concat$ denotes concatenation). A coloring $f$ of a graph $G$ on a set $X$ is \emph{nonrepetitive} if for any finite path $x_1$--\ldots--$x_k$ in $G$, the sequence $(f(x_1), \ldots, f(x_k))$ is nonrepetitive. Note that a nonrepetitive coloring is, in particular, proper since if $x\,G\, y$ and $f(x) = f(y) = c$, then the sequence $(f(x), f(y)) = (c,c)$, corresponding to the path $x$--$y$ of length one, is repetitive. The smallest number of colors necessary to color $G$ nonrepetitively is called the \emph{Thue number}\footnote{Thue initiated the study of nonrepetitive sequences. While it is easy to see that there are no nonrepetitive sequences of length $4$ over an alphabet of size $2$, Thue's famous theorem~\cite{Thue} asserts that there exist arbitrarily long nonrepetitive sequnces over an alphabet of size $3$.} of $G$ and is denoted by $\pi(G)$. The following theorem of Alon, Grytczuk, Ha\l{}uszczak, and Riordan~\cite{AGHR} gives an upper bound on $\pi(G)$ in terms of $\Delta(G)$:
		
		\begin{theo}[Alon--Grytczuk--Ha\l{}uszczak--Riordan~{\cite[Theorem~1]{AGHR}}]\label{theo:nonrep}
			Let $G$ be a graph with maximum degree $d \in \N$. Then $\pi(G) = O(d^2)$.
		\end{theo}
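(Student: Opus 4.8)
The plan is to apply the General Lov\'asz Local Lemma (Theorem~\ref{theo:GLLL_fin}, or Theorem~\ref{theo:LLLvbls} if $G$ is infinite) with a geometrically decaying weight function; this is precisely what makes the argument go through, since the Symmetric LLL does not apply directly. Let $X$ be the vertex set of $G$, fix an absolute constant $c$ to be pinned down at the end, and set $C \defeq \lceil c\, d^2 \rceil$; the goal is to produce a nonrepetitive coloring $f \colon X \to \set{1, \ldots, C}$, which gives $\pi(G) \leq C = O(d^2)$. By Remark~\ref{remk:other_spaces} we may replace $([0;1], \lambda)$ by the color space $\set{1, \ldots, C}$ with the uniform measure. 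For each $k \geq 1$ and each path $P = (x_1, \ldots, x_{2k})$ on $2k$ distinct vertices of $G$, let $B_P$ be the bad event over $X$ with $\dom(B_P) = \set{x_1, \ldots, x_{2k}}$ consisting of all colorings $w$ of this set with $w(x_i) = w(x_{k+i})$ for every $i \leq k$; that is, $B_P$ says that the two halves of $P$ receive identical color sequences. Since the vertices of $P$ are distinct, $\mathbb{P}[B_P] = C^{-k}$. Let $\B$ be the set of all these events. The routine starting point is that $f$ is nonrepetitive exactly when it is a solution to $\B$: if the color sequence along some path contains a block $v \concat v$ with $v = (f(x_{i+1}), \ldots, f(x_{i+k}))$, then $P \defeq (x_{i+1}, \ldots, x_{i+2k})$ is a path on $2k$ distinct vertices for which the restriction of $f$ to $\dom(B_P)$ lies in $B_P$; conversely, each $B_P$ forces the color sequence along $P$ to equal $v \concat v$ with $v \defeq (f(x_1), \ldots, f(x_k)) \neq \0$.

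Next I estimate the dependency structure. Fixing in which of its $2j$ positions a $2j$-vertex path places a given vertex $v$, such a path is determined by two walks emanating from $v$ in opposite directions whose lengths sum to $2j-1$, giving at most $d^{2j-1}$ choices; summing over the $2j$ positions, at most $2j\, d^{2j-1}$ paths on $2j$ vertices pass through $v$. Hence, for $B_P \in \B$ with $|\dom(B_P)| = 2k$, the number of $B_Q \in \Nbhd_\B(B_P)$ arising from $2j$-vertex paths is at most $2k \cdot 2j\, d^{2j-1} = 4kj\, d^{2j-1}$; in particular $\Nbhd_\B(B_P)$ is countable. I will take the weight function $\omega(B_Q) \defeq (\alpha/d^2)^{j}$ for $B_Q$ arising from a $2j$-vertex path, where $\alpha \defeq 1/2$, so that $\omega(B_Q) \leq 1/2$ and in particular $\omega(B_Q) \in [0;1)$.

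It then remains to check that $\B$ is correct for the GLLL, namely $\mathbb{P}[B_P] \leq \omega(B_P)\prod_{B_Q \in \Nbhd_\B(B_P)}(1 - \omega(B_Q))$ for each $B_P$ with $|\dom(B_P)| = 2k$. Using $\ln(1-t) \geq -2t$ for $t \in [0, 1/2]$ together with the count above, the product is at least $\exp\big(-2\sum_{j \geq 1} 4kj\, d^{2j-1}(\alpha/d^2)^{j}\big) = \exp\big(-\tfrac{8k}{d}\sum_{j \geq 1} j\alpha^{j}\big) = \exp\big(-\tfrac{8k}{d}\cdot\tfrac{\alpha}{(1-\alpha)^2}\big) \geq e^{-16k}$, using $d \geq 1$ and $\alpha = 1/2$. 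Thus the right-hand side is at least $(\alpha/d^2)^k e^{-16k} = (2e^{16}d^2)^{-k}$, and the desired inequality $C^{-k} \leq (2e^{16}d^2)^{-k}$ holds as soon as $C \geq 2e^{16}d^2$; so $c \defeq 2e^{16}$ works. By the General LLL, $\B$ has a solution $f$, which is a nonrepetitive $C$-coloring, whence $\pi(G) \leq C = O(d^2)$.

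The only genuine obstacle is conceptual rather than computational. The Symmetric LLL is useless here because $d(\B) = \infty$: a single event $B_P$ shares a variable with infinitely many others, namely paths of every length through each of the $2k$ vertices of $P$. The point of the geometric weight $\omega(B_Q) = (\alpha/d^2)^{|\dom(B_Q)|/2}$ is that it makes the infinite product $\prod(1 - \omega(B_Q))$ both convergent — which essentially forces its base to be of order $1/d^2$ — and bounded below by $e^{-O(k)}$ with an implied constant independent of $d$, so that balancing it against $\mathbb{P}[B_P] = C^{-k}$ costs only a constant factor in $C$ and keeps the final bound at $O(d^2)$ rather than, say, $d^{2+\varepsilon}$.
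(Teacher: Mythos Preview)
Your proof is correct and follows exactly the approach the paper alludes to (and that the cited paper of Alon--Grytczuk--Ha\l{}uszczak--Riordan uses): a single application of the General LLL with geometrically decaying weights $\omega(B_Q) \sim (\alpha/d^2)^{|\dom(B_Q)|/2}$, which is necessary precisely because $d(\B) = \infty$. The paper itself does not reproduce a proof of this result, only citing it as a classical illustration of an LLL application that is \emph{not} locally finite; your write-up fills in those details accurately.
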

		
		A proper coloring $f$ of a graph $G$ is \emph{acyclic} if every cycle in $G$ receives at least three different colors. The least number of colors needed for an acyclic proper coloring of $G$ is called the \emph{acyclic chromatic number} of $G$ and is denoted by $a(G)$. In 1976, Erd\H os conjectured that $a(G) = o(\Delta(G)^2)$; 15 years later, Alon, McDiarmid, and Reed~\cite{AMcDR} confirmed Erd\H os's hypothesis.
		
		\begin{theo}[Alon--McDiarmid--Reed~{\cite[Theorem~1.1]{AMcDR}}]\label{theo:acyclic}
			Let $G$ be a graph with maximum degree $d \in \N$. Then $a(G) = O(d^{4/3})$.
		\end{theo}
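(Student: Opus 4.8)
The plan is to apply the Lov\'asz Local Lemma to a uniformly random coloring. Fix $C \defeq K d^{4/3}$ for a sufficiently large absolute constant $K$, color each vertex of $G$ independently and uniformly at random with a color from $[C]$ (formally, via Remark~\ref{remk:other_spaces}, the finite palette $[C]$ is simulated by $([0;1], \lambda)$), and encode ``$c$ is an acyclic proper coloring'' as the requirement of avoiding a suitable family $\B$ of bad events; if $\B$ has a solution (which the~LLL will provide), then $a(G) \leq C = O(d^{4/3})$. The reduction to use is that a \emph{proper} coloring is acyclic precisely when no cycle receives exactly two colors, and a properly colored cycle using only two colors must be \emph{alternating}, hence of even length. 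So it suffices to avoid: (i) for each edge $e$, the event $A_e$ that the two endpoints of $e$ get the same color; and (ii) for each even cycle $D$, the event $B_D$ that $D$ is properly $2$-colored. Here $\mathbb{P}[A_e] = 1/C$ and $\mathbb{P}[B_D] \leq C^{2 - |D|}$, each vertex lies on at most $d$ edges, and each vertex lies on at most $d^{\ell - 1}$ cycles of length $\ell$. Since a single edge lies on arbitrarily long cycles, $d(\B) = \infty$, so one must invoke the \emph{General}~LLL (Theorem~\ref{theo:GLLL_fin}, or Theorem~\ref{theo:LLLvbls} when $G$ is infinite) rather than the Symmetric one.

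For the events $A_e$ and the events $B_D$ with $|D| \geq 6$ the verification is routine. Assigning $A_e$ a weight of order $1/C$ and $B_D$ a weight of order $(2/C)^{|D| - 2}$, the contribution of the length-$\ell$ cycle events is governed by $d^{\ell - 1}(2/C)^{\ell - 2} = d\,(4 d^2/C^2)^{\ell/2 - 1}$, which, since $4d^2/C^2 = 4/(K^2 d^{2/3}) \ll 1$, forms a rapidly converging geometric series in $\ell \geq 6$; so checking the General~LLL inequality for these events reduces to a bounded-sum estimate. Note that, in contrast to the situation for ordinary colorings (Theorems~\ref{theo:Kim} and~\ref{theo:Johansson}), no distinction between $g(G) \geq 4$ and $g(G) \geq 5$ arises here: acyclicity only ever constrains cycles, and --- via (ii) --- it is the $4$-cycles, not the triangles, that are delicate.

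The crux, and the reason the exponent is $4/3$, is precisely the $4$-cycle events $B_D$ with $|D| = 4$. Treating each $4$-cycle by its own bad event is hopeless: a vertex lies on $\sim d^3$ of them, each properly $2$-colored with probability $\lesssim C^{-2} = d^{-8/3}$, and $d^3 \cdot d^{-8/3} = d^{1/3} \to \infty$, so no weights of order $C^{-2}$ can absorb them. Following Alon--McDiarmid--Reed~\cite{AMcDR}, one instead reorganizes these events around the diagonals of the $4$-cycles. A $2$-colored $4$-cycle has a diagonal pair $\{x, y\}$ of non-adjacent vertices with $c(x) = c(y)$ such that the common neighbors of $x$ and $y$ (there are at least two) also contain a monochromatic pair; call $\{x, y\}$ \emph{thick} if $x$ and $y$ have at least $t \defeq d^{2/3}$ common neighbors and \emph{thin} otherwise. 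For each thick pair, forbid the single bad event $\{c(x) = c(y)\}$ of domain $\{x, y\}$: a double-counting argument shows that at most $d^2/t = d^{4/3}$ thick pairs contain any given vertex, so --- each having probability $1/C = d^{-4/3}$ --- these events contribute only a bounded amount per vertex. For each thin pair, use instead the finer event ``$c(x) = c(y)$ and two common neighbors of $x, y$ receive the same color,'' whose probability carries an extra birthday-paradox factor $\lesssim t/C$; organizing the common neighbors into $\sim \sqrt{t}$ blocks keeps the event domains, hence the dependency degrees, under control, and a careful count shows that with $t = d^{2/3}$ the thin-pair events are likewise controllable once $C = \Theta(d^{4/3})$. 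Choosing $t$ to balance the thick and thin estimates is exactly what forces the exponent $4/3$.

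I expect this last step to be the main obstacle. The random coloring, the edge and long-cycle events, and the geometric estimates are standard; but making the $4$-cycle analysis tight --- simultaneously controlling the probabilities, the domain sizes, and the dependency degrees of the thick- and thin-pair events, so that ``dense spots'' of $G$ do not spoil the bound --- is where the genuine work (and the ingenuity of the Alon--McDiarmid--Reed argument) lies.
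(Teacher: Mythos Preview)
The paper does not give its own proof of this theorem. Theorem~\ref{theo:acyclic} is stated in \S\ref{subsection:LLL} as one of several illustrative applications of the~LLL and is attributed directly to Alon--McDiarmid--Reed~\cite{AMcDR}; the only remark the paper makes about its proof is the sentence ``Each of Theorems~\ref{theo:nonrep} and~\ref{theo:acyclic} is proved via a single application of the~LLL to a carefully constructed correct instance.'' There is therefore nothing in the paper to compare your proposal against beyond that one-line summary.

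Your sketch is consistent with that summary and tracks the structure of the original argument in~\cite{AMcDR}: edge events, long-even-cycle events handled by a geometric series, and a separate treatment of $4$-cycles via a threshold on the number of common neighbors, with the General~LLL (Theorem~\ref{theo:GLLL_fin}/\ref{theo:LLLvbls}) invoked because $d(\B) = \infty$. One caution: your description of the thin-pair step (``organizing the common neighbors into $\sim \sqrt{t}$ blocks'') does not match the original paper, which simply uses the individual $4$-cycle events for non-dangerous pairs and counts them directly; you should either follow~\cite{AMcDR} there or verify carefully that your block variant actually closes. Since the present paper neither proves nor needs the details, the comparison you were asked to make is vacuous.
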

		
		Each of Theorems~\ref{theo:nonrep} and~\ref{theo:acyclic} is proved via a single application of the~LLL to a carefully constructed correct instance.
		
		\subsection{Overview of our main results and the structure of the paper}
		
		\mbox{}
		
		\smallskip
		
		\noindent Let $X$ be a standard Borel space. An instance $\B$ over $X$ is \emph{Borel} if
		\[
			\bigcup \B \defeq \set{w \in \finf{X}{[0;1]} \,:\, w \in B \text{ for some } B \in \B}
		\]
		is a Borel subset of $\finf{X}{[0;1]}$. In general, given a correct Borel instance $\B$ over $X$, one cannot guarantee the existence of a Borel solution~\cite[Theorem~1.6]{CJMST-D}. Suppose, however, that $\mu$ is a probability Borel measure on $X$. When can one ensure that there is a ``large'' (in~terms of $\mu$) Borel subset of $X$ on which $\B$ admits a Borel solution?
		
		\subsubsection*{The Moser--Tardos theory}
		
		\mbox{}
		
		\smallskip
		
		\noindent In our investigation, we rely heavily on the algorithmic approach to the~LLL due to Moser and Tardos~\cite{MoserTardos}. The original motivation behind Moser and Tardos's work was to develop a randomized algorithm which, given a correct instance $\B$ over a finite set $X$, quickly finds a solution to $\B$. It turns out that the Moser--Tardos method naturally extends to the case when $X$ is infinite, leading to the possibility of analogs of the~LLL that are ``constructive'' in various senses; a notable example is the computable version of the~LLL due to Rumyantsev and Shen~\cite{RSh}. In Section~\ref{section:MoserTardos} we describe (a generalized version~of) the Moser--Tardos algorithm and consider its behavior in the Borel setting. The Moser--Tardos technique was first used in the measurable framework in~\cite{Kun}.
		
		\subsubsection*{A universal combinatorial structure---hereditarily finite sets}
		
		\mbox{}
		
		\smallskip
		
		\noindent By definition, an instance of the~LLL over a set $X$ puts a set of constraints on a map $f \colon X \to [0;1]$. For example, if $X$ is the vertex set of a graph~$G$, then by solving instances over $X$ one finds vertex colorings of $G$ with desired properties. However, sometimes we want to consider \emph{edge} colorings instead, or maybe maps defined on some other combinatorial structures ``built'' from~$G$, such as, say, paths of length $2$, or cycles, etc. Additionally, even when looking for vertex colorings, it is sometimes necessary to assign to each vertex several colors at once, which can be viewed as replacing every element of~$X$ by finitely many ``copies'' of it and coloring each ``copy'' independently. In order to cover all potential combinatorial applications, we enlarge the set $X$, adding points for various combinatorial data that can be built from the elements of $X$. We call the resulting ``universal'' combinatorial structure the \emph{amplification} of $X$ and denote it by $\HF(X)$ (here the letters ``HF'' stand for ``hereditarily finite''). Roughly speaking, the points of $\HF(X)$ correspond to all sets that can be obtained from~$X$ by repeatedly taking finite subsets. The precise construction of $\HF(X)$ is described in Section~\ref{sec:HF}. All our results are stated for instances over $\HF(X)$; however, to simplify the current discussion, we will be only talking about instances over $X$ in this subsection.
		
		\subsubsection*{Approximate LLL}
		
		\mbox{}
		
		\smallskip
		
		\noindent Our first main result is the approximate~LLL, which we state and prove in Section~\ref{sec:approx}. Let $X$ be a set. For an instance $\B$ over $X$ and a map $f \colon X \to [0;1]$, the \emph{defect} $\Def_\B(f)$ of $f$ with respect to $\B$ is the set of all $x \in X$ such that $x \in \dom(w)$ for some $w \in B \in \B$ with $w \subseteq f$. Thus, $f$ is a solution to $\B$ if and only if $\Def_\B(f) = \0$. An instance $\B$ is \emph{locally finite} if $\deg_\B(B) < \infty$ for all $B \in \B$. For locally finite instances, we prove the following:
		
		\begin{theocopy}{theo:approxLLL}[{\textbf{Approximate LLL}}]
			Let $\B$ be a correct locally finite Borel instance over a standard probability space $(X, \mu)$. Then for any $\epsilon > 0$, there exists a Borel function $f \colon X \to [0;1]$ with $\mu(\Def_\B(f)) \leq \epsilon$.
		\end{theocopy}
		
		Most (but not all) standard applications of the~LLL only consider locally finite instances; for example, any instance that is correct for the~SLLL is locally finite. Among the examples listed in \prg\ref{subsection:LLL}, Theorems~\ref{theo:hypcol}, \ref{theo:Kim}, \ref{theo:Johansson}, and \ref{theo:Kahn} only use locally finite instances; in particular, Theorem~\ref{theo:approxLLL} immediately yields Theorems~\ref{theo:approxVertex} and \ref{theo:approxEdge} on approximate chromatic numbers of Borel graphs. On the other hand, Theorems~\ref{theo:nonrep} and \ref{theo:acyclic} apply the~LLL to instances that are in general \emph{not} locally finite, as there can be infinitely many paths or cycles passing through a given vertex in a locally finite graph.
		
		We point out that in their recent study~\cite{CGMPT}, carried out independently from this work, Cs\'oka, Grabowski, M\'ath\'e, Pikhurko, and Tyros use an approach similar to ours in order to establish a purely Borel version of the~LLL for a class of instances satisfying stronger boundedness assumptions (namely having uniformly subexponential growth).
		
		\subsubsection*{Measure-preserving group actions}
		
		\mbox{}
		
		\smallskip
		
		\noindent Our second main result is the measurable version of the LLL for probability measure-preserving actions of countable groups, which we present in Section~\ref{section:groups}. It shows that under certain additional restrictions on the correct instance $\B$, one can find a Borel function that solves it on a conull subset---even when $\B$ is not locally finite. To motivate these restrictions, consider a graph $G$ on a set~$X$. Combinatorial problems related to $G$ usually require solving instances of the~LLL that possess the following two properties:
		\begin{itemize}
			\item[--] the correctness of a solution can be verified separately within each component of $G$;
			\item[--] the instance only depends on the graph structure of $G$, in other words, it is invariant under the (combinatorial/abstract) automorphisms of $G$. 
		\end{itemize}
		These two properties are captured in the following definition: Let $\alpha \colon \Gamma \acts (X, \mu)$ be a measure\-/preserving action of a countable group $\Gamma$ on a standard probability space $(X, \mu)$ and let $\iso_\alpha$ denote the set of all equivariant bijections $\phi \colon O \to O'$ between $\alpha$-orbits. An \emph{instance \ep{of the~LLL}} over $\alpha$ is a Borel instance $\B$ over $X$ such that:
		\begin{itemize}
			\item[--] for all $B \in \B$, $\dom(B)$ is contained within a single orbit of $\alpha$; and
			\item[--] the set $\B$ is ($\mu$-almost everywhere) invariant under the functions $\phi \in \iso_\alpha$.
		\end{itemize}
		A basic measurable version of the~LLL for probability measure-preserving group actions is as follows:
		
		\begin{corlcopy}{corl:no_iterations}
			Let $\alpha \colon \Gamma \acts (X, \mu)$ be a measure-preserving action of a countable group $\Gamma$ on a standard probability space $(X, \mu)$. Suppose that $\alpha$ factors to the shift action $\Gamma \acts ([0;1]^\Gamma, \lambda^\Gamma)$ and let $\B$ be a correct instance over $\alpha$. Then there exists a Borel function $f \colon X \to [0;1]$ with $\mu(\Def_\B(f)) =0$.
		\end{corlcopy}	
		
		Corollary~\ref{corl:no_iterations} is sufficient for many applications; for instance, it yields measurable analogs of Theorems~\ref{theo:nonrep} and \ref{theo:acyclic}. However, a more general result is required to derive Theorems~\ref{theo:col_large_g} and~\ref{theo:FreeEdge}. As mentioned in \prg\ref{subsection:LLL}, to establish their combinatorial counterparts (namely Theorems~\ref{theo:Kim}, \ref{theo:Johansson}, and~\ref{theo:Kahn}) the~LLL is applied iteratively to a series of instances, with each next instance defined \emph{using the solutions to the previous ones}: Even though the very first instance $\B_0$ is invariant under \emph{all} functions $\phi \in \iso_\alpha$, as soon as a solution $f_0$ to $\B_0$ is fixed, the next instance $\B_1$ is only guaranteed to be invariant under those $\phi \in \iso_\alpha$ that additionally \emph{preserve} the value of $f_0$, so Corollary~\ref{corl:no_iterations} can no longer be used.
		
		To formalize this complication, we define a game between two players, called the \emph{LLL~Game}. A~run of the LLL Game over an action $\alpha \colon \Gamma \acts (X, \mu)$ looks like this:
		
		\smallskip
		\begin{center}
		\begin{tabular}{ c || c | c | c | c | c | c | c | c }			
			Player~I  & $\B_0$ &       & $\B_1$ &       &  \ldots & $\B_n$ & & \ldots\\
			\hline
			Player~II &        & $f_0$ &        & $f_1$ &  \ldots &        & $f_n$ & \ldots\\
		\end{tabular}
		\end{center}
		\smallskip
		
		\noindent On his first turn, Player~I chooses a correct instance $\B_0$ over $\alpha$. Player~II responds by choosing a $\mu$\=/measurable solution $f_0$ to~$\B_0$. Player~I then picks a new correct Borel instance $\B_1$, this time only invariant under the functions $\phi \in \iso_\alpha$ that preserve~$f_0$. Player~II must respond by finding a $\mu$\=/measurable solution $f_1$ to~$\B_1$. On the next step, Player~I selects a correct Borel instance $\B_2$ invariant under the functions $\phi \in \iso_\alpha$ that preserve both $f_0$ and $f_1$; and so on. Player~II wins if the game continues indefinitely and loses if at any step, she is presented with an instance that has no $\mu$\=/measurable solution. Our result, Theorem~\ref{theo:Thm1}, asserts that Player~II has a winning strategy in this game:
		\begin{theocopy}{theo:Thm1}[{\textbf{Measurable LLL for group actions}}]
			Let $\alpha \colon \Gamma \acts (X, \mu)$ be a measure-preserving action of a countable group $\Gamma$ on a standard probability space $(X,\mu)$. If $\alpha$ factors to the shift action $\Gamma \acts ([0;1]^\Gamma, \lambda^\Gamma)$, then Player~II has a winning strategy in the LLL Game over $\alpha$.
		\end{theocopy}

		\subsubsection*{A partial converse}
		
		\mbox{}
		
		\smallskip
		
		\noindent Finally, we turn to the following natural question:
		\begin{leftbar}
			\noindent Is it necessary to assume that $\alpha$ admits a factor map to the $[0;1]$-shift action in order to establish Theorem~\ref{theo:Thm1} and Corollary~\ref{corl:no_iterations}, or is this assumption just an artifact of our proof?
		\end{leftbar}
		\noindent In Section~\ref{section:converse}, we demonstrate that, at least for amenable groups, this assumption is indeed necessary; furthermore, a probability measure\=/preserving free ergodic action $\alpha$ of a countably infinite amenable group~$\Gamma$ factors to the $[0;1]$-shift action \emph{if and only if} it satisfies the conclusion of Corollary~\ref{corl:no_iterations}. In fact, a much weaker version of the~LLL than Corollary~\ref{corl:no_iterations} already yields a factor map to the $[0;1]$-shift, which, in particular, shows that Theorem~\ref{theo:approxLLL} fails for instances that are not locally finite.
		
		To establish these results, we combine the tools of the Ornstein--Weiss theory of entropy for actions of amenable groups with concepts from computability theory. 
		By a theorem of Ornstein and Weiss, a free ergodic probability measure\=/preserving action $\alpha\colon \Gamma \acts (X, \mu)$ of a countably infinite amenable group $\Gamma$ factors to the $[0;1]$-shift action if and only if $H_\mu(\alpha) = \infty$, where $H_\mu(\alpha)$ is the so-called \emph{Kolmogorov--Sinai entropy} of $\alpha$. Intuitively, $H_\mu(\alpha)$ measures how ``unpredictable'' or ``random'' the interaction of $\alpha$ with a Borel map $f \colon X \to k \in \N$ can be. 
		Therefore, in proving a converse to Theorem~\ref{theo:Thm1}, we have to apply the~LLL in order to exhibit Borel functions $f$ whose behavior is highly ``random.'' Notice that entropy is a ``global'' parameter that depends on $f$ as a whole, while the~LLL can only constrain a function ``locally.'' In other words, we require a way to certify high entropy in a ``local,'' or ``pointwise,'' manner. To that end, we use \emph{Kolmogorov complexity}---a deterministic alternative to entropy defined in the language of computability theory---to measure the ``randomness'' of a given Borel function \emph{at each point}. The crux of our argument is Lemma~\ref{lemma:complexity_vs_entropy}, which is of independent interest. It gives a lower bound on the Kolmogorov--Sinai entropy of a Borel function in terms of the average value of its pointwise Kolmogorov complexity. The proof of Lemma~\ref{lemma:complexity_vs_entropy} invokes the result of Ornstein and Weiss concerning the existence of quasi-tilings in amenable groups and is inspired by previous work of Brudno~\cite{Brudno} in the case of $\Z$-actions.
		
	\subsection*{Acknowledgments}
	
	\mbox{}
	
	\smallskip
	
	\noindent This work is partially supported by the Illinois Distinguished Fellowship. I would like to thank Andy Zucker for his comments on an earlier version of this paper.
	I am grateful to Anush Tserunyan for introducing me to the field of Borel combinatorics and for her constant support and encouragement. I am also grateful to the anonymous referee for carefully reading the manuscript and providing helpful comments and suggestions.
		
		\section{Preliminaries}
		
		
		
		\noindent We use $\N \defeq \set{0,1,\ldots}$ to denote the set of all nonnegative integers and identify each $k \in \N$ with the set $\set{i \in \N \,:\, i < k}$. 
		A function~$f$ is identified with its graph, i.e., the set $\set{(x, y) \,:\, f(x) = y}$; this enables the use of standard set-theoretic notation, such as $\cup$, $\cap$, $\subseteq$, etc., for functions. In particular, $\0$ denotes the empty function as well as the empty set. For a function $f$ and a subset $S$ of its domain, $f\vert S$ denotes the restriction of $f$ to $S$. We write $f \colon X \rightharpoonup Y$ to indicate that $f$ is a partial function from $X$ to $Y$, i.e., a function of the form $f \colon X' \to Y$ with $X' \subseteq X$.
		
		Our standard references for descriptive set theory are~\cite{Kechris} and~\cite{Anush}. Below we only review the most basic facts and terminology used throughout the paper without mention.
		
		A \emph{standard Borel space} $(X, \mathfrak{B})$ is a set $X$ together with a $\sigma$-algebra $\mathfrak{B}$ of \emph{Borel sets} such that there is a compatible Polish (i.e., separable completely metrizable) topology $\tau$ on $X$ with $\mathfrak{B}$ as its $\sigma$-algebra of Borel sets. We will suppress the notation for the $\sigma$-algebra and denote a standard Borel space $(X, \mathfrak{B})$ simply by $X$. A function $f \colon X \to Y$ between standard Borel spaces $X$ and $Y$ is \emph{Borel} if $f$-preimages of Borel subsets of $Y$ are Borel in $X$. Due to the Borel isomorphism theorem~\cite[Theorem~13.10]{Anush}, all countable standard Borel spaces are discrete and all uncountable ones are isomorphic to each other.
		
		We use $\P(X)$ to denote the set of all probability Borel measures on a standard Borel space $X$. If $\mu \in \P(X)$, then the pair $(X, \mu)$ is called a \emph{standard probability space}. A measure $\mu \in \P(X)$ is \emph{atomless} if $\mu(\{x\}) = 0$ for all $x \in X$. The measure isomorphism theorem~\cite[Theorem~10.6]{Anush} asserts that all standard probability spaces $(X, \mu)$ with atomless $\mu$ are Borel isomorphic. If $X$ is a standard Borel space and $X' \subseteq X$ is a Borel set, then we identify $\P(X')$ with a subset of~$\P(X)$ in the natural way. In particular, given $\mu \in \P(X')$, we also use $\mu$ to denote the extension of $\mu$ to~$X$ (i.e., the pushforward $\iota_\ast(\mu)$ of $\mu$ under the inclusion map $\iota \colon X' \to X$); similarly, if $\mu \in \P(X)$ and $X'$ is $\mu$\=/conull, then we use $\mu$ to denote the restriction of $\mu$ to $X'$. The Lebesgue measure on the unit interval $[0;1]$ is denoted by $\lambda$.
		
		A subset $A$ of a standard Borel space $X$ is \emph{analytic} if it is the image of a Borel set under a Borel function. Somewhat informally, a set is analytic if it can be defined using existential (but not universal) quantifiers ranging over Borel sets. Analytic subsets of $X$ are universally measurable, i.e., $\mu$-measurable for every $\mu \in \P(X)$~\cite[Corollary~14.10]{Anush}. The complement of an analytic set is said to be \emph{co-analytic}. If a set is both analytic and co-analytic, then it is Borel~\cite[Corollary~12.7]{Anush}.
		
		Recall that for sets $X$ and $Y$,
		\[
		\begin{array}{rll}
		\text{--} & \fins{X} & \text{denotes the set of all finite subsets of $X$;}\\
		\text{--} & \finf{X}{Y} & \text{denotes the set of all partial functions $\phi \colon X \rightharpoonup Y$ with $\dom(\phi) \in \fins{X}$.}
		\end{array}
		\]
		If~$X$ is a standard Borel space, then $\fins{X}$ is also naturally equipped with a standard Borel structure.\footnote{One way to see this is to notice that if $\tau$ is a compatible Polish topology on $X$, then $\fins{X}$ is a Borel subset of $\mathcal{K}(X, \tau)$, the Polish space of all compact subsets of $(X, \tau)$ equipped with the Vietoris topology~\cite[Subsection~3.D]{Anush}.} For any standard Borel space $X$, there exists a Borel map $f \colon \fins{X}\setminus \set{\0} \to X$ such that $f(S) \in S$ for all $S \in [X]^{<\infty}\setminus \set{\0}$; for example, if $<$ is a Borel linear ordering of $X$ (which exists as $X$ is Borel isomorphic to a Borel subset of $\R$, say), then the function $S \mapsto \min_< S$ is Borel. If $X$ and $Y$ are standard Borel spaces, then $\finf{X}{Y}$ is also a standard Borel space, which can be identified with a Borel subset of $\fins{X \times Y}$.
		
		For sets $X$, $Y$, elements $x \in X$, $y \in Y$, and a subset $A \subseteq X \times Y$, we use the following notation:
		\[
			A_x \defeq \{y \in Y \,:\, (x, y) \in A\} \qquad\text{and}\qquad A^y \defeq \{x \in X \,:\, (x, y) \in A\}.
		\]
		
		The following fundamental result is used without mention:
		\begin{theo}[\textbf{Luzin--Novikov theorem};~{\cite[Theorem~18.10]{Kechris}}]\label{theo:LN}
			Let $X$ and $Y$ be standard Borel spaces and let $A \subseteq X \times Y$ be a Borel set such that for all $x \in X$, the set $A_x$ is countable. Then $A$ can be written as a countable union
			\[
				A = \bigcup_{n=0}^\infty A_n,
			\]
			where the sets $(A_n)_{n=0}^\infty$ are pairwise disjoint and for each $n \in \N$ and $x \in X$, $|(A_n)_x| \leq 1$. In particular, the set
			$\proj_X(A) \defeq \{x \in X \,:\, A_x \neq \0\}$
			is Borel.
		\end{theo}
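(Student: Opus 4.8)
The plan is to reduce to a concrete picture, run a Cantor--Bendixson derivative on the fibers of $A$, keep every stage of that derivative inside the Borel $\sigma$-algebra via a $\Pi^1_1$-boundedness argument, and then get the ``in particular'' clause for free from the Luzin--Souslin theorem that injective Borel images are Borel.

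First I would set up the reductions. By the Borel isomorphism theorem we may take $X = Y = \N^\N$, and by a standard change of topology (refining the Polish topologies on $X$ and on $Y$ to finer ones with the same Borel sets) we may assume $A$ is closed in $\N^\N \times \N^\N$; this does not affect the fibers' cardinalities, which depend only on the Borel structure. Identifying $\N^\N \times \N^\N$ with $(\N \times \N)^\N$, we then have $A = [T]$, the set of infinite branches of a tree $T$ on $\N \times \N$, and for $x \in \N^\N$ the fiber is $A_x = [T_x]$, where $T_x \defeq \set{t \in \N^{<\N} : (x{\restriction}|t|, t) \in T}$; crucially, the set $\set{(x, t) : t \in T_x}$ is Borel. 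For a tree $R$ on $\N$ let $R'$ be the set of its nodes having two incompatible extensions inside $R$ (again a subtree, with $[R'] \subseteq [R]$), and iterate: $R^{(0)} = R$, $R^{(\alpha+1)} = (R^{(\alpha)})'$, and $R^{(\lambda)} = \bigcap_{\alpha < \lambda} R^{(\alpha)}$ at limits. The tree form of the Cantor--Bendixson theorem says $[R]$ is countable iff $R^{(\alpha)} = \0$ for some $\alpha$ (necessarily a countable one, since the $R^{(\alpha)}$ decrease inside the countable set $\N^{<\N}$); the least such $\alpha$, call it $\rho(R)$, is the canonical $\Pi^1_1$-rank certifying this.

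Here is where the hypothesis does its work. Since by assumption \emph{every} fiber $A_x = [T_x]$ is countable, $\rho(T_x) < \omega_1$ for all $x \in \N^\N$; as $\N^\N$ is (trivially) analytic and $x \mapsto \rho(T_x)$ is a $\Pi^1_1$-rank defined everywhere on it, $\Pi^1_1$-boundedness ($\Sigma^1_1$-bounding) yields a single \emph{countable} ordinal $\xi_0$ with $T_x^{(\xi_0)} = \0$ for every $x$. I expect this to be the one genuinely delicate point, and it is precisely where countability of the fibers---rather than, say, mere scatteredness---is essential: without a uniform countable bound the derivative would have to be pushed through all of $\omega_1$, and the resulting decomposition of $A$ would be $\omega_1$-indexed rather than countable. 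Granting $\xi_0 < \omega_1$, Borelness is then routine to propagate: each step $R \mapsto R'$ is defined by quantifiers ranging over the countable set $\N^{<\N}$, so a transfinite induction along the countably many stages $\alpha \le \xi_0$ shows $\set{(x,t) : t \in T_x^{(\alpha)}}$ is Borel (successors are countable Boolean combinations of Borel conditions; limits are countable intersections), and hence so is $A^{(\alpha)} \defeq \set{(x,y) : y \in [T_x^{(\alpha)}]}$. With $A^{(0)} = A$, $A^{(\xi_0)} = \0$, and $A^{(\lambda)} = \bigcap_{\alpha<\lambda} A^{(\alpha)}$, we get $A = \bigcup_{\beta < \xi_0}(A^{(\beta)} \setminus A^{(\beta+1)})$, a countable disjoint union of Borel sets.

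Finally I would split each layer into graphs. If $(x,y) \in A^{(\beta)} \setminus A^{(\beta+1)}$, then some initial segment $s = y{\restriction}n$ lies outside $T_x^{(\beta+1)}$, so $s$ has no two incompatible extensions in $T_x^{(\beta)}$; thus the extensions of $s$ in $T_x^{(\beta)}$ form a chain and $y$ is the \emph{unique} branch of $T_x^{(\beta)}$ through $s$. Consequently
\[
A^{(\beta)} \setminus A^{(\beta+1)} \;=\; \bigcup_{s \in \N^{<\N}} C_{\beta, s}, \qquad C_{\beta, s} \defeq \set{(x,y) : s \subseteq y,\ s \notin T_x^{(\beta+1)},\ y \in [T_x^{(\beta)}]},
\]
and each $C_{\beta, s}$ is Borel---an intersection of the clopen set $\set{y : s \subseteq y}$, the Borel set $\set{x : s \notin T_x^{(\beta+1)}}$, and $A^{(\beta)}$---with $|(C_{\beta, s})_x| \le 1$ for every $x$ (the fiber is the set of branches of $T_x^{(\beta)}$ extending $s$). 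Running over the countably many pairs $(\beta, s)$ with $\beta < \xi_0$ gives a countable family of Borel sets with at most one point per fiber whose union is $A$; disjointifying in the usual way---replacing the $k$-th set by its difference with the union of the earlier ones---keeps Borelness and the one-point-fiber property and yields the sought $(A_n)_{n \in \N}$. For the last assertion, each $\proj_X {\restriction} A_n$ is a Borel injection (points of $A_n$ sharing a first coordinate are equal), so $\proj_X(A_n)$ is Borel by the Luzin--Souslin theorem, and therefore $\proj_X(A) = \bigcup_n \proj_X(A_n)$ is Borel. To repeat, the crux is the uniform bound $\xi_0 < \omega_1$; the rest is bookkeeping.
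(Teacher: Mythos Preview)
The paper does not prove this statement; it is cited as a standard result from Kechris's textbook and used as a black box throughout. So there is nothing in the paper to compare your argument against.

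That said, your proof is correct and is essentially the classical argument (as in Kechris, \S18). The one step that deserves a comment is your invocation of $\Pi^1_1$-boundedness: you are applying it with the analytic set equal to all of $\N^\N$, which is fine, but you should make explicit that the Cantor--Bendixson rank $x \mapsto \rho(T_x)$ is a $\Pi^1_1$-rank on the $\Pi^1_1$ set $\set{x : [T_x] \text{ is countable}}$ (this is standard but not entirely trivial), and that the hypothesis puts \emph{every} $x$ into that set. Everything else---the change of topology to make $A$ closed, the tree representation, the Borelness of the iterated derivatives by transfinite induction through the countable bound $\xi_0$, the splitting of each layer $A^{(\beta)} \setminus A^{(\beta+1)}$ into the Borel graphs $C_{\beta,s}$, the disjointification, and the use of Luzin--Souslin for the projection---is routine and correctly handled.
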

		Informally, the Luzin--Novikov theorem implies that if a set is defined only using quantifiers ranging over countable sets, then it is Borel.
		
		On a couple of occasions, we will need the following fact.
		\begin{prop}[\textbf{Countable colorings of locally finite graphs}]\label{prop:lfcoloring}
			Let $G$ be a locally finite analytic graph on a standard Borel space $X$. Then $\chi_{\operatorname{B}}(G) \leq \aleph_0$.
		\end{prop}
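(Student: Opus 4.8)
The plan is to reduce the statement about a locally finite analytic graph $G$ to the Luzin--Novikov theorem. The key observation is that if $G$ is a locally finite graph on a standard Borel space $X$, then $G$, viewed as a subset of $X^2$, has countable (in fact finite) vertical sections $G_x$, so although $G$ is only assumed analytic a priori, it is genuinely Borel by the ``in particular'' clause of Theorem~\ref{theo:LN}: the projection $\proj_X(G) = \{x : G_x \neq \0\}$ and, more to the point, $G$ itself can be decomposed. Precisely, apply Theorem~\ref{theo:LN} to $G \subseteq X \times X$ to obtain a partition $G = \bigcup_{n \in \N} G_n$ into Borel pieces with $|(G_n)_x| \leq 1$ for every $x$ and $n$; in particular each $G_n$ is the graph of a partial Borel injection $g_n \colon X \rightharpoonup X$, and we may arrange (by symmetrizing if desired, though this is not needed) that $x\,G\,y$ iff $g_n(x) = y$ for some $n$.

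First I would use these partial functions to define a proper coloring with color set $\fins{\N}$, which is a countable standard Borel space. For each $n$ consider the Borel set $D_n \defeq \{x \in X : g_n(x) \text{ is defined and } g_n(x)\,G\,x \text{ witnessed "backwards"}\}$; more cleanly, for each pair $(m,n) \in \N^2$ let $E_{m,n} \defeq \{x \in X : \exists y\ (g_m(x) = y \text{ and } g_n(y) = x)\}$, which is Borel since it is obtained by composing the Borel partial injections $g_m$ and $g_n$. Then define $f \colon X \to \fins{\N}$ by $f(x) \defeq \{ \langle m, n\rangle : x \in E_{m,n}\}$, where $\langle \cdot, \cdot \rangle \colon \N^2 \to \N$ is a Borel (indeed recursive) pairing function; $f$ is Borel because each $E_{m,n}$ is Borel and $G$ is locally finite, so $f(x)$ is a finite subset of $\N$ for every $x$. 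The point is that $f(x)$ records, for each neighbor $y$ of $x$, the index $m$ with $g_m(x) = y$ together with the index $n$ with $g_n(y) = x$; one checks that if $x \neq y$ and $x\,G\,y$, then the corresponding code $\langle m,n\rangle$ lies in $f(x)$ but $\langle n,m\rangle$ (with the roles reversed) would be the witness for $y$, and a small combinatorial bookkeeping argument — choosing a consistent asymmetric convention, e.g. always comparing $f(x)$ and $f(y)$ via the two ordered pairs that witness the single edge $\{x,y\}$ — shows $f(x) \neq f(y)$.

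A cleaner alternative that I would likely prefer is the following greedy-in-$\N$ argument, which avoids fiddly pair-code conventions. Having written $G = \bigcup_n G_n$ as above with each $G_n$ a partial Borel matching, build the coloring by transfinite/countable recursion on $n$: maintain a Borel partial coloring and at stage $n$ extend it over the (Borel) set of vertices newly forced to recolor because of an edge in $G_n$; since each vertex meets only finitely many of the $G_n$ nontrivially and each $G_n$ contributes at most one neighbor, the standard ``list-coloring greedily with colors in $\N$'' procedure terminates on a Borel set after finitely many steps \emph{at each point}, and the uniformity of the construction (each step is a Borel operation) keeps the final coloring Borel. Formally this is the Borel analogue of the classical fact $\chi(G) \le \aleph_0$ for countable/locally finite graphs, and the Luzin--Novikov decomposition is exactly what makes the classical greedy enumeration ``Borel-uniform.''

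\textbf{The main obstacle} is purely bookkeeping: verifying that the recursion (or the pair-code construction) actually stabilizes to a \emph{Borel} function rather than merely a function measurable with respect to a larger $\sigma$-algebra, and that properness holds at every edge — one must be careful that the finitely many stages needed at a point $x$ are governed by a Borel ``height'' function so that the graph of $f$ is Borel. There is no deep descriptive-set-theoretic difficulty here beyond Luzin--Novikov; once $G$ is decomposed into countably many partial Borel matchings, everything reduces to the elementary observation that a locally finite graph is countably colorable, performed uniformly. The only genuinely essential input is that analytic graphs with countable sections are Borel, which is immediate from Theorem~\ref{theo:LN}.
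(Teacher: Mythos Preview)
Your proposal has genuine gaps.

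First, a minor point: Theorem~\ref{theo:LN} as stated in the paper applies to \emph{Borel} sets, not analytic ones. You invoke its ``in particular'' clause to conclude that the analytic graph $G$ is Borel, but that clause concerns projections of Borel sets. The fact that an analytic subset of $X\times X$ with countable sections is Borel is true, but it is a separate (stronger) theorem, not the one you cite.

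More seriously, neither of your coloring constructions works. For the pair-code map $f(x)=\{\langle m,n\rangle : x\in E_{m,n}\}$: take $X=\Z/4\Z$ with $G$ the $4$-cycle, and let $g_0(x)=x+1$, $g_1(x)=x-1$ be the Luzin--Novikov pieces. Then $E_{0,1}=E_{1,0}=X$ and $E_{0,0}=E_{1,1}=\0$, so $f$ is constant; it is not a proper coloring. The ``small combinatorial bookkeeping argument'' you defer is the entire difficulty, and in general no invariant of this type can separate adjacent vertices in a graph with enough symmetry. Your second ``greedy'' construction is too vague to evaluate: a greedy coloring requires an enumeration of vertices, and you have not specified a Borel procedure that assigns colors. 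The phrase ``vertices newly forced to recolor because of an edge in $G_n$'' does not describe a well-defined algorithm, and the assertion that it ``terminates on a Borel set after finitely many steps at each point'' is exactly what would need to be proved.

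The paper takes a completely different route that avoids these issues. It fixes a countable separating family $(B_n)$ of Borel sets closed under complements and finite intersections, so that for each $x$ and each finite $S\subseteq X\setminus\{x\}$ some $B_n$ contains $x$ and misses $S$. The set $Z=\{(x,n):x\in B_n,\ G_x\cap B_n=\0\}$ is co-analytic with all vertical sections nonempty (taking $S=G_x$), and the Novikov separation theorem then yields a Borel $f\colon X\to\N$ with $(x,f(x))\in Z$ for all $x$; this $f$ is a proper coloring by construction. The key idea you are missing is to color $x$ by the \emph{index of a Borel set separating $x$ from $G_x$}, together with a uniformization argument to select that index Borel-measurably.
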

		\begin{proof}
			Let $(B_n)_{n=0}^\infty$ be a countable family of Borel subsets of $X$ that separates points and is closed under complements and finite intersections. In particular, for any $x \in X$ and $S \subseteq X \setminus \{x\}$, if $S$ is finite, then there is $n \in \N$ such that $x \in B_n$ but $S \cap B_n = \0$.
			
			Define a set $Z \subseteq X \times \N$ as follows:
			\begin{align*}
			(x,n) \in Z \,\vcentcolon&\Longleftrightarrow\, x \in B_n \quad \text{and} \quad G_x \cap B_n = \0 \\
			\,&\Longleftrightarrow\, x \in B_n \quad \text{and} \quad \forall y \in B_n \,(y \not \in G_x).
			\end{align*}
			The second line in the above definition makes it clear that the set $Z$ is co-analytic. For all $x \in X$, there is $n \in \N$ such that $(x, n) \in Z$, so the Novikov separation theorem~\cite[Theorem~28.5]{Kechris} gives a Borel function $f \colon X \to \N$ such that for all $x \in X$, $(x, f(x)) \in Z$. Then $f$ is a Borel proper coloring of $G$.
		\end{proof}
		
		Proposition~\ref{prop:lfcoloring} also follows from the general characterization of analytic graphs with countable Borel chromatic numbers due to Kechris, Solecki, and Todorcevic~\cite[Theorem~6.3]{KechrisSoleckiTodorcevic}.
		
		\section{Moser--Tardos theory}\label{section:MoserTardos}
		
		\noindent As mentioned in the introduction, a major role in our arguments is played by ideas stemming from the algorithmic proof of the~LLL due to Moser and Tardos~\cite{MoserTardos}. In this section we review their method and introduce some convenient notation and terminology. Most results of this section are essentially present in~\cite{MoserTardos}; nevertheless, we include a fair amount of detail for completeness. Some proofs are deferred until Appendix~\ref{app:MoserTardos}.

		For the rest of this section, fix a set $X$ and a correct instance $\B$ over $X$. Motivated by algorithmic applications, Moser and Tardos only consider the case when the ground set $X$ is finite; however, their technique naturally extends to the case of infinite $X$.
		
		Let $\dom(\B) \defeq \set{\dom(B) \,:\, B \in \B}$. For the reasons explained in Remark~\ref{remk:empty}, we may assume that $\0 \not \in \dom(\B)$. For $S \in \dom(\B)$, define
		\[
			\B_S \defeq \bigcup \set{B \in \B \,:\, \dom(B) = S} = \set{w \colon S \to [0;1] \,:\, w \in B \text{ for some } B \in \B}.
		\]
		The correctness of $\B$ implies that the set $\set{B \in \B \,:\, \dom(B) = S}$ is countable. Therefore, $\B_S$ is a Borel subset of~$[0;1]^S$. For brevity, we write
		\[
			\mathbb{P}[S] \defeq \lambda^S(\B_S).
		\]
		(Note that this notation implicitly depends on $\B$.)
		
		We say that a family $A$ of sets is \emph{disjoint} if the elements of $A$ are pairwise disjoint.
		
		\begin{defn}[\textbf{Moser--Tardos process}]
			A \emph{table} is a map $\theta \colon X \times \N \to [0;1]$. Fix a table $\theta$ and consider the following inductive construction:
			\begin{leftbar}
					\noindent Set $t_0(x) \defeq 0$ for all $x \in X$.
					
					\smallskip
					
					\noindent {\sc Step $n \in \N$}: Define
					\[
						f_n(x) \defeq \theta(x, t_n(x)) \text{ for all } x \in X \qquad \text{and} \qquad A_n' \defeq \{S \in \dom(\B) \,:\, f_n \supseteq w \text{ for some } w \in \B_S\}.
					\]
					Choose $A_n$ to be an arbitrary maximal disjoint subset of~$A_n'$ and let
					\[
					t_{n+1}(x) \defeq \begin{cases}
					t_n(x) + 1 &\text{if $x \in S$ for some $S \in A_n$};\\
					t_n(x) &\text{otherwise}.
					\end{cases}
					\]
			\end{leftbar}
			\noindent A sequence $\mathcal{A} = (A_n)_{n=0}^\infty$ of subsets of~$\dom(\B)$ obtained via the above procedure is called a \emph{Moser--Tardos process} with input $\theta$.
		\end{defn}
		
		\begin{remk*}
			Since each set $A_n$ in a Moser--Tardos process is disjoint, for every $x \in X$ with $t_{n+1}(x) > t_n(x)$, there is a \emph{unique} set $S \in A_n$ such that $x \in S$.
		\end{remk*}

		\begin{prop}\label{prop:finitestep}
			Let $\mathcal{A} = (A_n)_{n=0}^\infty$ be a Moser--Tardos process. For $n \in \N$, let
			\[
				X_n \defeq \{x \in X\,:\, x \in S \text{ for some } S \in A_n\}.
			\]
			Then $f_n$ avoids all bad events $B \in \B$ with $\dom(B) \cap X_n = \0$.
		\end{prop}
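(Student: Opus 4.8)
The plan is to argue by contradiction, exploiting the maximality of $A_n$ as a disjoint subfamily of $A_n'$. Suppose $B \in \B$ satisfies $\dom(B) \cap X_n = \0$ but $f_n$ fails to avoid $B$; I want to derive a contradiction with the hypothesis.

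First I would unwind the definitions. Write $S \defeq \dom(B)$. Since $f_n$ does not avoid $B$, there is some $w \in B$ with $w \subseteq f_n$. By definition such a $w$ is a function with domain $S$, and the fact that $w \in B$ witnesses $w \in \B_S$; together with $f_n \supseteq w$ this shows $S \in A_n'$ directly from the definition of $A_n'$ in the Moser--Tardos process.

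Next I would invoke maximality. Because $A_n$ is a \emph{maximal} disjoint subset of $A_n'$ and $S \in A_n'$, the family $A_n \cup \set{S}$ cannot be disjoint (else $A_n$ would not be maximal); hence either $S \in A_n$ already, or $S$ meets some $T \in A_n$. Recalling Remark~\ref{remk:empty}, under which we assume $\0 \notin \dom(\B)$, in the first case $S$ is a nonempty subset of $X_n = \bigcup A_n$, and in the second case $S \cap T \neq \0$ with $T \subseteq X_n$; either way $S \cap X_n \neq \0$. Since $S = \dom(B)$, this contradicts the assumption $\dom(B) \cap X_n = \0$, and the proposition follows.

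I do not expect a genuine obstacle here: the argument is essentially the observation that ``$A_n$ maximal disjoint in $A_n'$'' means precisely that every member of $A_n'$ intersects $\bigcup A_n$. The only point requiring minor care is confirming that a nonempty $S \in A_n$ actually contributes to $X_n$, which is exactly where the convention $\0 \notin \dom(\B)$ (justified in Remark~\ref{remk:empty}) enters; everything else is a direct translation of the definitions.
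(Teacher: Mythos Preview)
Your proof is correct and is essentially the same as the paper's: both use that $\dom(B) \in A_n'$ together with the maximality of $A_n$ and the convention $\dom(B) \neq \0$ to force $\dom(B) \cap X_n \neq \0$. The only difference is that you phrase it as a contradiction while the paper argues the contrapositive directly.
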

		\begin{proof}
			If $\dom(B) \cap X_n = \0$, then $\dom(B)$ is disjoint from all $S \in A_n$. Since we assume $\dom(B) \neq \0$, this implies $\dom(B) \not \in A_n$. By the choice of $A_n$, we then get $\dom(B) \not \in A'_n$, as desired.
		\end{proof}
		
		Suppose that $\mathcal{A}$ is a Moser--Tardos process. By definition, the sequence $t_0(x)$, $t_1(x)$, \ldots{} is non\=/decreasing for all $x \in X$. We say that an element $x \in X$ is \emph{$\mathcal{A}$-stable} if the sequence $t_0(x)$, $t_1(x)$, \ldots{} is eventually constant. Let $\Stab(\mathcal{A})\subseteq X$ denote the set of all $\mathcal{A}$-stable elements of $X$. For $x \in \Stab(\mathcal{A})$, define
		\[
			t(x) \defeq \lim_{n \to \infty} t_n(x) \qquad \text{and} \qquad f(x) \defeq \theta(x, t(x)).
		\]
		We have the following limit analog of Proposition~\ref{prop:finitestep}:
		\begin{prop}\label{prop:infinitestep}
			Let $\mathcal{A} = (A_n)_{n=0}^\infty$ be a Moser--Tardos process. Then $f$ avoids all bad events $B \in \B$ with $\dom(B) \subseteq \Stab(\mathcal{A})$.
		\end{prop}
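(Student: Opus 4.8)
The plan is to mimic the single-stage argument of Proposition~\ref{prop:finitestep}, using the finiteness of $\dom(B)$ to reduce the limiting claim to what happens at one sufficiently late step of the process. Fix a bad event $B \in \B$ with $S \defeq \dom(B) \subseteq \Stab(\mathcal{A})$ and suppose, toward a contradiction, that $f$ does not avoid $B$; then there is $w \in B$ with $w \subseteq f$. Since $B \subseteq [0;1]^S$, every element of $B$ is a function with domain exactly $S$; in particular $\dom(w) = S$, and $w \subseteq f$ forces $w = f \vert S$.

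First I would pin down a stage after which all coordinates in $S$ have stopped changing. For each $x \in S$, $\mathcal{A}$-stability provides $N_x \in \N$ with $t_n(x) = t(x)$ for every $n \geq N_x$; since $S$ is finite, $N \defeq \max_{x \in S} N_x$ is well defined, and $t_n \vert S = t \vert S$ for all $n \geq N$. By the definitions of $f_n$ and of $f$, this yields $f_n \vert S = f \vert S = w$ for every $n \geq N$. Hence $f_n \supseteq w$ with $w \in \B_S$, so $S \in A_n'$ for all $n \geq N$.

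Next I would use the maximality of $A_n$ to deduce that $S$ meets $X_n \defeq \bigcup A_n$ at each such stage. If $S \in A_n$, then $S \subseteq X_n$ and $S \neq \0$ (recall that we assume $\0 \notin \dom(\B)$, so $S \cap X_n = S \neq \0$). If $S \notin A_n$, then, since $A_n$ is a maximal disjoint subfamily of $A_n'$ and $S \in A_n'$, adding $S$ to $A_n$ would still give a disjoint subfamily of $A_n'$ unless $S$ intersects some member of $A_n$; so again $S \cap X_n \neq \0$. Either way, pick $x \in S \cap X_n$; then $t_{n+1}(x) = t_n(x) + 1 > t_n(x)$. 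But $x \in S \subseteq \Stab(\mathcal{A})$ and $n \geq N$, so $t_n(x) = t(x) = t_{n+1}(x)$ — a contradiction. Therefore no such $w$ exists, and $f$ avoids $B$.

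I do not expect any serious obstacle here: this is just the limiting version of the argument for Proposition~\ref{prop:finitestep}. The one place where the hypotheses are genuinely used is in choosing the freeze time $N$, which requires both that $\dom(B)$ be finite — automatic, since bad events have finite domain — and that every element of $\dom(B)$ be $\mathcal{A}$-stable, which is exactly the standing assumption $\dom(B) \subseteq \Stab(\mathcal{A})$.
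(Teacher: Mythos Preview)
Your proof is correct and follows essentially the same approach as the paper: freeze a late stage $n$ at which all coordinates in $\dom(B)$ have stabilized, and then argue that $\dom(B)$ cannot meet $X_n$ since that would force $t_{n+1}$ to increase. The only cosmetic difference is that the paper proves this directly and invokes Proposition~\ref{prop:finitestep} as a black box, whereas you argue by contradiction and unpack the maximality-of-$A_n$ step inline.
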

		\begin{proof}
			Fix $B \in \B$ with $\dom(B) \subseteq \Stab(\mathcal{A})$ and choose $n \in \N$ so large that for all $x \in \dom(B)$, we have $t(x) = t_n(x)$. Then $f \vert \dom(B) = f_n \vert \dom(B)$, and thus it remains to show that $f_n$ avoids $B$. Notice that $\dom(B)$ is disjoint from all $S \in A_n$; indeed, if $x \in \dom(B) \cap S$ for some $S \in A_n$, then $t_{n + 1}(x) = t_n(x) + 1$, which contradicts the choice of $n$. Now we are done by Proposition~\ref{prop:finitestep}.
		\end{proof}
		
		For each $S \in \dom(\B)$, define the \emph{index} $\mathbf{Ind}(S, \mathcal{A}) \in \N \cup \set{\infty}$ of $S$ in $\mathcal{A}$ by
		\[
			\mathbf{Ind}(S, \mathcal{A}) \defeq |\{n \in \N\,:\, S \in A_n\}|.
		\]
		Note that for all $x \in X$,
		\begin{equation}\label{eq:stabilizing}
			\lim_{n \to \infty} t_n(x) = \sum_{S \in \dom(\B)\,:\, S \ni x} \mathbf{Ind}(S, \mathcal{A}),
		\end{equation}
		so $x \in \Stab(\mathcal{A})$ if and only if the expression on the right hand side of \eqref{eq:stabilizing} is finite. Our goal therefore is to obtain good upper bounds on the numbers $\mathbf{Ind}(S, \mathcal{A})$. To that end, we look at certain patterns in the table $\theta$.
		
		A \emph{pile} is a nonempty finite set $\mathscr{P}$ of functions of the form $\tau \colon S \to \N$ with $S \in \dom(\B)$, satisfying the following requirements:
		\begin{itemize}
			\item[--] the graphs of the elements of $\mathscr{P}$ are pairwise disjoint; in other words, for every pair of distinct functions $\tau$, $\tau' \in \mathscr{P}$ and for each $x \in \dom(\tau) \cap \dom(\tau')$, we have $\tau(x) \neq \tau'(x)$;
			\item[--] for every $\tau \in \mathscr{P}$ and $x \in \dom(\tau)$, either $\tau(x) = 0$, or else, there is $\tau' \in \mathscr{P}$ with $x \in \dom(\tau')$ and $\tau'(x) = \tau(x)-1$.
		\end{itemize}
		The \emph{support} of a pile $\mathscr{P}$ is the set
		\[
			\supp(\mathscr{P}) \defeq \bigcup_{\tau \in \mathscr{P}} \dom(\tau).
		\]
		Note that $\supp(\mathscr{P})$ is a finite subset of $X$.
		
			\begin{figure}[h]
				\centering	
				\begin{tikzpicture}[scale=0.6]
				
				\draw (0, -1) -- (0,6);
				\draw (-1, 0) -- (10,0);
				
				\node at (-0.5, 1) {$0$};
				\node at (-0.5, 3) {$1$};
				\node at (-0.5, 5) {$2$};
				
				\node[circle, draw, inner sep=1pt, outer sep=0](10) at (1,1) {$\tau_1$};
				\node[circle, draw, inner sep=1pt, outer sep=0](20) at (3,1) {$\tau_1$};
				\node[circle, draw, inner sep=1pt, outer sep=0](21) at (3,3) {$\tau_3$};
				\node[circle, draw, inner sep=1pt, outer sep=0](30) at (5,1) {$\tau_3$};
				\node[circle, draw, inner sep=1pt, outer sep=0](31) at (5,3) {$\tau_4$};
				\node[circle, draw, inner sep=1pt, outer sep=0](32) at (5,5) {$\tau_5$};
				\node[circle, draw, inner sep=1pt, outer sep=0](40) at (7,1) {$\tau_2$};
				\node[circle, draw, inner sep=1pt, outer sep=0](41) at (7,3) {$\tau_3$};
				\node[circle, draw, inner sep=1pt, outer sep=0](42) at (7,5) {$\tau_4$};
				\node[circle, draw, inner sep=1pt, outer sep=0](50) at (9,1) {$\tau_2$};
				\node[circle, draw, inner sep=1pt, outer sep=0](51) at (9,3) {$\tau_4$};
				
				\node at (1,-0.5) {$x_1$};
				\node at (3,-0.5) {$x_2$};
				\node at (5,-0.5) {$x_3$};
				\node at (7,-0.5) {$x_4$};
				\node at (9,-0.5) {$x_5$};
				
				\node[anchor=west] at (10, 0) {$X$};
				\node[anchor=south] at (0, 6) {$\N$};
				
				\draw (10) -- (20);
				\draw (21) -- (30) -- (41);
				\draw (31) -- (42) -- (51);
				\draw (40) -- (50);
				
				\end{tikzpicture}
				\caption{$\mathscr{P} = \set{\tau_1, \tau_2, \tau_3, \tau_4, \tau_5}$ is a neat pile of height $4$ with $\supp(\mathscr{P}) = \set{x_1, x_2, x_3, x_4, x_5}$ and $\Top(\mathscr{P}) = \set{\tau_5}$.
				}
			\end{figure}
		
		Let $\mathscr{P}$ be a pile and let $\tau$, $\tau' \in \mathscr{P}$. We say that $\tau'$ \emph{supports} $\tau$, in symbols $\tau' \prec \tau$,
		if there is an element $x \in \dom(\tau) \cap \dom(\tau')$ such that $\tau'(x) = \tau(x) - 1$. A pile $\mathscr{P}$ is \emph{neat} if there does not exist a sequence of functions $\tau_1$, $\tau_2$, \ldots, $\tau_k \in \mathscr{P}$ with $k \geq 2$ such that $\tau_1 \prec \tau_2 \prec \ldots \prec \tau_k \prec \tau_1$. Equivalently, $\mathscr{P}$ is neat if the transitive closure of the relation $\prec$ on $\mathscr{P}$ is a (strict) partial order.
		
		A \emph{top element} in a pile $\mathscr{P}$ is any $\tau \in \mathscr{P}$ for which there is no $\tau' \in \mathscr{P}$ with $\tau \prec \tau'$. The set of all top elements in $\mathscr{P}$ is denoted $\Top(\mathscr{P})$. Notice that if $\mathscr{P}$ is a neat pile, then $\Top(\mathscr{P}) \neq \0$. The \emph{height} $h(\mathscr{P})$ of a neat pile $\mathscr{P}$ is the largest $k \in \N$ such that there is a sequence $\tau_1$, \ldots, $\tau_k \in \mathscr{P}$ with $\tau_1 \prec \ldots \prec \tau_k$ (so necessarily $h(\mathscr{P}) \geq 1$).
		
		We say that a pile $\mathscr{P}$ \emph{appears} in a table $\theta \colon X \times \N \to [0;1]$ if for all $\tau \in \mathscr{P}$, the map
		\[
		\dom(\tau) \to [0;1] \colon x \mapsto \theta(x, \tau(x))
		\]
		belongs to $\B_{\dom(\tau)}$. For $S \in \dom(\B)$, let $\mathbf{Piles}(S)$ denote the set of all neat piles $\mathscr{P}$ with $\Top(\mathscr{P}) = \set{\tau}$ such that the unique top element $\tau$ of $\mathscr{P}$ satisfies $\dom(\tau) = S$. The \emph{index} $\mathbf{Ind}(S, \theta) \in \N \cup \set{\infty}$ of $S$ in~$\theta$ is defined by
		\[
			\mathbf{Ind}(S, \theta) \defeq \left|\set{\mathscr{P} \in \mathbf{Piles}(S)\,:\, \mathscr{P} \text{ appears in } \theta}\right|.
		\]
		The next proposition asserts that $\mathbf{Ind}(S, \theta) \geq \mathbf{Ind}(S, \mathcal{A})$ for any Moser--Tardos process $\mathcal{A}$ with input $\theta$:
		
		\begin{prop}\label{prop:reduction}
			Let $\mathcal{A} = (A_n)_{n=0}^\infty$ be a Moser--Tardos process with input $\theta$ and let $S \in \dom(\B)$. If $n \in \N$ is such that $S \in A'_n$, then there exists a neat pile $\mathscr{P} \in \mathbf{Piles}(S)$ of height precisely $n+1$ that appears in~$\theta$. In particular, $\mathbf{Ind}(S, \mathcal{A}) \leq \mathbf{Ind}(S, \theta)$.
		\end{prop}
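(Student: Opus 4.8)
The plan is to prove the displayed assertion — existence of a neat pile $\mathscr{P} \in \mathbf{Piles}(S)$ of height precisely $n+1$ appearing in $\theta$ — by induction on $n$, and then read off ``$\mathbf{Ind}(S,\mathcal{A}) \leq \mathbf{Ind}(S,\theta)$'' from the fact that distinct steps $n$ with $S \in A_n$ yield piles of distinct heights, hence distinct piles, all lying in $\mathbf{Piles}(S)$ and appearing in $\theta$. The crux is to guess the right building blocks, namely the \emph{snapshots} of the process: for a step $m$ and a set $S' \in A_m$, put $\tau_{S',m}\colon S' \to \N$, $\tau_{S',m}(x) \defeq t_m(x)$. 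I would first record two elementary facts about these. (1) The graphs of the functions $\tau_{S',m}$, over all pairs $(S',m)$ with $S' \in A_m$, are pairwise disjoint: if $(S',m) \neq (S'',m')$ and $y \in S' \cap S''$, then $m \neq m'$ since $A_m$ is disjoint, and for $m < m'$ one has $t_m(y) < t_{m+1}(y) \leq t_{m'}(y)$. (2) If $y$ lies in some member of $A_m$ with $m < n$, then $t_m(y) < t_n(y)$. Both are used throughout.

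The induction should be run with a strengthened hypothesis: \emph{if $S \in A_n'$, there is a neat pile $\mathscr{P} \in \mathbf{Piles}(S)$ appearing in $\theta$ with $h(\mathscr{P}) = n+1$, whose unique top $\tau$ satisfies $\tau(x) = t_n(x)$ for $x \in S$, and all of whose other elements are snapshots $\tau_{S',m}$ with $m < n$.} The base case $n=0$ is immediate: since $t_0 \equiv 0$ and $f_0\vert S = w \in \B_S$, the singleton $\mathscr{P} = \{\tau\}$ with $\tau \equiv 0$ on $S$ works (vacuously a neat pile of height $1$). For the step, given $S \in A_n'$ with $n \geq 1$, set $\tau(x) \defeq t_n(x)$; this lies in $\B_S$ because $x \mapsto \theta(x,\tau(x)) = f_n(x)$ and $f_n\vert S \in \B_S$. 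For each $x \in S$ with $t_n(x) > 0$, let $m_x < n$ be the last step at which $x$ was resampled and $S_x \in A_{m_x}$ the unique member of $A_{m_x}$ containing $x$; then $t_{m_x}(x) = t_n(x)-1$, and the inductive hypothesis applied to $S_x \in A_{m_x}'$ supplies a pile $\mathscr{P}_x \in \mathbf{Piles}(S_x)$ of height $m_x+1$ appearing in $\theta$, with top $\tau_{S_x,m_x}$ and all other elements snapshots from steps $< m_x$. Put $\mathscr{P} \defeq \{\tau\} \cup \bigcup_x \mathscr{P}_x$, the union over $x \in S$ with $t_n(x) > 0$.

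It then remains to verify the two pile axioms, that $\Top(\mathscr{P}) = \{\tau\}$, neatness, and the height. Pairwise-disjointness of graphs is fact (1) between two non-top elements, and for $\tau$ against a non-top $\tau_{S',m}$ it is fact (2): any common coordinate $y \in S \cap S'$ has $\tau_{S',m}(y) = t_m(y) < t_n(y) = \tau(y)$. The ``support'' axiom holds for $\tau$ at each coordinate $x$ with $\tau(x) > 0$ via $\tau_{S_x,m_x}$ (since $\tau_{S_x,m_x}(x) = t_n(x)-1$), and for the other elements is inherited from the corresponding $\mathscr{P}_x$. Every non-top element has values $\leq t_n(\cdot)$, so nothing is $\prec$-above $\tau$, and each $\tau_{S_x,m_x} \prec \tau$ (coordinate $x$), whence $\Top(\mathscr{P}) = \{\tau\}$. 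Assigning rank $n$ to $\tau$ and rank $m$ to a snapshot $\tau_{S',m}$ (well-defined by (1)), one checks that $\sigma \prec \sigma'$ forces $\operatorname{rank}(\sigma) < \operatorname{rank}(\sigma')$, since $\sigma(z) = \sigma'(z)-1 < \sigma'(z)$ at some shared coordinate $z$ and $t$ is nondecreasing; hence $\prec$ is acyclic — so $\mathscr{P}$ is neat — and every $\prec$-chain has at most $n+1$ elements, i.e.\ $h(\mathscr{P}) \leq n+1$. For the matching lower bound, I would argue that some $x \in S$ is resampled precisely at step $n-1$: if $S \in A_{n-1}'$ this follows from maximality of $A_{n-1}$ (either $S \in A_{n-1}$, or $S$ meets a member of $A_{n-1}$), and if $S \notin A_{n-1}'$ then $f_{n-1}\vert S \neq f_n\vert S$ forces a resampling of some $x \in S$ at step $n-1$; for such $x$ we have $m_x = n-1$, and prepending $\tau$ to a longest $\prec$-chain of $\mathscr{P}_x$ (which has length $n$ and ends at $\tau_{S_x,n-1} \prec \tau$) gives a chain of length $n+1$. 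Appearance in $\theta$ is immediate, as $\tau \in \B_S$ and each $\mathscr{P}_x$ appears.

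The hard part is the choice of inductive hypothesis rather than any individual estimate: one must carry along the exact description of the pile's elements as the snapshots $\tau_{S',m}$, because that is precisely what makes both structural requirements on piles transparent — the disjointness of the new top from the inherited functions, and the sharp height $n+1$. The two small technical lynchpins are the pairwise-disjointness fact (1) and the case analysis showing that some vertex of $S$ is resampled exactly at step $n-1$.
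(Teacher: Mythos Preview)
Your proof is correct. Both your argument and the paper's build the pile out of the same ``snapshot'' functions $t_m\vert S'$ with $S' \in A_m$, but the organization differs: the paper gives a direct iterative construction, starting from $R_0 = S$ and, for $k = 0, \ldots, n-1$, adjoining all snapshots $t_{n-k-1}\vert S'$ with $S' \in A_{n-k-1}$ meeting the current region $R_k$, then setting $R_{k+1} = R_k \cup \bigcup\{S' : \ldots\}$; it then declares the verification ``straightforward.'' You instead run an induction on $n$, following for each $x \in S$ only its \emph{last} resampling step $m_x$ and recursing. The two constructions can produce different piles (yours is in general a subset of the paper's), but both satisfy the required properties. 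Your version has the advantage of making every verification explicit---the rank function for neatness, the case split for the height lower bound---at the cost of a heavier inductive hypothesis; the paper's version is quicker to state but leaves more to the reader.
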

		\begin{proof}
			The ``in particular'' part follows, since for different $n$ with $S \in A_n'$, the neat piles given by the first part of the proposition are distinct (they have distinct heights).
			
			To prove the main statement, fix $S \in \dom(\B)$ and $n \in \N$ with $S \in A_n'$. Build $\mathscr{P}$ by ``tracing back'' the steps of the Moser--Tardos process as follows. Start by setting $\mathscr{P}_0$ to be the one-element set $\set{t_n \vert S}$ and let $R_0 \defeq S$. If $k < n$, then, after $R_k \subseteq X$ is determined, let $\mathscr{P}_{k+1}$ be the family of all maps of the form $t_{n-k-1} \vert S'$, where $S'$ is an element of $A_{n-k-1}$ such that $S' \cap R_k \neq \0$, and let $R_{k+1} \defeq R_k \cup \bigcup_{\tau \in \mathscr{P}_{k+1}} \dom(\tau)$. 
			Finally, let $\mathscr{P} \defeq \mathscr{P}_0 \cup \ldots \cup \mathscr{P}_n$. It is straightforward to check that $\mathscr{P}$ is a neat pile with support $R_n$ that has all the desired properties.
		\end{proof}
		
		Given a table $\theta \colon X \times \N \to [0;1]$, we say that an element $x \in X$ is \emph{$\theta$-stable} if
		\[
			\sum_{S \in \dom(\B)\,:\, S \ni x} \mathbf{Ind}(S, \theta) < \infty.
		\]
		The set of all $\theta$-stable elements is denoted $\Stab(\theta)$. Due to Proposition~\ref{prop:reduction}, $\Stab(\theta) \subseteq\Stab(\mathcal{A})$ for every Moser--Tardos process $\mathcal{A}$ with input $\theta$.
		
		Now the strategy is to switch the order of summation and, instead of counting how many piles from $\mathbf{Piles}(S)$ appear in a particular table $\theta$, fix a pile $\mathscr{P}$ and estimate the probability that $\mathscr{P}$ appears in a table $\theta$ chosen \emph{at random}. For a given pile $\mathscr{P}$, the restriction of $\theta$ to $\supp(\mathscr{P}) \times \N$ fully determines whether $\mathscr{P}$ appears in $\theta$ or not. Thus, we may let $\mathbf{App}(\mathscr{P}) \subseteq [0;1]^{\supp(\mathscr{P}) \times \N}$ be the set such that
		\[
			\mathscr{P} \text{ appears in } \theta \,\Longleftrightarrow\, \theta\vert (\supp(\mathscr{P}) \times \N) \in \mathbf{App}(\mathscr{P}).
		\]
		It is easy to see that the set $\mathbf{App}(\mathscr{P})$ is Borel. Since the graphs of the elements of $\mathscr{P}$ are pairwise disjoint, there is a simple expression for the Lebesgue measure of $\mathbf{App}(\mathscr{P})$; namely, we have
		\[
			\lambda^{\supp(\mathscr{P}) \times \N} \left(\mathbf{App}(\mathscr{P})\right) = \prod_{\tau \in \mathscr{P}} \mathbb{P}[\dom(\tau)].
		\]
		Now we are ready to state the cornerstone result of Moser--Tardos theory:
		\begin{theo}\label{theo:MoserTardos}
			Let $\omega \colon \B \to [0;1)$ be a function witnessing the correctness of $\B$ and let $S \in \dom(\B)$. Then
			\begin{equation}\label{eq:MoserTardos}
				\sum_{\mathscr{P} \in \mathbf{Piles}(S)} \lambda^{\supp(\mathscr{P})\times \N} \left(\mathbf{App}(\mathscr{P})\right) \leq \sum_{\substack{B \in \B \,:\\\dom(B) = S}}\frac{\omega(B)}{1 - \omega(B)}.
			\end{equation}
		\end{theo}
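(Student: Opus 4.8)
The plan is to follow the Galton--Watson branching-process argument of Moser and Tardos, adapted from witness trees to piles. I would begin with two reductions that reshape the left-hand side of \eqref{eq:MoserTardos}. First, note that the weight $\lambda^{\supp(\mathscr{P})\times\N}(\mathbf{App}(\mathscr{P}))=\prod_{\tau\in\mathscr{P}}\mathbb{P}[\dom(\tau)]$ of a pile depends only on the multiset of domains occurring in $\mathscr{P}$; moreover, the actual positions $\tau(x)$ are forced by the combinatorial ``shape'' of $\mathscr{P}$, that is, by the $\prec$-relation together with the labeling of each $\prec$-edge by the shared variables it is witnessed through (the minimal elements are identically $0$, and going upwards $\tau(x)$ is one more than the position at $x$ of the unique $\prec$-predecessor of $\tau$ joined to it through $x$, or $0$ if there is none). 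Hence $\sum_{\mathscr{P}\in\mathbf{Piles}(S)}\lambda^{\supp(\mathscr{P})\times\N}(\mathbf{App}(\mathscr{P}))$ equals the sum, over finite ``pile-shapes'' whose unique top node has domain $S$, of $\prod_{v}\mathbb{P}[\dom(v)]$. Second, since $\mathbb{P}[S']=\lambda^{S'}(\B_{S'})=\lambda^{S'}\big(\bigcup\{B:\dom(B)=S'\}\big)\leq\sum_{B:\dom(B)=S'}\mathbb{P}[B]$ by countable subadditivity, expanding every factor $\mathbb{P}[\dom(v)]$ this way reduces the estimate to a sum over \emph{labeled} pile-shapes, in which each node additionally carries a bad event of the prescribed domain; grouping these by the event $B_0$ labeling the top node (so $\dom(B_0)=S$), it suffices to show that for each fixed such $B_0$ the sum of $\prod_v\mathbb{P}[B_v]$ over all labeled pile-shapes with top label $B_0$ is at most $\tfrac{\omega(B_0)}{1-\omega(B_0)}$, and then sum over $B_0$.

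For this core estimate I would run the branching process of Moser and Tardos: start with a single node labeled $B_0$; a node already labeled $B$ tries, independently over all bad events $C$ with $\dom(C)\cap\dom(B)\neq\0$ and over all admissible ways to attach $C$ beneath $B$, to spawn a child labeled $C$, with probability given by the Moser--Tardos quantity (essentially $\omega(C)$, or the sharper $\omega(C)\prod_{C'\in\Nbhd_\B(C)}(1-\omega(C'))$), except that a would-be child that lands on an already-occupied grid position is identified with the node sitting there rather than created anew. The correctness hypothesis $\mathbb{P}[B]\leq\omega(B)\prod_{B'\in\Nbhd_\B(B)}(1-\omega(B'))$ is precisely what is needed so that, after telescoping the ``chosen'' factors (which multiply up to $\tfrac{1}{\omega(B_0)}\prod_v\omega(B_v)$ across a generated structure) against the ``not-chosen'' factors and the neighborhood products handed over by correctness, the probability of generating a given labeled pile-shape dominates $\tfrac{1-\omega(B_0)}{\omega(B_0)}\prod_v\mathbb{P}[B_v]$. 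As all neighborhoods are countable, this defines a (possibly defective) probability measure on finite structures, so these generation probabilities sum to at most $1$; rescaling by $\tfrac{\omega(B_0)}{1-\omega(B_0)}$ gives the core estimate and hence \eqref{eq:MoserTardos}.

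The step I expect to be the main obstacle is the identification mechanism just mentioned, made necessary by the fact that a pile in $\mathbf{Piles}(S)$ need not be a tree: a single element may $\prec$-support several others simultaneously (a ``diamond''), and a node may even carry two children of the same domain attached through different shared variables. Consequently pile-shapes are \emph{not} in bijection with Moser--Tardos witness trees, and the naive tree unfolding of a diamond introduces repeated nodes and so decreases the product $\prod_v\mathbb{P}[\dom(v)]$ in the wrong direction; one therefore cannot simply invoke \cite{MoserTardos} as a black box, and must instead verify that the branching recursion, run directly on pile-shapes with descendants coalescing onto occupied positions, still telescopes verbatim. Carrying out this bookkeeping --- while simultaneously absorbing the countable-subadditivity loss from several bad events sharing a domain, and checking that the infinite products and sums involved converge (which follows from $\omega<1$ together with correctness) --- is the delicate part; the rest is the standard Moser--Tardos computation. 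An alternative is to bypass the explicit branching process and bound the quantity $P(S):=\sum_{\mathscr{P}\in\mathbf{Piles}(S)}\prod_{\tau\in\mathscr{P}}\mathbb{P}[\dom(\tau)]$, together with its event-refined variants, directly by induction on height, showing it is majorized by the relevant fixed point of the same recursion.
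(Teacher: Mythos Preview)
You have correctly located the crux: a neat pile is a DAG rather than a tree, because a single $\tau$ may $\prec$-support several elements simultaneously, and you are right that unfolding the DAG into a tree only \emph{lowers} the weight $\prod_\tau\mathbb{P}[\dom(\tau)]$, which is the wrong direction for an upper bound. But neither of your two proposed resolutions closes this gap.

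The Galton--Watson process ``with coalescence onto already-occupied grid positions'' is not well-posed: the branching process generates an abstract labeled tree with no notion of grid positions, so there is nothing for a new child to coalesce onto; if you force coalescence by hand, the offspring of distinct nodes cease to be independent and the telescoping bound $\prod_v\mathbb{P}[B_v]\leq\frac{\omega(B_0)}{1-\omega(B_0)}\cdot(\text{generation probability})$ no longer holds. The alternative of inducting on height directly for piles fails for the same underlying reason: removing the top element $\tau_0$ of $\mathscr{P}$ leaves a collection of sub-piles $\mathscr{P}_{S'}$ (one for each child domain $S'$) that are \emph{not} disjoint, so $\mathrm{weight}(\mathscr{P})=\mathbb{P}[S]\prod_{\tau\neq\tau_0}\mathbb{P}[\dom(\tau)]$ is at \emph{least} $\mathbb{P}[S]\prod_{S'}\mathrm{weight}(\mathscr{P}_{S'})$, with inequality the wrong way whenever a diamond is present; hence the recursion $P_n(S)\leq\mathbb{P}[S]\prod_{S':S'\cap S\neq\0}(1+P_{n-1}(S'))$ does not follow.

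The paper supplies the missing idea: an \emph{injection} $\mathscr{P}\mapsto\mathscr{T}_\mathscr{P}$ from $\mathbf{Piles}(S)$ into proper finite trees over $\dom(\B)$ rooted at $(S)$ that \emph{preserves the weight}. For each $\tau\in\mathscr{P}$ one records, among all $\prec$-paths from the top to $\tau$, the one whose sequence of domains is lexicographically largest; the set of these sequences is a tree with exactly $|\mathscr{P}|$ nodes, and $\mathbf{tail}(w_\mathscr{P}(\tau))=\dom(\tau)$, so $\prod_{\tau\in\mathscr{P}}\mathbb{P}[\dom(\tau)]=\prod_{w\in\mathscr{T}_\mathscr{P}}\mathbb{P}[\mathbf{tail}(w)]$. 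Injectivity then gives $\sum_{\mathscr{P}\in\mathbf{Piles}(S)}\lambda^{\supp(\mathscr{P})\times\N}(\mathbf{App}(\mathscr{P}))\leq\sum_{\mathscr{T}\in\mathbf{Trees}(S)}\prod_{w\in\mathscr{T}}\mathbb{P}[\mathbf{tail}(w)]$, and \emph{now} the height induction (or, equivalently, the standard branching-process computation) goes through verbatim because trees decompose cleanly.
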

		The proof of Theorem~\ref{theo:MoserTardos} is given in Appendix~\ref{app:MoserTardos}. The following corollary is immediate:
		\begin{corl}\label{corl:summation}
			For all $x \in X$, we have
			\[
				\sum_{\substack{S \in \dom(\B)\,:\\S \ni x}}\, \sum_{\mathscr{P} \in \mathbf{Piles}(S)} \lambda^{\supp(\mathscr{P})\times \N} \left(\mathbf{App}(\mathscr{P})\right) < \infty.
			\]
		\end{corl}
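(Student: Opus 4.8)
The plan is to sum the inequality of Theorem~\ref{theo:MoserTardos} over all $S \in \dom(\B)$ containing a fixed point, and then show that the resulting bound is finite — handling separately the one degenerate case in which that bound is not. Fix $x \in X$, write $\mathcal{B}_x \defeq \set{B \in \B \,:\, x \in \dom(B)}$, and let $\omega \colon \B \to [0;1)$ witness the correctness of $\B$. Applying \eqref{eq:MoserTardos} to each $S \in \dom(\B)$ with $S \ni x$, and noting that $\mathcal{B}_x$ is partitioned according to the value of $\dom(B)$, I would obtain
\[
\sum_{\substack{S \in \dom(\B)\,:\\ S \ni x}} \sum_{\mathscr{P} \in \mathbf{Piles}(S)} \lambda^{\supp(\mathscr{P}) \times \N}\!\left(\mathbf{App}(\mathscr{P})\right) \;\leq\; \sum_{B \in \mathcal{B}_x} \frac{\omega(B)}{1 - \omega(B)}.
\]
Thus it suffices either to show that the right-hand side is finite, or to bound the left-hand side by other means.

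Here I would split into two cases. If every $B \in \mathcal{B}_x$ has $\mathbb{P}[B] = 0$, then for each $S \in \dom(\B)$ with $S \ni x$ the set $\B_S$ is a countable union of $\lambda^S$-null sets, so $\mathbb{P}[S] = 0$; since the top element $\tau$ of any $\mathscr{P} \in \mathbf{Piles}(S)$ satisfies $\dom(\tau) = S$, the identity $\lambda^{\supp(\mathscr{P}) \times \N}(\mathbf{App}(\mathscr{P})) = \prod_{\tau' \in \mathscr{P}} \mathbb{P}[\dom(\tau')]$ forces $\lambda^{\supp(\mathscr{P}) \times \N}(\mathbf{App}(\mathscr{P})) = 0$, and the left-hand side above vanishes. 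Otherwise, choose $B_0 \in \mathcal{B}_x$ with $\mathbb{P}[B_0] > 0$. Every other member of $\mathcal{B}_x$ has domain meeting $\dom(B_0)$ at $x$, so $\mathcal{B}_x \subseteq \set{B_0} \cup \Nbhd_\B(B_0)$, which is countable by correctness. The correctness inequality $\mathbb{P}[B_0] \leq \omega(B_0) \prod_{B' \in \Nbhd_\B(B_0)} (1 - \omega(B'))$ then forces the (countable) product to be strictly positive, hence $\sum_{B' \in \Nbhd_\B(B_0)} \omega(B') < \infty$, and therefore $\sum_{B \in \mathcal{B}_x} \omega(B) < \infty$.

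Finally, from $\sum_{B \in \mathcal{B}_x} \omega(B) < \infty$ one gets $\omega(B) \leq 1/2$ for all but finitely many $B \in \mathcal{B}_x$, so $\omega(B)/(1 - \omega(B)) \leq 2\,\omega(B)$ for those $B$, while each of the remaining finitely many terms is finite because $\omega(B) < 1$; summing, $\sum_{B \in \mathcal{B}_x} \omega(B)/(1 - \omega(B)) < \infty$, which finishes the proof. The only real subtlety — and hence the step I expect to require the most care — is precisely that the bound coming out of Theorem~\ref{theo:MoserTardos} need not be finite on its own: nothing in the correctness hypothesis prevents $\omega(B)$ from tending to $1$ along a sequence of \emph{null} bad events through $x$, which is exactly why the case where all bad events through $x$ have probability zero must be disposed of by the separate (and elementary) argument above.
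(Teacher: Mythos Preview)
Your proof is correct and follows essentially the same approach as the paper's: sum the bound from Theorem~\ref{theo:MoserTardos}, then split into the degenerate case (all bad events through $x$ are null) and the generic case (some $B_0$ has $\mathbb{P}[B_0]>0$), using the positivity of the product in the correctness inequality to get $\sum \omega(B) < \infty$ and then the $\omega(B)\leq 1/2$ trick. The only cosmetic difference is in the degenerate case: the paper first replaces $\omega$ by a witness that vanishes on null events (so the right-hand side is zero), whereas you bypass the bound and argue directly that the left-hand side vanishes via $\mathbb{P}[S]=0$ and the product formula for $\lambda^{\supp(\mathscr{P})\times\N}(\mathbf{App}(\mathscr{P}))$; both are fine.
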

		\begin{proof}
			Let $\omega \colon \B \to [0;1)$ witness the correctness of $\B$. Due to Theorem~\ref{theo:MoserTardos}, it suffices to check that the sum
			\begin{equation}\label{eq:sum}
				\sum_{\substack{S \in \dom(\B)\,:\\S \ni x}} \, \sum_{\substack{B \in \B \,:\\\dom(B) = S}}\frac{\omega(B)}{1 - \omega(B)} = \sum_{\substack{B \in \B \,:\\ \dom(B) \ni x}} \frac{\omega(B)}{1 - \omega(B)}
			\end{equation}
			is finite. We may assume that $\omega(B) = 0$ whenever $\mathbb{P}[B] = 0$. If for all $B \in \B$ with $x \in \dom(B)$, we have $\mathbb{P}[B] = 0$, then the sum \eqref{eq:sum} is $0$ (hence finite). Otherwise, for some $B_0 \in \B$ with $x \in \dom(B)$, we have $\mathbb{P}[B_0] > 0$, and thus the correctness of $\B$ implies
			\[
			\prod_{B \in \Nbhd_\B(B_0)} (1 - \omega(B)) > 0.
			\]
			Therefore,
			\begin{equation}\label{eq:finite}
			\sum_{\substack{B \in \B \,:\\ \dom(B) \ni x}} \omega(B) \leq \sum_{B \in \Nbhd_\B(B_0)} \omega(B) < \infty.
			\end{equation}
			In particular, for all but finitely many events $B \in \B$ with $x \in \dom(B)$, we have $\omega(B) \leq 1/2$, so
			\[
			\frac{\omega(B)}{1 - \omega(B)} \leq 2\omega(B).
			\]
			Together with~\eqref{eq:finite}, this shows that the sum \eqref{eq:sum} is finite, as desired.
		\end{proof}
		
		The next corollary considers the case when the table $\theta$ is chosen randomly from $[0;1]^{X \times \N}$. (Note that the product probability space $([0;1]^{X \times \N}, \lambda^{X \times \N})$ is standard only if $X$ is countable.)
		
		\begin{corl}\label{corl:countable}
			For each $x \in X$, we have
			\[
				\int_{[0;1]^{X \times \N}}\sum_{\substack{S \in \dom(\B)\,:\\ S \ni x}} \mathbf{Ind}(S, \theta) \, \D\lambda^{X \times \N}(\theta) < \infty.
			\]
			In particular,
			\[
				\lambda^{X \times \N} (\set{\theta \in [0;1]^{X \times \N}\,:\, x \in \Stab(\theta)}) = 1.	
			\]
		\end{corl}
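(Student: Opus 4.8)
The plan is to unwind the definition $\mathbf{Ind}(S,\theta) = |\set{\mathscr{P} \in \mathbf{Piles}(S) \,:\, \mathscr{P} \text{ appears in } \theta}|$, swap the integral past the resulting sums using Tonelli's theorem, and then invoke Corollary~\ref{corl:summation}. Before doing so I would record the bookkeeping that makes Tonelli applicable. By correctness, for each $x \in X$ the set $\set{B \in \B \,:\, x \in \dom(B)}$ is countable (any two such bad events lie in each other's neighborhoods), so $\set{S \in \dom(\B) \,:\, S \ni x}$ is countable; and for each $S \in \dom(\B)$ the set $\mathbf{Piles}(S)$ is countable, since every $\tau$ in a pile $\mathscr{P}\in\mathbf{Piles}(S)$ has $\dom(\tau)$ in the (countable) connected component of $S$ in the ``shares a point'' graph on $\dom(\B)$, and a pile is a finite set of $\N$-valued functions on such finite domains — a countability already implicit in Theorem~\ref{theo:MoserTardos}. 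Since each $\mathbf{App}(\mathscr{P}) \subseteq [0;1]^{\supp(\mathscr{P})\times\N}$ is Borel and ``$\mathscr{P}$ appears in $\theta$'' is a condition on $\theta\vert(\supp(\mathscr{P})\times\N)$ only, the map $\theta \mapsto \sum_{S \ni x}\mathbf{Ind}(S,\theta)$ is a countable sum of indicator functions of events depending on countably many coordinates, hence measurable with respect to the product $\sigma$-algebra of $\lambda^{X\times\N}$.

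Next, because all terms are nonnegative and both index sets are countable, Tonelli gives
\[
\int_{[0;1]^{X\times\N}} \sum_{\substack{S\in\dom(\B)\\S\ni x}} \mathbf{Ind}(S,\theta)\,\D\lambda^{X\times\N}(\theta)
= \sum_{\substack{S\in\dom(\B)\\S\ni x}}\ \sum_{\mathscr{P}\in\mathbf{Piles}(S)} \lambda^{X\times\N}\!\left(\set{\theta \,:\, \mathscr{P} \text{ appears in } \theta}\right).
\]
The set $\set{\theta \,:\, \mathscr{P} \text{ appears in } \theta}$ is the preimage of $\mathbf{App}(\mathscr{P})$ under the coordinate projection $[0;1]^{X\times\N} \to [0;1]^{\supp(\mathscr{P})\times\N}$, and this projection pushes $\lambda^{X\times\N}$ forward to $\lambda^{\supp(\mathscr{P})\times\N}$; hence the right-hand side equals $\sum_{S\ni x}\sum_{\mathscr{P}\in\mathbf{Piles}(S)}\lambda^{\supp(\mathscr{P})\times\N}(\mathbf{App}(\mathscr{P}))$, which is finite by Corollary~\ref{corl:summation}. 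This establishes the first assertion.

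For the ``in particular'' part, a nonnegative measurable function with finite integral is finite almost everywhere, so $\sum_{S\in\dom(\B):\, S\ni x}\mathbf{Ind}(S,\theta) < \infty$ for $\lambda^{X\times\N}$-almost every $\theta$; by the definition of $\theta$-stability this is precisely the statement $\lambda^{X\times\N}(\set{\theta \,:\, x\in\Stab(\theta)}) = 1$ (note the set on the left is measurable, being $\set{g<\infty}$ for the measurable function $g(\theta)=\sum_{S\ni x}\mathbf{Ind}(S,\theta)$). The only delicate point is the measurability and countability bookkeeping needed to license the use of Tonelli on the non-standard space $[0;1]^{X\times\N}$, and that follows directly from the correctness hypothesis together with the construction of $\mathbf{App}(\mathscr{P})$; I do not expect any deeper obstacle.
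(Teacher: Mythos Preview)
Your proof is correct and follows essentially the same approach as the paper's: swap the integral and the sums via Tonelli, identify $\int\mathbf{Ind}(S,\theta)\,\D\lambda^{X\times\N}$ with $\sum_{\mathscr{P}\in\mathbf{Piles}(S)}\lambda^{\supp(\mathscr{P})\times\N}(\mathbf{App}(\mathscr{P}))$, and invoke Corollary~\ref{corl:summation}. You supply more of the countability and measurability bookkeeping than the paper does, but the underlying argument is the same.
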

		\begin{proof}
			Corollary~\ref{corl:summation} yields
			\begin{align*}
			\int_{[0;1]^{X \times \N}}\sum_{\substack{S \in \dom(\B)\,:\\ S \ni x}} \mathbf{Ind}(S, \theta) \, \D\lambda^{X \times \N}(\theta)
			&= \sum_{\substack{S \in \dom(\B)\,:\\ S \ni x}} \int_{[0;1]^{X \times \N}} \mathbf{Ind}(S, \theta) \, \D\lambda^{X \times \N}(\theta) \\
			&= \sum_{\substack{S \in \dom(\B)\,:\\S \ni x}}\, \sum_{\mathscr{P} \in \mathbf{Piles}(S)} \lambda^{\supp(\mathscr{P})\times \N} \left(\mathbf{App}(\mathscr{P})\right)  <\infty. \qedhere
			\end{align*}
		\end{proof}
		
		We can now deduce the~LLL in the form of Theorem~\ref{theo:LLLvbls}. Since the set $\Nbhd_\B(B)$ is countable for each $B \in \B$, we may, without loss of generality, assume that $X$ is countable. By Corollary~\ref{corl:countable}, each $x \in X$ satisfies
		\[
			\lambda^{X \times \N} (\set{\theta \in [0;1]^{X \times \N}\,:\, x \in \Stab(\theta)}) = 1.	
		\]
		As $X$ is countable, we obtain
		\[
			\lambda^{X \times \N} (\set{\theta \in [0;1]^{X \times \N}\,:\, X = \Stab(\theta)}) = 1.
		\]
		Choose any $\theta$ such that $X = \Stab(\theta)$ and let $\mathcal{A}$ be any Moser--Tardos process with input~$\theta$. Then $\Stab(\mathcal{A}) = X$ and Theorem~\ref{theo:LLLvbls} follows from Proposition~\ref{prop:infinitestep}.
		
		\subsection{Moser--Tardos theory in the Borel setting}\label{subsec:BorelMT}
		
		\mbox{}
		
		\smallskip
		
		
		\noindent Let $X$ be a standard Borel space. Recall that an instance $\B$ over $X$ is \emph{Borel} if $\bigcup \B$ is a Borel subset of $\finf{X}{[0;1]}$. Notice that if $\B$ is a Borel instance over $X$, then $\dom(\B)$ is an analytic subset of~$\fins{X}$.\footnote{In most applications, each bad event $B \in \B$ has positive probability. If that is the case, then $\dom(\B)$ is actually a Borel subset of $\fins{X}$ due to the ``large section'' uniformization theorem~\cite[Corollary~18.7]{Kechris}.} A Moser--Tardos process $\mathcal{A} = (A_n)_{n=0}^\infty$ with Borel input $\theta \colon X \times \N \to [0;1]$ is \emph{Borel} if each $A_n$ is a Borel subset of $\fins{X}$. Note that if $\mathcal{A}$ is a Borel Moser--Tardos process, then the associated maps $t_n \colon X \to \N$ and $f_n \colon X \to [0;1]$ are Borel.
		
		\begin{prop}[\textbf{Borel Moser--Tardos processes}]\label{prop:BorelMoserTardos}
			Let $X$ be a standard Borel space and let $\B$ be a correct Borel instance over~$X$. Let $\theta \colon X \times \N \to [0;1]$ be a Borel table. Then there exists a Borel Moser--Tardos process $\mathcal{A}$ with input $\theta$.
		\end{prop}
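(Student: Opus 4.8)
The plan is to build $\mathcal{A} = (A_n)_{n=0}^\infty$ by recursion on $n$, following the definition of the Moser--Tardos process literally and checking that each step is Borel. As in Remark~\ref{remk:empty}, assume $\0 \not\in \dom(\B)$. Suppose $A_0, \ldots, A_{n-1}$ have been chosen as Borel subsets of $\fins{X}$; then $t_n \colon X \to \N$ and $f_n(x) \defeq \theta(x, t_n(x))$ are Borel, and the set from the definition,
\[
A_n' = \set{S \in \fins{X} \,:\, f_n \vert S \in \textstyle\bigcup \B},
\]
is Borel: it is the preimage of the Borel set $\bigcup\B$ under the Borel map $S \mapsto f_n \vert S$, and it coincides with $A_n'$ as defined because $f_n\vert S$ is the only possible $w \in \B_S$ with $w \subseteq f_n$. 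Thus the whole construction reduces to: \emph{given a Borel set $A_n' \subseteq \fins{X}\setminus\set{\0}$, produce, in a Borel way, a maximal disjoint subfamily $A_n \subseteq A_n'$.} Once $A_n$ is Borel, $\set{(x,S) \,:\, x \in S \in A_n}$ is the graph of a partial Borel function (as $A_n$ is disjoint), so $\bigcup A_n$ is Borel, hence $t_{n+1}$ is Borel and the recursion continues.

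So let $\mathcal F \subseteq \fins{X}\setminus\set{\0}$ be Borel; I want a Borel maximal disjoint $\mathcal G \subseteq \mathcal F$. The one structural fact I will use is that when $\mathcal F = A_n'$ the \emph{intersection graph} $\mathcal H$ on $\mathcal F$ (with $S \mathrel{\mathcal H} S'$ iff $S \neq S'$ and $S \cap S' \neq \0$) is locally countable: if $S = \dom(B_0)$ with $B_0 \in \B$, then any $S' = \dom(B') \in \mathcal F$ meeting $S$ has $B' \in \set{B_0}\cup\Nbhd_\B(B_0)$, which is countable by correctness of $\B$; equivalently, each $x \in X$ lies in only countably many members of $\mathcal F$. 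A maximal disjoint subfamily of $\mathcal F$ is exactly a maximal $\mathcal H$-independent subset of $\mathcal F$, but $\mathcal H$ need \emph{not} be locally finite, so Proposition~\ref{prop:lfcoloring} does not apply directly. This is the main obstacle; I get around it by stratifying $\mathcal F$ into countably many pieces on which the intersection graph \emph{is} locally finite.

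For this, note that $\set{(x,S) \,:\, x \in S \in \mathcal F}$ is Borel with countable vertical sections, so by the Luzin--Novikov theorem (Theorem~\ref{theo:LN}) it decomposes as $\bigsqcup_{k\in\N} C_k$ with each $C_k$ the graph of a partial Borel function $x \mapsto S^k_x$. For $x \in S \in \mathcal F$ let $c(x,S)$ be the unique $k$ with $(x,S)\in C_k$, and set $c(S) \defeq \max_{x\in S} c(x,S) \in \N$, a Borel function of $S$; put $\mathcal F^{=k} \defeq \set{S \in \mathcal F \,:\, c(S) = k}$. The key point: the intersection graph restricted to $\mathcal F^{=k}$ is locally finite, since if $S, S' \in \mathcal F^{=k}$ share a point $z$ then $S = S^i_z$ and $S' = S^j_z$ with $i,j\leq k$, leaving at most $|S|(k+1)$ choices for $S'$ once $S$ is fixed. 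Now build $\mathcal G$ in $\omega$ rounds: given Borel $G_0 \subseteq \mathcal F^{=0}, \ldots, G_{k-1}\subseteq\mathcal F^{=k-1}$, let $\mathcal F^{=k}_\ast \defeq \set{S \in \mathcal F^{=k} \,:\, S \cap \bigcup_{j<k}\bigcup G_j = \0}$ (Borel, as each $G_j$ is disjoint so $\bigcup G_j$ is Borel), and let $G_k$ be a Borel maximal disjoint subfamily of $\mathcal F^{=k}_\ast$ — which exists because the intersection graph on $\mathcal F^{=k}_\ast$ is locally finite (a subgraph of that on $\mathcal F^{=k}$), hence by Proposition~\ref{prop:lfcoloring} admits a Borel $\N$-coloring, from which the usual greedy recursion along color classes yields $G_k$ in a Borel way. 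Set $\mathcal G \defeq \bigcup_k G_k$.

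Then $\mathcal G$ is Borel and disjoint (within each $G_k$ by construction, and across rounds since $G_k \subseteq \mathcal F^{=k}_\ast$ avoids $\bigcup_{j<k}\bigcup G_j$), and it is maximal: for $S \in \mathcal F\setminus\mathcal G$ with $k = c(S)$, either $S \notin \mathcal F^{=k}_\ast$, so $S$ meets some member of $\bigcup_{j<k}G_j \subseteq \mathcal G$, or $S \in \mathcal F^{=k}_\ast\setminus G_k$, so $S$ meets some member of $G_k \subseteq \mathcal G$ by maximality of $G_k$. Applying this to $\mathcal F = A_n'$ at every stage of the recursion gives the desired Borel Moser--Tardos process. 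The only genuinely nonroutine ingredient is the stratification $\mathcal F = \bigsqcup_k \mathcal F^{=k}$, which converts one locally countable intersection graph into countably many locally finite ones; everything else is bookkeeping with Luzin--Novikov and Proposition~\ref{prop:lfcoloring}.
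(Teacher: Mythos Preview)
Your proof is correct and follows the same route as the paper: reduce to showing that every Borel family $\mathcal{F} \subseteq \fins{X}\setminus\{\0\}$ with countable ``vertical sections'' admits a Borel maximal disjoint subfamily, and apply this at each stage of the recursion. The paper simply cites this as a black box (Kechris--Miller, \cite[Lemma~7.3]{KechrisMiller}), whereas you supply a self-contained proof of it; your Luzin--Novikov stratification into pieces $\mathcal{F}^{=k}$ with locally finite intersection graph, followed by the greedy build-up over $k$, is a clean elementary argument that avoids the Feldman--Moore machinery used in the original Kechris--Miller proof.
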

		\begin{proof}
			We use the following result of Kechris and Miller:
			
			\begin{lemma}[Kechris--Miller~{\cite[Lemma~7.3]{KechrisMiller}}; \textbf{maximal disjoint subfamilies}]\label{lemma:disjoint}
				Let $X$ be a standard Borel space and let $A \subseteq \fins{X}$ be a Borel set such that for every $x \in X$, the set $\set{S \in A\,:\, x \in S}$ is countable. Then there is a Borel maximal disjoint subset $A_0 \subseteq A$.
			\end{lemma}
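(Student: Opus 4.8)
The plan is to pass to the \emph{intersection graph} $G$ on the vertex set $A$ --- which is itself a standard Borel space, being a Borel subset of $\fins{X}$ --- defined by
\[
S \mathrel{G} S' \,\vcentcolon\Longleftrightarrow\, S \neq S' \text{ and } S \cap S' \neq \0 .
\]
This $G$ is Borel, and it is locally countable: the $G$-neighbours of $S$ form $\bigcup_{x \in S}\set{S' \in A \setminus \set{S} : x \in S'}$, a finite union of countable sets by the local countability hypothesis on $A$. A Borel maximal disjoint subset $A_0 \subseteq A$ is precisely a Borel maximal $G$-independent set, so it is enough to produce one of the latter. I would do this in two stages: first show $\chi_{\operatorname B}(G) \leq \aleph_0$, and then build the maximal independent set greedily, one colour class at a time.

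For the countable colouring, first observe that the relation ``$x$ and $y$ lie in a common member of $A$'' on $X$ is Borel with countable sections (each such section lies inside a countable set by hypothesis), hence its transitive closure $E$ is a Borel equivalence relation with countable classes (by Theorem~\ref{theo:LN}). By the Feldman--Moore theorem, $E = \bigcup_n \mathrm{graph}(\gamma_n)$ for Borel bijections $\gamma_n \colon X \to X$, which I may take with $\gamma_0 = \id$. Fix a Borel linear order $<$ on $X$; for $S \in A$ set $m(S) \defeq \min_{<} S$ and
\[
f(S) \defeq \set{\min\set{n \in \N : \gamma_n(m(S)) = x} : x \in S} \in \fins{\N}.
\]
This is a Borel map $A \to \fins{\N}$ (every $x \in S$ is $E$-equivalent to $m(S)$, so the inner minima exist), and $S = \set{\gamma_n(m(S)) : n \in f(S)}$. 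The key point is that $S$ is recoverable from any triple $(f(S), x, k)$ with $x \in S$, $k \in f(S)$, and $\gamma_k(m(S)) = x$, via $S = \set{\gamma_n(\gamma_k^{-1}(x)) : n \in f(S)}$. Hence for each $F \in \fins{\N}$ and each $S \in f^{-1}(F)$, a $G$-neighbour $T$ of $S$ with $f(T) = F$ is determined by a pair $(x, k)$ with $x \in S \cap T$ and $k \in F$, so $G \vert f^{-1}(F)$ has degrees at most $|S| \cdot |F|$, i.e.\ it is a locally finite Borel graph. By Proposition~\ref{prop:lfcoloring} it admits a Borel proper colouring $c_F \colon f^{-1}(F) \to \N$, and then $S \mapsto (f(S), c_{f(S)}(S))$ is a Borel proper colouring of $G$ into the countable set $\fins{\N} \times \N$.

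Finally, given a Borel proper colouring $c \colon A \to \N$ of $G$, put $A_{\leq 0} \defeq c^{-1}(0)$ and recursively $A_{\leq m+1} \defeq A_{\leq m} \cup \set{S \in c^{-1}(m+1) : S \cap S' = \0 \text{ for all } S' \in A_{\leq m}}$; each $A_{\leq m}$ is Borel, since the condition that $S$ meets no member of the Borel family $A_{\leq m}$ is Borel by Theorem~\ref{theo:LN} ($S$ meets only countably many members of $A$). Then $A_0 \defeq \bigcup_m A_{\leq m}$ is Borel; it is a disjoint family (two adjacent members with colours $j \leq k$ contradict independence of the colour class when $j = k$, and the defining condition at stage $k$ when $j < k$); and it is maximal (if $S \notin A_0$ and $c(S) = m$, then $S \notin A_{\leq m}$ forces $S$ to meet some member of $A_{\leq m-1} \subseteq A_0$). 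The main obstacle is the middle stage: arranging the Borel encoding $S \mapsto (m(S), f(S))$ so that each fibre $f^{-1}(F)$ carries a locally finite graph. This is where the finiteness of the members of $A$ is used essentially --- for an arbitrary locally countable Borel graph the conclusion $\chi_{\operatorname B} \leq \aleph_0$ fails --- and it requires the Feldman--Moore reduction together with careful verification of Borelness of the auxiliary maps.
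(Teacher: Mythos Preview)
Your proof is correct and follows essentially the same route as the argument the paper's source includes (commented out) just before the proof of Proposition~\ref{prop:BorelMoserTardos}: pass to the intersection graph on $A$, use Feldman--Moore on the induced countable Borel equivalence relation on $X$ to code each $S$ by $(\min_< S, f(S))$ with $f(S) \in \fins{\N}$, observe that each fibre $f^{-1}(F)$ carries a locally finite graph so Proposition~\ref{prop:lfcoloring} applies, and then greedily extract a maximal independent set from the resulting countable Borel colouring. In the compiled paper the lemma is simply cited to Kechris--Miller without proof.
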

			
			On Step~$n$ of the Moser--Tardos process, we are given a Borel map $f_n \colon X \to [0;1]$, so the set
			\[
				A_n' \defeq \set{S \in \dom(\B)\,:\, f_n \supseteq w \text{ for some } w \in \B_S} = \set{S \in \dom(\B)\,:\, f_n\vert S \in \textstyle\bigcup \B}
			\]
			is Borel. Hence, we can use Lemma~\ref{lemma:disjoint} to pick a Borel maximal disjoint subset $A_n \subseteq A_n'$.
		\end{proof}

		\section{Hereditarily finite sets}\label{sec:HF}
		
		
		\noindent In this section we describe the construction of a ``universal'' combinatorial structure over a space~$X$, whose points encode various combinatorial data that can be built from the elements of $X$.
		
		The set $\HF_\0(X)$ of all \emph{hereditarily finite sets} over $X$ is defined inductively as follows\footnote{Here we treat the points of $X$ as \emph{urelements}, i.e., not sets. Formally, we can replace $X$ with, say, the diagonal \[\Delta_X^\N \defeq \{(x, x, x, \ldots)\,:\, x \in X\}\subseteq X^\N,\] ensuring that no point in $X$ is a finite set.}:
		\begin{itemize}
			\item[--] $\HF^{(0)}(X) \defeq X$;
			\item[--] $\HF^{(n+1)}(X) \defeq \HF^{(n)}(X) \cup \fins{\HF^{(n)}(X)}$ for all $n \in \N$;
			\item[--] $\HF_\0(X) \defeq \bigcup_{n = 0}^\infty \HF^{(n)}(X)$ (note that this union is increasing).
		\end{itemize}
		In other words, $\HF_\0(X)$ is the smallest set containing $X$ that is closed under taking finite subsets. For $h \in \HF_\0(X)$, the \emph{underlying set} of $h$, in symbols $\U(h)$, is defined inductively by:
		\begin{itemize}
			\item[--] for $x \in X$, \[\U(x) \defeq \set{x};\]
			\item[--] for $h \in \HF^{(n+1)}(X) \setminus \HF^{(n)}(X)$, \[\U(h) \defeq \bigcup_{h' \in h} \U(h').\]
		\end{itemize}
		Equivalently, $\U(h)$ is the smallest subset $S$ of $X$ such that $h \in \HF_\0(S)$. The \emph{amplification} of $X$ is defined to be
		\[
			\HF(X) \defeq \set{h \in \HF_\0(X)\,:\, \U(h) \neq \0}.
		\]
		If $X$ is a standard Borel space, then so are $\HF_\0(X)$ and $\HF(X)$. The space $\HF(X)$ encodes the ``combinatorics'' of~$X$. For instance, $\HF(X)$ contains (as Borel subsets) the space $X^{<\infty}$ of all nonempty finite sequences of elements of $X$ and the space $X \times \N$, i.e., the union of countably many disjoint copies of $X$.\footnote{To embed $\N$ in $\HF_\0(X)$, we use the standard von Neumann convention $0 = \0$, $1 = \set{\0}$, $2 = \set{\0, \set{\0}}$, etc.} In fact, $\HF(X) \supseteq \HF(X) \times \N$, i.e., $\HF(X)$ contains ``countably many disjoint copies of itself.'' If $G$ is a Borel graph on $X$, then the edge set of~$G$, i.e., the set $\mathcal{E}(G) \defeq \set{\set{x, y}\,:\, x \,G\, y}$, is also a Borel subset of $\HF(X)$. So are other, more complicated, objects associated with~$G$. For instance, the set of all cycles in $G$, i.e., the set of all finite subsets $C \subseteq \mathcal{E}(G)$ whose elements form a cycle, is a Borel subset of $\HF(X)$.
		
		If $X'$ is a Borel subset of $\HF(X)$, then the inclusions
		\[
			\fins{X'}\subseteq \fins{\HF(X)} \quad \text{and} \quad \finf{X'}{[0;1]} \subseteq \finf{\HF(X)}{[0;1]}
		\]
		are Borel as well. Therefore, a Borel instance of the~LLL over $X'$ is also a Borel instance over $\HF(X)$. Because of that, we will restrict our attention to instances over $\HF(X)$, and this will include various combinatorial applications such as vertex coloring or edge coloring.
		
		Functions between sets naturally lift to functions between their amplifications. Namely, given a map $\phi\colon X \to Y$, define $\tilde{\phi}_\0 \colon \HF_\0(X) \to \HF_\0(Y)$ inductively~via:
		\begin{itemize}
			\item[--] for $x \in X$, \[\tilde{\phi}_\0(x) \defeq \varphi(x);\]
			\item[--] for $h \in \HF^{(n+1)}(X) \setminus \HF^{(n)}(X)$, \[\tilde{\phi}_\0(h) \defeq \set{\tilde{\phi}_\0(h')\,:\, h' \in h}.\]
		\end{itemize}
		The \emph{amplification} of $\phi$ is the map $\tilde{\phi} \colon \HF(X) \to \HF(Y)$ given by
		\[
			\tilde{\phi} \defeq \tilde{\phi}_\0 \vert \HF(X).
		\]
		For $S \in \fins{X}\setminus \set{\0}$, we have $\tilde{\phi}(S) = \phi(S)$ (where $\phi(S)$ denotes, as usual, the image of $S$ under $\phi$). If $\varphi$ is injective (resp. surjective), then~$\tilde{\phi}$ is also injective (resp. surjective). 
		
		
		\section{Approximate LLL}\label{sec:approx}
		
		\noindent In this section we state and prove our first main result: the approximate LLL for Borel instances.
		
		Let $(X, \mu)$ be a standard probability space. Suppose that $\B$ is a Borel instance over $\HF(X)$. For each $x \in X$, consider the following set:
		\[
			\sh_x(\B) \defeq \set{S \in \dom(\B) \,:\, x \in \U(h) \text{ for some } h \in S}.
		\]
		We call $\sh_x(\B)$ the \emph{shadow} of $\B$ over $x$. We say that $\B$ is \emph{hereditarily locally finite} if $\sh_x(\B)$ is finite for all $x \in X$. For a Borel map $f \colon \HF(X) \to [0;1]$, its \emph{defect} with respect to $\B$ is the set
		\[
			\Def_\B(f) \defeq \set{x \in X \,:\, f\vert S \in \textstyle\bigcup\B \text{ for some } S \in \sh_x(\B)}.
		\]
		Note that if $B$ is hereditarily locally finite, then $\Def_\B(f)$ is a Borel subset of $X$.
		
		\begin{theo}[\textbf{Approximate LLL}]\label{theo:approxLLL}
			Let $(X, \mu)$ be a standard probability space and let $\B$ be a hereditarily locally finite correct Borel instance over $\HF(X)$. Then for any $\epsilon > 0$, there is a Borel map $f \colon \HF(X) \to [0;1]$ with $\mu(\Def_\B(f)) \leq \epsilon$.
		\end{theo}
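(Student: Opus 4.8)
The plan is to run a Borel Moser--Tardos process with a carefully designed Borel table and to show that its output has small defect.

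A few harmless reductions first. By Remark~\ref{remk:empty} we may assume $\0 \notin \dom(\B)$, and after discarding the bad events of probability $0$ (so that $\dom(\B)$ becomes a Borel subset of $\fins{\HF(X)}$) we may assume $\mathbb{P}[B] > 0$ for every $B \in \B$. Hereditary local finiteness then gives two facts I will use repeatedly: for every $h \in \HF(X)$ the set $\{S \in \dom(\B) : h \in S\}$ is finite; and, consequently, the \emph{intersection graph} $H$ on $\dom(\B)$ (with $S \sim S'$ iff $S \cap S' \neq \0$) and the graph $H'$ on $\bigcup \dom(\B)$ (with $h \sim h'$ iff $h, h' \in S$ for some $S \in \dom(\B)$) are both locally finite. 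I also record the elementary fact that a neat pile of height $k$ has $H'$-diameter $O(k)$, since every element of such a pile lies below its unique top along a $\prec$-chain of length at most $k$.

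Pick any Borel table $\theta$ and, by Proposition~\ref{prop:BorelMoserTardos}, let $\mathcal{A}$ be a Borel Moser--Tardos process with input $\theta$; let $f \colon \HF(X) \to [0;1]$ be the associated Borel function (the Moser--Tardos limit on $\Stab(\mathcal{A})$, and, say, $\theta(\cdot, 0)$ off it). By Proposition~\ref{prop:infinitestep}, $f$ avoids every $B \in \B$ with $\dom(B) \subseteq \Stab(\mathcal{A})$, so $x \in \Def_\B(f)$ forces some $S \in \sh_x(\B)$ to contain an element outside $\Stab(\mathcal{A})$; since $\Stab(\theta) \subseteq \Stab(\mathcal{A})$ by Proposition~\ref{prop:reduction}, writing $N(x) \defeq \bigcup \sh_x(\B)$ we obtain $\Def_\B(f) \subseteq \{x \in X : N(x) \not\subseteq \Stab(\theta)\}$. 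Hereditary local finiteness makes each $N(x)$ a finite subset of $\HF(X)$ and this last set Borel. As an element fails to be $\theta$-stable exactly when $\mathcal{A}$ resamples it infinitely often, the task reduces to producing a Borel table $\theta$ with $\mu(\{x : N(x) \not\subseteq \Stab(\theta)\}) \leq \epsilon$, i.e. to controlling how often the (finitely many) elements of $N(x)$ get resampled.

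To build such a $\theta$ I would localize the randomness by a Borel coloring. Fix $\epsilon > 0$ and a large integer $D = D(\epsilon)$. Since the $D$-th power of the locally finite graph $H'$ is again locally finite, Proposition~\ref{prop:lfcoloring} supplies a Borel $c \colon \bigcup \dom(\B) \to \N$ assigning distinct values to elements at $H'$-distance at most $D$ (extend $c$ arbitrarily to $\HF(X)$). On an auxiliary standard probability space $(\Omega, \mathbb{P})$ carrying i.i.d. uniform $[0;1]$-variables $(\zeta_{i,n})_{i, n \in \N}$, set $\theta_\omega(h, n) \defeq \zeta_{c(h), n}$; this is a jointly Borel family of Borel tables, and for each $h$ the sequence $(\theta_\omega(h, n))_{n}$ is i.i.d. uniform. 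The virtue of the coloring is that any finite configuration of pile-functions supported on a set of $H'$-diameter at most $D$ queries pairwise distinct, hence independent, variables, and therefore ``appears'' in $\theta_\omega$ with exactly its Lebesgue probability. Now run, for each $\omega$, the $D$-truncated Moser--Tardos process (resample each element at most $D$ times; this remains Borel via Lemma~\ref{lemma:disjoint}) and let $f_\omega$, $\Def_\B(f_\omega)$ be as before. Because resampling is capped, $x \in \Def_\B(f_\omega)$ only if some $y \in N(x)$ reaches the cap, and one checks that such a $y$ must then carry an appearing Moser--Tardos pile through it of height $\asymp D$ and of bounded support. Summing over the finitely many relevant witnesses and invoking Corollary~\ref{corl:summation} together with Theorem~\ref{theo:MoserTardos}, the $\mathbb{P}$-probability that a fixed $y$ is resampled $D$ times is bounded by a tail of a convergent series, hence tends to $0$ as $D \to \infty$, uniformly off an $\epsilon/2$-fraction of $x$ (those where the relevant series is too large). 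A Fubini estimate then makes $\int_\Omega \mu(\Def_\B(f_\omega)) \, \D\mathbb{P}(\omega)$ at most $\epsilon$ for $D$ large, so some $\omega$ yields a Borel $f = f_\omega$ with $\mu(\Def_\B(f)) \leq \epsilon$.

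The main obstacle is exactly this last step. Run over a genuine product measure on $[0;1]^{\HF(X) \times \N}$, the ``ideal'' randomized argument would produce a solution with zero defect $\mu$-almost everywhere (compare Corollary~\ref{corl:countable}); but a \emph{Borel} table cannot be independent across an infinite dependency component of $H'$, so the independence one actually has is confined to configurations of bounded $H'$-diameter. Reconciling this with the fact that ``$y$ is resampled many times'' is a priori certified by a Moser--Tardos pile that may be spread out — by truncating the process so that the certifying configurations become the bounded-support ones the coloring controls, and absorbing the truncation error into $\epsilon$ — is where hereditary local finiteness is indispensable and where the real work lies.
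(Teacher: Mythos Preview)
Your overall strategy---color a high power of the locally finite dependency graph $H'$, pull countably many i.i.d.\ uniforms through the coloring to manufacture a Borel table, bound the defect via appearing piles, and finish with Fubini---is exactly the paper's. The gap is in your truncation. You cap the number of times each element may be resampled at $D$; from $x \in \Def_\B(f_\omega)$ you correctly deduce that some $y \in N(x)$ hit the cap, but the next claim, that $y$ then ``carries an appearing pile of height $\asymp D$ and of bounded support,'' is unjustified. The only pile you have is the trace-back of Proposition~\ref{prop:reduction} from the step $n_D$ at which $y$ was last resampled, and that pile has height $n_D + 1$; nothing ties $n_D$ to $D$, since distant parts of the instance may churn for arbitrarily many steps before $y$ exhausts its quota. (Bounding all \emph{values} in the pile by $D$, which your cap does guarantee, does not bound the height: $\prec$-chains can zigzag through fresh elements while keeping every value in $\{0,1\}$.) So the pile's support can have $H'$-diameter $\gg D$, your coloring $c$ need not be injective on it, and you lose the independence needed to equate the probability over $\omega$ that $\mathscr{P}$ appears in $\theta_\omega$ with $\lambda^{\supp(\mathscr{P}) \times \N}(\mathbf{App}(\mathscr{P}))$; the appeal to Corollary~\ref{corl:summation} then has no force.

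The paper truncates in \emph{time} instead: fix $N$, run exactly $N$ steps, and output $f_N$. If $x \in \Def_\B(f_N)$ then some $S \in \sh_x(\B)$ lies in $A'_N$, and Proposition~\ref{prop:reduction} gives a pile in $\mathbf{Piles}_N(S)$ of height \emph{exactly} $N+1$, hence of $H'$-diameter at most $2(N+1)$. Coloring the $2(N+1)$-th power of $H'$ makes $c$ injective on every such support, so appearance probabilities in $\theta_c$ match their Lebesgue values, and your Fubini argument (including the ``off an $\epsilon/2$-fraction of $x$'' caveat for non-uniformity) finishes the proof. The opening detour through $\Stab(\theta)$ and the limit function is not needed for this route.
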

		
		\subsection{Proof of Theorem~\ref{theo:approxLLL}}\label{subsection:ProofApprox}
		
		Let $(X, \mu)$ be a standard probability space and let $\B$ be a hereditarily locally finite correct Borel instance over $\HF(X)$. Fix $\epsilon > 0$. For $S \in \dom(\B)$ and $n \in \N$, let $\mathbf{Piles}_n(S)$ denote the set of all neat piles $\mathscr{P} \in \mathbf{Piles}(S)$ of height precisely $n+1$. In particular, we have
		\begin{equation*}
			\mathbf{Piles}(S) = \bigcup_{n=0}^\infty \mathbf{Piles}_n(S),
		\end{equation*}
		and the above union is disjoint. For $n \in \N$, let $D_n$ denote the set of all $x \in X$ such that
		\[
			\sum_{S \in \sh_x(\B)}\, \sum_{\mathscr{P} \in \mathbf{Piles}_n(S)} \lambda^{\supp(\mathscr{P})\times \N} \left(\mathbf{App}(\mathscr{P})\right) > \epsilon/2.
		\]
		It is clear from the definition that the set $D_n$ is analytic; in particular, it is $\mu$-measurable.\footnote{In fact, $D_n$ is Borel. Indeed, if there is $\tau \in \mathscr{P}$ with $\mathbb{P}[\dom(\tau)] = 0$, then $\lambda^{\supp(\mathscr{P})\times \N} \left(\mathbf{App}(\mathscr{P})\right) = 0$; and the set $\set{S \in \fins{\HF(X)} \,:\, \mathbb{P}[S] > 0 }$ is Borel due to the ``large section'' uniformization theorem~\cite[Corollary~18.7]{Kechris}.}
		Due to Corollary~\ref{corl:summation} and the fact that $\B$ is hereditarily locally finite, each $x \in X$ satisfies
		\[
			\sum_{S \in \sh_x(\B)}\, \sum_{\mathscr{P} \in \mathbf{Piles}(S)} \lambda^{\supp(\mathscr{P})\times \N} \left(\mathbf{App}(\mathscr{P})\right) < \infty.
		\]
		Hence we can choose $N \in \N$ so large that $\mu(D_N) \leq \epsilon/2$.
		
		Let $G$ be the graph on $\HF(X)$ given by
		\[
			(H_1, H_2) \in G \,\vcentcolon\Longleftrightarrow\, H_1 \neq H_2 \text{ and } \set{H_1, H_2} \subseteq S \text{ for some } S \in \dom(\B).
		\]
		Clearly, $G$ is analytic. Since $\B$ is hereditarily locally finite, $G$ is locally finite. For $n \in \N$, let~$G^n$ denote the analytic graph on $\HF(X)$ in which distinct elements $H_1$, $H_2 \in \HF(X)$ are adjacent if and only if $G$ contains a path of length at most $n$ joining $H_1$ and $H_2$ (in particular, $G^1 = G$). Since $G$ is locally finite, so is~$G^n$ for each $n \in \N$. By Proposition~\ref{prop:lfcoloring}, $\chi_{\operatorname{B}}(G^n) \leq \aleph_0$ for all $n \in \N$, so let $c \colon \HF(X) \to \N$ be a Borel proper coloring of $G^{2(N+1)}$.
		
		For a function $\theta \colon \N \times \N \to [0;1]$, define a map $\theta_c$ by
		\[
			\theta_c \colon \HF(X) \times \N \to [0;1] \colon (x, n) \mapsto \theta(c(x), n).
		\]
		Note that $\theta_c$ is a Borel table in the sense of the Moser--Tardos algorithm on $\HF(X)$. Let $Q$ be the set of all pairs $(x, \theta)$ with $x \in X$ and $\theta \colon \N \times \N \to [0;1]$ such that
		\[
			\text{there exist } S \in \sh_x(\B) \text{ and } \mathscr{P} \in \mathbf{Piles}_N(S) \text{ such that } \mathscr{P} \text{ appears in } \theta_c.
		\]
		By definition, $Q$ is an analytic subset of $X \times [0;1]^{\N \times \N}$.\footnote{Again, one can show that $Q$ is actually Borel.} Recall that for $x \in X$ and $\theta \colon \N \times \N \to [0;1]$, we use $Q_x$ and $Q^\theta$ to denote the corresponding fibers of $Q$.
		
		\begin{lemma}\label{lemma:smallfibers}
			For all $x \in X \setminus D_N$, we have
		$\lambda^{\N \times \N} (Q_x) \leq \epsilon/2
		$.
		\end{lemma}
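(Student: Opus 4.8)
The plan is to bound $\lambda^{\N\times\N}(Q_x)$ by a union bound over the piles occurring in the definition of $Q_x$, reducing everything to the single claim that for each fixed pile $\mathscr{P}$ appearing there, the probability over a random $\theta$ that $\mathscr{P}$ appears in the ``colored'' table $\theta_c$ is exactly $\lambda^{\supp(\mathscr{P})\times\N}(\mathbf{App}(\mathscr{P}))$; in other words, replacing the genuinely product-random table by $\theta_c$ does not alter the probability that a fixed pile appears. Granting this, and recalling that for $x\in X\setminus D_N$ the sum $\sum_{S\in\sh_x(\B)}\sum_{\mathscr{P}\in\mathbf{Piles}_N(S)}\lambda^{\supp(\mathscr{P})\times\N}(\mathbf{App}(\mathscr{P}))$ is at most $\epsilon/2$ by the definition of $D_N$, the lemma follows by subadditivity of $\lambda^{\N\times\N}$.

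The crucial point --- and the one place where it matters that $c$ is a proper coloring of $G^{2(N+1)}$ rather than of $G$ itself --- is that $c$ is \emph{injective} on $\supp(\mathscr{P})$ for every $S\in\dom(\B)$ and every $\mathscr{P}\in\mathbf{Piles}_N(S)$. First I would show that the support of a height-$(N+1)$ neat pile has small $G$-diameter. Since $\mathscr{P}\in\mathbf{Piles}(S)$ has a single top element $\tau$ with $\dom(\tau)=S$ and is neat, every $\tau'\in\mathscr{P}$ lies below $\tau$ in the transitive closure of $\prec$ via some chain $\tau'=\sigma_0\prec\sigma_1\prec\cdots\prec\sigma_a=\tau$ in $\mathscr{P}$, and the height bound forces $a\leq N$. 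Each step $\sigma_i\prec\sigma_{i+1}$ supplies a common point of $\dom(\sigma_i)$ and $\dom(\sigma_{i+1})$, and any two distinct members of a single element of $\dom(\B)$ are $G$-adjacent; chaining through these common points shows that every element of $\dom(\tau')$ is joined in $G$ to some element of $S$ by a path of length at most $a\leq N$. Hence any two elements of $\supp(\mathscr{P})=\bigcup_{\tau'\in\mathscr{P}}\dom(\tau')$ are joined in $G$ by a path of length at most $N+1+N\leq 2(N+1)$, so distinct elements of $\supp(\mathscr{P})$ are $G^{2(N+1)}$-adjacent and thus receive distinct $c$-values.

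With injectivity in hand, the measure computation is routine. Fixing $\mathscr{P}\in\mathbf{Piles}_N(S)$, the map $(H,n)\mapsto(c(H),n)$ embeds $\supp(\mathscr{P})\times\N$ into $\N\times\N$, so $\Phi_{\mathscr{P}}\colon\theta\mapsto\theta_c\vert(\supp(\mathscr{P})\times\N)$ is, after relabeling coordinates, the projection of $[0;1]^{\N\times\N}$ onto the sub-product indexed by the image of this embedding; consequently $(\Phi_{\mathscr{P}})_\ast(\lambda^{\N\times\N})=\lambda^{\supp(\mathscr{P})\times\N}$. By the defining property of $\mathbf{App}(\mathscr{P})$, the statement ``$\mathscr{P}$ appears in $\theta_c$'' is equivalent to ``$\Phi_{\mathscr{P}}(\theta)\in\mathbf{App}(\mathscr{P})$'', whence $\lambda^{\N\times\N}(\set{\theta:\mathscr{P}\text{ appears in }\theta_c})=\lambda^{\supp(\mathscr{P})\times\N}(\mathbf{App}(\mathscr{P}))$. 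Finally $Q_x$ is precisely the union of these events over $S\in\sh_x(\B)$ and $\mathscr{P}\in\mathbf{Piles}_N(S)$; this union is finite because $\sh_x(\B)$ is finite (hereditary local finiteness) and each $\mathbf{Piles}_N(S)$ is finite (its members are supported in the finite $G$-ball of radius $N$ about $S$, their functions have domains drawn from $\dom(\B)$, and all their values are at most $N$, since a value $k$ at a point forces a $\prec$-chain of $k+1$ functions). Subadditivity then yields $\lambda^{\N\times\N}(Q_x)\leq\sum_{S\in\sh_x(\B)}\sum_{\mathscr{P}\in\mathbf{Piles}_N(S)}\lambda^{\supp(\mathscr{P})\times\N}(\mathbf{App}(\mathscr{P}))\leq\epsilon/2$.

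The only step requiring real care is the geometric one in the second paragraph: controlling the $G$-diameter of $\supp(\mathscr{P})$, and in particular exploiting the fact that the ``unique top element'' condition built into $\mathbf{Piles}(S)$ is exactly what makes every function of $\mathscr{P}$ reachable from $\tau$ by a $\prec$-chain of bounded length. Once that is established, the pushforward-of-product-measure identity and the union bound are pure bookkeeping.
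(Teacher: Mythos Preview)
Your proposal is correct and follows essentially the same approach as the paper: bound the $G$-diameter of $\supp(\mathscr{P})$ via $\prec$-chains to the unique top element, deduce that $c$ is injective on $\supp(\mathscr{P})$, conclude that $\theta\mapsto\theta_c\vert(\supp(\mathscr{P})\times\N)$ is measure-preserving, and finish with a union bound using the definition of $D_N$. Your diameter bound $2N+1$ is even slightly sharper than the paper's $2(N+1)$ (immaterially so), and your explicit verification that $\mathbf{Piles}_N(S)$ is finite is a correct extra remark that the paper leaves implicit.
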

		\begin{proof}
			If $\mathscr{P}$ is a neat pile with a unique top element $\tau$, then for every $\tau' \in \mathscr{P}$, there exists a sequence $\tau_1$, \ldots, $\tau_k \in \mathscr{P}$ such that $\tau_1 = \tau'$, $\tau_k=\tau$, and $\tau_1 \prec \ldots \prec \tau_k$. In particular, $\dom(\tau_i) \cap \dom(\tau_{i+1}) \neq \0$ for all $1 \leq i < k$, so the distance in $G$ between any element of $\dom(\tau')$ and any element of $\dom(\tau)$ is at most $k \leq h(\mathscr{P})$. Therefore, the distance in $G$ between any two elements of $\supp(\mathscr{P})$ is at most~$2h(\mathscr{P})$.
			
			Fix any $x \in X \setminus D_N$ and let $S \in \sh_x(\B)$ and $\mathscr{P} \in \mathbf{Piles}_N(S)$. Since $h(\mathscr{P}) = N+1$, the distance in $G$ between any two elements of $\supp(\mathscr{P})$ is at most $2(N+1)$; in other words, any two distinct elements of $\supp(\mathscr{P})$ are adjacent in $G^{2(N+1)}$. Therefore, the coloring $c$ is injective on $\supp(\mathscr{P})$. This implies that the map
			\[
				[0;1]^{\N \times \N} \to [0;1]^{\supp(\mathscr{P}) \times \N} \colon \theta \mapsto \theta_c \vert (\supp(\mathscr{P}) \times \N)
			\]
			is measure-preserving. Since
			\[
				\mathscr{P} \text{ appears in } \theta_c \,\Longleftrightarrow\, \theta_c \vert (\supp(\mathscr{P}) \times \N) \in \mathbf{App}(\mathscr{P}),
			\]
			we may conclude
			\begin{align*}
				\lambda^{\N \times \N} (\set{\theta \in [0;1]^{\N \times \N} \,:\, \mathscr{P} \text{ appears in }\theta_c}) = \lambda^{\supp(\mathscr{P})\times \N} (\mathbf{App}(\mathscr{P})).
			\end{align*}
			Therefore,
			\begin{align*}
				\lambda^{\N \times \N}(Q_x) \leq& \sum_{S \in \sh_x(\B)}\, \sum_{\mathscr{P} \in \mathbf{Piles}_N(S)} \lambda^{\N \times \N} (\set{\theta \in [0;1]^{\N \times \N} \,:\, \mathscr{P} \text{ appears in }\theta_c}) \\
				=& \sum_{S \in \sh_x(\B)} \,\sum_{\mathscr{P} \in \mathbf{Piles}_N(S)} \lambda^{\supp(\mathscr{P})\times \N} (\mathbf{App}(\mathscr{P})) \,\leq\, \epsilon/2,
			\end{align*}
			by the definition of $D_N$.
		\end{proof}
		
		Using Fubini's theorem and Lemma~\ref{lemma:smallfibers}, we get
		\begin{align*}
			(\mu \times \lambda^{\N \times \N}) (Q) = \int_{X} \lambda^{\N \times \N}(Q_x) \D\mu(x) \leq \mu(D_N) + (1-\mu(D_N)) \cdot\epsilon/2  \leq \epsilon.
		\end{align*}
		Therefore, Fubini's theorem yields some $\theta \colon \N \times \N \to [0;1]$ with $\mu(Q^\theta) \leq \epsilon$. Fix any such~$\theta$ and let $\mathcal{A}=(A_n)_{n=0}^\infty$ be an arbitrary Borel Moser--Tardos process with input $\theta_c$. Let $t_n$ and $f_n$ denote the associated maps.
		
		\begin{lemma}
			$\Def_\B(f_N) \subseteq Q^\theta$.
		\end{lemma}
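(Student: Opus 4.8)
The plan is to deduce this inclusion directly from Proposition~\ref{prop:reduction}, after unwinding the relevant definitions; there is no genuine difficulty here, only definition-chasing.

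First I would fix an arbitrary $x \in \Def_\B(f_N)$. By the definition of the defect, there is some $S \in \sh_x(\B)$ with $f_N \vert S \in \bigcup\B$. Since $\sh_x(\B) \subseteq \dom(\B)$, this is precisely the assertion that $S \in A_N'$, where $A_N'$ is the set $\set{S \in \dom(\B) \,:\, f_N \supseteq w \text{ for some } w \in \B_S}$ arising at Step~$N$ of the Moser--Tardos process $\mathcal{A}$, which is run on the input table $\theta_c$.

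Next I would apply Proposition~\ref{prop:reduction} to this $S$ with $n = N$: since $S \in A_N'$, the proposition furnishes a neat pile $\mathscr{P} \in \mathbf{Piles}(S)$ of height exactly $N+1$, that is, $\mathscr{P} \in \mathbf{Piles}_N(S)$, which appears in the input table $\theta_c$. Finally, recalling that $Q$ was defined as the set of pairs $(x, \theta)$ for which there exist $S \in \sh_x(\B)$ and $\mathscr{P} \in \mathbf{Piles}_N(S)$ with $\mathscr{P}$ appearing in $\theta_c$, the pair $(S, \mathscr{P})$ just produced witnesses $(x, \theta) \in Q$, i.e., $x \in Q^\theta$. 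Since $x \in \Def_\B(f_N)$ was arbitrary, this gives $\Def_\B(f_N) \subseteq Q^\theta$.

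The only points that need a moment's attention are the bookkeeping identifications: that ``$f_N \vert S \in \bigcup\B$ for some $S \in \sh_x(\B)$'' is the same as ``$A_N'$ meets $\sh_x(\B)$'' (immediate from the definitions of $A_N'$ and $\bigcup\B$ together with $\sh_x(\B) \subseteq \dom(\B)$), and that the table feeding $\mathcal{A}$ is $\theta_c$, so that ``appears in $\theta$'' as phrased in Proposition~\ref{prop:reduction} means ``appears in $\theta_c$'' in the present situation. Neither presents an obstacle.
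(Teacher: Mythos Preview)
Your proposal is correct and follows exactly the same route as the paper's proof: unwind the definition of the defect to find $S \in \sh_x(\B)$ with $S \in A_N'$, invoke Proposition~\ref{prop:reduction} to obtain $\mathscr{P} \in \mathbf{Piles}_N(S)$ appearing in $\theta_c$, and conclude $(x,\theta) \in Q$. The paper's version is simply more terse.
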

		\begin{proof}
			If $x \in \Def_\B(f_N)$, then, by definition, there is $S \in \sh_x(\B)$ such that $f_N\vert S \in \B_S$, i.e., $S \in A_N'$. By Proposition~\ref{prop:reduction}, there is $\mathscr{P} \in \mathbf{Piles}_N(S)$ that appears in $\theta_c$. Therefore, $(x, \theta) \in Q$, as desired.
		\end{proof}
		
		Finally, we obtain $\mu(\Def_\B(f_N)) \leq \mu(Q^\theta) \leq \epsilon$, and the proof of Theorem~\ref{theo:approxLLL} is complete.

		\section{The LLL for probability measure-preserving group actions}\label{section:groups}
		
		
		\subsection{Definitions and the statement of the theorem}
		
		\mbox{}
		
		\smallskip
		
		\noindent As discussed in the introduction, we would like to establish a measurable version of the~LLL for Borel instances that, in a certain sense, ``respect'' some additional structure on the space $X$, specifically, an action of a countable group $\Gamma$. To make this idea precise, we introduce L-systems---objects consisting of a standard probability space equipped with a family of functions (``partial isomorphisms'') under which any instance of the~LLL that we might consider must be invariant. We then define the~LLL Game over an L-system, which captures the need for iterated applications of the~LLL.
		
		\subsubsection*{Equivalence relations}
		
		\mbox{}
		
		\smallskip
		
		\noindent We identify an equivalence relation $E$ on a set $X$ with the set of pairs $\set{(x,y) \,:\, x \,E\, y}$. In particular, if $X$ is a standard Borel space, then $E$ is \emph{Borel} if it is a Borel subset of $X^2$. We use~$X/E$ to denote the set of all $E$-classes. A set $X' \subseteq X$ is \emph{$E$-invariant} if it is a union of $E$-classes; i.e., for all $x \in X'$ and $y \in X$ with $x \, E \, y$, we have $y \in X'$. For $S \subseteq X$, we use $[S]_E$ to denote the \emph{$E$-saturation} of $S$, i.e., the smallest $E$-invariant subset of $X$ that contains $S$. For brevity, given $x \in X$, we write $[x]_E$ instead of $[\set{x}]_E$.
		 
		 We say that an equivalence relation $E$ is \emph{countable} if every $E$-class is countable. It follows from the Luzin--Novikov Theorem~\ref{theo:LN} that if $E$ is a countable Borel equivalence relation on a standard Borel space $X$, then the $E$-saturation of every Borel subset of $X$ is Borel.
		 
		 Given an equivalence relation $E$ on $X$, we write (somewhat ambiguously)
		 \begin{align*}
			 \fins{E} &\defeq \set{S \in \fins{X} \,:\, S \text{ is contained in a single $E$-class}}\\
			 \text{and} \qquad \finf{E}{Y} &\defeq \set{w \in \finf{X}{Y} \,:\, \dom(w) \in \fins{E}}.
		 \end{align*}
		 An \emph{instance \ep{of the LLL}} over $E$ is an instance $\B$ over $X$ such that $\dom(\B) \subseteq \fins{E}$.
		 
		 \begin{exmp}[\textbf{Equivalence relations induced by graphs}]
		 	Let $G$ be a graph on a set~$X$. We use~$E_G$ to denote the equivalence relation on $X$ whose classes are the connected components of $G$.
		 \end{exmp}
		 
		 \begin{exmp}[\textbf{Equivalence relations induced by group actions}]
		 	Let $\alpha \colon \Gamma \acts X$ be an action of a group $\Gamma$ on a set $X$. Then $E_\alpha$ denotes the corresponding \emph{orbit equivalence relation}, i.e., the equivalence relation whose classes are the orbits of $\alpha$. Notice that if $S \subseteq \Gamma$ is a generating set, then $E_\alpha = E_{G(\alpha, S)}$.
		 \end{exmp}
		 
		 \subsubsection*{Isomorphism structures}
		 
		 \mbox{}
		 
		 \smallskip
		 
		 \noindent An \emph{isomorphism structure} on an equivalence relation $E$ on a set $X$ is a family $\iso$ of bijections between $E$\=/classes which forms a groupoid\footnote{A \emph{groupoid} is a category in which every morphism has an inverse.} whose set of objects is $X / E$; more precisely, the following conditions must be fulfilled:
		 \begin{itemize}
		 	\item[--] for each $C \in X/E$, the identity map $\id_C \colon C \to C$ belongs to $\iso$;
		 	\item[--] for each $\phi \in \iso$, we have $\phi^{-1}\in\iso$;
		 	\item[--] for all $\phi$, $\psi \in \iso$, if $\im(\phi) = \dom(\psi)$, then $\psi \circ \phi \in \iso$.
		 \end{itemize}
		 The following are the main examples of isomorphism structures we will be considering.
		 
		 \begin{exmp}[\textbf{Isomorphism structures induced by graphs}]
		 	Let $G$ be a graph on a set~$X$. Define the isomorphism structure $\iso_G$ on $E_G$ as follows: A bijection $\phi\colon C_1 \to C_2$ between components $C_1$ and $C_2$ belongs to $\iso_G$ if and only if it is an isomorphism between the graphs $G\vert C_1$ and $G \vert C_2$.
		 \end{exmp}
		 
		 \begin{exmp}[\textbf{Isomorphism structures induced by group actions}]
		 	Let $\alpha \colon \Gamma \acts X$ be an action of a group $\Gamma$ on a set $X$. The isomorphism structure $\iso_\alpha$ on $E_\alpha$ is defined as follows: A bijection $\phi\colon O_1 \to O_2$ between orbits $O_1$ and $O_2$ belongs to $\iso_\alpha$ if and only if it is \emph{$\Gamma$\=/equivariant}, i.e., $\phi(\gamma \cdot x) = \gamma \cdot \phi(x)$ for all $x \in O_1$ and $\gamma \in \Gamma$. Notice that if $S \subseteq \Gamma$ is a generating set, then $\iso_\alpha \subseteq \iso_{G(\alpha,S)}$.
		 \end{exmp}
		 
		 Let $E$ be a Borel equivalence relation on a standard probability space $(X,\mu)$ and let $\iso$ be an isomorphism structure on $E$. We say that an instance $\B$ over $E$ is \emph{$\iso$-invariant} on a set $X' \subseteq X$ if for all $\phi \in \iso$ with $\dom(\phi) \cup \im(\phi) \subseteq X'$ and for all $B \in \B$ with $\dom(B) \subseteq \im(\phi)$, we have
		 \[
		 \set{w \circ \phi \,:\, w \in B} \in \B.
		 \]
		 An instance $\B$ is \emph{$\mu$-almost everywhere $\iso$-invariant} if it is $\iso$-invariant on an $E$-invariant $\mu$-conull Borel subset $X' \subseteq X$.
		 
		 \subsubsection*{L-Systems and instances of the LLL over them}
		 
		 \mbox{}
		 
		 \smallskip
		 
		 \noindent An \emph{L-system}\footnote{``L'' is for ``Lov\'asz.''} is a tuple $\LS = (X_\LS, E_\LS, \iso_\LS, \mu_\LS)$, where
		 \begin{itemize}
		 	\item[--] $(X_\LS, \mu_\LS)$ is a standard probability space;
		 	\item[--] $E_\LS$ is a countable Borel equivalence relation on $X_\LS$;
		 	\item[--] $\iso_\LS$ is an isomorphism structure on $E_\LS$.
		 \end{itemize}
		 An \emph{instance \ep{of the~LLL}} over an L-system $\LS$ is a $\mu_\LS$-almost everywhere $\iso_\LS$-invariant Borel instance over $E_\LS$. A Borel map $f \colon X_\LS \to [0;1]$ is a \emph{measurable solution} to an instance $\B$ over $\LS$ if $\Def_\B(f)$ is contained in an $E_\LS$-invariant $\mu_\LS$-null Borel subset of $X_\LS$.
		 
		 For a Borel action $\alpha \colon \Gamma \acts (X, \mu)$, let $\LS(\alpha,\mu)$ denote the L-system $(X, E_\alpha, \iso_\alpha, \mu)$ \emph{induced} by $\alpha$. 
		 An instance over $\LS(\alpha,\mu)$ is simply a Borel instance over $X$ such that the domain of each bad event $B \in \B$ is contained within a single $\alpha$-orbit and $\B$ is ($\mu$-almost everywhere) invariant under the $\Gamma$-equivariant bijections between the orbits of $\alpha$.
		 
		 \subsubsection*{Amplifications and expansions}
		 
		 \mbox{}
		 
		 \smallskip
		 
		 \noindent Before we can state the main result of this section, we need a few more definitions describing how to build new L-systems from old ones.
		 
		 Let $E$ be an equivalence relation on a set $X$. Define (somewhat ambiguously)
		 \[
			 \HF(E) \defeq \set{h \in \HF(X) \,:\, \U(h) \in \fins{E}}.
		 \]
		 The \emph{amplification} of $E$ is the equivalence relation $\tilde{E}$ on $\HF(E)$ defined by
		 \[
			 h_1 \, \tilde{E} \, h_2 \,\vcentcolon\Longleftrightarrow\, [\U(h_1)]_E = [\U(h_2)]_E.
		 \]
		 In other words, $\tilde{E}$ is the equivalence relation on $\HF(E)$ whose classes are the sets $\HF(C)$ with $C \in X/E$. For a bijection $\phi \colon C_1 \to C_2$ between $E$-classes, we may extend it to a bijection $\tilde{\phi} \colon \HF(C_1) \to \HF(C_2)$ between the corresponding $\tilde{E}$-classes. The \emph{amplification} of an isomorphism structure $\iso$ on $E$ is the isomorphism structure $\tilde{\iso}$ on $\tilde{E}$ given by
		 \[
			 \tilde{\iso} \defeq \set{\tilde{\phi} \,:\, \phi \in \iso}.
		 \]
		 Given an L-system $\LS = (X, E, \iso, \mu)$, its \emph{amplification} is the L-system
		 \[
			 \HF(\LS) \defeq (\HF(E), \tilde{E}, \tilde{\iso}, \mu). 
		 \]
		 Notice that the measure in $\HF(\LS)$ is the same as in $\LS$ and is concentrated on $X \subseteq \HF(X)$.
		 
		 Another way of obtaining new L-systems is via \emph{expansions}. Let $\iso$ be an isomorphism structure on an equivalence relation $E$ on a set $X$. Given a partial map $f \colon X \rightharpoonup Y$, the \emph{expansion} of $\iso$ by $f$ is the subset $\iso[f] \subseteq \iso$ defined as follows:
		 \[
			 \iso[f] \defeq \set{\phi \in \iso\,:\, f(x) = f(\phi(x)) \text{ for all } x \in \dom(\phi)}.
		 \]
		 Here the equality ``$f(x) = f(\phi(x))$'' should be interpreted as a shorthand for:
		 \[
			 \text{``Either $\set{x, \phi(x)} \subseteq \dom(f)$ and $f(x) = f(\phi(x))$, or else, $\set{x, \phi(x)}\cap\dom(f) = \0$.''}
		 \]
		 For an L-system $\LS = (X, E, \iso, \mu)$ and a Borel map $f \colon X \rightharpoonup Y$, the \emph{expansion} of $\LS$ by $f$ is the L-system
		 \[
		 \LS[f] \defeq (X, E, \iso[f], \mu). 
		 \]
		 The term ``expansion'' conveys the following intuition: If $\iso$ is thought of as a family of isomorphisms between certain substructures of $X$, then expanding $\iso$ by $f$ corresponds to adding $f$ to~$X$ as a new ``predicate'' whose values must be preserved by isomorphisms.
		 
		 \subsubsection*{The LLL game}
		 
		 \mbox{}
		 
		 \smallskip
		 
		 \noindent As we mentioned in the introduction, many combinatorial arguments contain iterated applications of the LLL, where the output of a previous iteration can be used to create an instance for the next one. To accommodate such arguments, we introduce the following definition.
		 
		 \begin{defn}[\textbf{LLL Game}]\label{defn:LLLGame}
		 	The \emph{LLL Game} over an L-system $\LS$ is played as follows. Set $\LS_0 \defeq \LS$. On Step~$n\in\N$, Player~I chooses a correct instance $\B_n$ over~$\LS_n$. Player~II must respond by playing a measurable solution $f_n$ to $\B_n$ and setting $\LS_{n+1} \defeq \LS_n[f_n]$. Player~I wins if Player~II does not have an available move on some finite stage of the game; Player~II wins if the game continues indefinitely. A~run of the~LLL Game looks like this:
		 	\smallskip
		 	\begin{center}
		 		\begin{tabular}{ c || c | c | c | c | c | c | c | c }			
		 			Player~I  & $\B_0$ &       & $\B_1$ &       &  \ldots & $\B_n$ & & \ldots\\
		 			\hline
		 			Player~II &        & $f_0$ &        & $f_1$ &  \ldots &        & $f_n$ & \ldots\\
		 		\end{tabular}
		 	\end{center}
		 \end{defn}
		 
		 One can think of the LLL Game as a struggle between a malevolent combinatorial proof (Player~I) and a descriptive set theorist (Player~II), who wants to adapt this proof to the measurable setting. The proof consists of a series of steps, each of which is an application of the~LLL. The goal of Player~II is to perform these steps measurably; however, she might not know what the steps are in advance, and each time she solves an instance of the LLL, her solution may be ``used against her'' in creating new instances.
		 
		 With Definition~\ref{defn:LLLGame} at hand, we are finally ready to state the main result of this section:
		 
		 \begin{theo}[{\textbf{Measurable LLL for group actions}}]\label{theo:Thm1}
		 	Let $\alpha \colon \Gamma \acts (X, \mu)$ be a measure-preserving action of a countable group $\Gamma$ on a standard probability space $(X,\mu)$. If $\alpha$ factors to the shift action $\Gamma \acts ([0;1]^\Gamma, \lambda^\Gamma)$, then Player~II has a winning strategy in the LLL Game over $\HF(\LS(\alpha, \mu))$.
		 \end{theo}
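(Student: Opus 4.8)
The plan is for Player~II to win by playing, at every step, the limit function of a \textbf{Borel Moser--Tardos process} (Proposition~\ref{prop:BorelMoserTardos}) fed with a table assembled out of a supply of ``fresh'' i.i.d. randomness extracted from the factor map to the shift; the factoring hypothesis is used \emph{only} to manufacture this randomness. \emph{Step~1: reduce to free actions.} The shift action $\Gamma \acts ([0;1]^\Gamma,\lambda^\Gamma)$ is essentially free when $\Gamma$ is infinite, and any point of $X$ with nontrivial stabilizer maps under the factor map into a $\lambda^\Gamma$-null set; since the factor map pushes $\mu$ to $\lambda^\Gamma$, the action $\alpha$ is free on an $\alpha$-invariant $\mu$-conull Borel set, to which we restrict. (The case of finite $\Gamma$ is easy and handled separately.)

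\emph{Step~2: a reservoir of fresh labels.} Composing the given factor map with a measure isomorphism $[0;1]\cong[0;1]^{\N^3}$ gives a Borel $\Gamma$-equivariant map $\pi\colon X\to ([0;1]^{\N^3})^\Gamma$ with $\pi_\ast\mu$ the product Lebesgue measure. Set $r(x;n,m,k)\defeq\pi(x)(1_\Gamma)(n,m,k)$; $\Gamma$-equivariance gives $r(\gamma\cdot x;n,m,k)=\pi(x)(\gamma)(n,m,k)$, so by freeness, for $\mu$-a.e.\ $x$ the array $\big(r(y;n,m,k)\big)_{y\in[x]_{E_\alpha},\,(n,m,k)\in\N^3}$ is a $\Gamma$-indexed i.i.d.\ uniform family along the orbit. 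Next, $\HF(E_\alpha)$ carries a \emph{countable} Borel equivalence relation $\tilde E_\alpha$ (each class $\HF(C)$ is countable), and the Borel map $\iota\colon \HF(E_\alpha)\to X$, $\iota(h)\defeq\min_{<}\U(h)$, has countable fibres and sends each class $\HF(C)$ into $C$; by the Luzin--Novikov theorem there is a Borel $e\colon\HF(E_\alpha)\to\N$ injective on each fibre of $\iota$, so $h\mapsto(\iota(h),e(h))$ is a Borel injection of each $\tilde E_\alpha$-class into $C\times\N$. For each $n\in\N$ define the Borel table
\[
\vartheta_n\colon \HF(X)\times\N\to[0;1],\qquad \vartheta_n(h,k)\defeq r\big(\iota(h);\,n,\,e(h),\,k\big).
\]
Then for $\mu$-a.e.\ $x$ the restriction $\vartheta_n\vert(\HF([x]_{E_\alpha})\times\N)$ is an i.i.d.\ uniform family (it reads off the orbit's i.i.d.\ array at distinct indices), and the tables $(\vartheta_n)_{n}$ are built from pairwise disjoint coordinate blocks of $\pi$, hence are ``jointly independent.''

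\emph{Step~3: the strategy, and verification.} At Step~$n$ Player~I plays a correct instance $\B_n$ over $\LS_n=\HF(\LS(\alpha,\mu))[f_0]\cdots[f_{n-1}]$; Player~II runs a Borel Moser--Tardos process with input $\vartheta_n$ (Proposition~\ref{prop:BorelMoserTardos}), extends the resulting limit function (defined on $\Stab$ of the process by Proposition~\ref{prop:infinitestep}) to a total Borel map $f_n\colon\HF(X)\to[0;1]$ by an arbitrary Borel rule off $\Stab(\vartheta_n)$, plays $f_n$, and sets $\LS_{n+1}\defeq\LS_n[f_n]$. By Proposition~\ref{prop:infinitestep}, $f_n$ avoids every $B\in\B_n$ with $\dom(B)\subseteq\Stab(\vartheta_n)$, so $\Def_{\B_n}(f_n)\subseteq N_n\defeq\{x\in X:\HF([x]_{E_\alpha})\not\subseteq\Stab(\vartheta_n)\}$, an $E_\alpha$-invariant Borel set. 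To see $\mu(N_n)=0$ we combine the Moser--Tardos estimate (Corollary~\ref{corl:summation}, together with Corollary~\ref{corl:countable} and the proof of Theorem~\ref{theo:LLLvbls}) with a Fubini argument: $f_0,\dots,f_{n-1}$, hence $\B_n$, depend only on blocks $0,\dots,n-1$ of $\pi$, whereas $\vartheta_n$ depends only on block $n$; by independence, conditionally on $\B_n$ the table $\vartheta_n$ is still an i.i.d.\ uniform array along $\mu$-a.e.\ orbit, so Corollary~\ref{corl:countable} applies within $\mu$-a.e.\ orbit and forces $\HF([x]_{E_\alpha})\subseteq\Stab(\vartheta_n)$ for $\mu$-a.e.\ $x$. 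Thus $\mu(N_n)=0$, $f_n$ is a measurable solution, Player~II's move is legal, and, this working at every step, she wins.

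\emph{The main obstacle.} The delicate point is making the phrase ``conditionally on the adversary's instance $\B_n$, the $n$-th table is fresh i.i.d.'' literally correct: one must check that Player~II's partial strategy through round $n$ is a Borel function of blocks $0,\dots,n-1$ of $\pi$ alone, so that Player~I, seeing only $f_0,\dots,f_{n-1}$, cannot ``peek'' at block $n$; and one must verify that the i.i.d.\ structure of $\vartheta_n$ along orbits genuinely survives restriction to $\HF([x]_{E_\alpha})$ and meshes correctly with the countable Borel equivalence relation $\tilde E_\alpha$ (this is exactly where freeness and the Luzin--Novikov coding of Step~2 are used). Converting the per-orbit Moser--Tardos computation into a single Fubini statement over $(X,\mu)$, even though individual orbits are $\mu$-null, is the technically subtle part of the argument.
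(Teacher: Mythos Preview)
Your overall strategy---split the shift-factor randomness into countably many independent blocks, feed block $n$ into a Borel Moser--Tardos process to answer Player~I's $n$th move, and certify stabilization via Corollary~\ref{corl:countable} plus Fubini---is exactly the paper's idea. The paper packages it differently: it introduces \emph{countable Borel groupoids} and \emph{shift L-systems}, proves that $\HF$ of a shift L-system factors to a shift L-system (Lemma~\ref{lemma:LebesgueHF0}), that every correct instance over a shift L-system admits a measurable solution (Lemma~\ref{lemma:main}), and that expanding by that solution again factors to a shift L-system (Lemma~\ref{lemma:expansion}). The point of this machinery is that on a shift L-system the measure is \emph{literally} a product $\mu\times\lambda^{\Gamma\times\N}$, so the pulled-back instance $\B_r$ depends on the base coordinate $r$ alone, and the Fubini step is a one-line computation.

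Your direct route has a genuine gap at precisely the point you flag as the ``main obstacle.'' You assert that $f_0,\dots,f_{n-1}$ ``depend only on blocks $0,\dots,n-1$ of $\pi$,'' but your construction does not deliver this. The maps $\iota(h)=\min_{<}\U(h)$ and $e$ are Borel functions on $\HF(E_\alpha)$ built from an arbitrary Borel order on $X$; the Borel maximal-disjoint selections inside the Moser--Tardos process likewise use the ambient Borel structure of $X$. None of these need be measurable with respect to $\sigma(\Pi_0,\dots,\Pi_{n-1})$, and the hypothesis $\pi_\ast\mu=\lambda^{\Gamma\times\N^3}$ gives you no product decomposition of $(X,\mu)$ itself---only of its image. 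Concretely: the re-indexing $\psi_x\colon(\delta,k)\mapsto(\gamma_{\iota(\tilde\phi_x(\delta))},e(\tilde\phi_x(\delta)),k)$ through which $\hat\theta_n(x)$ reads off $\Pi_n(x)$ depends on $x$, and unless you can show $\psi_x$ is independent of $\Pi_n(x)$ under $\mu$, ``$\Pi_n(x)$ at the coordinates $\psi_x(\cdot)$'' is not conditionally i.i.d.\ given the instance. (If, say, $<$ happened to compare points via their block-$n$ coordinates, $\iota$ would be biased and the table would not be uniform.) The only way to force $\psi_x$ and the Moser--Tardos choices to be $\Pi_n$-independent is to define them on the shift space and pull back through $\pi$---at which point you are running the paper's argument, and the groupoid formalism is exactly the bookkeeping that makes this push-forward survive both the amplification $\HF$ and the successive expansions by $f_0,\dots,f_{n-1}$.
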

		 
		 A very specific case of Theorem~\ref{theo:Thm1} is given by the following immediate corollary:
		 
		 \begin{corl}\label{corl:no_iterations}
			 Let $\alpha \colon \Gamma \acts (X, \mu)$ be a measure-preserving action of a countable group $\Gamma$ on a standard probability space $(X, \mu)$. Suppose that $\alpha$ factors to the shift action $\Gamma \acts ([0;1]^\Gamma, \lambda^\Gamma)$ and let $\B$ be a correct instance over $\LS(\alpha, \mu)$. Then there exists a Borel function $f \colon X \to [0;1]$ with $\mu(\Def_\B(f)) =0$.
		\end{corl}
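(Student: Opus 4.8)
The plan is to obtain Corollary~\ref{corl:no_iterations} as the single-round special case of Theorem~\ref{theo:Thm1}. The first step is to observe that the given correct instance $\B$ over $\LS(\alpha,\mu)$ is \emph{literally} a correct instance over the amplified L-system $\HF(\LS(\alpha,\mu)) = (\HF(E_\alpha), \widetilde{E_\alpha}, \widetilde{\iso_\alpha}, \mu)$. Indeed, $X \subseteq \HF(X)$ and $E_\alpha$ is the restriction of $\widetilde{E_\alpha}$ to $X$, so $\dom(B) \in \fins{E_\alpha} \subseteq \fins{\widetilde{E_\alpha}}$ for every $B \in \B$, and $\bigcup\B$ is still a Borel subset of $\finf{\HF(X)}{[0;1]}$; hence $\B$ is a Borel instance over $\widetilde{E_\alpha}$, and correctness is a property of $\B$ in isolation and is unaffected by the reinterpretation. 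For $\widetilde{\iso_\alpha}$-invariance: if $\tilde{\phi} \in \widetilde{\iso_\alpha}$ and $B \in \B$ has $\dom(B) \subseteq \im(\tilde{\phi})$, then since $\dom(B) \subseteq \fins{X}$ and $\tilde{\phi}$ restricts on $X$ to some $\phi \in \iso_\alpha$, we get $\dom(B) \subseteq \im(\phi)$ and $w \circ \tilde{\phi} = w \circ \phi$ for every $w \in B$; thus $\set{w \circ \tilde{\phi} : w \in B} = \set{w \circ \phi : w \in B} \in \B$, so the ($\mu$-almost everywhere) $\iso_\alpha$-invariance of $\B$ upgrades to ($\mu$-almost everywhere) $\widetilde{\iso_\alpha}$-invariance (the $E_\alpha$-invariant $\mu$-conull witness set $X'$ is replaced by the union of the classes $\HF(C)$ over components $C \subseteq X'$, which is $\widetilde{E_\alpha}$-invariant and $\mu$-conull since $\mu$ is concentrated on $X$).

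The second step is to let Player~I open the LLL Game over $\HF(\LS(\alpha,\mu))$ by playing $\B_0 \defeq \B$. By Theorem~\ref{theo:Thm1}, Player~II has a winning strategy, so in particular she has a legal response $f_0$ at Step~$0$: a measurable solution to $\B$ in the sense of Section~\ref{section:groups}, i.e., a Borel map $f_0 \colon \HF(E_\alpha) \to [0;1]$ whose defect $\Def_\B(f_0)$ is contained in a $\widetilde{E_\alpha}$-invariant $\mu$-null Borel subset of $\HF(E_\alpha)$.

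The third step is to restrict $f_0$ back to $X$ and check that nothing is lost. For $x \in X$ one has $\U(h) = \set{h}$ for every $h \in X$, so $\sh_x(\B) = \set{S \in \dom(\B) : x \in S}$, whereas for $x \in \HF(X) \setminus X$ no $S \in \dom(\B) \subseteq \fins{X}$ can contain an $h$ with $x \in \U(h)$, so $\sh_x(\B) = \0$. Consequently $\Def_\B(f_0) \subseteq X$, and, setting $f \defeq f_0 \vert X$ (a Borel map $X \to [0;1]$), the set $\Def_\B(f_0)$ is exactly the defect of $f$ computed for $\B$ viewed as an instance over $X$, as in the statement of the corollary. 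Since $\Def_\B(f_0)$ is $\mu$-null, this yields $\mu(\Def_\B(f)) = 0$, as required. The entire mathematical content sits in Theorem~\ref{theo:Thm1}; the only thing to take care of is the bookkeeping above --- that an instance over a measure-preserving action is already an instance over its amplification, and that passing from $f_0$ to the restriction $f_0 \vert X$ leaves the defect unchanged --- so I do not expect a genuine obstacle here.
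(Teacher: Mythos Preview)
Your proposal is correct and matches the paper's approach exactly: the paper simply calls Corollary~\ref{corl:no_iterations} an ``immediate corollary'' of Theorem~\ref{theo:Thm1} without supplying any argument, and you have correctly spelled out the bookkeeping---that an instance over $\LS(\alpha,\mu)$ is already an instance over $\HF(\LS(\alpha,\mu))$, and that restricting Player~II's first response to $X$ gives the desired $f$. One minor remark: in Step~3 you invoke the shadow-based definition of $\Def_\B$ from Section~\ref{sec:approx}, whereas the operative definition in Section~\ref{section:groups} is the simpler one from the introduction (``$x \in \dom(w)$ for some $w \in B \in \B$ with $w \subseteq f$''); since all domains lie in $X$ the two notions coincide here, so your argument goes through unchanged.
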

		
		
		\subsection{Outline of the proof}\label{prg:Thm1outline}
		
		\mbox{}
		
		\smallskip
		
		\noindent Let $\mathscr{G}$ denote the class of all L-systems of the form $\LS(\alpha, \mu)$, where $\alpha \colon \Gamma \acts (X,\mu)$ is a measure\-/preserving action of a countable group $\Gamma$ on a standard probability space $(X,\mu)$ that factors to the $[0;1]$-shift action of $\Gamma$. Let $\mathscr{L}$ be the class of all L-systems such that Player~II has a winning strategy in the~LLL Game over $\HF(\LS)$. Our goal is to show $\mathscr{G} \subseteq \mathscr{L}$. To that end, we will introduce an intermediate class~$\mathscr{C}$ such that $\mathscr{G} \subseteq \mathscr{C} \subseteq \mathscr{L}$.
		
		Our strategy for showing that $\mathscr{C} \subseteq \mathscr{L}$ will be to ensure that $\mathscr{C}$ has the following two properties:
		\begin{enumerate}[label={(A$\arabic*$)}]
			\item\label{item:A1} if $\LS \in \mathscr{C}$, then $\HF(\LS) \in \mathscr{C}$;
			\item\label{item:A2} if $\LS \in \mathscr{C}$ and $\B$ is a correct instance over $\LS$, then there exists a measurable solution $f$ to $\B$ such that  $\LS[f] \in \mathscr{C}$.
		\end{enumerate}
		The above conditions imply that $\mathscr{C} \subseteq \mathscr{L}$. Indeed, due to Property~\ref{item:A1}, it is enough to show that for every $\LS \in \mathscr{C}$, Player~II has a winning strategy in the~LLL Game over $\LS$. The existence of such strategy is guaranteed by Property~\ref{item:A2}, since, provided that $\LS_n \in \mathscr{C}$, Player~II can always find a measurable solution $f_n$ to $\B_n$ such that $\LS_{n+1} = \LS_n[f_n] \in \mathscr{C}$.
		
		It is easy to see that Property~\ref{item:A1} fails for $\mathscr{G}$. For instance, if $\LS = (X, E, \iso, \mu) \in \mathscr{G}$, then the measure $\mu$ is $E$\=/invariant, while it is not even $\tilde{E}$-quasi-invariant. To overcome this complication, we will introduce \emph{countable Borel groupoids}---algebraic structures more general than countable groups---and their actions on standard Borel spaces. Every Borel action of a countable Borel groupoid on a standard probability space induces an L-system. We will also define shift actions of countable Borel groupoids, generalizing shift actions of countable groups. Our choice for $\mathscr{C}$ will be the class of all L-systems that admit factor maps to L-systems induced by shift actions of countable Borel groupoids (we define what a factor map between two general L-systems is in \prg\ref{prg:factors}).
				
		\subsection{Factors of L-systems}\label{prg:factors}
		
		\mbox{}
		
		\smallskip

		\noindent In this section we introduce the notion of a factor map between two L-systems. It will allow us to transfer instances of the LLL from a given L-system to a simpler or better-behaved one.
				
				\begin{defn}[\textbf{Factors}]\label{defn:factors}
					Let $\LS_1 = (X_1, E_1, \iso_1, \mu_1)$ and $\LS_2 = (X_2, E_2, \iso_2, \mu_2)$ be L-systems. A Borel partial map $\pi \colon X_1 \rightharpoonup X_2$, defined on an $E_1$-invariant $\mu_1$-conull Borel subset of $X_1$, is called a \emph{factor map} (notation: $\pi \colon \LS_1 \to \LS_2$) if the following requirements are fulfilled:
					\begin{enumerate}[label={(\roman*)}]
						\item $\pi_\ast(\mu_1) = \mu_2$;
						\item the map $\pi$ is \emph{class-bijective}, i.e., for each $E_1$-class $C \subseteq \dom(\pi)$, its image $\pi(C)$ is an $E_2$-class and the restriction $\pi\vert C \colon C \to \pi(C)$ is a bijection;
						\item\label{item:diagram} for all $E_1$-classes $C_1$, $C_2 \subseteq \dom(\pi)$, whenever $\phi_2 \in \iso_2$ is a bijection between $\pi(C_1)$ and $\pi(C_2)$, there is a bijection $\phi_1 \in \iso_1$ between $C_1$ and $C_2$ that makes the following diagram commute:
						\[
							\begin{tikzcd}
								C_1 \arrow{d}{\pi} \arrow[dashrightarrow]{r}{\phi_1} & C_2 \arrow{d}{\pi} \\
							\pi(C_1) \arrow{r}{\phi_2}& \pi(C_2).
						\end{tikzcd}
						\]
					\end{enumerate}
				\end{defn}
				
				\begin{prop}\label{prop:EnhancedFactor}
					Let $\LS_1$ and $\LS_2$ be L-systems with a factor map $\pi \colon \LS_1 \to \LS_2$ between them. Then there exists a factor map from $\HF(\LS_1)$ to $\HF(\LS_2)$.
				\end{prop}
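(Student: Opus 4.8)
The plan is to take as the required map the amplification $\tilde\pi$ of $\pi$ itself, restricted to an appropriate domain, and then verify the three clauses of Definition~\ref{defn:factors} essentially formally, using that the operation $\widetilde{(\cdot)}$ behaves like a functor. Write $\LS_i = (X_i, E_i, \iso_i, \mu_i)$ and let $X_1' \defeq \dom(\pi)$, an $E_1$-invariant $\mu_1$-conull Borel subset of $X_1$. Applying the construction of Section~\ref{sec:HF} to $\pi \colon X_1' \to X_2$ yields a map $\tilde\pi \colon \HF(X_1') \to \HF(X_2)$, and I would set
\[
	\Pi \defeq \tilde\pi \vert \HF(E_1\vert X_1'), \qquad \text{where } \HF(E_1\vert X_1') = \set{h \in \HF(E_1) \,:\, \U(h) \subseteq X_1'}.
\]
Since $X_1'$ is $E_1$-invariant, $\HF(E_1\vert X_1')$ is a $\tilde{E_1}$-invariant Borel subset of $\HF(E_1)$; since $X_1 \cap \HF(E_1\vert X_1') = X_1'$ and the measure of $\HF(\LS_1)$ is concentrated on $X_1$ and restricts to $\mu_1$ there, $\HF(E_1\vert X_1')$ is $\mu_1$-conull; and $\tilde\pi$ is Borel by a level-by-level argument (its restriction to each $\HF^{(n)}(X_1')$ is built from $\pi$ by finitely many applications of the Borel operation $S \mapsto \pi(S)$, and $\HF(X_1') = \bigcup_n \HF^{(n)}(X_1')$).

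The verification rests on two routine lemmas about amplification, each proved by induction on $\HF$-rank: that it is functorial, $\widetilde{g \circ f} = \tilde g \circ \tilde f$ whenever $g \circ f$ is defined, and that it commutes with restriction, $\tilde f \vert \HF(S) = \widetilde{f\vert S}$ for $S \subseteq \dom(f)$ (together with the fact, already recorded in Section~\ref{sec:HF}, that $\tilde f$ is a bijection when $f$ is). Granting these, clause (i) is immediate: the measures of $\HF(\LS_1)$ and $\HF(\LS_2)$ are carried by the urelement level, $\Pi\vert X_1' = \pi$, and $\pi_\ast(\mu_1) = \mu_2$. For clause (ii), the $\tilde{E_1}$-classes inside $\dom(\Pi)$ are exactly the sets $\HF(C)$ for $E_1$-classes $C \subseteq X_1'$; on such a class $\Pi$ agrees with $\widetilde{\pi\vert C}$, which is a bijection onto $\HF(\pi(C))$, and $\HF(\pi(C))$ is precisely the $\tilde{E_2}$-class over the $E_2$-class $\pi(C)$, so $\Pi$ is class-bijective.

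Clause (iii) is where the hypotheses on $\pi$ and functoriality combine. Given $E_1$-classes $C_1, C_2 \subseteq X_1'$ and a bijection $\psi_2 \in \tilde{\iso_2}$ between $\Pi(\HF(C_1)) = \HF(\pi(C_1))$ and $\Pi(\HF(C_2)) = \HF(\pi(C_2))$, I would write $\psi_2 = \tilde\phi_2$ for some $\phi_2 \in \iso_2$; restricting to the urelement level identifies $\phi_2$ as a bijection between $\pi(C_1)$ and $\pi(C_2)$. Clause (iii) for $\pi$ then supplies $\phi_1 \in \iso_1$, a bijection between $C_1$ and $C_2$, with $\pi\circ\phi_1 = \phi_2\circ\pi$ on $C_1$. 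Setting $\psi_1 \defeq \tilde\phi_1 \in \tilde{\iso_1}$, a bijection between $\HF(C_1)$ and $\HF(C_2)$, functoriality and compatibility with restriction (applied to the restrictions of $\pi$, $\phi_1$, $\phi_2$ to the relevant classes) give $\Pi\circ\psi_1 = \widetilde{\pi\circ\phi_1} = \widetilde{\phi_2\circ\pi} = \psi_2\circ\Pi$ on $\HF(C_1)$, which is exactly the commuting square demanded by (iii). Hence $\Pi \colon \HF(\LS_1) \to \HF(\LS_2)$ is a factor map.

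I do not expect a serious obstacle: the whole content is that $\widetilde{(\cdot)}$ transports the data witnessing ``$\pi$ is a factor map'' to the data witnessing ``$\Pi$ is a factor map.'' The only genuine work is the two inductive lemmas on amplification (and the attendant Borelness bookkeeping), plus keeping careful track of which invariant conull Borel sets the various objects live on.
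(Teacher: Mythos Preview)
Your proposal is correct and is exactly the paper's approach: the paper's proof consists of a single sentence stating that the restriction of $\tilde\pi$ to $\HF(E_{\LS_1}) \cap \dom(\tilde\pi)$ (which coincides with your $\HF(E_1\vert X_1')$) is the desired factor map, leaving the verification of clauses (i)--(iii) implicit. You have simply written out that routine verification in full.
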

				\begin{proof}
					Let $\tilde{\pi} \colon \HF(\dom(\pi)) \to \HF(X_{\LS_2})$ be the amplification of~$\pi$. Then the restriction of $\tilde{\pi}$ to the set $\HF(E_{\LS_1}) \cap \dom(\tilde{\pi})$ is a factor map from $\HF(\LS_1)$ to $\HF(\LS_2)$.
				\end{proof}
				
				\begin{lemma}\label{lemma:pushforward}
					Let $\LS_1$ and $\LS_2$ be L-systems with a factor map $\pi \colon \LS_1\to \LS_2$ between them. Then for every correct instance~$\B$ over $\LS_1$, there exists a correct instance $\pi(\B)$ over $\LS_2$ such that whenever $f$ is a measurable solution to~$\pi(\B)$, the composition $f \circ \pi$, possibly restricted to a smaller invariant conull Borel subset, is a measurable solution to~$\B$.
				\end{lemma}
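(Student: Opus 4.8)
The plan is to transport $\B$ along $\pi$ by reindexing. Each bad event of $\B$ has nonempty domain (Remark~\ref{remk:empty}) contained in a single $E_1$-class, and $\pi$ maps that class bijectively onto an $E_2$-class, so the event can be carried over verbatim. Concretely, shrink $\dom(\pi)$ to an $E_1$-invariant $\mu_1$-conull Borel set $X_1$ on which $\B$ is $\iso_1$-invariant, and discard the (at most $\mu_1$-null worth of) events $B$ with $\dom(B)\not\subseteq X_1$. For a surviving $B$, with $S\defeq\dom(B)$ inside an $E_1$-class $C\subseteq X_1$, set $\pi_S\defeq\pi\vert S\colon S\to\pi(S)$ --- a bijection with $\pi(S)\in\fins{E_2}$ --- and define
\[
	\pi(B) \defeq \set{w\circ\pi_S^{-1} \,:\, w\in B} \subseteq [0;1]^{\pi(S)}, \qquad \pi(\B) \defeq \set{\pi(B) \,:\, B\in\B}.
\]
Reindexing along a bijection preserves Lebesgue product measure, so $\mathbb{P}[\pi(B)] = \mathbb{P}[B]$, and domains of events of $\pi(\B)$ sit in single $E_2$-classes.

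The crux is to check that $\pi(\B)$ is a \emph{correct Borel} instance over $\LS_2$, and the subtlety is that a class-bijective factor map need not be countable-to-one --- possibly uncountably many $E_1$-classes get collapsed onto a single $E_2$-class --- so the naive image $\pi(\bigcup\B)$ need not be Borel. Condition~(iii) of Definition~\ref{defn:factors} is exactly what neutralizes this. If $C_1,C_2\subseteq X_1$ are $E_1$-classes with $\pi(C_1) = \pi(C_2)$, applying (iii) to $\id_{\pi(C_1)}\in\iso_2$ produces $\phi_1\in\iso_1$ from $C_1$ to $C_2$ with $\pi\circ\phi_1 = \pi$ on $C_1$; since $\B$ is $\iso_1$-invariant, for $B\in\B$ over $C_1$ the event $B'\defeq\set{w\circ\phi_1 : w\in B}$ again lies in $\B$, now over $C_2$, with $\pi(B') = \pi(B)$. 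So collapsed classes contribute identical events to $\pi(\B)$; this lets one express ``$v\in\bigcup\pi(\B)$'' both as an existential quantifier over a single $\pi$-preimage point (analytic) and as the corresponding universal quantifier (co-analytic), so $\bigcup\pi(\B)$ is Borel --- after passing to a Borel $\mu_2$-conull subset of the image $\pi(X_1)$, which may be merely analytic. The $\iso_2$-invariance of $\pi(\B)$ is verified in the same style: given $\phi_2\in\iso_2$ between $\pi(C_1)$ and $\pi(C_2)$, lift it via (iii) to $\phi_1\in\iso_1$ with $\pi\circ\phi_1 = \phi_2\circ\pi$, and a direct diagram chase gives $\set{v\circ\phi_2 : v\in\pi(B)} = \pi(B')$ for $B'\defeq\set{w\circ\phi_1:w\in B}\in\B$. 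Correctness passes across because, within each $E_2$-class, the maps $\pi_S$ give a probability- and degree-preserving correspondence between the events of $\pi(\B)$ and those of $\B$ over one preimage class, so a witness $\omega$ for $\B$ induces the witness $\pi(B)\mapsto\omega(B)$ for $\pi(\B)$ (well defined by the collapsing analysis).

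Once $\pi(\B)$ is in hand, the solution lift is a routine defect-tracking argument. If $f\colon X_{\LS_2}\to[0;1]$ is a measurable solution to $\pi(\B)$, say $\Def_{\pi(\B)}(f)\subseteq Y_2$ for an $E_2$-invariant $\mu_2$-null Borel $Y_2$, set $g\defeq f\circ\pi$ on $X_1$, extended arbitrarily to a Borel map. If $x\in X_1\cap\Def_\B(g)$, witnessed by $B\in\B$ and $w\in B$ with $x\in S\defeq\dom(w)$ and $g\vert S = w$, then $S\subseteq[x]_{E_1}\subseteq X_1$ and $f\vert\pi(S) = (g\vert S)\circ\pi_S^{-1} = w\circ\pi_S^{-1}\in\pi(B)$, whence $\pi(x)\in\pi(S)\subseteq\Def_{\pi(\B)}(f)\subseteq Y_2$. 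Thus $\Def_\B(g)$ lies in the union of $X_{\LS_1}\setminus X_1$, $\pi^{-1}(Y_2)$, and the (null, $E_1$-invariant) set of points lying in some discarded event's domain --- all absorbed into an $E_1$-invariant $\mu_1$-null Borel set after a final harmless shrinking of $X_1$ --- so $f\circ\pi$, restricted to the $E_1$-invariant conull Borel complement of that set, is a measurable solution to $\B$. Essentially all of the difficulty is in the previous paragraph: the possible non-injectivity of $\pi$ is what makes Borelness and well-definedness of $\pi(\B)$ nontrivial, and condition~(iii) of the factor map is precisely the tool that dispatches it.
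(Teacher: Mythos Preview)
Your proof is correct and follows essentially the same route as the paper's: define $\pi(B)$ by reindexing along the class-bijection, and establish Borelness of $\bigcup\pi(\B)$ by writing membership both existentially and universally over $S\in\fins{E_1}$ with $\pi(S)=\dom(w)$. You are in fact more explicit than the paper about why those two quantifier formulations coincide---the paper simply writes them side by side, while you spell out that condition~(iii) applied to $\id_{\pi(C_1)}$ forces collapsed $E_1$-classes to contribute identical events; similarly, your treatments of $\iso_2$-invariance, correctness, and the defect-tracking solution lift are all things the paper dismisses as ``routine.''
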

				\begin{proof}
					For $i \in \set{1,2}$, let $\LS_i \eqqcolon (X_i, E_i, \iso_i, \mu_i)$. Suppose that $\B$ is a correct instance over $\LS_1$. Restricting $\pi$ to a smaller $E_1$-invariant $\mu_1$-conull Borel subset of $X_1$ if necessary, we arrange that $\B$ is $\iso_1$-invariant on $\dom(\pi)$ and $\im(\pi)$ is a Borel subset of $X_2$. Then we replace $X_1$ and $X_2$ by their invariant conull Borel subsets $\dom(\pi)$ and $\im(\pi)$ respectively. Thus, we now assume that $\pi \colon X_1 \to X_2$ is defined everywhere and is surjective.
					
					Consider any $B \in \B$. Since $\dom(B)$ is contained within a single $E_1$-class, the restriction
					\[
						\pi\vert \dom(B) \colon \dom(B) \to \pi(\dom(B))
					\]
					is bijective; in particular, the inverse
					\[
						(\pi\vert \dom(B))^{-1} \colon \pi(\dom(B)) \to \dom(B)
					\]
					is well-defined. Let
					\[
						\pi(B) \defeq \set{w \circ (\pi\vert \dom(B))^{-1} \,:\, w \in B}.
					\]
					Then $\pi(B)$ is a bad event over $X_2$ with domain $\pi(\dom(B))$. Define
					\[
						\pi(\B) \defeq \set{\pi(B) \,:\, B \in \B}.
					\]
					It is routine to check that $\pi(\B)$ is as desired. The only non-trivial step is to show that $\pi(\B)$ is Borel. To that end, we observe that the set $\bigcup \pi(\B)$ is both analytic and co-analytic, as for $w \in \finf{E_2}{[0;1]}$,
					\begin{align*}
						w \in \textstyle\bigcup\pi(\B) &\,\Longleftrightarrow\, \exists S \in \fins{E_1}\, \left(\pi(S) = \dom(w)\text{ and } w \circ (\pi\vert S) \in \textstyle\bigcup\B\right) \\
					&\,\Longleftrightarrow\, \forall S \in \fins{E_1}\, \left(\pi(S) = \dom(w) \Longrightarrow w \circ (\pi\vert S) \in \textstyle\bigcup\B\right).
					\end{align*}
					The first of these equivalences follows directly from the definition of $\pi(\B)$. To prove the second equivalence, take any $T \in \fins{E_2}$ and suppose that $S$, $S' \in \fins{E_1}$ satisfy $\pi(S) = \pi(S') = T$. Setting $C \defeq [S]_{E_1}$, $C' \defeq [S']_{E_1}$, and $D \defeq [T]_{E_2}$, we see that $\pi(C) = \pi(C') = D$. By part \ref{item:diagram} of Definition~\ref{defn:factors}, there is $\phi \in \iso_1$ that makes the following diagram commute:
					\[
					\begin{tikzcd}
					C \arrow{d}{\pi} \arrow[dashrightarrow]{r}{\phi} & C' \arrow{d}{\pi} \\
					D \arrow{r}{\id_D}& D.
					\end{tikzcd}
					\]
					As the instance $\B$ is $\iso_1$-invariant, we conclude that for all $w \in [0;1]^T$,
					\[
						w \circ (\pi \vert S) \in \bigcup \B \Longleftrightarrow w \circ (\pi \vert S') \in \bigcup \B, 
					\]
					and we are done.
				\end{proof}
				
				For a class $\mathscr{C}$ of L-systems, define the class $\mathscr{C}^\ast$ by
				\[
					\LS \in \mathscr{C}^\ast \,\vcentcolon\Longleftrightarrow\, \LS \text{ admits a factor map to } \LS' \text{ for some } \LS' \in \mathscr{C},
				\]
				so $\mathscr{C}^\ast \supseteq \mathscr{C}$ and $(\mathscr{C}^\ast)^\ast = \mathscr{C}^\ast$. Let $\mathscr{C}$ be a class of L-systems satisfying the following two conditions:
				\begin{enumerate}[label={(B$\arabic*$)}]
					\item\label{item:B1} if $\LS \in \mathscr{C}$, then $\HF(\LS) \in \mathscr{C}^\ast$;
					\item\label{item:B2} if $\LS \in \mathscr{C}$ and $\B$ is a correct instance over $\LS$, then there exists a measurable solution $f$ to $\B$ such that $\LS[f]\in\mathscr{C}^\ast$.
				\end{enumerate}
				Note that if $\pi \colon \LS_1 \to \LS_2$ is a factor map between L-systems $\LS_1 = (X_1, E_1, \iso_1, \mu_1)$ and $\LS_2 = (X_2, E_2, \iso_2, \mu_2)$ and $f \colon X_2 \rightharpoonup Y$ is a Borel function, then $\pi$ is also a factor map from $\LS_1[\pi \circ f]$ to $\LS_2[f]$. Therefore, due to Proposition~\ref{prop:EnhancedFactor} and Lemma~\ref{lemma:pushforward}, if $\mathscr{C}$ satisfies conditions \ref{item:B1} and \ref{item:B2}, then $\mathscr{C}^\ast$ has Properties~\ref{item:A1} and~\ref{item:A2} from~\prg\ref{prg:Thm1outline}, and hence $\mathscr{C} \subseteq \mathscr{C}^\ast \subseteq \mathscr{L}$.
				
				\subsection{Countable Borel groupoids and their actions}\label{prg:cBg}
				
				\mbox{}
				
				\smallskip
				
							
				\begin{defn}[\textbf{Countable Borel groupoids}]
					A \emph{countable Borel groupoid} $(R, \Gamma)$ is a structure consisting of a standard Borel space $R$ together with a countable set $\Gamma$ and Borel maps
					\[
						\begin{array}{clll}
						&\mathbf{a} \colon \Gamma \times R \to R &\colon (\gamma, r) \mapsto \gamma \cdot r \;\;\; &\text{(\emph{action})};\\
						&\mathbf{c} \colon \Gamma^2 \times R \to \Gamma &\colon (\gamma, \delta, r) \mapsto \gamma \circ_r \delta \;\;\; &\text{(\emph{composition})};\\
						&\mathbf{id} \colon R \to \Gamma &\colon r \mapsto \mathbf{1}_r \;\;\; &\text{(\emph{identity})};\\
						\text{and} \;\;\; &\mathbf{inv}\colon \Gamma \times R \to \Gamma &\colon (\gamma, r) \mapsto \gamma^{-1}_r \;\;\; &\text{(\emph{inverse})},
						\end{array}
					\]
					satisfying the following axioms:
					\[
						\begin{array}{lcc}
						 \text{-- } \text{\emph{consistency}: for all $\gamma$, $\delta \in \Gamma$ and $r \in R$,} &\qquad& \gamma \cdot (\delta \cdot r) = (\gamma \circ_r \delta) \cdot r;\\
						 \text{-- } \text{\emph{associativity}: for all $\gamma$, $\delta$, $\epsilon \in \Gamma$ and $r \in R$,} & & \gamma \circ_r (\delta \circ_r \epsilon) = (\gamma \circ_{\epsilon \cdot r} \delta) \circ_r \epsilon;\\
						 \text{-- } \text{\emph{identity}: for all $r \in R$ and $\gamma \in \Gamma$,} & & \mathbf{1}_r \cdot r = r$ \qquad
						 \text{and} \qquad $\mathbf{1}_{\gamma\cdot r} \circ_r \gamma = \gamma \circ_r \mathbf{1}_r = \gamma;\\
						 \text{-- } \text{\emph{inverse}: for all $r \in R$ and $\gamma \in \Gamma$,} & & \gamma^{-1}_r \circ_r \gamma = \mathbf{1}_r$ \qquad \text{and} \qquad  $\gamma \circ_{\gamma \cdot r} \gamma^{-1}_r = \mathbf{1}_{\gamma \cdot r}.
						\end{array}
					\]
				\end{defn}
				
				\begin{figure}[h]
				\[
					\begin{array}{ccc}
					\begin{tikzcd}
					\epsilon\cdot r  \arrow{rr}{\delta} & &\delta \cdot (\epsilon \cdot r) \arrow{d}{\gamma} \\
					r \arrow{u}{\epsilon} \arrow[dashed]{rr}[swap]{\gamma \circ_r (\delta \circ_r \epsilon)} \arrow{rru}[description]{\delta \circ_r \epsilon} &   & \gamma \cdot (\delta\cdot (\epsilon \cdot r))
					\end{tikzcd} &\qquad&
					\begin{tikzcd}
					\epsilon\cdot r  \arrow{rr}{\delta} \arrow{rrd}[description]{\gamma \circ_{\epsilon \cdot r} \delta} & &\delta \cdot (\epsilon \cdot r) \arrow{d}{\gamma} \\
					r \arrow{u}{\epsilon} \arrow[dashed]{rr}[swap]{(\gamma \circ_{\epsilon \cdot r} \delta) \circ_r \epsilon} &   & \gamma \cdot (\delta\cdot (\epsilon \cdot r))
					\end{tikzcd}
					\end{array}
				\]
				\caption{Associativity: the dashed arrows must coincide.}
				\end{figure}
				
	Any countable group $\Gamma$ can be canonically viewed as a countable Borel groupoid in the following way. Let $R \defeq \set{r}$ be a single point. For each $\gamma \in \Gamma$, set $\gamma \cdot r \defeq r$. Now we just transfer compositions, the identity, and inverses directly from the group (we use $\mathbf{1}_\Gamma$ to denote the identity element of $\Gamma$):
	\begin{equation}\label{eq:}
		\gamma \circ_r \delta \defeq  \gamma\delta;\qquad \mathbf{1}_r \defeq \mathbf{1}_\Gamma; \qquad \text{and} \qquad \gamma^{-1}_r \defeq \gamma^{-1}.
	\end{equation}			
	A more general class of examples is given by Borel actions of countable groups. Let $\alpha \colon \Gamma \acts R$ be a Borel action of a countable group $\Gamma$ on a standard Borel space~$R$. Then $(R, \Gamma)$ can be endowed with the structure of a countable Borel groupoid as follows: Set $\gamma \cdot r \defeq \gamma \cdot_\alpha r$ for all $\gamma \in \Gamma$, $r \in R$, and define compositions, identities, and inverses via~\eqref{eq:} (i.e., in a way that does not depend on~$r \in R$).
				
	An interesting example of a countable Borel groupoid is produced by ``bundling'' all countable groups into a single algebraic structure. Let $\mathcal{G}$ be the standard Borel space of all countably infinite groups with ground set $\N$ (which can be viewed as a Borel subset of the Cantor space $2^{\N^3}$). Define a countable Borel groupoid $(\mathcal{G}, \N)$ as follows: For each $n \in \N$ and $\Gamma \in \mathcal{G}$, let $n \cdot \Gamma \defeq \Gamma$. Now set
	\[
		\begin{array}{ll}
			n \circ_\Gamma m & \text{to be the product of $n$ and $m$ as elements of $\Gamma$};\\
			\mathbf{1}_\Gamma & \text{to be the identity element of $\Gamma$};\\
			n^{-1}_\Gamma & \text{to be the inverse of $n$ in $\Gamma$}.
		\end{array}
	\]
			
	The following proposition is a useful and easy-to-check condition that guarantees that a certain structure is a countable Borel groupoid.	
	\begin{prop}\label{prop:UniqueCBG}
		Let $R$ be a standard Borel space and let $E$ be a countable Borel equivalence relation on~$R$. Let $\Gamma$ be a countable set and let $\mathbf{a} \colon \Gamma \times R \to R \colon (\gamma, r) \mapsto \gamma \cdot r$ be a Borel function. Suppose that for each $r \in R$, the map $\gamma \mapsto \gamma \cdot r$ is a bijection between $\Gamma$ and $[r]_E$. Then there is a unique countable Borel groupoid structure on $(R, \Gamma)$ with $\mathbf{a}$ as its action map.
	\end{prop}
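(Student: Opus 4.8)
The plan is to observe that, once the action map $\mathbf{a}$ is fixed, the three remaining structure maps $\mathbf{c}$, $\mathbf{id}$, $\mathbf{inv}$ are completely determined by the groupoid axioms, and then to check that the forced definitions are Borel and do satisfy the axioms.

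First I would prove uniqueness. Fix $r \in R$; by hypothesis the map $\eta \mapsto \eta \cdot r$ is a bijection from $\Gamma$ onto $[r]_E$. Since $\gamma \cdot (\delta \cdot r) \in [r]_E$, the consistency axiom $(\gamma \circ_r \delta) \cdot r = \gamma \cdot (\delta \cdot r)$ forces $\gamma \circ_r \delta$ to be the \emph{unique} element of $\Gamma$ that this bijection sends to $\gamma \cdot (\delta \cdot r)$. Likewise, the identity axiom $\mathbf{1}_r \cdot r = r$ forces $\mathbf{1}_r$ to be the unique $\eta \in \Gamma$ with $\eta \cdot r = r$; and applying $\cdot\,(\gamma\cdot r)$ to $\gamma^{-1}_r \circ_r \gamma = \mathbf{1}_r$ and using consistency shows $\gamma^{-1}_r \cdot (\gamma \cdot r) = r$, which (since $\eta \mapsto \eta \cdot (\gamma\cdot r)$ is a bijection $\Gamma \to [\gamma\cdot r]_E = [r]_E$) forces $\gamma^{-1}_r$ to be the unique $\eta$ with $\eta \cdot (\gamma \cdot r) = r$. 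Hence any countable Borel groupoid structure on $(R,\Gamma)$ with action map $\mathbf{a}$ must be the one constructed below.

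For existence I would simply \emph{define} $\mathbf{c}$, $\mathbf{id}$, $\mathbf{inv}$ by the formulas just extracted: $\gamma \circ_r \delta$ is the unique element with $(\gamma \circ_r \delta)\cdot r = \gamma \cdot (\delta \cdot r)$; $\mathbf{1}_r$ is the unique element with $\mathbf{1}_r \cdot r = r$; and $\gamma^{-1}_r$ is the unique element with $\gamma^{-1}_r \cdot (\gamma \cdot r) = r$. These are well-defined because the relevant targets lie in the correct $E$-classes and $\eta \mapsto \eta\cdot s$ is a bijection $\Gamma \to [s]_E$ for every $s$. Borelness is immediate since $\Gamma$ is a countable standard Borel space, hence discrete: a function into $\Gamma$ is Borel iff each fibre is Borel, and e.g.\ $\mathbf{c}^{-1}(\eta) = \set{(\gamma,\delta,r)\,:\,\eta \cdot r = \gamma \cdot (\delta \cdot r)}$ is Borel because $\mathbf{a}$ is; similarly for $\mathbf{id}$ and $\mathbf{inv}$. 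It then remains to verify the four axioms. Each axiom is an equality between two elements of $\Gamma$, and in every case applying the appropriate injective map $\eta \mapsto \eta\cdot s$ to both sides and unwinding the definitions (using consistency) reduces it to a tautology: $\gamma\cdot(\delta\cdot(\epsilon\cdot r)) = \gamma\cdot(\delta\cdot(\epsilon\cdot r))$ for associativity, $\gamma\cdot r = \gamma\cdot r$ for the (left and right) identity laws, and $r=r$ or $\gamma\cdot r=\gamma\cdot r$ for the two inverse laws.

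I do not expect a serious obstacle here; the only thing to be careful about is the bookkeeping of which base point $s$ (hence which instance of injectivity) one uses when verifying each axiom—for associativity and for the right-identity and right-inverse laws one must use injectivity of $\eta\mapsto\eta\cdot s$ at $s = \epsilon\cdot r$ or $s = \gamma\cdot r$ rather than at $r$ itself—and confirming at each step that the element whose uniqueness is invoked genuinely lies in the $E$-class for which $\eta\mapsto\eta\cdot s$ is a bijection.
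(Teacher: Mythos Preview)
Your proposal is correct and follows essentially the same approach as the paper: the paper introduces the notation $\epsilon(r_1,r_2)$ for the unique $\epsilon\in\Gamma$ with $\epsilon\cdot r_1=r_2$, observes that the groupoid axioms force $\gamma\circ_r\delta=\epsilon(r,\gamma\cdot(\delta\cdot r))$, $\mathbf{1}_r=\epsilon(r,r)$, and $\gamma^{-1}_r=\epsilon(\gamma\cdot r,r)$, and then declares the verification of the axioms to be straightforward. Your write-up is in fact more explicit than the paper's about Borelness and about which injectivity is used when checking each axiom.
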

	\begin{proof}
	For $r_1$, $r_2 \in R$ with $r_1 \,E\, r_2$, let $\epsilon(r_1, r_2)$ denote the unique element $\epsilon \in \Gamma$ such that $r_2 = \epsilon \cdot r_1$. The only consistent way to turn $(R, \Gamma)$ into a countable Borel groupoid is as follows:
	\[
		\gamma \circ_r \delta \defeq \epsilon(r, \gamma \cdot (\delta \cdot r));\qquad \mathbf{1}_r \defeq \epsilon(r, r);\qquad\text{and}\qquad\gamma^{-1}_r \defeq \epsilon(\gamma \cdot r, r).
	\]
	A straightforward verification shows that the above definition satisfies all the axioms.
	\end{proof}
	
	Now we proceed to the definition of Borel actions of countable Borel groupoids.	
	\begin{defn}[\textbf{Actions}]\label{defn:actions}
		Let $(R, \Gamma)$ be a countable Borel groupoid. A (Borel) \emph{action} $(\rho, \alpha)$ of $(R, \Gamma)$ on a standard Borel space $X$ is a pair of Borel maps $\rho \colon X \to R$ and $\alpha \colon \Gamma \times X \to X \colon (\gamma, x) \mapsto \gamma \cdot_\alpha x$ satisfying the following conditions:
		\[
		\begin{array}{lcc}
		\text{-- } \text{\emph{equivariance}: for all $x \in X$ and $\gamma \in \Gamma$,} &\qquad\qquad& \rho(\gamma \cdot_\alpha x) = \gamma \cdot \rho(x);\\
		\text{-- } \text{\emph{identity}: for all $x \in X$,} & & \mathbf{1}_{\rho(x)} \cdot_\alpha x = x;\\
		\text{-- } \text{\emph{compatibility}: for all $x \in X$ and $\gamma$, $\delta \in \Gamma$} & & \gamma \cdot_\alpha (\delta \cdot_\alpha x) = (\gamma \circ_{\rho(x)} \delta) \cdot_\alpha x.
		\end{array}
		\]
		As with group actions, we will usually simply write $\gamma \cdot x$ for $\gamma \cdot_\alpha x$.
	\end{defn}
				
	Clearly, a (left) group action $\Gamma \acts X$ is also a countable Borel groupoid action if $\Gamma$ is understood as a countable Borel groupoid. Now suppose that a countable group $\Gamma$ acts (in a Borel way) on a standard Borel space $R$. Viewing $(R, \Gamma)$ as a countable Borel groupoid, consider an action $(\rho, \alpha)$ of $(R, \Gamma)$ on some space $X$. By the identity and the compatibility conditions in Definition~\ref{defn:actions}, $\alpha$ is an action of $\Gamma$ on~$X$, while the equivariance condition stipulates that the map $\rho \colon X \to R$ must be $\Gamma$-equivariant. Thus, a Borel action of $(R, \Gamma)$ is the same as a $\Gamma$-space equipped with a Borel $\Gamma$-equivariant map to~$R$.
	If $(\mathcal{G}, \N)$ is the countable Borel groupoid of all countable groups, then an action of $(\mathcal{G}, \N)$ on $X$ consists of a Borel map $\rho \colon X \to \mathcal{G}$ and a $\Gamma$-action on $\rho^{-1}(\Gamma)$ for each $\Gamma \in \mathcal{G}$.
				
	\begin{defn}[\textbf{Shift actions}]\label{defn:shift}
		Let $(R, \Gamma)$ be a countable Borel groupoid and let $Y$ be a standard Borel space. The \emph{$Y$-shift action} $(\rho,\alpha) \colon (R, \Gamma) \acts R \times Y^\Gamma$ is defined as follows: For each $(r, \theta) \in R \times Y^{\Gamma}$, set $\rho(r, \theta) \defeq r$, and for $\gamma \in \Gamma$, define
		\[
			\gamma \cdot_\alpha (r, \theta) \defeq (\gamma \cdot r, \theta'), \qquad \text{where} \qquad \theta'(\delta) \defeq \theta(\delta \circ_r \gamma) \text{ for all }\delta \in \Gamma.
		\]
	\end{defn}
				
	It is routine to check that the $Y$-shift action as defined above is indeed an action of $(R, \Gamma)$. We give the proof here to help the reader get familiar with the definitions. The equivariance condition is satisfied trivially. For the identity condition, observe that if $x = (r, \theta) \in R \times Y^\Gamma$, then
	\[
		\mathbf{1}_{\rho(x)} \cdot x = \mathbf{1}_r \cdot (r, \theta) = (\mathbf{1}_r \cdot r, \theta') = (r, \theta'),
	\]
	where for each $\delta \in \Gamma$,
	\[
		\theta'(\delta) = \theta(\delta \circ_r \mathbf{1}_r) = \theta(\delta),
	\]
	so $\theta' = \theta$, as desired. Finally, for the compatibility condition, we have
	\begin{align*}
		\gamma \cdot (\delta \cdot x) = \gamma \cdot (\delta \cdot (r, \theta)) = \gamma \cdot (\delta \cdot r, \theta') = (\gamma \cdot (\delta \cdot r), \theta'') = ((\gamma \circ_r \delta)\cdot r, \theta''),
	\end{align*}
	where for each $\epsilon \in \Gamma$,
	\[
		\theta''(\epsilon) = \theta'(\epsilon \circ_{\delta \cdot r} \gamma) = \theta((\epsilon \circ_{\delta \cdot r} \gamma) \circ_r \delta) = \theta(\epsilon\circ_r(\gamma \circ_r \delta)),
	\]
	so $\gamma \cdot (\delta \cdot x) = (\gamma \circ_r \delta) \cdot x$, as desired.
	
	Note that for a countable group $\Gamma$, Definition~\ref{defn:shift} is equivalent to the usual definition of the $Y$\=/shift action of $\Gamma$.
				
	By analogy with group actions, we can define L-systems corresponding to actions of countable Borel groupoids. Namely, let $(\rho, \alpha) \colon (R, \Gamma) \acts X$ be a Borel action of a countable Borel groupoid $(R, \Gamma)$ on a standard Borel space $X$. Let $E_{\alpha}$ be the corresponding \emph{orbit equivalence relation} on $X$, defined by
	\[
		x \,E_\alpha\, y \,\vcentcolon\Longleftrightarrow\, \gamma \cdot x = y \text{ for some } \gamma \in \Gamma.
	\]
	This is clearly a countable Borel equivalence relation. Note that $E_\alpha$ does not depend on $\rho$. Let $\iso_{(\rho, \alpha)}$ denote the isomorphism structure on $E_{\alpha}$ such that a bijection $\phi \colon C_1 \to C_2$ between $E_\alpha$-classes $C_1$, $C_2$ belongs to $\iso_{(\rho, \alpha)}$ if and only if $\phi$ is \emph{$(R, \Gamma)$\=/equivariant}, i.e., for all $x \in C_1$ and $\gamma \in \Gamma$, \[
		\rho(\phi(x)) = \rho(x) \qquad \text{and} \qquad \gamma \cdot \phi(x) = \phi(\gamma \cdot x).
	\]
	For $\mu \in \P(X)$, let $\LS(\rho, \alpha, \mu)$ denote the L-system $(X, E_{\alpha}, \iso_{(\rho, \alpha)}, \mu)$. In the case when $|R| = 1$, i.e., $(R, \Gamma)$ is a group, this definition coincides with the one given previously for group actions.
				
	We will be mostly interested in the properties of L-systems induced by shift actions of countable Borel groupoids. More precisely:
				
	\begin{defn}[\textbf{Shift L-systems}]
		A \emph{shift L-system} is any L-system of the form $\LS(\rho, \alpha, \mu \times \nu^\Gamma)$, where $(\rho, \alpha) \colon (R, \Gamma) \acts R \times Y^\Gamma$ is the $Y$-shift action of a countable Borel groupoid $(R, \Gamma)$ for some standard Borel space $Y$, $\mu \in \P(R)$, and $\nu \in \P(Y)$ is atomless. 
	\end{defn}
				
	Thanks to the measure isomorphism theorem, it is enough to consider shift L-systems induced by the $[0;1]$-shift action of $(R, \Gamma)$ with $\nu = \lambda$. However, sometimes it will be more convenient to use other choices for $Y$ and $\nu$; in particular, we will often assume that $Y = [0;1]^S$ and $\nu = \lambda^S$ for some countable set $S$.
				
	\subsection{Factors of L-systems induced by actions of countable Borel groupoids}
	
	\mbox{}
	
	\smallskip
				
	\noindent Let $(R, \Gamma)$ be a countable Borel groupoid and let $(\rho, \alpha) \colon (R, \Gamma) \acts X$ be a Borel action of $(R, \Gamma)$ on a standard Borel space $X$. The action $(\rho, \alpha)$ is \emph{free} if for all $x \in X$ and $\gamma \in \Gamma$,
	\[
		\gamma \cdot x = x\,\Longleftrightarrow\, \gamma = \mathbf{1}_{\rho(x)}.
	\]
	The \emph{free part} of $(\rho, \alpha)$ (notation: $\Free(\rho, \alpha)$ or $\Free(X)$ if the action is clear from the context) is the largest $E_{\alpha}$-invariant subset of $X$ on which the action is free. The free part of an action is always an invariant Borel set. For $\mu \in \P(X)$, an action is \emph{free $\mu$-almost everywhere} if its free part is $\mu$-conull. By definition, if $x\in \Free(X)$, then the map $\gamma \mapsto \gamma \cdot x$ is a bijection between $\Gamma$ and the orbit of $x$.
				
				\begin{prop}
					Let $(R, \Gamma)$ be a countable Borel groupoid and let $\mu \in \P(R)$. Let $Y$ be a standard Borel space and let $\nu \in \P(Y)$ be atomless. Then the $Y$-shift action of $(R, \Gamma)$ is free $(\mu \times \nu^\Gamma)$-almost everywhere.
				\end{prop}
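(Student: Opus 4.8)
The plan is to bound the set of non-free points directly and then apply Fubini's theorem. For each $\gamma \in \Gamma$ put
\[
	N_\gamma \defeq \set{(r, \theta) \in R \times Y^\Gamma \,:\, \gamma \neq \mathbf{1}_r \text{ and } \gamma \cdot (r, \theta) = (r, \theta)}.
\]
Since the action map and the map $r \mapsto \mathbf{1}_r$ are Borel, each $N_\gamma$ is Borel; and the $Y$-shift action fails to be free at a point $(r,\theta)$ precisely when $(r,\theta) \in N_\gamma$ for some $\gamma \in \Gamma$. A routine computation with the groupoid axioms shows that the set of points at which the action is free is $E_\alpha$-invariant --- along an orbit the isotropy groups are conjugated by the groupoid elements implementing that orbit, so triviality of the stabilizer is an orbit invariant --- hence $\Free(\rho,\alpha) = (R \times Y^\Gamma) \setminus \bigcup_{\gamma \in \Gamma} N_\gamma$. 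Thus it suffices to show that $(\mu \times \nu^\Gamma)(N_\gamma) = 0$ for each of the countably many $\gamma \in \Gamma$.

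First I would describe the fibers $(N_\gamma)_r$ for a fixed $r \in R$. By Definition~\ref{defn:shift}, $\gamma \cdot (r, \theta) = (\gamma \cdot r, \theta')$ with $\theta'(\delta) = \theta(\delta \circ_r \gamma)$, so the first coordinates already disagree unless $\gamma \cdot r = r$; hence $(N_\gamma)_r = \0$ whenever $\gamma \cdot r \neq r$. If $\gamma \cdot r = r$ (and $\gamma \neq \mathbf{1}_r$), then $(N_\gamma)_r$ is the set of $\theta \in Y^\Gamma$ with $\theta(\delta \circ_r \gamma) = \theta(\delta)$ for all $\delta \in \Gamma$. Specializing $\delta = \mathbf{1}_r$ and invoking the identity axiom $\mathbf{1}_{\gamma \cdot r} \circ_r \gamma = \gamma$ (which reads $\mathbf{1}_r \circ_r \gamma = \gamma$ because $\gamma \cdot r = r$), every $\theta \in (N_\gamma)_r$ satisfies $\theta(\gamma) = \theta(\mathbf{1}_r)$. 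As $\gamma$ and $\mathbf{1}_r$ are distinct coordinates and $\nu$ is atomless --- so that $\nu \times \nu$ assigns measure zero to the diagonal $\set{(y,y)\,:\,y \in Y} \subseteq Y^2$ --- it follows that $\nu^\Gamma\big((N_\gamma)_r\big) \leq \nu^\Gamma\big(\set{\theta \,:\, \theta(\gamma) = \theta(\mathbf{1}_r)}\big) = 0$.

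Finally, Fubini's theorem yields $(\mu \times \nu^\Gamma)(N_\gamma) = \int_R \nu^\Gamma\big((N_\gamma)_r\big)\,\D\mu(r) = 0$, and summing over $\gamma \in \Gamma$ shows that $\bigcup_{\gamma \in \Gamma} N_\gamma$ is $(\mu \times \nu^\Gamma)$-null; equivalently, $\Free(\rho,\alpha)$ is $(\mu \times \nu^\Gamma)$-conull, i.e., the $Y$-shift action is free $(\mu \times \nu^\Gamma)$-almost everywhere. The statement is essentially immediate once the shift action is unwound and the atomlessness of $\nu$ is used; the only point requiring a little care is the bookkeeping identifying $\Free(\rho,\alpha)$ with the complement of $\bigcup_\gamma N_\gamma$, which rests on the (routine) $E_\alpha$-invariance of the set of free points.
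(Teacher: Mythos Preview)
Your argument is correct and rests on the same idea as the paper's: atomlessness of $\nu$ forces the event $\theta(\gamma) = \theta(\mathbf{1}_r)$ (for $\gamma \neq \mathbf{1}_r$) to be $\nu^\Gamma$-null. The paper packages this more tersely by observing in one line that $\Free(R \times Y^\Gamma) \supseteq R \times F$, where $F \defeq \set{\theta \in Y^\Gamma \,:\, \theta \text{ is injective}}$, which is $\nu^\Gamma$-conull for the same reason.
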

				\begin{proof}
					It is enough to argue that $\Free(R \times Y^\Gamma) \supseteq R \times F$, where
					\[
						F \defeq \set{\theta \in Y^\Gamma\,:\, \theta \colon \Gamma \to Y \text{ is injective}},
					\]
					since the set $F$ is $\nu^\Gamma$-conull. Indeed, suppose that $(r, \theta) \in R \times F$ and let $\gamma \in \Gamma$ be such that $\gamma \cdot (r, \theta) = (r, \theta)$. By definition, this means that $\gamma \cdot r = r$ and
					$
						\theta(\mathbf{1}_r) = \theta(\mathbf{1}_r \circ_r \gamma) = \theta(\mathbf{1}_{\gamma \cdot r} \circ_r \gamma) = \theta(\gamma)
					$. 
					Since $\theta$ is injective, this yields $\gamma = \mathbf{1}_r$, as desired.
				\end{proof}
				
				The next lemma will be useful in verifying that certain maps between L-systems induced by actions of countable Borel groupoids are factor maps.
				
				\begin{lemma}\label{lemma:equivariant}
					Let $(R, \Gamma)$ be a countable Borel groupoid and let
					\[
						(\rho_1, \alpha_1) \colon (R, \Gamma) \acts X_1 \qquad \text{and} \qquad (\rho_2, \alpha_2) \colon (R, \Gamma) \acts X_2
					\]
					be two Borel actions of $(R, \Gamma)$. Let $\mu_1 \in \P(X_1)$ and $\mu_2 \in \P(X_2)$. Suppose that $(\rho_2, \alpha_2)$ is $\mu_2$-almost everywhere free. Let $\pi \colon X_1 \rightharpoonup X_2$ be a measure-preserving $(R, \Gamma)$-equivariant Borel map defined on an $E_{\alpha_1}$-invariant $\mu_1$-conull Borel subset of $X_1$. Then $\pi$, possibly restricted to a smaller invariant conull Borel subset of $X_1$, is a factor map from $\LS(\rho_1, \alpha_1, \mu_1)$ to $\LS(\rho_2, \alpha_2, \mu_2)$.
				\end{lemma}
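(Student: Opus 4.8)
The plan is to verify the three defining conditions of Definition~\ref{defn:factors} for $\pi$ directly, after first passing to an appropriate invariant conull Borel restriction. Condition~(i), that $\pi_\ast(\mu_1) = \mu_2$, holds by hypothesis. For the restriction: since $(\rho_2,\alpha_2)$ is free on the $E_{\alpha_2}$-invariant $\mu_2$-conull Borel set $\Free(X_2)$, and $\pi$ is $(R,\Gamma)$-equivariant and measure-preserving, the set $X_1' \defeq \dom(\pi) \cap \pi^{-1}(\Free(X_2))$ is $E_{\alpha_1}$-invariant, $\mu_1$-conull, and Borel; I replace the domain of $\pi$ by $X_1'$, so that along every point of the image the target action is free.

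Next I would prove class-bijectivity (condition~(ii)). Fix an $E_{\alpha_1}$-class $C \subseteq X_1'$ and a point $x \in C$. Using the equivariance identities $\pi(\gamma \cdot x) = \gamma \cdot \pi(x)$ and $\rho_2(\pi(x)) = \rho_1(x)$ together with the $E_{\alpha_1}$-invariance of $X_1'$, one checks that $\pi(C)$ is exactly the $E_{\alpha_2}$-class of $\pi(x)$. Injectivity of $\pi \vert C$ is where freeness enters: if $\pi(x) = \pi(y)$ with $x$, $y \in C$ and $y = \gamma \cdot x$, then $\gamma \cdot \pi(x) = \pi(\gamma \cdot x) = \pi(y) = \pi(x)$, so freeness at $\pi(x)$ forces $\gamma = \mathbf{1}_{\rho_2(\pi(x))} = \mathbf{1}_{\rho_1(x)}$, whence $y = \mathbf{1}_{\rho_1(x)} \cdot x = x$ by the identity axiom of Definition~\ref{defn:actions}.

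For condition~(iii), given $E_{\alpha_1}$-classes $C_1$, $C_2 \subseteq X_1'$ and a bijection $\phi_2 \in \iso_{(\rho_2,\alpha_2)}$ from $\pi(C_1)$ to $\pi(C_2)$, set $\phi_1 \defeq (\pi \vert C_2)^{-1} \circ \phi_2 \circ (\pi \vert C_1)$. By~(ii) this is a well-defined bijection $C_1 \to C_2$, and the required square commutes by construction; it remains only to see $\phi_1 \in \iso_{(\rho_1,\alpha_1)}$. The equality $\rho_1(\phi_1(x)) = \rho_1(x)$ follows by transporting $\rho$ around the square using equivariance of $\pi$ and $\phi_2$, and $\gamma \cdot \phi_1(x) = \phi_1(\gamma \cdot x)$ follows from a short diagram chase: apply equivariance of $\pi$ to move $\gamma$ inside $\pi$, then equivariance of $\phi_2$, then equivariance of $\pi$ again, and finally cancel $\pi \vert C_2$ using its injectivity.

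The verification is largely bookkeeping; the single essential point is the injectivity step in~(ii), which genuinely requires freeness of $(\rho_2,\alpha_2)$ and is the reason for the passage to $\pi^{-1}(\Free(X_2))$ — this is the step I would present in full detail. All measurability concerns are automatic, since $\Free(X_2)$, preimages under the Borel map $\pi$, and saturations of Borel sets under the countable Borel equivalence relations $E_{\alpha_1}$, $E_{\alpha_2}$ are all Borel.
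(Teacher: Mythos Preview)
Your proposal is correct and follows essentially the same approach as the paper: restrict to $\pi^{-1}(\Free(X_2))$, use equivariance to see that $\pi$ carries $E_{\alpha_1}$-classes to $E_{\alpha_2}$-classes, use freeness on the target to get injectivity on each class, and then define $\phi_1 \defeq (\pi\vert C_2)^{-1} \circ \phi_2 \circ (\pi\vert C_1)$ for condition~(iii). The paper's version is terser---it performs the restriction implicitly by saying ``we may assume $\pi(C) \subseteq \Free(X_2)$''---but the content is the same.
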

				\begin{proof}
					For $i \in \set{1, 2}$, let $E_i \defeq E_{\alpha_i}$ and $\iso_i \defeq \iso_{(\rho_i, \alpha_i)}$. Let $C\subseteq \dom(\pi)$ be an $E_1$-class. The equivariance of $\pi$ implies that $\pi(C)$ is an $E_2$-class. Since $(\rho_2, \alpha_2)$ is free $\mu_2$-almost everywhere, we may assume that $\pi(C) \subseteq \Free(X_2)$, in which case the map $\pi\vert C \colon C \to \pi(C)$ is a bijection.
					
					It remains to check the existence of $\varphi_1 \in \iso_1$ that closes the following diagram:
					\[
						\begin{tikzcd}
						C_1 \arrow{d}{\pi} \arrow[dashrightarrow]{r}{\varphi_1} & C_2 \arrow{d}{\pi} \\
						\pi(C_1) \arrow{r}{\varphi_2}& \pi(C_2).
						\end{tikzcd}
					\]
					Again, since $(\rho_2, \alpha_2)$ is free $\mu_2$-almost everywhere, we may assume that the maps
					\[
						\pi\vert C_1 \colon C_1 \to \pi(C_1) \qquad \text{and} \qquad \pi \vert C_2 \colon C_2 \to \pi(C_2)
					\]
					are bijections. Since $\phi_2$ is $(R, \Gamma)$-equivariant,  \[\varphi_1 \defeq (\pi\vert C_2)^{-1} \circ \varphi_2 \circ (\pi\vert C_1)\] is an equivariant bijection from $C_1$ to $C_2$; in other words, $\phi_1 \in \iso_1$, as desired.		
				\end{proof}
				
				Let $\Gamma$ be a countable group and let $\alpha_1 \colon \Gamma \acts (X_1, \mu_1)$ and $\alpha_2 \colon \Gamma \acts (X_2, \mu_2)$ be two probability measure\=/preserving actions of $\Gamma$. If $\alpha_2$ is free $\mu_2$-almost everywhere, then, by Lemma~\ref{lemma:equivariant}, a factor map $\pi\colon (X_1, \mu_1) \to (X_2, \mu_2)$ in the usual ergodic theory sense induces a factor map between the L\=/systems $\LS(\alpha_1, \mu_1)$ and $\LS(\alpha_2, \mu_2)$.
				
				\subsection{Closure properties of the class of shift L-systems}
				
				\mbox{}
				
				\smallskip
				
				\noindent In this subsection we show that the class of shift L-systems is closed under (certain) expansions and under amplifications. 
				
				\begin{lemma}\label{lemma:expansion}
					Let $(R, \Gamma)$ be a countable Borel groupoid and let $\LS = \LS(\rho, \alpha, \mu \times (\lambda^2)^\Gamma)$, where $\mu \in \P(R)$, be the shift L-system induced by the $[0;1]^2$-shift action of $(R, \Gamma)$. Let $Y$ be a standard Borel space and let
					\[
						f \colon R \times ([0;1]^2)^\Gamma \to Y
					\]
					be a Borel function that does not depend on the third coordinate, i.e., for all $r \in R$ and $\theta$, $\omega$, $\omega' \in [0;1]^\Gamma$,
					\[
						f(r, \theta, \omega) = f(r, \theta, \omega').
					\]
					Then $\LS[f]$ admits a factor map to a shift L-system.
				\end{lemma}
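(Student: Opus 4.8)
The plan is to absorb the base $R$ together with the ``second coordinate'' $\theta$ (hence also the function $f$, which depends only on $(r,\theta)$) into a larger base groupoid, leaving the ``third coordinate'' $\omega$ as fresh $[0;1]$-shift randomness. After this repackaging, $\LS[f]$ will be the expansion of a shift L-system whose isomorphism structure is already contained in $\iso_{(\rho,\alpha)}[f]$, so that the identity map witnesses the desired factor map.

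Concretely, I would first fix notation: identify $([0;1]^2)^\Gamma$ with $([0;1]^\Gamma)^2$, so that points of $X\defeq R\times([0;1]^2)^\Gamma$ become triples $(r,\theta,\omega)$; the hypothesis on $f$ provides a Borel $g\colon R\times[0;1]^\Gamma\to Y$ with $f(r,\theta,\omega)=g(r,\theta)$. Let $(\rho_0,\alpha_0)\colon(R,\Gamma)\acts R\times[0;1]^\Gamma$ be the $[0;1]$-shift action of $(R,\Gamma)$ and set $\hat R\defeq\Free(\rho_0,\alpha_0)$, an $E_{\alpha_0}$-invariant $(\mu\times\lambda^\Gamma)$-conull Borel subset of $R\times[0;1]^\Gamma$ on which $\gamma\mapsto\gamma\cdot\hat r$ is a bijection between $\Gamma$ and $[\hat r]_{E_{\alpha_0}}$. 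By Proposition~\ref{prop:UniqueCBG}, $(\hat R,\Gamma)$ then carries a unique countable Borel groupoid structure with $\alpha_0$ as its action map; I write $\hat\mu\in\P(\hat R)$ for the restriction of $\mu\times\lambda^\Gamma$, let $(\rho',\alpha')\colon(\hat R,\Gamma)\acts\hat R\times[0;1]^\Gamma$ be its $[0;1]$-shift action (with $\rho'$ the projection onto $\hat R$), and put $X'\defeq\hat R\times[0;1]^\Gamma$, an $E_\alpha$-invariant conull Borel subset of $X$.

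The main point to verify is that, on $X'$, the action $(\rho',\alpha')$ coincides with the $[0;1]^2$-shift action $(\rho,\alpha)$ of $(R,\Gamma)$. This comes down to the identity $\delta\circ_{\hat r}\gamma=\delta\circ_r\gamma$ for $\hat r=(r,\theta)\in\hat R$: by the compatibility axiom of Definition~\ref{defn:actions} for the groupoid action $\alpha_0$ one has $\delta\cdot_{\alpha_0}(\gamma\cdot_{\alpha_0}\hat r)=(\delta\circ_r\gamma)\cdot_{\alpha_0}\hat r$, and since $\alpha_0$ is free on $\hat R$ this forces the element $\delta\circ_{\hat r}\gamma$ produced by Proposition~\ref{prop:UniqueCBG} to equal $\delta\circ_r\gamma$; consequently both actions send $(\hat r,\omega)$ to $(\gamma\cdot_{\alpha_0}\hat r,\ \delta\mapsto\omega(\delta\circ_r\gamma))$. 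Moreover $\mu\times(\lambda^2)^\Gamma$ restricted to $X'$ equals $\hat\mu\times\lambda^\Gamma$. Hence $\LS[f]$ (restricted to $X'$) and the shift L-system $\LS'\defeq\LS(\rho',\alpha',\hat\mu\times\lambda^\Gamma)$ have the same underlying space, orbit equivalence relation, and measure.

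Finally I would check that the identity map $X'\to X'$ is a factor map $\LS[f]\to\LS'$. Conditions (i) and (ii) of Definition~\ref{defn:factors} are immediate; condition (iii), for $\pi=\id$, is precisely the inclusion $\iso_{(\rho',\alpha')}\subseteq\iso_{(\rho,\alpha)}[f]$ (the latter being the isomorphism structure of $\LS[f]$). But under the identification one has $\rho=\rho_0\circ\rho'$ (with $\rho_0\colon\hat R\to R$ the projection) and $f=g\circ\rho'$, so any $\Gamma$-equivariant bijection $\phi$ between $E_\alpha$-classes with $\rho'\circ\phi=\rho'$ automatically satisfies $\rho\circ\phi=\rho$ and $f\circ\phi=f$; that is, $\phi\in\iso_{(\rho,\alpha)}[f]$. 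Since $\LS'$ is a shift L-system, this would finish the proof. I expect the only genuinely delicate part to be the bookkeeping in the third paragraph: one must pin down the groupoid structure on $(\hat R,\Gamma)$ and choose all the invariant conull sets compatibly so that the $(\hat R,\Gamma)$-shift really does reproduce the $[0;1]^2$-shift of $(R,\Gamma)$.
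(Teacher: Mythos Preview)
Your proof is correct and follows essentially the same approach as the paper: absorb $R\times[0;1]^\Gamma$ into a new base groupoid so that the identity map becomes a factor to the $[0;1]$-shift L-system of this enlarged groupoid. The only minor difference is that the paper defines the groupoid structure directly on all of $Q=R\times[0;1]^\Gamma$ via $\gamma\circ_{(r,\theta)}\delta\defeq\gamma\circ_r\delta$ (inherited from $(R,\Gamma)$), which spares the passage to the free part and the appeal to Proposition~\ref{prop:UniqueCBG}.
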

				\begin{proof}
					Set $Q \defeq R \times[0;1]^\Gamma$. The $[0;1]$-shift action of $(R, \Gamma)$ on $Q$ turns $(Q, \Gamma)$ into a countable Borel groupoid via
					\begin{equation*}
						\gamma \circ_{(r, \theta)} \delta \defeq\gamma\circ_r\delta; \qquad 
					\mathbf{1}_{(r, \theta)} \defeq \mathbf{1}_r;\qquad \text{and} \qquad
					\gamma^{-1}_{(r, \theta)} \defeq \gamma^{-1}_r.
					\end{equation*}
					Let $(\sigma, \alpha')$ denote the $[0;1]$-shift action of $(Q, \Gamma)$. If we identify $([0;1]^2)^\Gamma$ with $[0;1]^\Gamma \times [0;1]^\Gamma$ in the natural way, then \[
					R \times ([0;1]^2)^\Gamma = R \times [0;1]^\Gamma \times [0;1]^\Gamma = Q \times [0;1]^\Gamma,
					\] and, in fact, $\alpha' = \alpha$. By definition, for all $r \in R$ and $\theta$, $\omega \in [0;1]^\Gamma$,
					\[
						\sigma(r, \theta, \omega) = (r, \theta),
					\]
					so the value $f(x)$ is determined by $\sigma(x)$ for all $x$. Therefore, the identity function
					\[
					\id \colon R \times ([0;1]^2)^\Gamma \to Q \times [0;1]^\Gamma
					\]
					is a factor map from $\LS[f]$ to the shift L-system $\LS' \defeq \LS(\sigma, \alpha, \mu \times \lambda^\Gamma \times \lambda^\Gamma)$ induced by $(\sigma, \alpha)$.
				\end{proof}
				
				
				\begin{lemma}\label{lemma:LebesgueHF0}
					If $\LS$ is a shift L-system, then $\HF(\LS)$ factors to a shift L-system.
				\end{lemma}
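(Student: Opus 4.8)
The plan is to exhibit a shift L-system $\LS'$ and a factor map $\pi\colon\HF(\LS)\to\LS'$ satisfying conditions (i)--(iii) of Definition~\ref{defn:factors}. Write $\LS=\LS(\rho,\alpha,\mu\times\nu^\Gamma)$ for the $Y$-shift action $(\rho,\alpha)\colon(R,\Gamma)\acts R\times Y^\Gamma$ of a countable Borel groupoid $(R,\Gamma)$, with $\mu\in\P(R)$ and $\nu\in\P(Y)$ atomless, and put $X\defeq R\times Y^\Gamma$, so that $\HF(\LS)=(\HF(E_\alpha),\tilde E_\alpha,\widetilde{\iso_{(\rho,\alpha)}},\mu\times\nu^\Gamma)$ with the measure concentrated on $X$. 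Since restricting an L-system to an invariant conull Borel subset is a factor (via the identity) and factor maps compose, I first pass to the free part: the $Y$-shift action is free $\mu\times\nu^\Gamma$-a.e., so I may assume $\alpha$ acts freely on $X$, whence every $\tilde E_\alpha$-class is $\HF(C)$ for a free orbit $C$ on which $\gamma\mapsto\gamma\cdot x$ is a bijection $\Gamma\to C$. (If $\Gamma$ is finite then $E_\alpha$ is smooth and the claim is routine, so I also assume $\Gamma$ infinite; in any case $\HF(\Gamma)$ and every $\HF(C)$ are countably infinite.)

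Now I amplify the group. Fix a Borel selector $s\colon\fins X\setminus\{\0\}\to X$ with $s(S)\in S$. For $h\in\HF(E_\alpha)$, amplifying the bijection $\gamma\mapsto\gamma\cdot s(\U(h))$ of $\Gamma$ onto $[\U(h)]_{E_\alpha}$ yields a bijection of $\HF(\Gamma)$ onto $\HF([\U(h)]_{E_\alpha})=[h]_{\tilde E_\alpha}$, Borel in $h$; so $\tilde E_\alpha$ is the orbit equivalence relation of the free Borel action of the countable Borel groupoid $(\HF(E_\alpha),\HF(\Gamma))$ produced from this action map by Proposition~\ref{prop:UniqueCBG}. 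The target $\LS'$ will be the L-system of the $[0;1]$-shift action of a countable Borel groupoid built from $(\HF(E_\alpha),\HF(\Gamma))$, but with its base space enlarged so that the product measure works out: because $\mu\times\nu^\Gamma$ restricted to $X$ is the product $\mu\otimes\nu^\Gamma$ with atomless second factor, I split off a copy of $([0;1]^{\HF(\Gamma)},\lambda^{\HF(\Gamma)})$ from $\nu^\Gamma$ to serve as the shift fibre --- exactly the manoeuvre used in the proof of Lemma~\ref{lemma:expansion} --- while the remaining randomness, transported through the amplified orbit maps above, determines the base coordinate. One then defines $\pi$ on $X$ by the resulting measure isomorphism and extends it to $\HF(E_\alpha)$ by equivariance of the amplified groupoid.

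It remains to verify: (i) $\pi_\ast(\mu\times\nu^\Gamma)$ is the intended product measure, by choice of the splitting isomorphism; (ii) $\pi$ is class-bijective --- on the rank-$0$ part $C$ of a $\tilde E_\alpha$-class, $\pi$ is already injective through the fibre coordinate (on the free part distinct $\Gamma$-translates of $\theta$ are distinct), and the amplified orbit maps push this to a bijection of the whole class $\HF(C)$ onto one $\alpha'$-orbit; (iii) the lifting property, because an $\iso_{(\rho',\alpha')}$-bijection between two $\alpha'$-orbits is $\rho'$-preserving and $\HF(\Gamma)$-equivariant, hence pinned down by the underlying $\tilde E_\alpha$-class identification, and every such identification is realized by the amplification of an $\iso_{(\rho,\alpha)}$-bijection, which lies in $\widetilde{\iso_{(\rho,\alpha)}}$. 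Borelness of all the maps involved follows from the Luzin--Novikov theorem (Theorem~\ref{theo:LN}) and the fact that amplifications of Borel maps are Borel.

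The main obstacle I anticipate is making the three requirements on $\pi$ hold simultaneously. Class-bijectivity forces $\pi$ to respect the (generally non-smooth) orbit structure, which rules out a blanket measure isomorphism of $\HF(\LS)$ onto a shift L-system; requirement (i) forces $\pi$ to manufacture the shift fibre from the atomless randomness already carried by $\theta\in Y^\Gamma$, rather than letting the fibre be a deterministic function of the base point; and the domain of $\pi$ must be $\tilde E_\alpha$-invariant, so $\pi$ must be defined coherently on the higher-rank points of every class even though $\mu\times\nu^\Gamma$ only charges the rank-$0$ slice $X$. Threading these constraints is exactly what dictates the bookkeeping --- the selector $s$, the splitting isomorphism, and the precise definition of the enlarged base of the target groupoid; once a mutually compatible choice has been fixed, the verifications above are mechanical.
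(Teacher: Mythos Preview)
Your plan matches the paper's approach: build a countable Borel groupoid with index set $\HF(\Gamma)$ acting on $\HF(E_\alpha)$ via a selector, realise $\HF(\LS)$ as (a.e.) a free action of this groupoid, and produce an equivariant measure-preserving map to its $[0;1]$-shift, invoking Lemma~\ref{lemma:equivariant}. The paper carries this out by writing the fibre as $Y=[0;1]\times[0;1]^{\HF(\Gamma)}$, so that $R\times Y^\Gamma=(R\times[0;1]^\Gamma)\times[0;1]^{\Gamma\times\HF(\Gamma)}$; the base $Q$ of the target groupoid is then $\HF$ of the orbit relation of the $[0;1]$-shift on $R\times[0;1]^\Gamma$ (i.e., the second factor is \emph{forgotten}), and the new shift fibre is extracted from $[0;1]^{\Gamma\times\HF(\Gamma)}$ by evaluating at a coordinate $(\gamma_{x,\delta},\mathbf 1_{\delta\cdot x})$ for each $\delta\in\HF(\Gamma)$, after which the crux is checking that $\delta\mapsto(\gamma_{x,\delta},\mathbf 1_{\delta\cdot x})$ is injective so the resulting map pushes $\lambda^{\Gamma\times\HF(\Gamma)}$ to $\lambda^{\HF(\Gamma)}$.

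The one point in your sketch that is not merely bookkeeping is the choice of target base. You write that the target groupoid is ``built from $(\HF(E_\alpha),\HF(\Gamma))$'' with base ``enlarged'', but if the base remembers all of $E_\alpha$ (i.e., all of $Y^\Gamma$), there is no randomness left over to make the shift fibre independent of it, and $\pi_\ast$ cannot be a product measure. The splitting must happen at the level of the single fibre $Y$ (so that the two pieces of $Y^\Gamma$ are each $\Gamma$-shift-invariant), with the base of the target groupoid depending only on the first piece; an arbitrary measure isomorphism of $Y^\Gamma$ will not be $\Gamma$-equivariant and will break class-bijectivity. Once you make this precise, the argument is exactly the paper's, and the injectivity check above is the only nonroutine verification.
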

				\begin{proof}
					Suppose that $\LS$ is induced by a shift action of a countable Borel groupoid $(R, \Gamma)$. We will proceed in three steps. First, we will construct a countable Borel groupoid $(Q, \Delta)$, where $\Delta = \HF(\Gamma)$. Then we will show that $\HF(\LS)$ is induced by an (almost everywhere) free action of $(Q, \Delta)$. Finally, we will define a measure-preserving $(Q, \Delta)$-equivariant Borel map from this action to the $[0;1]$-shift action of $(Q, \Delta)$, which will give us a desired factor map, thanks to Lemma~\ref{lemma:equivariant}.
					
					
					{\sc Step 1.} Let $\mu \in \P(R)$ and consider the $[0;1]$-shift action $(R, \Gamma) \acts R \times [0;1]^\Gamma$. Let $E$ denote the induced orbit equivalence relation. Define
					\[
						Q \defeq \HF(E \vert \Free(R \times [0;1]^\Gamma)) \qquad\text{and} \qquad \Delta \defeq \HF(\Gamma).
					\]
					Note that for each $x \in \Free(R \times [0;1]^\Gamma)$, the following map is a bijection between $\Gamma$ and $[x]_{E}$:
					\[
						\phi_x \colon \Gamma \to [x]_{E} \colon \gamma \mapsto \gamma \cdot x.
					\]
					Therefore, its amplification
					\[
						\tilde{\phi}_x\colon \HF(\Gamma) = \Delta \to \HF([x]_E) = [x]_{\tilde{E}}
					\]
					is a bijection between $\Delta$ and $[x]_{\tilde{E}}$. Fix a Borel map $x_0 \colon Q \to \Free(R \times [0;1]^\Gamma)$ such that $x_0(q) \in \U(q)$ for all $q \in Q$, and let \[\tilde{\phi}_q \defeq \tilde{\phi}_{x_0(q)}.\] Then for each $q \in Q$, the map $\tilde{\phi}_q$ is a bijection from $\Delta$ to $[x_0(q)]_{\tilde{E}} = [q]_{\tilde{E}}$. For $q \in Q$ and $\delta \in \Delta$, define
					\[
						\delta \cdot q \defeq \tilde{\phi}_q(\delta).
					\]
					Since $\tilde{\phi}_q \colon \Delta \to [q]_{\tilde{E}}$ is a bijection for each $q \in Q$, by Proposition~\ref{prop:UniqueCBG}, $(Q, \Delta)$ is equipped with a unique countable Borel groupoid structure. It is useful to observe that
					 \[
					 \delta \cdot q = \tilde{\phi}_q(\delta) = \tilde{\phi}_{x_0(q)}(\delta) = \delta \cdot x_0(q).
					 \]

					{\sc Step 2.} Now we turn to the shift L-system $\LS$. Suppose that $\LS = \LS(\rho, \alpha, \mu \times (\lambda \times \nu)^\Gamma)$, where \[(\rho, \alpha) \colon (R, \Gamma) \acts R \times ([0;1]\times Y)^\Gamma\] is the $([0;1]\times Y)$-shift action of $(R, \Gamma)$, $\mu \in \P(R)$, and $\nu \in \P(Y)$ is atomless. Here $Y$ is an arbitrary standard Borel space; we will specify a concrete choice for $Y$ later. Let 
					\[
						F \defeq \Free(R \times [0;1]^\Gamma) \times Y^\Gamma.
					\]
					Then $F$ is a conull $E_\alpha$-invariant Borel subset of $\Free(\rho, \alpha)$. We will now define a free action of $(Q, \Delta)$ on $H \defeq \HF(E_\alpha\vert F)$ (which is a conull $\tilde{E}_\alpha$-invariant Borel subset of~$\HF(E_\alpha)$).
					
					The construction is analogous to the one from Step~1. For each $x \in F$, define $\phi_x \colon \Gamma \to [x]_{E_\alpha}$ by
					\[
						\phi_x \colon \Gamma \to [x]_{E_\alpha} \colon \gamma \mapsto \gamma \cdot x.
					\]
					Then $\phi_x$ is a bijection between $\Gamma$ and $[x]_{E_\alpha}$. Therefore, $\tilde{\phi}_x \colon \Delta \to [x]_{\tilde{E}_\alpha}$ is a bijection from $\Delta$ to $[x]_{\tilde{E}_\alpha}$. Let $\sigma \colon F \to \Free(R \times [0;1]^\Gamma)$ denote the projection on the first two coordinates, i.e.,
					\[
						\sigma(r, \theta, y) \defeq (r, \theta) \qquad \text{for all $(r, \theta) \in \Free(R \times [0;1]^\Gamma)$ and $y \in Y^\Gamma$}.
					\]
					For every $h \in H$, we have $\tilde{\sigma}(h) \in Q$ and the map $\sigma \vert \U(h) \colon \U(h) \to \U(\tilde{\sigma}(h))$ is a bijection. Let $x_0(h)$ be the unique element of $\U(h)$ such that
					\[
						\sigma(x_0(h)) = x_0(\tilde{\sigma}(h)).
					\]
					Define $\tilde{\phi}_h \defeq \tilde{\phi}_{x_0(h)}$. Then $\tilde{\phi}_h \colon \Delta \to [h]_{\tilde{E}_\alpha}$ is a bijection. Hence, if we let
					\[
						\beta \colon \Delta \times H \colon (\delta, h) \mapsto \delta \cdot h \defeq \tilde{\phi}_h(\delta),
					\]
					then $(\tilde{\sigma}, \beta)$ is a free action of $(Q, \Delta)$ on $H$. Note that we again have
					\[
						\delta \cdot h = \tilde{\phi}_h(\delta) = \tilde{\phi}_{x_0(h)}(\delta) = \delta \cdot x_0(h).
					\]
					It is clear that the restriction of $\HF(\LS)$ to $H$ coincides with $\LS(\tilde{\sigma}, \beta, \mu \times (\lambda \times \nu)^\Gamma)$.
					
					{\sc Step 3.} So far we have constructed a countable Borel groupoid $(Q, \Delta)$ and a free action $(\tilde{\sigma}, \beta)$ of $(Q, \Delta)$ that essentially (i.e., up to an invariant null set) induces the L-system $\HF(\LS)$. It remains to define a factor map from that action to the L-system induced by the $[0;1]$-shift action of $(Q, \Delta)$.
					
					To that end, choose $Y$ to be $[0;1]^\Delta$ and $\nu$ to be $\lambda^\Delta$. Consider any $h \in H$. Suppose that $x_0(h) = (x, y)$, where $x \in \Free(R\times[0;1]^\Gamma)$ and $y \in Y^\Gamma = ([0;1]^\Delta)^\Gamma = [0;1]^{\Gamma \times \Delta}$. Define $\xi(h) \in [0;1]$ by
					\[
					\xi(h) \defeq y(\mathbf{1}_{\rho(x)}, \mathbf{1}_{\tilde{\sigma}(h)}).
					\]
					Here $\rho(x) \in R$ and $\tilde{\sigma}(h) \in Q$, so $\mathbf{1}_{\rho(x)} \in \Gamma$ and $\mathbf{1}_{\tilde{\sigma}(h)} \in \Delta$. Now define $\xi^\Delta \colon H \to [0;1]^\Delta$ by setting
					\[
					\xi^\Delta(h)(\delta) \defeq \xi (\delta \cdot h) \qquad \text{for all } h \in H \text{ and }\delta \in \Delta.
					\]
					By construction, the map
					\[
					(\tilde{\sigma}, \xi^\Delta) \colon H \to Q \times [0;1]^\Delta
					\]
					is $(Q, \Delta)$-equivariant. Due to Lemma~\ref{lemma:equivariant}, we only need to check that this map is measure-preserving.
					
					At this point, it is useful to recall that the measure on $H$, namely $\mu \times \lambda^\Gamma \times \lambda^{\Gamma \times \Delta}$, is concentrated on $F$. Since we have the freedom to choose the measure on $Q$, we can take it to be
					\[\tilde{\sigma}_\ast(\mu \times \lambda^\Gamma \times \lambda^{\Gamma\times \Delta}) = \sigma_\ast(\mu \times \lambda^\Gamma \times \lambda^{\Gamma \times \Delta}) = \mu \times \lambda^\Gamma,\]
					where the first equality follows from the fact that $\mu \times \lambda^\Gamma \times \lambda^{\Gamma \times \Delta}$ is concentrated on $F$, while the second equality is a consequence of the definition of $\sigma$. To finish the proof, it suffices to show that for each $x \in \Free(R \times [0;1]^\Gamma)$, the map
					\[
						\xi^\Delta_{x} \colon [0;1]^{\Gamma \times \Delta} \to [0;1]^\Delta \colon y \mapsto \xi^\Delta(x, y)
					\] 
					satisfies $(\xi^\Delta_{x})_\ast(\lambda^{\Gamma \times \Delta})=\lambda^\Delta$. To this end, fix some $x \in \Free(R \times [0;1]^\Gamma)$. For each $\delta \in \Delta$, let $\gamma_{x, \delta}$ be the unique element of $\Gamma$ such that
					$
						x_0(\delta \cdot x) = \gamma_{x, \delta} \cdot x
					$.
					Observe that the map
					\[
					\Delta \to \Gamma \times \Delta \colon \delta \mapsto (\gamma_{x, \delta}, \mathbf{1}_{\delta \cdot x})
					\]
					is injective. Indeed, we have
					\[
					\tilde{\phi}_x(\delta) = \delta \cdot x = \mathbf{1}_{\delta \cdot x} \cdot (\delta \cdot x) = \mathbf{1}_{\delta \cdot x} \cdot  x_0(\delta \cdot x)= \mathbf{1}_{\delta \cdot x} \cdot (\gamma_{x, \delta} \cdot x),
					\]
					and the map $\tilde{\phi}_x$ is injective. Let $y \in [0;1]^{\Gamma \times \Delta}$. We claim that
					\begin{equation}\label{eq:proj}
					\xi^\Delta_x(y)(\delta) = y(\gamma_{x, \delta}, \mathbf{1}_{\delta \cdot x}).
					\end{equation}
					Indeed, by definition,
					\[
					x_0 (\delta \cdot (x, y)) = \gamma_{x, \delta} \cdot (x,y) = (\gamma_{x, \delta} \cdot x, y') \qquad \text{and} \qquad \tilde{\sigma}(\delta \cdot (x, y)) = \delta \cdot x,
					\]
					where $y'$ is a particular element of $[0;1]^{\Gamma\times \Delta}$.
					Therefore,
					\[
					\xi^\Delta_x(y)(\delta) = \xi(\delta \cdot (x,y)) = y'(\mathbf{1}_{\rho(\gamma_{x, \delta} \cdot x)}, \mathbf{1}_{\delta \cdot x}) = y'(\mathbf{1}_{\gamma_{x, \delta} \cdot \rho(x)}, \mathbf{1}_{\delta \cdot x}).
					\]
					Since
					$
					\mathbf{1}_{\gamma_{x, \delta} \cdot \rho(x)} \circ_{\rho(x)} \gamma_{x, \delta} = \gamma_{x,\delta}
					$,
					by the definition of the shift action, we get
					\[
					y'(\mathbf{1}_{\gamma_{x, \delta} \cdot \rho(x)}, \mathbf{1}_{\delta \cdot x}) = y(\gamma_{x, \delta}, \mathbf{1}_{\delta \cdot x}),
					\]
					as desired. Equation \eqref{eq:proj} shows that $\xi^\Delta_x$ acts as the projection on the set of coordinates \[\{(\gamma_{x,\delta}, \mathbf{1}_{\delta \cdot x})\,:\, \delta \in \Delta\}.\] Therefore, it pushes $\lambda^{\Gamma \times \Delta}$ forward to $\lambda^\Delta$, and the proof is complete.
				\end{proof}
				
				\subsection{The Moser--Tardos algorithm for shift L-systems}
				
				\mbox{}
				
				\smallskip
				
				\noindent In this subsection we use Moser--Tardos theory to show that any correct instance $\B$ over a shift L\=/system $\LS$ admits a measurable solution. To do this, we will reduce $\B$ to a family $(\B_r)_{r \in R}$ of correct instances over the (countable) set $\Gamma$ indexed by the elements of $R$, where $(R, \Gamma)$ is the countable Borel groupoid whose shift action induces $\LS$.
				
				\begin{lemma}\label{lemma:main}
					Let $\LS$ be a shift L-system. Then every correct instance over $\LS$ has a measurable solution.
				\end{lemma}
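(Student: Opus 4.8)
The plan is to reduce a correct instance $\B$ over $\LS$ to a \emph{Borel family} of correct instances over the \emph{countable} set $\Gamma$, indexed by $r \in R$, and then run the Moser--Tardos machinery of Section~\ref{section:MoserTardos} orbit by orbit, using the shift coordinate of $\LS$ as a built-in Moser--Tardos table. Write $\LS = \LS(\rho, \alpha, \mu \times \nu^\Gamma)$, where $(\rho, \alpha) \colon (R, \Gamma) \acts R \times Y^\Gamma$ is the $Y$-shift action of a countable Borel groupoid $(R, \Gamma)$. By the measure isomorphism theorem we may assume $Y = [0;1]^\N$ and $\nu = \lambda^\N$; identifying $([0;1]^\N)^\Gamma$ with $[0;1]^{\Gamma \times \N}$, a point of $X_\LS = R \times ([0;1]^\N)^\Gamma$ is then a pair $(r, \theta)$ in which $\theta \colon \Gamma \times \N \to [0;1]$ is literally a table in the sense of the Moser--Tardos algorithm over $\Gamma$. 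Restricting to an $E_\alpha$-invariant $\mu \times \nu^\Gamma$-conull Borel set, we may also assume the action is free and $\B$ is $\iso_{(\rho,\alpha)}$-invariant everywhere.

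First I would show that $\B$ descends to a family $(\B_r)_{r \in R}$ of correct instances over $\Gamma$. Fix an injective Borel map $\iota \colon \Gamma \to [0;1]^\N$; then $(r, \iota) \in \Free(X_\LS)$ for every $r$, and $j_r \colon \gamma \mapsto \gamma \cdot (r, \iota)$ is a Borel-in-$(r, \gamma)$ bijection from $\Gamma$ onto the orbit of $(r, \iota)$. Let $\B_r$ be the instance over $\Gamma$ obtained by transporting, via $j_r$, those bad events of $\B$ whose domain lies in that orbit. Because $\iso_{(\rho,\alpha)}$ contains, for any $\theta, \theta'$, the canonical equivariant bijection between the orbits of $(r, \theta)$ and $(r, \theta')$ (which automatically preserves $\rho$), the $\iso$-invariance of $\B$ makes this transported instance independent of which orbit with $\rho$-value $r$ is used as basepoint; thus $\B_r$ is well defined, and transporting the bad events of $\B$ on the orbit of \emph{any} $(r,\theta)$ via $\gamma \mapsto \gamma \cdot (r,\theta)$ recovers $\B_r$. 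Moreover $j_r$ restricts to a measure-preserving bijection on each bad event, so the correctness witness of $\B$ transfers and each $\B_r$ is correct, and the family $(\bigcup \B_r)_{r \in R}$ is Borel in $r$. The only fiddly point here is verifying this last Borel-uniformity and checking that the preliminary restriction absorbs the ``only $\mu$-a.e.'' $\iso$-invariance and freeness hypotheses.

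Next I would introduce the table. Define a Borel table $\Theta \colon X_\LS \times \N \to [0;1]$ by $\Theta((r, \theta), n) \defeq \theta(\mathbf{1}_r, n)$. The crucial identity is that the table induced on the orbit of $(r, \theta)$ under the identification $\gamma \mapsto \gamma \cdot (r, \theta)$ is $\theta$ itself: by the definition of the shift action and the groupoid identity axiom $\mathbf{1}_{\gamma \cdot r} \circ_r \gamma = \gamma$,
\[
\Theta\bigl(\gamma \cdot (r, \theta), n\bigr) \,=\, \theta\bigl(\mathbf{1}_{\gamma \cdot r} \circ_r \gamma, n\bigr) \,=\, \theta(\gamma, n).
\]
By Proposition~\ref{prop:BorelMoserTardos} there is a Borel Moser--Tardos process $\mathcal{A} = (A_n)_{n=0}^\infty$ on $\B$ with input $\Theta$; let $t_n$ and $f_n$ be the associated Borel maps. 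Since $\dom(B)$ is contained in a single orbit for every $B \in \B$, and two subsets of distinct orbits are automatically disjoint, the restriction of $\mathcal{A}$ to any orbit is again a Moser--Tardos process (the maximal disjoint subfamilies restrict correctly). Transporting and combining with the displayed identity, the restriction of $\mathcal{A}$ to the orbit of $(r, \theta)$ is a Moser--Tardos process for $\B_r$ with input $\theta$.

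Finally, when $(r, \theta) \sim \mu \times \nu^\Gamma$ with $r$ fixed, $\theta$ is distributed as $\lambda^{\Gamma \times \N}$; by Corollary~\ref{corl:countable} (as in the derivation of Theorem~\ref{theo:LLLvbls}), for each fixed correct instance over $\Gamma$ almost every table makes \emph{all} of $\Gamma$ stable, and $\Stab(\theta) \subseteq \Stab(\mathcal{A}')$ for \emph{every} Moser--Tardos process $\mathcal{A}'$ with input $\theta$ (Proposition~\ref{prop:reduction}). Hence Fubini's theorem yields an $E_\alpha$-invariant $\mu \times \nu^\Gamma$-conull Borel set $X' \subseteq \Stab(\mathcal{A})$ on which every orbit is entirely $\mathcal{A}$-stable. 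Setting $f(x) \defeq \Theta(x, \lim_n t_n(x))$ for $x \in X'$ and extending $f$ arbitrarily to a Borel map on $X_\LS$, Proposition~\ref{prop:infinitestep} shows $f$ avoids every $B \in \B$ with $\dom(B) \subseteq X'$; as $X'$ is invariant and $\dom(B)$ lies in a single orbit, $\Def_\B(f) \subseteq X_\LS \setminus X'$, an $E_\alpha$-invariant $\mu \times \nu^\Gamma$-null Borel set, so $f$ is a measurable solution. I expect the real work to be bookkeeping --- Borel-uniformity of $(\B_r)_r$, transport along the $j_r$ preserving Borelness and the Lebesgue measures, and threading the almost-everywhere hypotheses through --- rather than anything conceptual; the one genuinely slick point is that the shift coordinate, read off pointwise as $\theta \mapsto \theta(\mathbf{1}_r)$, supplies along each orbit exactly a uniformly random Moser--Tardos table, which is precisely what the groupoid identity axiom is built to make true.
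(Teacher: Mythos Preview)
Your proposal is correct and follows essentially the same approach as the paper: reduce to correct instances $\B_r$ over the countable set $\Gamma$ via the $\iso_{(\rho,\alpha)}$-invariance, use the shift coordinate $\theta(\mathbf{1}_r, \cdot)$ as the Moser--Tardos table (verifying the key identity $\Theta(\gamma \cdot (r,\theta), n) = \theta(\gamma, n)$ via the groupoid identity axiom), and then apply Corollary~\ref{corl:countable} plus Fubini. The paper works directly with $\theta$-stability rather than restricting a single global Borel Moser--Tardos process to orbits, and it defines $\B_r$ as $\B_{(r,\theta)}$ for almost all $\theta$ rather than via a fixed injective basepoint $\iota$, but these are minor presentational choices; the substance is the same.
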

				\begin{proof}
					Let $(R, \Gamma)$ be a countable Borel groupoid, let $\mu \in \P(R)$, and let $(\rho, \alpha) \colon (R, \Gamma) \acts R \times [0;1]^{\Gamma \times \N}$ be the $[0;1]^\N$-shift action of $(R, \Gamma)$. Let $\LS \defeq \LS(\rho, \alpha, \mu \times \lambda^{\Gamma \times \N})$. We use the following notation:
					\[
						X \defeq R \times [0;1]^{\Gamma \times \N}, \qquad E \defeq E_\alpha, \qquad \text{and} \qquad \iso \defeq \iso_{(\rho, \alpha)}.
					\]
					Suppose $\B$ is a correct instance over $\LS$. Due to Propositions~\ref{prop:BorelMoserTardos} and~\ref{prop:infinitestep}, it is enough to show that there exists a Borel table $\xi \colon X \times \N \to [0;1]$ such that
					\begin{equation}\label{eq:conullinvset}
						(\mu \times \lambda^{\Gamma \times \N})(\set{x \in X \,:\, \gamma \cdot x \in \Stab(\xi) \text{ for all } \gamma \in \Gamma}) = 1.
					\end{equation}
					We claim that the map
					\[
						\xi \colon X \times \N \to [0;1] \colon ((r, \theta), n) \mapsto \theta(\mathbf{1}_r)(n)
					\]
					satisfies~\eqref{eq:conullinvset}. Note that for every $\gamma \in \Gamma$,
					\[
						\xi(\gamma \cdot (r, \theta), n) = \theta(\gamma)(n).
					\]
					For each $x \in X$, there is a surjection
					\[
						\phi_x \colon \Gamma \to [x]_E \colon \gamma \mapsto \gamma \cdot x
					\]
					from $\Gamma$ onto $[x]_E$. Since the action $(\rho, \alpha)$ is free almost everywhere, $\phi_x$ is bijective for almost all $x \in X$. Hence, for almost every $x\in X$, the map $\phi_x$ can be used to define a correct instance $\B_x$ over $\Gamma$ by ``pulling back'' the restriction of $\B$ to $[x]_E$. Formally, we set
					\[
						\B_x \defeq \set{ \set{f \circ \phi_x \,:\, f \in B} \,:\, B \in \B}.
					\]
					Note that whenever $r \in R$ and $\theta$, $\omega \in [0;1]^{\Gamma \times \N}$ and both $(r, \theta)$ and $(r, \omega)$ belong to the free part of the action $(\rho, \alpha)$, the map $\gamma \cdot (r, \theta) \mapsto \gamma \cdot (r, \omega)$ is a well-defined $(R, \Gamma)$-equivariant bijection between $[(r, \theta)]_E$ and $[(r, \omega)]_E$. Therefore, since $\B$ is almost everywhere $\iso$-invariant, the following definition makes sense for almost all $r \in R$:
					\[
						\B_r \defeq \B_{(r, \theta)} \text{ for almost all } \theta \in [0;1]^{\Gamma \times \N}.
					\]
					Now, for almost every $r \in R$ and for all $\gamma \in \Gamma$, using Corollary~\ref{corl:countable}, we obtain
					\begin{align*}
						&\lambda^{\Gamma \times \N} (\set{\theta \in [0;1]^{\Gamma \times \N}\,:\, \text{$\gamma \cdot (r, \theta)$ is $\xi$-stable with respect to $\B$}})\\
						=\,&\lambda^{\Gamma \times \N} (\set{\theta \in [0;1]^{\Gamma \times \N}\,:\, \text{$\gamma$ is $\theta$-stable with respect to $\B_r$}}) \,=\, 1.
					\end{align*}
					An application of Fubini's theorem yields \eqref{eq:conullinvset}.
				\end{proof}
				
				\subsection{Completing the proof of Theorem~\ref{theo:Thm1}}
				
				\mbox{}
				
				\smallskip
				
				\noindent Now we have all the necessary ingredients to prove the following generalization of Theorem~\ref{theo:Thm1}:
				
				\begin{theo}[\textbf{Measurable LLL for shift L-systems}]
					Let $\LS$ be an L-system that admits a factor map to a shift L-system. Then Player~II has a winning strategy in the~LLL~Game over $\HF(\LS)$.
				\end{theo}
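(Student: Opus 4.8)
The plan is to execute the strategy set up in~\prg\ref{prg:Thm1outline}--\prg\ref{prg:factors}, taking $\mathscr{C}$ to be the class of all shift L-systems. By the discussion at the end of~\prg\ref{prg:factors}, it suffices to verify that $\mathscr{C}$ satisfies conditions~\ref{item:B1} and~\ref{item:B2}; this yields $\mathscr{C}^\ast$-versions of Properties~\ref{item:A1} and~\ref{item:A2} (via Proposition~\ref{prop:EnhancedFactor} and Lemma~\ref{lemma:pushforward}), and hence $\mathscr{C}^\ast \subseteq \mathscr{L}$, which is exactly the statement of the theorem (an L-system admits a factor map to a shift L-system if and only if it lies in $\mathscr{C}^\ast$). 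Concretely, Player~II's winning strategy in the LLL~Game over $\HF(\LS)$ for $\LS \in \mathscr{C}^\ast$ is as follows: since $\HF(\LS) \in \mathscr{C}^\ast$ by the $\mathscr{C}^\ast$-analogue of~\ref{item:A1}, she plays over $\HF(\LS)$, and at each round answers the instance offered by Player~I with a measurable solution $f_n$ whose corresponding expansion again lies in $\mathscr{C}^\ast$, which is possible by the $\mathscr{C}^\ast$-analogue of~\ref{item:A2}.

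Condition~\ref{item:B1} is nothing but Lemma~\ref{lemma:LebesgueHF0}. For~\ref{item:B2}, let $\LS$ be a shift L-system and let $\B$ be a correct instance over~$\LS$. Using the measure isomorphism theorem, I may present $\LS$ as $\LS(\rho, \alpha, \mu \times (\lambda^2)^\Gamma)$, the L-system induced by the $[0;1]^2$-shift action of a countable Borel groupoid $(R, \Gamma)$; write a generic point of $X_\LS = R \times ([0;1]^2)^\Gamma$ as $(r, \theta, \omega)$ with $\theta, \omega \in [0;1]^\Gamma$. Let $\LS_0 \defeq \LS(\rho, \alpha_0, \mu \times \lambda^\Gamma)$ be the $[0;1]$-shift L-system of $(R, \Gamma)$, and let $\pi(r, \theta, \omega) \defeq (r, \theta)$ be the projection forgetting the third coordinate. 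Then $\pi$ is a measure-preserving $(R, \Gamma)$-equivariant Borel map, and since the $[0;1]$-shift action $\alpha_0$ is free almost everywhere, Lemma~\ref{lemma:equivariant} shows that $\pi$, restricted to a suitable $E_{\alpha}$-invariant $\mu$-conull Borel set, is a factor map $\LS \to \LS_0$.

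By Lemma~\ref{lemma:pushforward}, $\B$ pushes forward to a correct instance $\pi(\B)$ over $\LS_0$; since $\LS_0$ is a shift L-system (the measure $\lambda$ being atomless), Lemma~\ref{lemma:main} yields a measurable solution $g \colon R \times [0;1]^\Gamma \to [0;1]$ to $\pi(\B)$, and then $f \defeq g \circ \pi$ --- possibly restricted to a smaller invariant conull Borel set --- is a measurable solution to $\B$, again by Lemma~\ref{lemma:pushforward}. Crucially, $f(r, \theta, \omega) = g(r, \theta)$ does not depend on the third coordinate, so Lemma~\ref{lemma:expansion} applies to $f$ and shows that $\LS[f]$ admits a factor map to a shift L-system, i.e., $\LS[f] \in \mathscr{C}^\ast$. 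This establishes~\ref{item:B2} and hence the theorem.

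Since the substantive work is carried out in the preceding subsections (Lemmas~\ref{lemma:LebesgueHF0}, \ref{lemma:main}, and~\ref{lemma:expansion} in particular), the only point in this argument that requires genuine care is the production of a measurable solution that ``forgets'' the spare randomness coordinate of the shift --- I expect this to be the part most worth spelling out --- and it is handled precisely by solving the pushed-forward instance $\pi(\B)$ over the smaller shift L-system $\LS_0$ rather than tackling $\B$ over $\LS$ head-on.
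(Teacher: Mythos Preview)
Your proposal is correct and follows essentially the same approach as the paper: take $\mathscr{C}$ to be the class of shift L-systems, verify \ref{item:B1} via Lemma~\ref{lemma:LebesgueHF0}, and verify \ref{item:B2} by presenting $\LS$ as the $[0;1]^2$-shift, projecting off the spare coordinate, solving the pushed-forward instance via Lemma~\ref{lemma:main}, and invoking Lemma~\ref{lemma:expansion}. Your write-up is slightly more explicit about the strategy framework and about why the ``spare randomness'' trick is the crux, but the argument is the same.
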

				\begin{proof}
					We need to verify that the class $\mathscr{C}$ of shift L-systems satisfies conditions~\ref{item:B1} and~\ref{item:B2} from~\prg\ref{prg:factors}. Condition~\ref{item:B1} is given by Lemma~\ref{lemma:LebesgueHF0}. It remains to show that if $\LS$ is a shift L-system and $\B$ is a correct instance over $\LS$, then there is a measurable solution $f$ to $\B$ such that $\LS[f]$ factors to another shift L-system.
					
					To that end, suppose that $\LS$ is induced by the $[0;1]^2$-shift action of a countable Borel groupoid $(R, \Gamma)$ with measure $\mu \times (\lambda^2)^\Gamma$, where $\mu \in \P(R)$. Consider the L-system $\LS'$ induced by the $[0;1]$-shift action of $(R, \Gamma)$ with measure $\mu \times \lambda^\Gamma$. The projection onto the first two coordinates, i.e., the map
					\[
						\pi \colon R \times [0;1]^\Gamma \times [0;1]^\Gamma \to R \times [0;1]^\Gamma \colon (r, \theta, \omega) \mapsto (r, \theta),
					\]
					is $(R, \Gamma)$-equivariant and measure-preserving, so, by Lemma~\ref{lemma:equivariant}, it is a factor map from $\LS$ to~$\LS'$. Due to Lemma~\ref{lemma:pushforward}, there is a correct instance $\pi(B)$ over $\LS'$ such that whenever $f'$ is a measurable solution to $\pi(\B)$, then $f' \circ \pi$ is a measurable solution to $\B$ (modulo an invariant null set). Lemma~\ref{lemma:main} does indeed provide a measurable solution $f'$ to $\pi(\B)$, so let $f \defeq f' \circ \pi$. By definition, $f$ does not depend on the third coordinate. Therefore, by Lemma~\ref{lemma:expansion}, $\LS[f]$ factors to a shift L-system, as desired.
				\end{proof}

		\section{The converse of Theorem~\ref{theo:Thm1} for actions of amenable groups}\label{section:converse}
		
		\noindent Corollary~\ref{corl:no_iterations} asserts that if a probability measure-preserving action $\alpha \colon \Gamma \acts (X, \mu)$ of a countable group~$\Gamma$ factors to the $[0;1]$-shift action, then every correct instance $\B$ over $\alpha$ admits a Borel solution $\mu$-almost everywhere. In this section we show that if $\Gamma$ is amenable, then the converse also holds. In fact, we will prove that even (seemingly) much weaker assumptions already imply the existence of a factor map to the $[0;1]$-shift.
		
		To articulate these weaker assumptions, we need a few definitions. An instance $\B$ over a set~$X$ is \emph{$\epsilon$-correct}, where $0 < \epsilon \leq 1$, if the neighborhood of each $B \in \B$ is countable, and there exists a function $\omega \colon \B \to [0;1)$ such that for all $B \in \B$,
		\[
			\mathbb{P}[B] \leq \epsilon^{|\dom(B)|}  \omega(B) \prod_{B' \in \Nbhd_\B(B)} (1 - \omega(B')).
		\]
		Hence, correct is the same as $1$-correct, and
		\[
			\text{$B$ is $\epsilon$-correct}\,\Longrightarrow\,\text{$B$ is $\epsilon'$-correct} \quad \text{whenever $0<\epsilon \leq \epsilon'\leq 1$}.
		\]
		An instance $\B$ over a set $X$ is \emph{discrete} if there exist a finite set $S$ and a Borel function $\phi \colon [0;1] \to S$ such that for all $B \in \B$ and $w$, $w' \colon \dom(B) \to [0;1]$ with $\phi \circ w = \phi \circ w'$, we have
		\[
			w \in B \,\Longleftrightarrow\, w' \in B.
		\]
		In other words, $\B$ is discrete if the bad events in $\B$ can be identified with subsets of $\finf{X}{S}$, where~$S$ is equipped with the probability measure $\phi_\ast(\lambda)$ (see Remark~\ref{remk:other_spaces}). Most instances of the~LLL that appear in combinatorial applications are discrete. If $\phi_\ast(\lambda)$ is the uniform probability measure on $S$, then $\B$ is said to be \emph{uniformly discrete}.
		
		Given a graph $G$ on a set $X$, an \emph{instance \ep{or the~LLL}} over $G$ is an instance $\B$ over $X$ such that:
		\begin{itemize}
			\item[--] for each $B \in \B$, the (finite) graph $G \vert \dom(B)$ is connected;
			\item[--] if $B \in \B$, $S \subseteq X$, and $\phi \colon S \to \dom(B)$ is an isomorphism between $G \vert S$ and $G \vert \dom(B)$, then
			\[
				\set{w \circ \phi \,:\, w \in B} \in \B.
			\]
		\end{itemize}
		Note that if $\alpha \colon \Gamma \acts X$ is an action of a countable group $\Gamma$ generated by a set $S \subseteq \Gamma$, then every instance over $G(\alpha, S)$ is in particular an instance over $\alpha$.
		
		Now we are ready to state the first version of the converse theorem.
		
		\begin{theo}\label{theo:Thm2}
			Let $\alpha \colon \Gamma \acts (X, \mu)$ be a free ergodic measure-preserving action of a countably infinite amenable group~$\Gamma$ on a standard probability space $(X, \mu)$. Suppose that $S \subseteq \Gamma$ is a finite generating set and let $G \defeq G(\alpha, S)$. The following statements are equivalent:
			\begin{enumerate}[label=\ep{\normalfont{\roman*}}]
				\item there exists $\epsilon \in (0;1]$ such that for every $\epsilon$-correct uniformly discrete Borel instance $\B$ over $G$, there is a Borel map $f\colon X \to [0;1]$ with $\mu(\Def_\B(f)) < 1$;
				\item $\alpha$ factors to the shift action $\Gamma \acts ([0;1]^\Gamma, \lambda^\Gamma)$.
			\end{enumerate}
		\end{theo}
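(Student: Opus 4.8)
The plan is to prove the two implications separately; $(\mathrm{ii})\Rightarrow(\mathrm{i})$ is essentially immediate from what we already have, while $(\mathrm{i})\Rightarrow(\mathrm{ii})$ is the substance of the theorem.

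\emph{The easy direction.} Assume $\alpha$ factors to the $[0;1]$-shift. Any Borel instance $\B$ over $G = G(\alpha, S)$ is in particular a Borel instance over the L-system $\LS(\alpha,\mu)$: the domain of each bad event is connected in $G$, hence contained in a single $\alpha$-orbit, and since $\iso_\alpha \subseteq \iso_{G(\alpha,S)}$, invariance under graph isomorphisms entails invariance under the $\Gamma$-equivariant bijections of orbits. Moreover, $\epsilon$-correctness for any $\epsilon\in(0;1]$ implies correctness (take $\epsilon'=1$ in the monotonicity remark). Hence Corollary~\ref{corl:no_iterations} applies and produces a Borel $f\colon X\to[0;1]$ with $\mu(\Def_\B(f))=0<1$; thus $(\mathrm{i})$ holds, in fact with $\epsilon = 1$.

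\emph{The hard direction.} As recalled in the introduction, by the theorem of Ornstein and Weiss a free ergodic probability measure-preserving action of a countably infinite amenable group factors to the $[0;1]$-shift if and only if its Kolmogorov--Sinai entropy $H_\mu(\alpha)$ is infinite, so it suffices to deduce $H_\mu(\alpha)=\infty$ from $(\mathrm{i})$. The plan is: for each large integer $k$, use $(\mathrm{i})$ to manufacture a Borel $f\colon X\to k$ whose restriction to every large ``window'' in $G$ is Kolmogorov-incompressible, and then feed this into Lemma~\ref{lemma:complexity_vs_entropy} to get a lower bound on $H_\mu(\alpha)$ that tends to infinity with $k$. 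Fix the $\epsilon_0\in(0;1]$ supplied by $(\mathrm{i})$, fix $k$ (large, to be chosen in terms of $\epsilon_0$ and $d=|S\cup S^{-1}|$), and let $\phi\colon[0;1]\to k$ split $[0;1]$ into $k$ intervals of equal length, so $\phi_\ast(\lambda)$ is uniform on $k$. Fix a tempered, Cayley-connected Følner sequence $(F_n)$ in $\Gamma$ with $e\in F_n$ --- more precisely, the family of finite subsets along which the pointwise Kolmogorov complexity $\kappa_f$ of Lemma~\ref{lemma:complexity_vs_entropy} is measured (adding Cayley balls $B_G(x,r)$ as extra window shapes if needed) --- and set $M_n \defeq \mathrm{Cay}(\Gamma,S)\vert F_n$, a connected $d$-regular graph on $|F_n|$ vertices.

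\emph{The incompressibility instance.} On each model $M_n$ let $\mathcal{P}_n\subseteq k^{F_n}$ consist of all $\mathrm{Aut}(M_n)$-images of patterns $p$ with $K(p)\le c\,|F_n|\log_2 k$, where $c<1$ is a fixed threshold. There are fewer than $k^{c|F_n|}$ compressible patterns, and a connected degree-$d$ graph on $m$ vertices has at most $m\,(d!)^m$ automorphisms, so $|\mathcal{P}_n| \le k^{c'|F_n|}$ for some $c'<1$ once $k$ is large relative to $d$. Let $\B$ be the instance over $G$ whose bad events are indexed by triples $(n,W,q)$ with $W$ a subset of a single $G$-component, $G\vert W\cong M_n$, and $q$ a pattern on $W$ obtained from $\mathcal{P}_n$ via some isomorphism $M_n\to G\vert W$; the event says ``$\phi\circ f\vert W = q$,'' so it has probability $k^{-|F_n|}$. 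Since each $\mathcal{P}_n$ is $\mathrm{Aut}(M_n)$-invariant, this family does not depend on the chosen isomorphisms and is invariant under the isomorphisms of $G$, so $\B$ is genuinely an instance over $G$, and it is uniformly discrete by construction (witnessed by $\phi$). The crucial point is that $\B$ is $\epsilon_0$-correct for $k$ large: $k^{-|F_n|}$ is far below $\epsilon_0^{|F_n|}$ when $k\to\infty$ with $\epsilon_0$ fixed, and a bad event of ``shape'' $n$ has only finitely many neighbors of each shape $m$ (copies of $M_m$ meeting a fixed finite set, times $\le k^{c'|F_m|}$ patterns), the shape-$m$ contribution being controlled by $k^{-(1-c')|F_m|}$; taking $\omega(B)$ to depend only on the shape and summing the resulting tails over $n$ (using $|F_n|\to\infty$) yields $\mathbb{P}[B]\le\epsilon_0^{|\dom(B)|}\,\omega(B)\prod_{B'\in\Nbhd_\B(B)}(1-\omega(B'))$.

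\emph{Conclusion.} Apply $(\mathrm{i})$ to $\B$ to obtain a Borel $f\colon X\to k$ with $\mu(Y)>0$, where $Y\defeq X\setminus\Def_\B(f)$. By construction, for every $x\in Y$ and every $n$ the pattern induced by $f$ on the copy $F_n x$ of $M_n$ avoids $\mathcal{P}_n$, hence is incompressible: $K(f\vert F_n x)\ge (c-o(1))\,|F_n|\log_2 k$ for all large $n$, so $\liminf_n |F_n|^{-1}K(f\vert F_n x)\ge c\log_2 k$ on $Y$. As $(n,x)\mapsto K(f\vert F_n x)$ is subadditive along $(F_n)$, the Ornstein--Weiss/subadditive ergodic machinery used for Lemma~\ref{lemma:complexity_vs_entropy} makes $|F_n|^{-1}K(f\vert F_n x)$ converge $\mu$-a.e.\ to an $\alpha$-invariant limit, which by ergodicity of $\alpha$ is $\mu$-a.e.\ equal to a constant $\kappa_0$; since $\mu(Y)>0$, this forces $\kappa_0\ge c\log_2 k$, i.e.\ $\kappa_f\ge c\log_2 k$ $\mu$-almost everywhere. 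Lemma~\ref{lemma:complexity_vs_entropy} then gives $H_\mu(\alpha)\ge h_\mu(\alpha,f)\ge\int\kappa_f\,\D\mu\ge c\log_2 k$, and letting $k\to\infty$ yields $H_\mu(\alpha)=\infty$, whence Ornstein--Weiss delivers the factor map to the $[0;1]$-shift.

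I expect the main obstacle to be the construction of $\B$: one must simultaneously (a) encode incompressibility at every relevant scale by bad events whose probabilities decay fast enough, (b) make the whole family invariant under all combinatorial automorphisms of $G$ so that it is a legitimate instance over $G$ --- which forces the $\mathrm{Aut}(M_n)$-saturation and hence the regime $k\gg d$ --- and (c) verify the strengthened $\epsilon_0$-correctness rather than ordinary correctness. A secondary subtlety, but the one that makes the weak hypothesis $\mu(\Def_\B(f))<1$ suffice, is the use of ergodicity to upgrade ``incompressible on a set of positive measure'' to ``incompressible almost everywhere'' via the a.e.-constancy of the limiting complexity, together with matching the windows of $\B$ to the finite exhaustion used to define $\kappa_f$ in Lemma~\ref{lemma:complexity_vs_entropy}.
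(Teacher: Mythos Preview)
Your plan is correct and follows essentially the same route as the paper: reduce $(\mathrm{i})\Rightarrow(\mathrm{ii})$ to $H_\mu(\alpha)=\infty$ via Ornstein--Weiss, build an LLL instance that forbids Kolmogorov-compressible patterns on connected F\o{}lner windows, apply $(\mathrm{i})$, and feed the result into Lemma~\ref{lemma:complexity_vs_entropy}. Two implementation choices in your sketch are more complicated than necessary, and it is worth seeing how the paper streamlines them.

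First, the $\mathrm{Aut}(M_n)$-saturation and the attendant ``$k\gg d$'' regime are avoidable. The paper indexes bad events not by pairs $(W,q)$ but by \emph{embeddings} $\phi\colon \Phi_n\to X$ of the Cayley model into $G$: the event $B_\phi$ consists of all $w\colon\im(\phi)\to 2^s$ with $K_\Oracle(w\circ\phi)\le (s-t)|\Phi_n|$. Since the family ranges over all embeddings, invariance under $\iso_G$ is automatic without any saturation, and the neighborhood count becomes simply ``number of embeddings touching a given finite set,'' bounded by $|\dom(B)|\cdot k d^k$ for neighbors of size $k$. Correctness is then verified with $\omega_n=\delta^n$ for small $\delta$, and one finds a single threshold $t=t(\epsilon_0,d)$ such that the instance is $\epsilon_0$-correct for \emph{every} $s\ge t$; letting $s\to\infty$ plays the role of your $k\to\infty$.

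Second, your appeal to a subadditive/Brudno-type pointwise ergodic theorem for Kolmogorov complexity is not needed (and Lemma~\ref{lemma:complexity_vs_entropy} does not use one --- its proof is a direct coding argument via Ornstein--Weiss quasi-tilings). The paper's use of ergodicity is more elementary: since $\mu(X\setminus\Def_\B(f))>0$, the $E_G$-saturation $[X\setminus\Def_\B(f)]_{E_G}$ is conull. For any $x$ in this saturation pick $y\in[x]_{E_G}\setminus\Def_\B(f)$; because the F\o{}lner sequence is increasing and exhaustive with $\mathbf{1}\in\Phi_0$, eventually $y\in\Phi_m\cdot x$, so $y$ lies in the domain of the bad event $B_{\phi_x}$ (for the canonical embedding $\phi_x\colon\gamma\mapsto\gamma\cdot x$), which is therefore avoided, giving $K_\Oracle(f_m(x))>(s-t)|\Phi_m|$ for all large $m$. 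Then Fatou and Lemma~\ref{lemma:complexity_vs_entropy} yield $H_\mu(\alpha,f)\ge s-t$. This replaces your a.e.\ convergence step entirely and is the cleaner way to exploit the weak hypothesis $\mu(\Def_\B(f))<1$.
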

		
		In general, the conclusion of Theorem~\ref{theo:Thm2} fails for infinite $S$. To see this, consider any free ergodic measure-preserving action $\alpha \colon \Gamma \acts (X,\mu)$ of a countably infinite amenable group $\Gamma$ on a standard probability space $(X,\mu)$ and set $G \defeq G(\alpha, \Gamma)$. We claim that for every correct Borel instance $\B$ over~$G$, there is a Borel map $f\colon X \to [0;1]$ with $\mu(\Def_\B(f)) = 0$, regardless of the choice of $\alpha$. Indeed, \[G = E_\alpha \setminus \set{(x, x) \,:\, x\in X},\] so $G$ only depends on the orbit equivalence relation $E_\alpha$ and not on the action $\alpha$ itself. Since, by a theorem of Dye and Ornstein--Weiss~\cite[Theorem~10.7]{KechrisMiller}, all free probability measure-preserving ergodic actions of countable amenable groups are orbit-equivalent, we may replace $\alpha$ by a $[0;1]$-shift action and apply Corollary~\ref{corl:no_iterations}.
		
		However, by keeping track of slightly more information than just the graph $G(\alpha, S)$, one can still establish an analog of Theorem~\ref{theo:Thm2} for infinite $S$ (and in particular for groups that are not finitely generated). An \emph{\ep{$S$-}labeled graph} on $X$ is a family $G = (G_\gamma)_{\gamma \in S}$ of graphs on $X$ indexed by the elements of a given countable set $S$. Note that the sets $G_\gamma$ are not required to be disjoint, i.e., the same edge can receive more than one label. A labeled graph $G$ on a standard Borel space is \emph{Borel} if each $G_\gamma$ is Borel. An isomorphism between labeled graphs $G_1$ and $G_2$ must preserve the labeling, i.e., it has to be an isomorphism between each $(G_1)_\gamma$ and $(G_2)_\gamma$ individually. For an $S$-labeled graph $G$ on $X$ and a subset $X' \subseteq X$, let $G\vert X'$ denote the $S$-labeled graph on $X'$ given by $(G \vert X')_\gamma \defeq G_\gamma \vert X'$. For an $S$-labeled graph~$G$, its \emph{underlying graph} is $\bigcup_{\gamma \in S} G_\gamma$. A labeled graph $G$ is \emph{connected} if its underlying graph is connected. The definition of an instance over $G$ extends verbatim to the case when $G$ is labeled. If $\alpha \colon \Gamma \acts X$ is an action of a countable group $\Gamma$ on a set $X$ and $S \subseteq \Gamma$ is a generating set, then $G_\ell(\alpha, S)$ denotes the $S$-labeled graph on $X$ given by
		\[
			(x,y) \in  (G_\ell(\alpha, S))_\gamma \,\vcentcolon\Longleftrightarrow\, x \neq y \text{ and } (\gamma \cdot x = y \text{ or } \gamma \cdot y = x).
		\]
		Thus, the underlying graph of $G_\ell(\alpha, S)$ is $G(\alpha, S)$. Now we have the following:
		\begin{theobis}{theo:Thm2}\label{theo:Thm2bis}
			Let $\alpha \colon \Gamma \acts (X, \mu)$ be a free ergodic measure-preserving action of a countably infinite amenable group~$\Gamma$ on a standard probability space $(X, \mu)$. Let $S \subseteq \Gamma$ be a generating set and let $G \defeq G_\ell(\alpha, S)$. The following statements are equivalent:
			\begin{enumerate}[label=\ep{\normalfont{\roman*}}]
				\item there exists $\epsilon \in (0;1]$ such that for every $\epsilon$-correct uniformly discrete Borel instance $\B$ over $G$, there is a Borel map $f\colon X \to [0;1]$ with $\mu(\Def_\B(f)) < 1$;
				\item $\alpha$ factors to the shift action $\Gamma \acts ([0;1]^\Gamma, \lambda^\Gamma)$.
			\end{enumerate}
		\end{theobis}
		
		Notice that Theorems~\ref{theo:Thm2} and \ref{theo:Thm2bis} also demonstrate that the local finiteness requirement in the statement of Theorem~\ref{theo:approxLLL} is necessary.
		
		\subsection{Outline of the proof}
		
		\mbox{}
		
		\smallskip
		
		\noindent The proofs of Theorems~\ref{theo:Thm2} and \ref{theo:Thm2bis} are almost identical, so we will present them simultaneously. We only have to show the forward implication in both statements (the other direction is handled by Corollary~\ref{corl:no_iterations}). Here we briefly sketch our plan of attack.
		
		For simplicity, assume that $\Gamma = \Z$ and let $\alpha \colon \Z \acts (X, \mu)$ be a free ergodic probability measure\=/preserving action of $\Z$ on a standard probability space $(X, \mu)$. There is a simple criterion, called \emph{Sinai's factor theorem}, that determines whether there is a factor map $\pi \colon (X, \mu) \to ([0;1]^\Z, \lambda^\Z)$: Such $\pi$ exists if and only if $\alpha$ has infinite \emph{Kolmogorov--Sinai entropy}. The Kolmogorov--Sinai entropy of $\alpha$ is defined as follows. Consider any Borel function $f \colon X \to I$ to a finite set $I$. The \emph{Shannon entropy} of $f$ measures how ``uncertain'' the value $f(x)$ is when $x \in X$ is chosen randomly with respect to $\mu$; formally,
		\[
			h_\mu(f) \defeq - \sum_{i \in I} \mu(f^{-1}(i)) \log_2\mu(f^{-1}(i)).
		\]
		Now the action comes into play: Given $x \in X$ and $n \in \N$, we record the sequence of values
		\[
			f((-n) \cdot x),\, f((-n+1) \cdot x),\, \ldots,\, f(n \cdot x);
		\]
		this gives us a tuple of elements of $I$ of length $2n+1$. Let $f_n \colon X \to I^{2n+1}$ be the corresponding function. We can compute the \emph{average} amount of uncertainty in $f_n(x)$ per symbol; in other words, we can look at the quantity $h_\mu(f_n(x))/(2n+1)$. It turns out that, as $n$ grows, this quantity decreases, so there exists a limit
		\[
			H_\mu(\alpha, f) \defeq \lim_{n \to \infty} \frac{h_\mu(f_n)}{2n+1}.
		\]
		This limit is called the \emph{Kolmogorov--Sinai entropy} of $f$ with respect to $\alpha$. The \emph{Kolmogorov--Sinai entropy} of the action $\alpha$ itself measures the ``maximum level of uncertainty'' that can be achieved with respect to $\alpha$; formally, it is defined as
		\[
			H_\mu(\alpha) \defeq \sup_f H_\mu(\alpha, f),
		\]
		where $f$ is ranging over all Borel functions from $X$ to a finite set. As mentioned previously, $\alpha$ factors to the $[0;1]$-shift action if and only if $H_\mu(\alpha)= \infty$.
		
		How can we use the~LLL to prove that $H_\mu(\alpha) = \infty$? By definition, we have to exhibit Borel functions~$f$ with arbitrarily large values of $H_\mu(\alpha, f)$. But $H_\mu(\alpha, f)$ is, in some sense, a ``global'' parameter---it is defined in terms of the measures of certain subsets of $X$---while instances of the~LLL can only put ``local'' constraints on the function~$f$. However, high value of $H_\mu(\alpha,f)$ indicates that the functions $f_n$ behave very ``randomly'' or ``unpredictably.'' Thus, what we need is a way to measure ``randomness'' or ``unpredictability'' deterministically, which we can then apply to the values of $f_n$ at each point instead of looking at the function $f_n$ as a whole.
		
		There is indeed a convenient deterministic analog of Shannon's entropy, namely the so-called \emph{Kolmogorov complexity}. Roughly speaking, a finite sequence $w$ of symbols has high Kolmogorov complexity if there is no way to encode it by a significantly shorter sequence. Our instance of the~LLL will require $f_n(x)$ to have high Kolmogorov complexity for all $n \in \N$ and $x \in X$. We will show that solving this instance, even partially, guarantees that $H_\mu(\alpha, f)$ must also be high.
		
		The structure of the rest of this section is as follows. In \prg\ref{prg:preliminaries} we list the necessary definitions and preliminary results regarding the structure of amenable groups, Kolmogorov--Sinai entropy of their actions (including the version of Sinai's factor theorem with a general amenable group in place of $\Z$), and Kolmogorov complexity. In \prg\ref{prg:complexity_vs_entropy} we prove the main lemma that connects Kolmogorov complexity and Kolmogorov--Sinai entropy. Finally, 
		\prg\ref{prg:finish} completes the proof by constructing a series of instances of the~LLL whose solutions necessarily have high Kolmogorov complexity and hence high Kolmogorov--Sinai entropy.
		
		\subsection{Preliminaries}\label{prg:preliminaries}
		
		\subsubsection*{Background on amenable groups}
		
		\mbox{}
		
		\smallskip
		
		\noindent For a group $\Gamma$, subsets $S$, $T \subseteq \Gamma$, and an element $\gamma \in \Gamma$, let
		\[
			\gamma S \defeq \set{\gamma\delta \,:\, \delta \in S}, \qquad S\gamma \defeq \set{\delta\gamma \,:\, \delta \in S}, \qquad \text{and}\qquad ST \defeq \set{\delta_1\delta_2 \,:\, \delta_1 \in S, \delta_2\in T}.
		\]
		Recall that a countable group $\Gamma$ is \emph{amenable} if it admits a \emph{F\o{}lner sequence}, i.e., a sequence $(\Phi_n)_{n=0}^\infty$ of nonempty finite subsets of $\Gamma$ such that for all $\gamma \in \Gamma$,
		\begin{equation}\label{eq:Folner}
			\lim_{n \to \infty} \frac{|\gamma \Phi_n \symdif \Phi_n|}{|\Phi_n|} = 0,
		\end{equation}
		where $\symdif$ denotes symmetric difference of sets. Note that if $S \subseteq \Gamma$ is a generating set and~\eqref{eq:Folner} holds for all $\gamma \in S$, then $(\Phi_n)_{n=0}^\infty$ is a F\o{}lner sequence (see~\cite[Remark~5.12]{KechrisMiller}).
				
		\begin{prop}\label{prop:connectedFolner}
			Let $\Gamma$ be a countably infinite amenable group and let $S \subseteq \Gamma$ be a generating set. Let $G \defeq \operatorname{Cay}(\Gamma, S)$ denote the corresponding Cayley graph. Then $\Gamma$ admits a F\o lner sequence $(\Phi_n)_{n=0}^\infty$ such that every \ep{finite} graph $G \vert \Phi_n$ is connected. 
		\end{prop}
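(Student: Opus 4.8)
The plan is to start from an arbitrary F\o{}lner sequence for $\Gamma$ and repair each of its terms by replacing it with a carefully chosen connected component of the induced subgraph. The only subtle point is that passing to a component could in principle destroy the F\o{}lner (isoperimetric) estimate; I would get around this with a ``boundary identity'' showing that the boundaries of the components partition the boundary of the whole set, so that at least one component is as good as the original.

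Concretely, I would first replace $S$ by $S \cup S^{-1}$, which changes neither $\operatorname{Cay}(\Gamma,S)$ nor the generation hypothesis, so that $S = S^{-1}$. For a finite $F \subseteq S$ and a finite $\Phi \subseteq \Gamma$, set $\partial_F\Phi \defeq \set{\delta \in \Phi : \gamma\delta \notin \Phi \text{ for some }\gamma\in F}$. The key observation is that if $C$ is a connected component of $G\vert\Phi$, then $\partial_F C = \partial_F\Phi \cap C$: indeed, if $\delta \in C$, $\gamma \in F$, and $\gamma\delta \notin C$, then $\set{\delta,\gamma\delta}$ is a $G$-edge, so $\gamma\delta$ cannot lie in $\Phi$ (otherwise it would lie in the component $C$); the reverse inclusion is immediate since $C\subseteq\Phi$. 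Summing over all components of $G\vert\Phi$ gives $\sum_C|\partial_F C| = |\partial_F\Phi|$ and $\sum_C |C| = |\Phi|$.

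Next, fix any F\o{}lner sequence $(\Psi_n)$ of nonempty finite sets and an increasing chain of symmetric finite sets $F_1 \subseteq F_2 \subseteq \cdots$ with $\bigcup_k F_k = S$. Since each $F_k$ is finite, the F\o{}lner condition yields $|\partial_{F_k}\Psi_n| \le \sum_{\gamma\in F_k}|\gamma\Psi_n \symdif \Psi_n|$, so $|\partial_{F_k}\Psi_n|/|\Psi_n| \to 0$ as $n \to \infty$; hence I can choose $n_1 < n_2 < \cdots$ with $|\partial_{F_k}\Psi_{n_k}| \le \tfrac1k|\Psi_{n_k}|$. A weighted averaging argument now applies: not every component $C$ of $G\vert\Psi_{n_k}$ can satisfy $|\partial_{F_k}C| > \tfrac1k|C|$, since summing that inequality over components would, by the boundary identity, contradict $|\partial_{F_k}\Psi_{n_k}| \le \tfrac1k|\Psi_{n_k}|$. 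So I obtain a connected component $\Phi_k$ of $G\vert\Psi_{n_k}$ with $|\partial_{F_k}\Phi_k| \le \tfrac1k|\Phi_k|$. Each $\Phi_k$ is finite, nonempty, and $G\vert\Phi_k$ is connected by construction. Finally, since $F_k$ is symmetric, for $\gamma \in F_k$ one has $|\gamma\Phi_k\setminus\Phi_k| = |\set{\delta\in\Phi_k : \gamma\delta\notin\Phi_k}| \le |\partial_{F_k}\Phi_k|$ and likewise $|\Phi_k\setminus\gamma\Phi_k| \le |\partial_{F_k}\Phi_k|$, so $|\gamma\Phi_k\symdif\Phi_k| \le \tfrac2k|\Phi_k|$; as every $\gamma \in S$ lies in $F_k$ for all large $k$, it follows that $|\gamma\Phi_k\symdif\Phi_k|/|\Phi_k| \to 0$ for all $\gamma \in S$, and since $S$ generates $\Gamma$ this makes $(\Phi_k)_k$ a F\o{}lner sequence by \cite[Remark~5.12]{KechrisMiller}.

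The main obstacle is exactly the interaction between connectedness and the F\o{}lner condition: knowing that the original sets are nearly invariant is not enough by itself, and one has to see that near-invariance is ``conserved'' upon restricting to a connected piece. This is precisely what the identity $\partial_F C = \partial_F \Phi \cap C$ buys. The passage to the exhausting finite sets $F_k$ is a minor technical device needed only when $S$ is infinite; for finite $S$ one simply takes $F_k = S$ at every stage and dispenses with the diagonalization.
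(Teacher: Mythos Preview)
Your proof is correct and follows essentially the same approach as the paper: both arguments pass to a subsequence with controlled boundary and then use an averaging argument to select a connected component whose isoperimetric ratio is no worse than that of the full set. The only cosmetic difference is that you phrase the key decomposition via the inner boundary identity $\partial_F C = \partial_F\Phi \cap C$, while the paper works directly with symmetric differences and shows that $\gamma\Phi \symdif \Phi$ is the disjoint union of the $\gamma\Phi_j \symdif \Phi_j$ over the components $\Phi_j$; the two formulations are equivalent and your version is arguably a bit cleaner.
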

		\begin{proof}
			Let $\gamma_0$, $\gamma_1$, \ldots{} be a list of all the elements of $S$ in an arbitrary order, possibly with repetitions (so the list is infinite even if $S$ is finite) and let $(\Phi_n)_{n=0}^\infty$ be a F\o lner sequence for $\Gamma$. By passing to a subsequence if necessary, we can arrange that for all $n \in \N$,
			\begin{equation}\label{eq:Folner1}
				\sum_{i=0}^n\frac{|\gamma_i \Phi_n \symdif \Phi_n|}{|\Phi_n|} \leq \frac{1}{n}.
			\end{equation}
			Suppose $G\vert \Phi_n$ has $k_n$ connected components and let $\Phi_{n, 1}$, \ldots, $\Phi_{n, k_n} \subseteq \Phi_n$ denote their vertex sets. For all $i \in \N$ and $1 \leq j_1 < j_2 \leq k_n$, we have $\gamma_i \Phi_{n, j_1} \cap \Phi_{n, j_2} = \0$, so
			\begin{equation}\label{eq:componentwise}
				\gamma_i \Phi_n \symdif \Phi_n = \bigcup_{j = 1}^{k_n} (\gamma_i \Phi_{n,j} \symdif \Phi_{n,j}),
			\end{equation}
			and the union on the right-hand side of \eqref{eq:componentwise} is disjoint. Therefore,
			\[
				\sum_{i=0}^{n}\frac{|\gamma_i\Phi_n \symdif \Phi_n|}{|\Phi_n|} \,=\, \frac{\sum_{j=1}^{k_n} \sum_{i=0}^{n} |\gamma_i \Phi_{n,j} \symdif \Phi_{n,j}|}{\sum_{j = 1}^{k_n} |\Phi_{n,j}|}.
			\]
			If for all $1 \leq j \leq k_n$, we have
			\[
				\sum_{i=0}^{n}\frac{|\gamma_i \Phi_{n,j} \symdif \Phi_{n,j}|}{|\Phi_{n,j}|} > \frac{1}{n},
			\]
			then
			\[
				\frac{\sum_{j=1}^{k_n} \sum_{i=0}^{n} |\gamma_i \Phi_{n,j} \symdif \Phi_{n,j}|}{\sum_{j = 1}^{k_n} |\Phi_{n,j}|}\,>\, 
				\frac{\sum_{j=1}^{k_n} \frac{1}{n} |\Phi_{n,j}|}{\sum_{j = 1}^{k_n} |\Phi_{n,j}|} = \frac{1}{n},
			\]
			which contradicts~\eqref{eq:Folner1}. Hence, there is some $1 \leq j_n \leq k_n$ such that
			\[
			\sum_{i=0}^{n}\frac{|\gamma_i \Phi_{n,j_n} \symdif \Phi_{n,j_n}|}{|\Phi_{n,j_n}|} \leq \frac{1}{n}.
			\]
			Then $(\Phi_{n, j_n})_{n=0}^\infty$ is a desired F\o lner sequence consisting of connected sets.
		\end{proof}
		
		\begin{corl}\label{corl:goodFolner}
			Let $\Gamma$ be a countably infinite amenable group and let $S \subseteq \Gamma$ be a generating set. Let $G \defeq \operatorname{Cay}(\Gamma, S)$ denote the corresponding Cayley graph. Then $\Gamma$ admits a F\o lner sequence $(\Phi_n)_{n=0}^\infty$ such that:
			\begin{itemize}
				\item[--] for each $n \in \N$, the graph $G \vert \Phi_n$ is connected;
				\item[--] $\mathbf{1} \in \Phi_0 \subset \Phi_1 \subset \ldots$, where $\mathbf{1}$ is the identity element of $\Gamma$;
				\item[--] $\bigcup_{n=0}^\infty \Phi_n = \Gamma$;
				\item[--] $\lim_{n \to \infty} |\Phi_n|/\log_2n = \infty$.
			\end{itemize}
		\end{corl}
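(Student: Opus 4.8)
The plan is to construct $(\Phi_n)$ recursively, using the connected Følner sequence furnished by Proposition~\ref{prop:connectedFolner} as a reservoir of large connected ``bulk'' pieces, and gluing each new piece onto a growing connected ``skeleton'' that is responsible for nestedness, exhaustion, and containment of $\mathbf{1}$. The one genuinely non-obvious ingredient is that in $\operatorname{Cay}(\Gamma, S)$ (whose edges are the pairs $x \sim \gamma x$ with $\gamma \in S \cup S^{-1}$, matching $G(\alpha, S)$ for the left-translation action) right multiplication $x \mapsto x\delta$ is a graph automorphism \emph{and} preserves the (left) Følner defect, since $\gamma(\Psi\delta) = (\gamma\Psi)\delta$ gives $|\gamma(\Psi\delta) \symdif (\Psi\delta)| = |\gamma\Psi \symdif \Psi|$ and $|\Psi\delta| = |\Psi|$. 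This lets me reposition a connected Følner set \emph{anywhere} in $\Gamma$ without destroying either its connectedness or its Følner estimate.

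Concretely, fix an enumeration $\Gamma = \set{\gamma_0 = \mathbf{1}, \gamma_1, \gamma_2, \ldots}$ and set $\Phi_0 \defeq \set{\mathbf{1}}$. Assuming $\Phi_{n-1}$ is a finite connected subset of $\operatorname{Cay}(\Gamma, S)$ containing $\set{\mathbf{1}, \gamma_0, \ldots, \gamma_{n-1}}$, first enlarge it to a finite connected set $B_n \supseteq \Phi_{n-1} \cup \set{\gamma_0, \ldots, \gamma_n}$ by adjoining geodesic paths in $\operatorname{Cay}(\Gamma, S)$ from $\mathbf{1}$ to each $\gamma_i$, $i \leq n$; these paths all meet $\Phi_{n-1}$ at $\mathbf{1}$, so the union stays connected. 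Since $B_n$ is determined before the next choice, I may then select a member $\Psi$ of the connected Følner sequence of Proposition~\ref{prop:connectedFolner} that is simultaneously \ep{i} Følner-good, $|\gamma_i \Psi \symdif \Psi| \leq \tfrac{1}{n}|\Psi|$ for $0 \leq i \leq n$; \ep{ii} large, $|\Psi| \geq 2n\,|B_n|$; and \ep{iii} larger still, $|\Psi| \geq n\log_2 n$. Such $\Psi$ exists because the Følner condition for $\gamma_0, \ldots, \gamma_n$ holds for all sufficiently far-out members of the sequence, while Følner sets in an infinite group have cardinality tending to infinity (a standard fact: if $|\Psi| = t$ and $\Psi$ is Følner-good enough for $t+1$ distinct elements $\delta_1, \ldots, \delta_{t+1}$, then each $|\delta_j\Psi \symdif \Psi| < 2$ forces $\delta_j\Psi = \Psi$, and $\delta_1\psi, \ldots, \delta_{t+1}\psi$ are then $t+1$ distinct members of $\Psi$, a contradiction). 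Now pick $\psi_0 \in \Psi$, put $\Psi' \defeq \Psi\psi_0^{-1}$, a connected Følner set containing $\mathbf{1}$ by the remark above, and set $\Phi_n \defeq B_n \cup \Psi'$.

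It then remains to verify the four bullets together with the Følner property. The set $\Phi_n$ is connected because $B_n$ and $\Psi'$ are each connected and both contain $\mathbf{1}$; it contains $\mathbf{1}$, contains $\Phi_{n-1}$ and $\gamma_0, \ldots, \gamma_n$ via $B_n$ (giving nestedness and $\bigcup_n \Phi_n = \Gamma$), and $|\Phi_n| \geq |\Psi'| = |\Psi| \geq n\log_2 n$ gives $|\Phi_n|/\log_2 n \to \infty$. Finally, for each fixed $\gamma_i$ and every $n \geq i$,
\[
	|\gamma_i\Phi_n \symdif \Phi_n| \;\leq\; |\gamma_i B_n \symdif B_n| + |\gamma_i\Psi' \symdif \Psi'| \;\leq\; 2|B_n| + |\gamma_i\Psi \symdif \Psi| \;\leq\; \tfrac{|\Psi|}{n} + \tfrac{|\Psi|}{n} \;=\; \tfrac{2|\Psi|}{n} \;\leq\; \tfrac{2}{n}|\Phi_n|,
\]
using $\gamma(C \cup D) \symdif (C \cup D) \subseteq (\gamma C \symdif C) \cup (\gamma D \symdif D)$ and the right-translation invariance $|\gamma_i\Psi' \symdif \Psi'| = |\gamma_i\Psi \symdif \Psi|$. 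Hence $|\gamma_i\Phi_n \symdif \Phi_n|/|\Phi_n| \to 0$ for every $\gamma_i \in \Gamma$, so $(\Phi_n)_{n=0}^\infty$ is a Følner sequence with all the desired properties. The step I expect to require the most care is the repositioning: one cannot translate a Følner set on the left without conjugating the generators (ruining the Følner estimate), nor link a far-away connected Følner set to the skeleton by a short path (the $\operatorname{Cay}(\Gamma, S)$-distance is uncontrolled); recognizing that right multiplication is the correct symmetry here—an automorphism of this Cayley graph that simultaneously commutes with the left Følner defect—is what makes the gluing go through cleanly.
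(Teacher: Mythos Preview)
Your proof is correct and follows essentially the same approach as the paper's: start from the connected F{\o}lner sequence of Proposition~\ref{prop:connectedFolner}, right-translate so that each set contains $\mathbf{1}$ (using that right multiplication is a Cayley-graph automorphism commuting with the left F{\o}lner defect), and then union a sufficiently large connected F{\o}lner set with a small connected ``skeleton'' through $\mathbf{1}$ that forces nestedness and exhaustion. The only cosmetic differences are that the paper enumerates the generating set $S$ and uses word-balls $B_n$ as the skeleton, whereas you enumerate $\Gamma$ itself and build $B_n$ from geodesic paths; both achieve the same effect.
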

		\begin{proof}
			Proposition~\ref{prop:connectedFolner} gives a F\o lner sequence $(\Phi_n)_{n=0}^\infty$ satisfying the first condition. Since $\Gamma$ is infinite, we have $|\Phi_n| \to \infty$ as $n \to \infty$. If $\mathbf{1} \not \in\Phi_n$ for some $n \in \N$, then choose any $\gamma \in \Phi_n$ and replace $\Phi_n$ with $\Phi_n \gamma^{-1}$. Now we construct a new sequence $(\Phi'_n)_{n=0}^\infty$ inductively. Let $\gamma_0$, $\gamma_1$, \ldots{} be a list of all the elements of $S$ in an arbitrary order, possibly with repetitions. For $n \in \N$, set $S_n \defeq \set{\gamma_0, \ldots, \gamma_n}$. Let $B_n$ denote the collection of all the elements of $\Gamma$ that can be expressed as products of at most $n$ elements of $S_n$. Note that $\mathbf{1} \in B_n$ and the graph $G \vert B_n$ is connected. Let $\Phi_0' \defeq \Phi_0$. On step $n+1$, choose $N$ large enough so that
			\[
			|\Phi_N| > n \cdot \left(\sum_{i=0}^n |\Phi'_i| + |B_n| + \log_2n\right),
			\]
			and define
			\[
			\Phi'_{n+1} \defeq \Phi_N \cup \bigcup_{i=0}^n \Phi'_{i} \cup B_n.
			\]
			Clearly, $(\Phi'_n)_{n=0}^\infty$ is a F\o lner sequence satisfying all the requirements.
		\end{proof}
				
		We will need a result of Ornstein and Weiss on the existence of quasi-tilings in amenable groups. A~family $A_1$, \ldots, $A_k$ of finite sets is said to be \emph{$\epsilon$-disjoint}, $\epsilon > 0$, if there exist pairwise disjoint subsets $B_1 \subseteq A_1$, \ldots, $B_k \subseteq A_k$ such that for all $1 \leq i \leq k$,
			\[
				|B_i| \geq (1 - \epsilon) |A_i|.
			\]
		A finite set $A$ is \emph{$(1-\epsilon)$-covered} by $A_1$, \ldots, $A_k$ if
			\[
			\left|A \cap \bigcup_{i=1}^k A_i\right| \geq (1-\epsilon) |A|.
			\]
		Let $\Gamma$ be a countable group and let $A$, $A_1$, \ldots, $A_k$ be finite subsets of $\Gamma$. An \emph{$\epsilon$-quasi-tiling} of~$A$ by the sets $A_1$, \ldots, $A_k$ is a collection $C_1$, \ldots, $C_k$ of finite subsets of $\Gamma$ such that:
		\begin{itemize}
			\item[--] for each $1 \leq i \leq k$, we have $A_iC_i \subseteq A$ and the family of sets sets $(A_i \gamma)_{\gamma \in C_i}$ is $\epsilon$-disjoint;
			\item[--] the sets $A_1C_1$, \ldots, $A_kC_k$ are pairwise disjoint;
			\item[--] $A$ is $(1-\varepsilon)$-covered by the sets $A_1C_1$, \ldots, $A_kC_k$.
		\end{itemize}
			
		\begin{theo}[Ornstein--Weiss~\cite{OrnsteinWeiss}; see also~{\cite[Theorem~2.6]{WardZhang}} and {\cite[Proposition~2.3]{ZhengChenYang}}]\label{theo:quasitiling}
			Let $\Gamma$ be a countable amenable group and let $(\Phi_n)_{n=0}^\infty$ be a F\o lner sequence in $\Gamma$. Then for all $\epsilon > 0$ and for all $n \in \N$, there exist $k$, $\ell_1$, \ldots, $\ell_k$, $m_0 \in \N$ with $n \leq \ell_1 < \ell_2 < \ldots < \ell_k$ such that for each $m \geq m_0$, there exists an $\epsilon$-quasi-tiling of $\Phi_m$ by $\Phi_{\ell_1}$, \ldots, $\Phi_{\ell_k}$.
		\end{theo}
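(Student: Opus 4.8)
I would follow the Ornstein--Weiss ``greedy packing'' approach: reduce the theorem to a single combinatorial covering estimate and iterate it along a rapidly increasing hierarchy of scales. Fix $\epsilon \in (0, 1/2)$ (larger $\epsilon$ reduce to this, as an $\epsilon$-quasi-tiling is also an $\epsilon'$-quasi-tiling for $\epsilon' > \epsilon$). First I would choose the number of scales $k$: by the covering lemma below, a \emph{maximal} $\epsilon$-disjoint family of translates of a finite set $A$ inside a sufficiently $A$-invariant region always covers at least a fixed fraction $c = c(\epsilon) > 0$ of that region, so after $k$ rounds of packing the uncovered part has relative size at most $(1-c)^k$; I take $k$ with $(1-c)^k < \epsilon$. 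Next I would pick the indices $\ell_1 < \ell_2 < \cdots < \ell_k$, all $\geq n$, \emph{adaptively}: choose $\ell_1$ so that $\Phi_{\ell_1}$ is already quite invariant, and then, having fixed $\ell_1, \dots, \ell_i$, choose $\ell_{i+1} > \ell_i$ so that $\Phi_{\ell_{i+1}}$ is extremely invariant relative to the (finite, already-chosen) set $\Phi_{\ell_1} \cup \cdots \cup \Phi_{\ell_i}$ --- possible because the F\o{}lner condition makes $\Phi_m$ arbitrarily invariant as $m \to \infty$, and the required degree of invariance is a finite constraint depending only on $\epsilon$, $k$, and the cardinalities $|\Phi_{\ell_1}|, \dots, |\Phi_{\ell_i}|$. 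Finally I would take $m_0$ large enough that $\Phi_m$ is sufficiently $\Phi_{\ell_k}$-invariant for all $m \geq m_0$.

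The combinatorial core is the \emph{covering lemma}: if $A$ is finite with $\mathbf{1} \in A$ and $F$ is finite with $|\set{\gamma : A\gamma \subseteq F}| \geq (1-\delta)|F|$, then every maximal $\epsilon$-disjoint family of right-translates $A\gamma_1, \dots, A\gamma_t$ with $A\gamma_j \subseteq F$ satisfies $|A\gamma_1 \cup \cdots \cup A\gamma_t| \geq \tfrac{\epsilon(1-\delta)}{1+\epsilon}|F|$. I would prove it by double counting: fixing disjoint witnesses $B_j \subseteq A\gamma_j$ with $|B_j| \geq (1-\epsilon)|A|$, maximality forces every ``uncovered'' $\gamma$ (i.e.\ $A\gamma \subseteq F$ but $\gamma \notin \bigcup_j A\gamma_j$) to satisfy $|A\gamma \cap \bigcup_j B_j| > \epsilon|A|$, for otherwise $A\gamma$ could be adjoined with witness $A\gamma \setminus \bigcup_j B_j$; summing $|A\gamma \cap \bigcup_j B_j|$ over all $\gamma$ with $A\gamma \subseteq F$ and bounding the total above by $|A| \cdot |\bigcup_j A\gamma_j|$ (each point of $\bigcup_j A\gamma_j$ lies in $A\gamma$ for at most $|A|$ values of $\gamma$) yields the inequality. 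The crucial feature is that $c(\epsilon)$ does not depend on $|A|$.

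With these in hand, for $m \geq m_0$ I would $\epsilon$-quasi-tile $\Phi_m$ by a $k$-round greedy procedure using the scales \emph{from largest to smallest}. Set $R_0 \defeq \Phi_m$; on round $i = 1, \dots, k$, if $|R_{i-1}| < \epsilon|\Phi_m|$ stop (taking all remaining $C$'s empty), otherwise apply the covering lemma with $A = \Phi_{\ell_{k+1-i}}$ and $F = R_{i-1}$ to obtain a maximal $\epsilon$-disjoint family $(\Phi_{\ell_{k+1-i}}\gamma)_{\gamma \in C_{k+1-i}}$ of translates contained in $R_{i-1}$, and set $R_i \defeq R_{i-1} \setminus \bigcup_{\gamma \in C_{k+1-i}} \Phi_{\ell_{k+1-i}}\gamma$. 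By construction the sets $\Phi_{\ell_i}C_i$ lie in $\Phi_m$ and are pairwise disjoint, each family $(\Phi_{\ell_i}\gamma)_{\gamma \in C_i}$ is $\epsilon$-disjoint, and $|R_i| \leq (1-c)|R_{i-1}|$, so $|R_k| \leq (1-c)^k|\Phi_m| < \epsilon|\Phi_m|$ --- which is exactly the three clauses in the definition of an $\epsilon$-quasi-tiling.

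The one delicate point, and the main obstacle, is that the residual regions $R_i$ are \emph{not} F\o{}lner sets, so the covering lemma's invariance hypothesis on $F = R_{i-1}$ is not automatic. The resolution is precisely the adaptive, rapidly-increasing choice of scales: $R_{i-1}$ differs from $\Phi_m$ only by finitely many translates of the \emph{coarser} scales $\Phi_{\ell_{k+2-i}}, \dots, \Phi_{\ell_k}$, and since each of those was chosen vastly more invariant than $\Phi_{\ell_{k+1-i}}$, the set of $\gamma \in R_{i-1}$ for which $\Phi_{\ell_{k+1-i}}\gamma$ meets any previously placed tile has size at most a $\delta$-fraction of $|R_{i-1}|$ (using $|R_{i-1}| \geq \epsilon|\Phi_m|$ whenever we have not yet stopped, together with the fact that an $\epsilon$-disjoint family of $\Phi_{\ell_j}$-translates inside $\Phi_m$ has at most $|\Phi_m|/((1-\epsilon)|\Phi_{\ell_j}|)$ members). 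Getting this bookkeeping right is what dictates the order in which the parameters must be fixed ($\epsilon \to k \to$ the invariance thresholds $\to \ell_1 \to \cdots \to \ell_k \to m_0$); once the hierarchy is set up correctly, the remaining verifications are routine.
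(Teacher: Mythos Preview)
The paper does not prove this theorem: it is quoted from the literature (Ornstein--Weiss, with the references to Ward--Zhang and Zheng--Chen--Yang for expository treatments), and is used as a black box in the proof of Lemma~\ref{lemma:complexity_vs_entropy}. So there is no ``paper's own proof'' to compare against.

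That said, your outline is the standard Ornstein--Weiss greedy packing argument, and the architecture is right: a covering lemma giving a uniform density gain $c(\epsilon)$ independent of the tile, a choice of $k$ with $(1-c)^k < \epsilon$, scales $\ell_1 < \cdots < \ell_k$ chosen adaptively so that each $\Phi_{\ell_{i+1}}$ is highly invariant relative to the earlier $\Phi_{\ell_j}$'s, and a top-down packing of $\Phi_m$. You have also correctly identified the one genuinely delicate point---controlling the $\Phi_{\ell_{k+1-i}}$-invariance of the residual set $R_{i-1}$---and the mechanism that resolves it (the boundary of each removed $\Phi_{\ell_j}$-translate, relative to the smaller current scale, is negligible compared to its volume, and the number of such translates is controlled by the $\epsilon$-disjointness).

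Two small points to tighten if you flesh this out. First, in your covering lemma, ``uncovered $\gamma$'' should mean $\gamma \notin \{\gamma_1,\dots,\gamma_t\}$ rather than $\gamma \notin \bigcup_j A\gamma_j$; the maximality argument applies to every $\gamma$ with $A\gamma \subseteq F$ that is not already a center, regardless of whether $\gamma$ itself is covered. Second, the precise constant you quote, $\epsilon(1-\delta)/(1+\epsilon)$, does not fall out immediately from the double count you describe (the $t$ term needs to be absorbed, e.g.\ via $t(1-\epsilon)|A| \leq |\bigcup_j A\gamma_j|$); the exact constant is immaterial, but be prepared to adjust it when writing the details.
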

		
		\subsubsection*{Background on Kolmogorov--Sinai entropy}
		
		\mbox{}
		
		\smallskip
		
		\noindent An important invariant of an amenable probability measure-preserving system is its Kolmogorov--Sinai entropy. It is usually defined in terms of finite Borel partitions; however, for our purposes it will be more convenient to define it in terms of Borel functions to a finite set (the two notions are, of course, equivalent).
		
		Let $(X,\mu)$ be a standard probability space. A \emph{\ep{finite} coloring} of $X$ is a function $f \colon X \to I$, where $I$ is a finite set. The \emph{Shannon entropy} of a Borel finite coloring $f\colon X \to I$  is defined to be
		\[
			h_\mu(f) \defeq -\sum_{i \in I} \mu(f^{-1}(i)) \log_2\mu(f^{-1}(i)).
		\]
		Here we adopt the convention that $0 \cdot \log_2 0 = 0$. Note that $0\leq h_\mu(f) \leq \log_2 |I|$.
		
		Let $\alpha \colon \Gamma \acts (X, \mu)$ be a probability measure-preserving action of a countable amenable group $\Gamma$. For a finite coloring $f \colon X \to I$ and a set $\Phi \in \fins{\Gamma}$, let $f^\Phi \colon X \to I^\Phi$ denote the finite coloring defined by setting, for all $x \in X$ and $\gamma \in \Phi$,
		\[
			f^\Phi(x)(\gamma) \defeq f(\gamma \cdot x).
		\]
		The \emph{Kolmogorov--Sinai entropy} of a Borel finite coloring $f$ with respect to $\alpha$ is given by
		\begin{equation}\label{eq:entropy}
			H_\mu(\alpha, f) \defeq \lim_{n \to \infty} \frac{h_\mu (f^{\Phi_n})}{|\Phi_n|},
		\end{equation}
		where $(\Phi_n)_{n=0}^\infty$ is a F\o lner sequence in $\Gamma$. Due to a fundamental result of Ornstein and Weiss~\cite{OrnsteinWeiss}, the limit in~\eqref{eq:entropy} always exists and is independent of the choice of $(\Phi_n)_{n=0}^\infty$. Note that we again have $0 \leq H_\mu(\alpha, f) \leq \log_2|I|$, where $I$ is the range of $f$. The \emph{Kolmogorov--Sinai entropy} of $\alpha$ is defined as follows:
		\[
			H_\mu(\alpha) \defeq \sup \set{H_\mu(\alpha, f) \,:\, f \text{ is a Borel finite coloring of }X}.
		\]
		We will use the following special case of a generalization of Sinai's factor theorem to actions of arbitrary amenable groups proven by Ornstein and Weiss:
		
		\begin{theo}[Ornstein--Weiss \cite{OrnsteinWeiss}]
			Let $\alpha \colon \Gamma \acts (X,\mu)$ be a free ergodic measure-preserving action of a countably infinite amenable group $\Gamma$ on a standard probability space $(X, \mu)$. Suppose that $H_\mu(\alpha) = \infty$. Then there exists a factor map $\pi \colon (X, \mu) \to ([0;1]^\Gamma, \lambda^\Gamma)$ to the $[0;1]$-shift action of $\Gamma$.
		\end{theo}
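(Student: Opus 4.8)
The plan is to recognize this as the Ornstein--Weiss extension of Sinai's factor theorem to amenable groups and to follow Ornstein's strategy, with the classical Rokhlin lemma replaced by the quasi-tiling machinery of Theorem~\ref{theo:quasitiling}. First I would reduce the statement to producing a \emph{coherent} sequence of finitary Bernoulli factors. Since $([0;1],\lambda)$ is a standard probability space, fix a refining sequence $\xi_0 \leq \xi_1 \leq \cdots$ of finite Borel partitions of $[0;1]$ whose union generates the Borel $\sigma$-algebra, put $I_n \defeq [0;1]/\xi_n$ and $\nu_n \defeq (\xi_n)_\ast\lambda$, and let $\beta_n \colon \Gamma \acts (I_n^\Gamma, \nu_n^\Gamma)$ be the corresponding finite-alphabet Bernoulli shift. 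The coarsening maps $I_{n+1} \to I_n$ form an inverse system with $\varprojlim_n \beta_n \cong ([0;1]^\Gamma, \lambda^\Gamma)$ as $\Gamma$-systems, and $H_{\nu_n^\Gamma}(\beta_n)$ equals the single-coordinate Shannon entropy $h_{\nu_n}(\mathrm{id}_{I_n})$, which tends to $\infty$. Hence it suffices to build, on a $\Gamma$-invariant $\mu$-conull Borel set, $\Gamma$-equivariant Borel maps $\pi_n \colon X \to I_n^\Gamma$ with $(\pi_n)_\ast\mu = \nu_n^\Gamma$ such that composing $\pi_{n+1}$ with the coarsening $I_{n+1}^\Gamma \to I_n^\Gamma$ recovers $\pi_n$; the $\pi_n$ then assemble into the desired factor map $\pi$.

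I would construct the $\pi_n$ by induction, the inductive step being a \emph{relative} Sinai factor theorem. Given $\pi_n$, the sub-$\sigma$-algebra it generates carries entropy $h_{\nu_n}(\mathrm{id}_{I_n}) < \infty$, so $H_\mu(\alpha) = \infty$ forces the relative Kolmogorov--Sinai entropy of $\alpha$ over $\pi_n$ to be infinite, in particular to exceed $h_{\nu_{n+1}}(\mathrm{id}_{I_{n+1}}) - h_{\nu_n}(\mathrm{id}_{I_n})$. The relative Sinai theorem then yields a finite Borel coloring $c \colon X \to I_{n+1}$ whose orbit process, conditioned on the $\pi_n$-factor, is an i.i.d.\ family with marginals $\nu_{n+1}$ and which \emph{refines} the $\pi_n$-names; setting $\pi_{n+1}(x)(\gamma) \defeq c(\gamma\cdot x)$ gives the required coherent refinement. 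The proof of the relative statement is the Ornstein--Weiss $\bar d$-approximation argument: one builds finite colorings $c_k$ whose induced relative processes lie within $\bar d$-distance $2^{-k}$ of the target relative-Bernoulli process by fixing an $\epsilon$-quasi-tiling of a large F\o lner set $\Phi_m$ by shifted tiles $\Phi_{\ell_1},\ldots,\Phi_{\ell_k}$ (Theorem~\ref{theo:quasitiling}), partitioning $\mu$-almost every orbit into near-disjoint translates of these tiles, and then stamping onto each tile a nearly i.i.d.\ pattern drawn from ``fresh'' randomness; the amenable Shannon--McMillan--Breiman theorem, together with the relative entropy lower bound, guarantees that enough distinct patterns are available to do this measurably and equivariantly. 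The colorings $c_k$ form a $\bar d$-Cauchy sequence, and $L^1$-completeness of the space of equivariant Borel maps $X \to I_{n+1}^\Gamma$ produces the limit coloring $c$, with ergodicity upgrading ``approximately Bernoulli'' to ``genuinely Bernoulli'' in the limit.

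The main obstacle is precisely this relative finitary-approximation step: carrying out the quasi-tiling ``copying'' construction so that it is (i) relative to the already-fixed factor $\pi_n$, (ii) Borel and $\Gamma$-equivariant on a conull invariant set, and (iii) accompanied by entropy bookkeeping --- via the Ornstein--Weiss and Shannon--McMillan--Breiman estimates and ergodic averaging along the F\o lner sequence --- that actually certifies enough independent randomness can be injected. Freeness of $\alpha$ is used to identify almost every orbit with a copy of $\Gamma$, so that F\o lner sets and quasi-tilings act pointwise, and ergodicity is used to pass from the ``on average'' conclusion of the approximation to an exact factorization in the limit. The remaining ingredients --- the inverse-limit reduction of the first paragraph and the passage to the limit over $n$ --- are routine once the relative step is in hand.
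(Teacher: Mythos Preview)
The paper does not prove this theorem at all: it is stated as a black-box result attributed to Ornstein and Weiss \cite{OrnsteinWeiss} and is used only as an input to the proof of Theorems~\ref{theo:Thm2} and~\ref{theo:Thm2bis}. There is therefore nothing to compare your proposal against.

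That said, your sketch is a reasonable high-level outline of the Ornstein--Weiss argument (inverse limit of finite-alphabet Bernoulli factors, relative Sinai step via quasi-tilings and $\bar d$-approximation), though it remains just that --- a sketch --- with the hard analytic core (the relative copying lemma and the entropy bookkeeping) asserted rather than carried out. For the purposes of this paper no proof is expected or needed; the theorem is simply quoted.
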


		\subsubsection*{Background on Kolmogorov complexity}
		
		\mbox{}
		
		\smallskip
	
		\noindent We will use some basic properties of Kolmogorov complexity, for which our references are \cite{UVSh} and \cite{LiVitanyi}. Let $2^\ast$ denote the set of all finite sequences of zeroes and ones (including the empty sequence). For $w \in 2^\ast$, let $|w|$ be the length of~$w$. For a partial function $D \colon 2^\ast \rightharpoonup 2^\ast$, define $K_D \colon 2^\ast \to \N \cup \set{\infty}$~via
		\[
			K_D (x) \defeq \inf\set{|w|\,:\, D(w) = x}.
		\]
		Given two partial functions $D_1$, $D_2 \colon 2^\ast \rightharpoonup 2^\ast$, we say that $D_1$ \emph{minorizes} $D_2$ (notation:~$D_1 \leq_K D_2$) if there is a constant $c \in \N$ such that for all $x \in 2^\ast$, $K_{D_1}(x) \leq K_{D_2}(x) + c$. Clearly, $\leq_K$ is a preorder. If $\mathscr{C}$ is a class of partial functions $2^\ast \rightharpoonup 2^\ast$, then $D \in \mathscr{C}$ is \emph{optimal} in $\mathscr{C}$ if for all $D' \in \mathscr{C}$,~$D \leq_K D'$.
		
		Given a subset $\Oracle \subseteq 2^\ast$, we say that a function is computable \emph{relative to $\Oracle$} if it can be computed by a Turing machine enhanced with the ability to determine whether a given word $w$ is in $\Oracle$; see \cite[Definition 1.7.7]{LiVitanyi} for the precise definition. In this context, the set $\Oracle$ is usually referred to as an \emph{oracle}. The class of all partial maps $D \colon 2^\ast \rightharpoonup 2^\ast$ that are computable relative to a fixed oracle $\Oracle$ is denoted by $\mathscr{C}_\Oracle$. A cornerstone of the theory of Kolmogorov complexity is the following observation:
		
		\begin{theo}[Solomonoff--Kolmogorov; see~{\cite[Lemma~2.1.1]{LiVitanyi} and \cite[Theorem~1]{UVSh}}]\label{theo:SolKol}
			Fix an oracle~$\Oracle$. There exists a map $D \in \mathscr{C}_\Oracle$ that is optimal in $\mathscr{C}_\Oracle$.
		\end{theo}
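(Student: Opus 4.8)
The plan is to carry out, relative to the oracle $\Oracle$, the classical construction of an optimal (universal) machine. First I would fix once and for all an effective enumeration $(D_i)_{i \in \N}$ of all partial functions $2^\ast \rightharpoonup 2^\ast$ computable relative to $\Oracle$; such an enumeration exists because the oracle Turing machines with oracle $\Oracle$ can themselves be listed effectively. For $i \in \N$, let $\bar{\imath} \in 2^\ast$ denote the string consisting of $i$ ones followed by a single zero (a self-delimiting code of $i$), so that $|\bar{\imath}| = i+1$, and note that a string $w \in 2^\ast$ admits a (necessarily unique) decomposition $w = \bar{\imath}\concat w'$ with $w' \in 2^\ast$ if and only if $w$ contains at least one zero.

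Next I would define the candidate map $D \colon 2^\ast \rightharpoonup 2^\ast$ as follows: given $w \in 2^\ast$, if $w = \bar{\imath} \concat w'$, then simulate $D_i$ on input $w'$ using the oracle $\Oracle$, and set $D(w) \defeq D_i(w')$ if this computation halts; otherwise leave $D(w)$ undefined. Since the enumeration $(D_i)_{i\in\N}$ is effective and the parsing $w \mapsto (i, w')$ is computable, $D$ is computable relative to $\Oracle$, i.e.\ $D \in \mathscr{C}_\Oracle$.

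To verify optimality, let $D' \in \mathscr{C}_\Oracle$ be arbitrary and pick $i$ with $D' = D_i$; set $c \defeq i+1$, a constant depending only on $D'$. Fix any $x \in 2^\ast$. If $K_{D'}(x) = \infty$ there is nothing to prove; otherwise choose $w'$ with $D_i(w') = x$ and $|w'| = K_{D_i}(x) = K_{D'}(x)$. Then $D(\bar{\imath}\concat w') = D_i(w') = x$, so
\[
	K_D(x) \,\leq\, |\bar{\imath}\concat w'| \,=\, (i+1)+|w'| \,=\, c + K_{D'}(x).
\]
Hence $D \leq_K D'$, and since $D'$ was arbitrary, $D$ is optimal in $\mathscr{C}_\Oracle$.

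I do not expect a genuine obstacle here; the one point deserving a word of justification is that the simulation step --- recovering the $i$-th oracle machine from $i$ and running it on $w'$ --- is itself computable with oracle $\Oracle$, which is exactly the content of the relativized universal machine theorem. The particular choice of the code $\bar{\imath}$ is inessential: any computable, uniquely parseable encoding of $\N$ works, and it affects only the value of the additive constant $c$.
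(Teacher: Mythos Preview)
Your argument is correct and is precisely the standard construction of a universal (optimal) machine, relativized to the oracle $\Oracle$. Note, however, that the paper does not supply its own proof of this statement: Theorem~\ref{theo:SolKol} is quoted from the literature (Li--Vit\'anyi and Uspensky--Vereshchagin--Shen), so there is no in-paper proof to compare against. What you have written is exactly the proof one finds in those references.
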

		
		In the light of Theorem~\ref{theo:SolKol}, we can define the \emph{Kolmogorov complexity} of a word $x \in 2^\ast$ relative to an oracle~$\Oracle$ to be $K_\Oracle(x) \defeq K_D(x)$,
		for some fixed optimal $D \in \mathscr{C}_\Oracle$. Note that if $D$, $D' \in \mathscr{C}_\Oracle$ are two optimal functions, then there is a constant $c \in \N$ such that $|K_D(x) - K_{D'}(x)| \leq c$ for all $x \in 2^\ast$; in this sense, the value $K_\Oracle(x)$ is defined up to an additive constant.
		
		 The following property of Kolmogorov complexity will play a crucial role in our argument.
		
		\begin{prop}\label{prop:KolmogorovProb}
			Fix an oracle $\Oracle$. Let $c$, $n \in \N$ and let $\nu_n$ denote the uniform probability measure on~$2^n$. Then
			\[
				\nu_n(\set{x \in 2^n\,:\, K_\Oracle(x) \leq n - c}) < 2^{-c+1}.
			\]
		\end{prop}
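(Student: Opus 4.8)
The plan is a direct counting argument; there is essentially no obstacle beyond bookkeeping. Fix, as in Theorem~\ref{theo:SolKol}, an optimal map $D \in \mathscr{C}_\Oracle$ so that $K_\Oracle = K_D$ (any additive ambiguity in the choice of optimal $D$ is irrelevant here, since the statement is about one fixed $K_\Oracle$). The key observation is that a short description determines a word: if $x \in 2^n$ satisfies $K_\Oracle(x) \leq n - c$, then by definition of $K_D$ there exists some $w \in 2^\ast$ with $|w| \leq n-c$ and $D(w) = x$. Assigning to each such $x$ the length\=/lexicographically least such $w$ yields an injection from $\set{x \in 2^n \,:\, K_\Oracle(x) \leq n - c}$ into $\set{w \in 2^\ast \,:\, |w| \leq n - c}$, since $D$ is a (partial) function and distinct $x$'s therefore have distinct preimages.

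Next I would estimate the size of the target set. If $n - c < 0$, then (recalling $K_\Oracle$ takes values in $\N \cup \set{\infty}$, so $K_\Oracle(x) \geq 0$) the set $\set{x \in 2^n \,:\, K_\Oracle(x) \leq n - c}$ is empty, and the inequality holds trivially because $\nu_n(\0) = 0 < 2^{-c+1}$. Otherwise, counting binary strings by length (the empty word being the unique string of length $0$),
\[
	\left|\set{w \in 2^\ast \,:\, |w| \leq n - c}\right| \,=\, \sum_{k=0}^{n-c} 2^k \,=\, 2^{n-c+1} - 1 \,<\, 2^{n - c + 1}.
\]

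Finally, combining the injection with this bound and dividing by $|2^n| = 2^n$ gives
\[
	\nu_n\!\left(\set{x \in 2^n \,:\, K_\Oracle(x) \leq n - c}\right) \,\leq\, \frac{2^{n - c + 1} - 1}{2^n} \,<\, 2^{-c+1},
\]
which is exactly the claimed estimate. The only mild subtlety is handling the degenerate range $n \leq c$ separately (and, within it, the possibility that $n - c$ is negative), together with remembering that the empty word contributes one description of length $0$; neither causes any real difficulty.
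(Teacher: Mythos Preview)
Your proof is correct and follows essentially the same counting argument as the paper: both observe that there are at most $2^{n-c+1}-1$ binary strings of length at most $n-c$, hence at most that many words of complexity $\leq n-c$, and divide by $2^n$. Your treatment is slightly more careful in handling the degenerate case $n < c$ explicitly, but otherwise the arguments coincide.
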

		\begin{proof}
			Let $D \in \mathscr{C}_\Oracle$ be the optimal function used in the definition of Kolmogorov complexity relative to $\Oracle$. There are exactly $2^{n-c+1}-1$ sequences of zeroes and ones of length at most $n-c$, so there can be at most $2^{n-c+1}-1$ words $x \in 2^\ast$ with $K_\Oracle(x) = K_D(x) \leq n-c$. Therefore,
			\[
			\nu_n(\set{x \in 2^n\,:\, K_\Oracle(x) \leq n - c}) \leq \frac{2^{n-c+1}-1}{2^n} = 2^{-c+1}-2^{-n} < 2^{-c+1},
			\]
			as desired.
		\end{proof}		
		
		\subsection{Kolmogorov complexity vs. Kolmogorov--Sinai entropy}\label{prg:complexity_vs_entropy}
		
		\mbox{}
		
		\smallskip
		
		\noindent For the rest of this section, we fix a countably infinite amenable group $\Gamma$, a generating set $S\subseteq \Gamma$, a standard probability space $(X, \mu)$, and a free ergodic measure-preserving action $\alpha \colon \Gamma \acts X$. We also fix a F\o lner sequence $(\Phi_n)_{n=0}^\infty$ in $\Gamma$ satisfying the requirements of Corollary~\ref{corl:goodFolner}, i.e., such that:
		\begin{itemize}
			\item[--] for each $n \in \N$, the graph $\operatorname{Cay}(\Gamma, S) \vert \Phi_n$ is connected;
			\item[--] $\mathbf{1} \in \Phi_0 \subset \Phi_1 \subset \ldots$, where $\mathbf{1}$ is the identity element of $\Gamma$;
			\item[--] $\bigcup_{n=0}^\infty \Phi_n = \Gamma$;
			\item[--] $\lim_{n \to \infty} |\Phi_n|/\log_2n = \infty$.
		\end{itemize}
		Let $\Oracle$ be an oracle relative to which the following data are computable:
		\begin{enumerate}
			\item[--] the group structure of $\Gamma$ and a fixed linear ordering $<$ on $\Gamma$ (we may assume, for instance, that the ground set of $\Gamma$ is $\N$);
			\item[--] the sequence $(\Phi_n)_{n=0}^\infty$ (meaning that the function $n \mapsto |\Phi_n|$ and the characteristic function of the set $\set{(\gamma, n) \in \Gamma \times \N \,:\, \gamma \in \Phi_n}$ are computable relative to $\Oracle$).
		\end{enumerate}
		Given a set $\Phi \in \fins{\Gamma}$ and a function $w \colon \Phi \to 2^s$, we can use the ordering on $\Gamma$ to identify $w$ with a sequence of zeroes and ones of length $s|\Phi|$. This identification enables us to talk about the Kolmogorov complexity $K_\Oracle(w)$ of $w$. For a Borel coloring $f \colon X \to 2^s$, a point $x \in X$, and $n \in \N$, let
		\[
			f_n(x) \defeq f^{\Phi_n}(x).
		\]
		Note that the map $X \times \N \to \N \colon (x, n)\mapsto K_\Oracle(f_n(x))$ is Borel.
		
		The following lemma connects Kolmogorov complexity and Kolmogorov--Sinai entropy:
		
		\begin{lemma}[\textbf{High complexity $\Longrightarrow$ high entropy}]\label{lemma:complexity_vs_entropy}
			Let $s \in \N$ and let $f \colon X \to 2^s$ be a Borel coloring of $X$. Then
			\[
				\limsup_{m \to \infty} \int_X\frac{K_\Oracle(f_m(x))}{|\Phi_m|} \D{\mu(x)} \leq H_\mu(\alpha,f). 
			\]
		\end{lemma}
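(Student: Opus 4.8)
The plan is to carry out a Brudno-style two-level coding argument adapted to amenable groups. The idea is to reduce the inequality to an explicit compression statement: for every $\epsilon>0$ and every sufficiently large $m$ I would exhibit a binary string $e_m(x)$ from which $f_m(x)=f^{\Phi_m}(x)$ can be reconstructed by a single $\Oracle$-computable decoder, and whose average length satisfies $\int_X|e_m(x)|\,\D\mu(x)\leq \tfrac{H_\mu(\alpha,f)+\epsilon}{1-\epsilon}|\Phi_m|+\epsilon s|\Phi_m|+O_\epsilon(\log_2 m)+C(\epsilon)$. Since any such decoder is minorized by the optimal one, this gives $K_\Oracle(f_m(x))\leq |e_m(x)|+O_\epsilon(1)$, and then dividing by $|\Phi_m|$, using $|\Phi_m|/\log_2 m\to\infty$ from Corollary~\ref{corl:goodFolner}, and finally letting $\epsilon\to 0$ yields the lemma. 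To begin, fix $\epsilon>0$; since the limit defining $H_\mu(\alpha,f)$ exists (Ornstein--Weiss), choose $n_0$ with $h_\mu(f^{\Phi_n})<(H_\mu(\alpha,f)+\epsilon)|\Phi_n|$ for all $n\geq n_0$, and apply Theorem~\ref{theo:quasitiling} with parameters $\epsilon$ and $n_0$ to obtain $k$, numbers $n_0\leq\ell_1<\dots<\ell_k$, and $m_0$ so that every $\Phi_m$ with $m\geq m_0$ admits an $\epsilon$-quasi-tiling by $\Phi_{\ell_1},\dots,\Phi_{\ell_k}$. For $m\geq m_0$ let $(C_1^m,\dots,C_k^m)$ be the lexicographically least such quasi-tiling relative to a fixed $\Oracle$-computable ordering; since membership in, and the cardinalities of, the $\Phi_n$ are decidable relative to $\Oracle$, this tuple is an $\Oracle$-computable function of $m$ (found by brute-force search, which terminates by Theorem~\ref{theo:quasitiling}).

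Next I would describe $e_m(x)$ for $m\geq m_0$. Write $A_i\defeq (2^s)^{\Phi_{\ell_i}}$ and, for $\gamma\in C_i^m$, $w_{i,\gamma}(x)\defeq f^{\Phi_{\ell_i}}(\gamma\cdot x)\in A_i$; then $w_{i,\gamma}(x)(\sigma)=f^{\Phi_m}(x)(\sigma\gamma)$ for $\sigma\in\Phi_{\ell_i}$, so the tuples $\big(w_{i,\gamma}(x)\big)_{\gamma\in C_i^m}$, $1\leq i\leq k$, determine $f_m(x)$ on $\bigcup_i\Phi_{\ell_i}C_i^m$ (consistently, since overlaps within a single $C_i^m$ carry identical values). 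The string $e_m(x)$ then consists of: a header recording $\epsilon,n_0,k,\ell_1,\dots,\ell_k,m_0,s$ (a string of length $C_0(\epsilon)$ independent of $m$); the integer $m$, self-delimitingly, in $O(\log_2 m)$ bits; for each $i$, the empirical type $\widehat{P}_i^x\defeq |C_i^m|^{-1}\sum_{\gamma\in C_i^m}\delta_{w_{i,\gamma}(x)}$ on $A_i$, encoded through its $|A_i|$ integer counts in $|A_i|\lceil\log_2(|C_i^m|+1)\rceil$ bits; for each $i$, the rank of $\big(w_{i,\gamma}(x)\big)_{\gamma\in C_i^m}$ in a fixed enumeration of all $|C_i^m|$-tuples over $A_i$ of empirical type $\widehat{P}_i^x$; and finally the values of $f_m(x)$ on the leftover set $\Phi_m\setminus\bigcup_i\Phi_{\ell_i}C_i^m$ listed raw. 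A decoder with oracle $\Oracle$ reads the header and $m$, recomputes $(C_1^m,\dots,C_k^m)$, then for each $i$ reads the type and the rank (of $\Oracle$-computable lengths, since $|A_i|$, $|C_i^m|$ and $\widehat{P}_i^x$ are known), recovers the tuple by enumeration, places the corresponding values of $f_m$, and reads off the leftover block, thereby reconstructing $f_m(x)$; hence $K_\Oracle(f_m(x))\leq |e_m(x)|+C_0'(\epsilon)$.

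Finally I would bound $\int_X|e_m(x)|\,\D\mu(x)$. The header contributes $C_0(\epsilon)$, the integer $m$ contributes $O(\log_2 m)$, and the types contribute at most $\sum_i 2^{s|\Phi_{\ell_i}|}\lceil\log_2(|\Phi_m|+1)\rceil=O_\epsilon(\log_2 m)$. By the method-of-types bound (which follows from the multinomial identity $1\geq\binom{N}{NP}\prod_a P(a)^{NP(a)}=\binom{N}{NP}\,2^{-N\mathrm{H}(P)}$, where $\mathrm{H}$ is Shannon entropy in bits) the $i$-th rank occupies at most $|C_i^m|\,\mathrm{H}(\widehat{P}_i^x)+1$ bits, and the leftover block occupies at most $s\,|\Phi_m\setminus\bigcup_i\Phi_{\ell_i}C_i^m|\leq\epsilon s|\Phi_m|$ bits because $\Phi_m$ is $(1-\epsilon)$-covered. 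Since $\mathrm{H}$ is concave, Jensen's inequality gives $\int_X\mathrm{H}(\widehat{P}_i^x)\,\D\mu(x)\leq\mathrm{H}\big(\int_X\widehat{P}_i^x\,\D\mu(x)\big)$, and because $\mu$ is $\alpha$-invariant, $\int_X\delta_{f^{\Phi_{\ell_i}}(\gamma\cdot x)}\,\D\mu(x)=(f^{\Phi_{\ell_i}})_\ast\mu$ for every $\gamma$, so $\int_X\widehat{P}_i^x\,\D\mu(x)=(f^{\Phi_{\ell_i}})_\ast\mu$ and therefore $\int_X\mathrm{H}(\widehat{P}_i^x)\,\D\mu(x)\leq h_\mu(f^{\Phi_{\ell_i}})<(H_\mu(\alpha,f)+\epsilon)|\Phi_{\ell_i}|$. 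Combining this with the quasi-tiling inequality $\sum_i|C_i^m|\,|\Phi_{\ell_i}|\leq|\Phi_m|/(1-\epsilon)$ (a consequence of $\epsilon$-disjointness within each $C_i^m$ together with pairwise disjointness of the sets $\Phi_{\ell_i}C_i^m$) we obtain
\[
\int_X K_\Oracle(f_m(x))\,\D\mu(x)\;\leq\;\frac{H_\mu(\alpha,f)+\epsilon}{1-\epsilon}\,|\Phi_m|\;+\;\epsilon s\,|\Phi_m|\;+\;O_\epsilon(\log_2 m)\;+\;C(\epsilon),
\]
with $C(\epsilon)$ independent of $m$. Dividing by $|\Phi_m|$, letting $m\to\infty$ (the $\log_2 m$ and $C(\epsilon)$ terms vanish since $|\Phi_m|/\log_2 m\to\infty$), and then letting $\epsilon\to 0$ finishes the proof. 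The main obstacle is not any of these information-theoretic estimates but the bookkeeping for the decoder: fixing conventions that make every field of $e_m(x)$ self-delimiting given the header and $m$, and checking carefully that the canonical quasi-tiling of $\Phi_m$ is genuinely recomputable relative to $\Oracle$.
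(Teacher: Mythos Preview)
Your proposal is correct and follows essentially the same Brudno-style argument as the paper: fix $\epsilon$, invoke the Ornstein--Weiss quasi-tiling theorem with an $\Oracle$-computable choice of tiling, encode $f_m(x)$ as (header)$+$(self-delimited $m$)$+$(raw leftover)$+$(empirical types)$+$(rank within type class), bound the rank contribution via the multinomial/method-of-types estimate, and then apply concavity of entropy together with $\alpha$-invariance of $\mu$ to identify the average empirical type with $(f^{\Phi_{\ell_i}})_\ast\mu$, exactly as in the paper's Claim and the subsequent Jensen step. The only cosmetic difference is that you bundle the $\epsilon$-dependent data into a ``header'' (absorbed into the additive constant of the optimal decoder), whereas the paper simply treats these as fixed parameters of a single $\Oracle$-computable decoding map; the resulting bounds and the final $\epsilon\to 0$ step are identical.
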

		\begin{proof}
			Our argument is inspired by the work of Brudno~\cite{Brudno}, who established a close relationship between Kolmogorov complexity and Kolmogorov--Sinai entropy in the case of $\Z$-actions (see also~\cite{Moriakov} for an extension of Brudno's theory to a wider class of amenable groups).
			
			Fix $\epsilon \in (0;1)$. Choose $n \in \N$ large enough so that for all $\ell \geq n$,
			\[
				\frac{h_\mu(f_\ell)}{|\Phi_\ell|} \leq H_{\mu}(\alpha,f) + \epsilon.
			\]
			For most of the proof, $n$ and $s$ will be treated as fixed constants. In particular, the implied constants in asymptotic notation may depend on $n$ and $s$.
			
			Using Theorem~\ref{theo:quasitiling}, choose $k$, $\ell_1$, \ldots, $\ell_k$, $m_0 \in \N$ so that $n \leq \ell_1 < \ldots < \ell_k$ and for every $m \geq m_0$, there exists an $\epsilon$-quasi-tiling of $\Phi_m$ by $\Phi_{\ell_1}$, \ldots, $\Phi_{\ell_k}$.
			For each $m \geq m_0$, let $C_{m,1}$, \ldots, $C_{m, k}$ be an $\epsilon$-quasi-tiling of $\Phi_m$ by $\Phi_{\ell_1}$, \ldots, $\Phi_{\ell_k}$, chosen in such a way that the map \[m \mapsto (C_{m,i})_{i=1}^k\] is computable relative to $\Oracle$. (For instance, we can choose the sequence of finite sets $C_{m,1}$, \ldots, $C_{m, k}$ to be the first in some computable ordering.)
			
			We will now devise a binary code for pairs of the form $(m, w)$, where $m \geq m_0$ and $w \colon \Phi_m \to 2^s$. The decoding procedure for this code will be computable relative to $\Oracle$, so the length of the code will provide an upper bound on the Kolmogorov complexity of $w$ (modulo an additive constant).
			
			Let $c_0(m)$ be the sequence of $\lfloor \log_2m\rfloor+1$ ones followed by a single zero and let $c_1(m)$ be any fixed binary code for the integer $m$ of length exactly $\lfloor \log_2m\rfloor+1$. Note that for any $c \in 2^\ast$, the pair $(m, c)$ is uniquely determined by $c_0(m)\concat c_1(m)\concat c$. Also note that \[|c_0(m)\concat c_1(m)| \leq 2\log_2m + O(1) = o_{m \to \infty} (|\Phi_m|).\]
			
			Consider the set
				\[
					\Lambda_m \defeq \Phi_m \setminus \bigcup_{i=1}^k\Phi_{\ell_i} C_{m,i}.
				\]
			We can view $w\vert\Lambda_m$ as a binary word of length $s|\Lambda_m|$, which we denote by $c_2(m, w)$. Note that the length of $c_2(m,w)$ is determined by $m$ and satisfies
			\[
				|c_2(m,w)| = s|\Lambda_m| \leq \epsilon s|\Phi_m|,
			\]
			since $\Phi_m$ is $(1-\epsilon)$-covered by the family $(\Phi_{\ell_i}C_{m,i})_{i=1}^k$.
			
			For each $1 \leq i \leq k$ and $\gamma \in C_{m,i}$, we can encode the mapping
			\[
				w_{i, \gamma} \colon \Phi_{\ell_i} \to 2^s \colon \delta \mapsto w(\delta \gamma)
			\]
			as a binary word of length $s|\Phi_{\ell_i}|$. For each binary word $u$ of length $s|\Phi_{\ell_i}|$, let $\eta_{i,u}(m,w)$ be the frequency of $u$ among the words of the form $w_{i,\gamma}$, i.e., let
			\[
				\eta_{i, u}(m, w) \defeq |\set{\gamma \in C_{m,i}\,:\, w_{i,\gamma} = u}|.
			\]
			By definition,
			\begin{equation}\label{eq:sum_of_frequences}
				\sum_u \eta_{i, u}(m,w) = |C_{m,i}|,
			\end{equation}
			where the summation is over all binary words of length $s|\Phi_{\ell_i}|$. Since $0 \leq \eta_{i, u}(m, w) \leq |C_{m,i}|$, we can encode $\eta_{i, u}(m, w)$ by a binary word $c_3(m, w, i, u)$ of length exactly
			\[
				\lfloor\log_2|C_{m,i}|\rfloor + 1 \leq \log_2|C_{m,i}| + O(1) \leq \log_2|\Phi_m| + O(1),
			\]
			so the length of $c_3(m,w,i,u)$ is determined by~$m$. Let $c_3(m,w,i)$ denote the concatenation of all the words of the form $c_3(m,w,i,u)$ with $u$ ranging over the binary words of length $s|\Phi_{\ell_i}|$, and let \[c_3(m,w) \defeq c_3(m,w,1) \concat \ldots\concat c_3(m,w,k).\] The length of $c_3(m,w)$ is at most $O(\log_2|\Phi_m|) = o_{m\to\infty} (|\Phi_m|)$.
			
			Now we consider the word
			\[
				w_i \defeq w \vert (\Phi_{\ell_i} C_{m,i}).
			\]
			Since $w_i$ is determined by the family $(w_{i,\gamma}\,:\, \gamma \in C_{m,i})$, there are at most
			\[
				\frac{|C_{m,i}|!}{\prod_u \eta_{i,u}(m,w)!}
			\]
			options for $w_i$, where the product is over all binary words of length $s|\Phi_{\ell_i}|$. Due to Stirling's formula and equation~\eqref{eq:sum_of_frequences}, we have
			\[
				\log_2\left(\frac{|C_{m,i}|!}{\prod_u \eta_{i,u}(m,w)!}\right) \leq -|C_{m,i}| \sum_u \frac{\eta_{i,u}(m,w)}{|C_{m,i}|}\log_2\left(\frac{\eta_{i,u}(m,w)}{|C_{m,i}|}\right).
			\]
			Thus, provided that $m$ and all the $\eta_{i,u}(m,w)$'s are given, $w_i$ can be encoded by a binary word $c_4(m,w,i)$ of length
			\[
				\left\lceil\log_2\left(\frac{|C_{m,i}|!}{\prod_u \eta_{i,u}(m,w)!}\right)\right\rceil \leq -|C_{m,i}| \sum_u \frac{\eta_{i,u}(m,w)}{|C_{m,i}|}\log_2\left(\frac{\eta_{i,u}(m,w)}{|C_{m,i}|}\right) + O(1).
			\]
			Let \[c_4(m,w) \defeq c_4(m,w,1)\concat \ldots\concat c_4(m,w,k).\] The length of $c_4(m,w)$ is at most
			\[
				-\sum_{i=1}^k|C_{m,i}| \sum_u \frac{\eta_{i,u}(m,w)}{|C_{m,i}|}\log_2\left(\frac{\eta_{i,u}(m,w)}{|C_{m,i}|}\right) + o_{m\to\infty}(|\Phi_m|).
			\]
			
			Our code for $(m,w)$ is the concatenation
			\[
				\code(m,w) \defeq c_0(m)\concat c_1(m) \concat c_2(m,w) \concat c_3(m,w) \concat c_4(m,w).
			\]
			It is clear that $\code(m,w)$ uniquely determines $m$ and $w$ and, moreover, the map
			\[
				\code(m,w) \mapsto (m,w)
			\]
			is computable relative to $\Oracle$.
			Combining the above upper bounds for the lengths of $c_0(m)$, $c_1(m)$, $c_2(m,w)$, $c_3(m,w)$, and $c_4(m,w)$, we get
			\[
				|\code(m,w)| \leq \epsilon s|\Phi_m| -\sum_{i=1}^k|C_{m,i}| \sum_u \frac{\eta_{i,u}(m,w)}{|C_{m,i}|}\log_2\left(\frac{\eta_{i,u}(m,w)}{|C_{m,i}|}\right) + o_{m\to\infty}(|\Phi_m|).
			\]
			Since $K_\Oracle(w) \leq |\code(m,w)| + O(1)$, the same asymptotic upper bound holds for $K_\Oracle(w)$ as well.
			
			Applying this analysis to a point $x \in X$, we obtain
			\begin{equation}\label{eq:Kolmogorov_at_a_point}
				\frac{K_\Oracle(f_m(x))}{|\Phi_m|} \leq \epsilon s -\sum_{i=1}^k\frac{|C_{m,i}|}{|\Phi_m|} \sum_u \frac{\eta_{i,u}(m,f_m(x))}{|C_{m,i}|}\log_2\left(\frac{\eta_{i,u}(m,f_m(x))}{|C_{m,i}|}\right) + o_{m\to\infty}(1).
			\end{equation}
			
			\begin{claim}\label{claim:oneterm}
				For each $m \geq m_0$, $1 \leq i \leq k$, and a binary word $u$ of length $s|\Phi_{\ell_i}|$, we have
				\[
					\int_X \frac{\eta_{i,u}(m,f_m(x))}{|C_{m,i}|} \D{\mu(x)} = \mu(f_{\ell_i}^{-1}(u)).
				\]	
			\end{claim}
			\begin{claimproof}
				Recall that, by definition,
				\[
					\eta_{i, u}(m, w) = |\set{\gamma \in C_{m,i}\,:\, w_{i,\gamma} = u}| = |\set{\gamma \in C_{m,i}\,:\, w(\delta\gamma) = u(\delta) \text{ for all } \delta \in \Phi_{\ell_i}}|.
				\]
				Notice that, for each $\delta \in \Phi_i$, $f_m(x)(\delta \gamma) = f(x)(\delta\gamma) = f(\gamma \cdot x)(\delta) = f_{\ell_i}(\gamma \cdot x)(\delta)$, so we have
				\begin{align*}
					\eta_{i, u}(m, f_m(x)) = |\set{\gamma \in C_{m,i}\,:\,f_m(x)(\delta\gamma) = u(\delta) \text{ for all } \delta \in \Phi_{\ell_i}}|= |\set{\gamma \in C_{m,i}\,:\, f_{\ell_i}(\gamma \cdot x) = u}|.
				\end{align*}
				Therefore,
				\begin{align*}
					\int_X \frac{\eta_{i,u}(m,f_m(x))}{|C_{m,i}|} \D{\mu(x)} &= \frac{1}{|C_{m,i}|}\int_X |\set{\gamma \in C_{m,i}\,:\, f_{\ell_i}(\gamma \cdot x) = u}| \D{\mu(x)}\\
					&= \frac{1}{|C_{m,i}|}\int_X |\set{\gamma \in C_{m,i}\,:\, f_{\ell_i}(x) = u}| \D{\mu(x)}\\
					&= \frac{1}{|C_{m,i}|} \cdot |C_{m,i}|\cdot \mu(f_{\ell_i}^{-1}(u)) = \mu(f_{\ell_i}^{-1}(u)),
				\end{align*}
				where the second equality holds since $\mu$ is $\alpha$-invariant.
			\end{claimproof}
			
			The function $\alpha \mapsto -\alpha \log_2\alpha$ is concave for $0\leq \alpha \leq 1$, so, by Claim~\ref{claim:oneterm}, for $1 \leq i \leq k$, we have
			\begin{align*}
				&-\int_X \sum_u \frac{\eta_{i,u}(m,f_m(x))}{|C_{m,i}|}\log_2\left(\frac{\eta_{i,u}(m,f_m(x))}{|C_{m,i}|}\right) \D{\mu(x)}\\
				\leq\,& -\sum_u \int_X\frac{\eta_{i,u}(m,f_m(x))}{|C_{m,i}|} \D{\mu(x)} \log_2 \left(\int_X\frac{\eta_{i,u}(m,f_m(x))}{|C_{m,i}|} \D{\mu(x)}\right)\\
				=\,&-\sum_u \mu(f_{\ell_i}^{-1}(u)) \log_2\mu(f_{\ell_i}^{-1}(u)) \\
				=\,& h_\mu(f_{\ell_i}).
			\end{align*}
			Combining this with \eqref{eq:Kolmogorov_at_a_point} gives
			\[
				\int_X\frac{K_\Oracle(f_m(x))}{|\Phi_m|}\D{\mu(x)} \leq \epsilon s + \sum_{i=1}^k \frac{|C_{m,i}|}{|\Phi_m|} h_\mu(f_{\ell_i}) + o_{m \to \infty}(1).
			\]
			Recall that, by the choice of $n$,
			\[
				\frac{h_\mu(f_\ell)}{|\Phi_\ell|} \leq H_{\mu}(\alpha,f) + \epsilon
			\]
			for all $\ell \geq n$. Since $n \leq \ell_1 < \ldots < \ell_k$, we get
			\[
				\int_X\frac{K_\Oracle(f_m(x))}{|\Phi_m|}\D{\mu(x)} \leq \epsilon s + \sum_{i=1}^k \frac{|C_{m,i}|}{|\Phi_m|} \cdot  |\Phi_{\ell_i}|(H_\mu(\alpha,f)+\epsilon) + o_{m \to \infty}(1).
			\]
			Since the sets $(\Phi_{\ell_i}C_{m,i})_{i=1}^k$ are pairwise disjoint, and for each $1 \leq i \leq k$, the family of sets $(\Phi_{\ell_i}\gamma)_{\gamma \in C_{m,i}}$ is $\epsilon$-disjoint, we have
			\[
				|\Phi_m| \geq \sum_{i=1}^k |\Phi_{\ell_i}C_{m,i}| \geq (1-\epsilon) \sum_{i=1}^k |\Phi_{\ell_i}||C_{m,i}|,
			\]
			so
			\begin{equation}\label{eq:Kolmogorov_integrated}
				\int_X\frac{K_\Oracle(f_m(x))}{|\Phi_m|}\D{\mu(x)} \leq \epsilon s + \frac{1}{1-\epsilon} (H_\mu(\alpha,f) + \epsilon) + o_{m \to \infty}(1).
			\end{equation}
			Since~\eqref{eq:Kolmogorov_integrated} holds for every $\epsilon \in (0;1)$ and for every sufficiently large $m$, we finally obtain
			\[
				\int_X\frac{K_\Oracle(f_m(x))}{|\Phi_m|}\D{\mu(x)} \leq H_\mu(\alpha,f) + o_{m \to \infty} (1),
			\]
			as desired.
		\end{proof}
		
		\subsection{Building the instances}\label{prg:finish}
		
		\mbox{}
		
		\smallskip
		
		\noindent Define
		\[
			G \defeq \begin{cases}
				G(a, S) &\text{if }S\text{ is finite};\\
				G_\ell(a, S) &\text{if }S\text{ is infinite}.
			\end{cases}
		\]
		If $S$ is finite, let $\operatorname{Cay}(\Gamma, S)$ denote the corresponding (unlabeled) Cayley graph; otherwise, assume that $\operatorname{Cay}(\Gamma, S)$ is $S$\=/labeled. Fix $\epsilon \in (0;1]$ such that for every $\epsilon$-correct uniformly discrete Borel instance $\B$ over $G$, there is a Borel map $f\colon X \to [0;1]$ with $\mu(\Def_\B(f)) < 1$. Our goal is to show that $H_\mu(\alpha) = \infty$.
		
		For each pair of nonnegative integers $s$, $t \in \N$ with $s \geq t$, we will construct a uniformly discrete Borel instance $\B(s, t)$ over $G$. For convenience, we will view each bad event $B \in \B(s,t)$ as a set of partial maps in $\finf{X}{2^s}$ instead of the usual $\finf{X}{[0;1]}$.
		
		For $n \in \N$, let $G_n \defeq \operatorname{Cay}(\Gamma, S) \vert \Phi_n$. By the choice of the F\o{}lner sequence $(\Phi_n)_{n=0}^\infty$, each graph $G_n$ is connected. Given an isomorphism $\phi \colon \Phi_n \to X$ between the graphs $G_n$ and $G \vert \im(\phi)$, let $B_\phi(s,t)$ denote the bad event with domain $\im(\phi)$ consisting of all maps $w \colon \im(\phi) \to 2^s$ such that
		\[
			K_\Oracle (w \circ \phi) \leq (s-t)|\Phi_n|.
		\]
		Let $\B(s,t)$ denote the collection of all bad events $B_\phi(s,t)$ defined above. It is clear that $\B(s,t)$ is a uniformly discrete Borel instance of the~LLL over $G$. We will show that there is some $t \in \N$ such that for all $s \geq t$, the instance $\B(s, t)$ is also $\epsilon$-correct.
		
		Set
		\[
			d \defeq \begin{cases}
				|S\cup S^{-1}| &\text{if }S\text{ is finite};\\
				2 &\text{if }S\text{ is infinite}.
			\end{cases}
		\]
		
		\begin{lemma}\label{lemma:counting_embeddings}
			For all $\gamma \in \Phi_n$ and $x \in X$, the number of isomorphic embeddings $\phi \colon \Phi_n \to X$ of $G_n$ into $G$ with $\phi(\gamma) = x$ does not exceed $d^{|\Phi_n|}$.
		\end{lemma}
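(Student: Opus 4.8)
The plan is to use the connectedness of $G_n = \operatorname{Cay}(\Gamma, S)\vert\Phi_n$ to reconstruct any such embedding $\phi$ one vertex at a time, starting from the prescribed value $\phi(\gamma) = x$, and to observe that at each extension step there are at most $d$ possible choices. Write $N \defeq |\Phi_n|$. First I would fix an enumeration $\gamma = \delta_0, \delta_1, \ldots, \delta_{N-1}$ of $\Phi_n$ such that for every $i \geq 1$ there is some index $j(i) < i$ with $\delta_i$ adjacent to $\delta_{j(i)}$ in $G_n$; such an enumeration exists because $G_n$ is connected and $\gamma \in \Phi_n$ (e.g.\ take a breadth-first search order rooted at $\gamma$, equivalently a vertex order compatible with a spanning tree of $G_n$ rooted at $\gamma$).

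Next I would argue by induction on $i$ that, once $\phi$ has been specified on $\set{\delta_0, \ldots, \delta_{i-1}}$, there are at most $d$ possibilities for $\phi(\delta_i)$. The value $\phi(\delta_0) = x$ is forced. For $i \geq 1$, set $j \defeq j(i)$; since $\delta_i$ and $\delta_j$ are adjacent in $G_n$ and $\phi$ is an isomorphism of $G_n$ onto $G\vert\im(\phi)$ (which, when $S$ is infinite, must additionally preserve the $S$-labeling), $\phi(\delta_i)$ must be a neighbor of $\phi(\delta_j)$ in $G$ respecting the relevant label. When $S$ is finite, every vertex of $G = G(\alpha, S)$ has at most $|S \cup S^{-1}| = d$ neighbors, so there are at most $d$ candidates for $\phi(\delta_i)$. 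When $S$ is infinite, the edge $\set{\delta_i, \delta_j}$ of the $S$-labeled Cayley graph carries some label $\eta \in S$, and the only $\eta$-labeled $G_\ell(\alpha,S)$-neighbors of $\phi(\delta_j)$ are $\eta \cdot \phi(\delta_j)$ and $\eta^{-1}\cdot \phi(\delta_j)$, so there are at most $d = 2$ candidates. Multiplying the number of choices over $i = 1, \ldots, N-1$ gives at most $d^{N-1} \leq d^{N} = d^{|\Phi_n|}$ embeddings $\phi$ with $\phi(\gamma) = x$, as claimed.

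I do not expect a real obstacle. The only points that need a little care are making the "enumeration compatible with a spanning tree" step precise while invoking nothing beyond connectedness of $G_n$, and, in the labeled case, verifying that label preservation genuinely reduces the number of candidates for $\phi(\delta_i)$ to at most two — this is where one uses that in a group $\eta \cdot y = \eta \cdot y'$ implies $y = y'$, so a fixed label $\eta$ pins $\phi(\delta_i)$ down to one of the two elements $\eta^{\pm 1}\cdot \phi(\delta_j)$. Both are routine; the whole argument is essentially a count of the ways to extend a partial homomorphism along a spanning tree.
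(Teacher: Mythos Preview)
Your proof is correct and follows the same approach as the paper's: the paper also distinguishes the finite case (where $\Delta(G) \leq d$) from the infinite labeled case (where each vertex is incident to at most $2$ edges with a given label), and then appeals to the connectedness of $G_n$. Your spanning-tree enumeration is simply the explicit unpacking of that connectedness argument.
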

		\begin{proof}
			If $S$ is finite, then $\Delta(G) \leq |S \cup S^{-1}| = d$; if $S$ is infinite, then for any given $\delta \in S$ and any $y \in X$, the graph $G$ can contain at most $2$ edges labeled by $\delta$ that are incident to $y$. Now the statement follows from the connectedness of~$G_n$. 
		\end{proof}

		\begin{lemma}\label{lemma:counting_neighbours}
			Let $B \in \B(s,t)$ and $k \in \N$. Then
			\[
				|\set{B'\in \Nbhd_{\B(s,t)}(B) \,:\, |\dom(B')| = k}| \leq |\dom(B)| \cdot kd^k.
			\]
		\end{lemma}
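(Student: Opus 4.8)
The plan is to unwind the definition of $\B(s,t)$ and reduce the estimate to a single application of Lemma~\ref{lemma:counting_embeddings}. By construction, every bad event of $\B(s,t)$ equals $B_{\phi'}(s,t)$ for some isomorphic embedding $\phi'\colon\Phi_{n'}\to X$ of $G_{n'}$ into $G$, and then $\dom(B_{\phi'}(s,t))=\im(\phi')$, which has size $|\Phi_{n'}|$ since $\phi'$ is injective. Hence the bad events of $\B(s,t)$ whose domain has size $k$ are exactly those of the form $B_{\phi'}(s,t)$ with $|\Phi_{n'}|=k$. Because the Følner sequence $(\Phi_n)_{n=0}^\infty$ furnished by Corollary~\ref{corl:goodFolner} satisfies $\mathbf{1}\in\Phi_0\subset\Phi_1\subset\cdots$ with strict inclusions, there is at most one index $n'$ with $|\Phi_{n'}|=k$ (and if there is none, the left-hand side is $0$ and the asserted bound holds trivially). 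Fix this $n'$, and write $B=B_\phi(s,t)$ with $\phi\colon\Phi_n\to X$, so that $\dom(B)=\im(\phi)$.

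First I would note that the assignment $\phi'\mapsto B_{\phi'}(s,t)$ maps the set of isomorphic embeddings of $G_{n'}$ into $G$ \emph{onto} the set of bad events of $\B(s,t)$ with domain of size $k$; therefore it suffices to bound the number of such embeddings $\phi'$ with $\im(\phi')\cap\im(\phi)\neq\0$ (dropping the requirement $B_{\phi'}(s,t)\neq B$ and passing to embeddings can only overcount). Next, for each such $\phi'$ pick some $x\in\im(\phi')\cap\im(\phi)$ and set $\gamma\defeq(\phi')^{-1}(x)\in\Phi_{n'}$; then $\phi'$ belongs to the set of isomorphic embeddings of $G_{n'}$ sending $\gamma$ to $x$. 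Consequently the embeddings in question are contained in $\bigcup_{x\in\im(\phi)}\bigcup_{\gamma\in\Phi_{n'}}\set{\phi'\,:\,\phi'(\gamma)=x}$, and Lemma~\ref{lemma:counting_embeddings} bounds the cardinality of each set in this union by $d^{|\Phi_{n'}|}=d^k$. Summing over the $|\im(\phi)|=|\dom(B)|$ choices of $x$ and the $|\Phi_{n'}|=k$ choices of $\gamma$ yields the bound $|\dom(B)|\cdot k\cdot d^k$.

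I do not expect a genuine obstacle here; this is a direct counting estimate. The only points needing mild care are: (i) recording that one may freely overcount when passing from bad events to their indexing embeddings, and from an embedding to the pairs $(x,\gamma)$ compatible with it, so that the crude union bound is legitimate; and (ii) observing that the hypothesis $|\dom(B')|=k$ pins down a unique level $n'$ of the Følner sequence, which is what makes $d^k$ (rather than some other power of $d$) the correct per-pair bound coming from Lemma~\ref{lemma:counting_embeddings}. Both the finite-$S$ and infinite-$S$ cases are covered uniformly, since the definition of $d$ and Lemma~\ref{lemma:counting_embeddings} already account for the labeled-graph situation.
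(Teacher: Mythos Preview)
Your proposal is correct and follows essentially the same argument as the paper: observe that at most one index $n'$ satisfies $|\Phi_{n'}|=k$, overcount neighbors by embeddings $\phi'$ whose image meets $\dom(B)$, and then bound the number of such embeddings by summing over the $|\dom(B)|$ choices of $x$, the $k$ choices of $\gamma\in\Phi_{n'}$, and applying Lemma~\ref{lemma:counting_embeddings} for the $d^k$ bound. The paper's proof is slightly terser but makes the identical moves.
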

		\begin{proof}
			If there is no $n \in \N$ with $|\Phi_n| = k$, then there is nothing to prove, so suppose that $|\Phi_n| = k$ (such $n$ is unique since the sequence $(\Phi_n)_{n=0}^\infty$ is strictly increasing). Consider any $B'\in \Nbhd_{\B(s,t)}(B)$ with $|\dom(B')| = k$. Then $B' = B_\phi(s,t)$ for some embedding $\phi \colon \Phi_n \to X$ of $G_n$ into $G$. As $B'\in \Nbhd_{\B(s,t)}(B)$, we have $\im(\phi) \cap \dom(B) \neq \0$, i.e., there exist $\gamma \in \Phi_n$ and $x \in \dom(B)$ such that $\phi(\gamma) = x$. Now we have $|\dom(B)|$ choices for $x$, $k$ choices for $\gamma$, and, by Lemma~\ref{lemma:counting_embeddings}, at most $d^k$ choices for $\phi$ given $\gamma$ and $x$.
		\end{proof}
		
		By Proposition~\ref{prop:KolmogorovProb}, if $B \in \B(s,t)$ and $|\dom(B)| \eqqcolon n$, then
		\[
			\mathbb{P}[B] < 2^{-tn+1}.
		\]
		In the light of Lemma~\ref{lemma:counting_neighbours}, to show that $\B(s, t)$ is $\epsilon$-correct, it suffices to find a sequence $(\omega_n)_{n=1}^\infty$ with each $\omega_n \in [0;1)$ such that for all $n \in \N$,
		\begin{equation}\label{eq:first_bound}
			2^{-tn+1} \leq \epsilon^n \omega_n \prod_{k=1}^\infty (1-\omega_k)^{nkd^k}.
		\end{equation}
		Note that inequality~\eqref{eq:first_bound} does not mention $s$; in other words, if it holds for some $(\omega_n)_{n=1}^\infty$ and for all $n \in \N$, then $\B(s,t)$ is $\epsilon$-correct for \emph{all} $s \geq t$.
		
		To solve \eqref{eq:first_bound}, let $\delta > 0 $ be sufficiently small and set $\omega_n \defeq \delta^n$. For every $n \in \N$, we have
		\[
			1-\omega_n > e^{-2\omega_n}.
		\]
		Moreover, if we choose $\delta < 1/d$, then the series
		\[
			\sum_{k=1}^\infty \omega_k k d^k = \sum_{k=1}^\infty k (\delta d)^k
		\]
		converges; denote its sum by $c$. Now we have
		\[
			\prod_{k=1}^\infty (1-\omega_k)^{nkd^k} > e^{-2cn},
		\]
		so \eqref{eq:first_bound} holds as long as
		\[
			2^{-tn+1} \leq \epsilon^n \cdot  \delta^n \cdot  e^{-2cn},
		\]
		for which it suffices to have
		\begin{equation}\label{eq:t_bound}
			t \geq 1 - \log_2(\epsilon \delta e^{-2c}).
		\end{equation}
		Choose any $t \in \N$ that satisfies~\eqref{eq:t_bound}; for all $s \geq t$, the instance $\B(s, t)$ is $\epsilon$-correct.
		
		If $\B(s, t)$ is $\epsilon$-correct, then there must exist a Borel map $f \colon X \to 2^s$ with $\mu(\Def_\B(f)) < 1$. Since the action is ergodic, the set $[X \setminus \Def_\B(f)]_{E_G}$ is conull. We claim that for all $x \in [X \setminus \Def_\B(f)]_{E_G}$,
		\[
			\liminf_{m \to \infty} \frac{K_\Oracle(f_m(x))}{|\Phi_m|} \geq s-t.
		\]
		Indeed, let $x \in X$ and let $y \in X \setminus \Def_\B(f)$ be such that $x\, E_G \, y$. The sequence $(\Phi_n)_{n \in \N}$ is increasing and exhaustive, so there is $m_0 \in \N$ such that $y \in \Phi_m \cdot x$ for all $m \geq m_0$. Since $y \not \in \Def_\B(f)$, for each $m \geq m_0$, the restriction of $f$ to $\Phi_m \cdot x$ satisfies the constraints laid down by $\B(s,t)$. By definition, this means that $K_\Oracle(f_m(x)) > (s-t) |\Phi_m|$, as claimed.
		
		Using Fatou's lemma together with Lemma~\ref{lemma:complexity_vs_entropy}, we obtain
		\begin{align}
			s-t &\leq \int_X \liminf_{m \to \infty} \frac{K_\Oracle(f_m(x))}{|\Phi_m|}\D{\mu(x)} \nonumber\\
			&\leq \liminf_{m \to \infty} \int_X \frac{K_\Oracle(f_m(x))}{|\Phi_m|}\D{\mu(x)} \label{eq:final_bound}\\
			&\leq \limsup_{m \to \infty} \int_X \frac{K_\Oracle(f_m(x))}{|\Phi_m|} \D{\mu(x)}\leq H_\mu(\alpha, f).\nonumber
		\end{align}
		Since for any $s \geq t$, we can find $f$ such that \eqref{eq:final_bound} holds, $H_\mu(\alpha) = \infty$, as desired.

		\printbibliography
		
	\begin{appendices}
		
	
	
	
		
	\section{Proof of Theorem~\ref{theo:MoserTardos}}\label{app:MoserTardos}

	\noindent For completeness, we present here a self-contained account of the proof of Theorem~\ref{theo:MoserTardos}. Recall that for this theorem, we have fixed a set $X$ and a correct instance $\B$ over $X$. The theorem reads:
	
	\begin{theocopy}{theo:MoserTardos}\label{theo:MTcopy}
		Let $\omega \colon \B \to [0;1)$ be a function witnessing the correctness of $\B$ and let $S \in \dom(\B)$. Then
		\begin{equation*}
		\sum_{\mathscr{P} \in \mathbf{Piles}(S)} \lambda^{\supp(\mathscr{P})\times \N} \left(\mathbf{App}(\mathscr{P})\right) \leq \sum_{\substack{B \in \B \,:\\\dom(B) = S}}\frac{\omega(B)}{1 - \omega(B)}.
		\end{equation*}
	\end{theocopy}	

	In order to prove Theorem~\ref{theo:MTcopy}, we establish a correspondence between neat piles and labeled trees of a certain kind. Let us start with some general definitions. Let $A$ be a set. We use $A^\ast$ to denote the set of all finite sequences of elements of $A$. For $w \in A^\ast$, $|w|$ denotes the length of $w$, so that \[w \eqqcolon (w_0, \ldots, w_{|w|-1}).\] If $w \neq \0$, then let $\mathbf{tail}(w) \defeq w_{|w|-1}$ denote the last element of $w$. For $w$, $w' \in A^\ast$, we say that $w$ is a \emph{prefix} of $w'$ (or an \emph{initial segment} of $w'$; notation: $w \subseteq w'$) if either $w = w'$, or else, there is a sequence $u \in A^\ast$ such that $w' = w \concat u$ (where $\concat$ denotes concatenation). A \emph{tree} over $A$ is a set $\mathscr{T} \subseteq A^\ast \setminus \set{\0}$ that is closed under taking nonempty initial segments and contains a unique sequence of length $1$, called the \emph{root} of $\mathscr{T}$.
	
	Given a linear order $<$ on $A$, the corresponding \emph{lexicographical order} $<_{\operatorname{lex}}$ on $A^\ast$ is defined in the usual way; i.e., for distinct $w$, $w' \in A^\ast$, we have $w <_{\operatorname{lex}}w'$ if and only if either $w \subset w'$, or else, $w' \not \subset w$ and the smallest index $i$ such that $w_i \neq w'_i$ satisfies $w_i < w'_i$. We will use the following properties of the lexicographical order:
	\begin{itemize}
		\item[--] if $w \subset w'$, then $w <_{\operatorname{lex}} w'$;
		\item[--] if $w <_{\operatorname{lex}} w'$ and $w \not \subset w'$, then $w \concat u <_{\operatorname{lex}} w'\concat u$ for all $u \in A^\ast$.
	\end{itemize}
	
	Now we return to our problem. Fix an arbitrary linear order $<$ on $\dom(\B)$ and the induced lexicographical order $<_{\operatorname{lex}}$ on $\dom(\B)^\ast$. Consider a neat pile $\mathscr{P}$ with a unique top element~$\tau_0$. Given any $\tau \in \mathscr{P}$, a \emph{$\tau$-path} in $\mathscr{P}$ is a sequence $\tau_0$, $\tau_1$, \ldots, $\tau_k$ of elements of $\mathscr{P}$ such that $\tau_k = \tau$ and \[\tau_k \prec \ldots \prec \tau_1 \prec \tau_0.\]
	
	Notice that if $\tau$, $\tau_1$, $\tau_2 \in \mathscr{P}$ satisfy $\tau_1 \prec \tau$, $\tau_2 \prec \tau$, and $\dom(\tau_1) = \dom(\tau_2)$, then $\tau_1 = \tau_2$. Indeed, suppose that $\tau_1 \neq \tau_2$. By the definition of the relation $\prec$, there exist $x_1$, $x_2 \in \dom(\tau_1) = \dom(\tau_2)$ such that $\tau_1(x_1) = \tau(x_1) - 1$ and $\tau_2(x_2) = \tau(x_2)-1$. Since $\mathscr{P}$ is neat, $\tau_1(x_2) < \tau(x_2)$; since the graphs of $\tau_1$ and $\tau_2$ are disjoint, we get $\tau_1(x_2) < \tau_2(x_2)$. Similarly, we have $\tau_2(x_1) < \tau_1(x_1)$, which contradicts the neatness of~$\mathscr{P}$.
	
	From the above observation it follows that for any given $w \in \dom(\B)^\ast$, there exists at most one path $\tau_0$, \ldots, $\tau_{|w|-1}$ in $\mathscr{P}$ such that $\dom(\tau_i) = w_i$ for all $i$. If such a path exists, then we denote its last element $\tau_{|w|-1}$ by $\tau_{\mathscr{P}}(w)$. Note that if $v \subset w$ and $\tau_\mathscr{P}(w)$ is defined, then $\tau_\mathscr{P}(v)$ is also defined and we have $\tau_\mathscr{P}(v) \neq \tau_\mathscr{P}(w)$. For $\tau \in \mathscr{P}$, let $w_{\mathscr{P}}(\tau)$ denote the lexicographically largest sequence $w \in \dom(\B)^\ast$ such that $\tau_{\mathscr{P}}(w) = \tau$ (the set of such sequences is nonempty and finite, so this definition makes sense).
	
	\begin{lemma}\label{lemma:tree_order}
		Let $\mathscr{P}$ be a neat pile with a unique top element. Let $\tau$, $\tau' \in \mathscr{P}$ and let $x \in \dom(\tau)\cap \dom(\tau')$. Then
		\[
			\tau'(x) < \tau(x) \,\Longleftrightarrow\, w_{\mathscr{P}}(\tau) <_{\operatorname{lex}} w_{\mathscr{P}}(\tau').
		\]
	\end{lemma}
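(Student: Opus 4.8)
The plan is to reduce the biconditional to a single implication and then establish that implication by exhibiting an explicit path in $\mathscr{P}$.

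First I would dispose of the trivial case $\tau = \tau'$ (both sides of the equivalence are then false) and henceforth assume $\tau \neq \tau'$. Since the graphs of the elements of $\mathscr{P}$ are pairwise disjoint and $x \in \dom(\tau)\cap\dom(\tau')$, we have $\tau(x) \neq \tau'(x)$; and since $\tau_\mathscr{P}(w_\mathscr{P}(\sigma)) = \sigma$ for every $\sigma \in \mathscr{P}$, the assignment $\sigma \mapsto w_\mathscr{P}(\sigma)$ is injective, so $w_\mathscr{P}(\tau) \neq w_\mathscr{P}(\tau')$ and these two words are $<_{\operatorname{lex}}$-comparable one way or the other. Consequently it suffices to prove the single implication $\tau'(x) < \tau(x) \Rightarrow w_\mathscr{P}(\tau) <_{\operatorname{lex}} w_\mathscr{P}(\tau')$; swapping the roles of $\tau$ and $\tau'$ (and of the two possibilities at $x$) then delivers the reverse implication for free, hence the equivalence.

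So assume $\tau'(x) < \tau(x)$ and set $m \defeq \tau(x) - \tau'(x) \geq 1$. The core step is to descend from $\tau$ to $\tau'$ along the single coordinate $x$: I would build a chain $\tau = \rho_0, \rho_1, \ldots, \rho_m$ in $\mathscr{P}$ with $x \in \dom(\rho_j)$, $\rho_j(x) = \tau(x) - j$, and $\rho_j \prec \rho_{j-1}$ (witnessed by $x$) for each $j$. At each stage $\rho_{j-1}(x) = \tau(x) - (j-1) > \tau'(x) \geq 0$ is nonzero, so the second defining property of a pile produces such a $\rho_j$; after $m$ steps $\rho_m(x) = \tau'(x)$, and by disjointness of graphs at most one element of $\mathscr{P}$ attains a given value at $x$, forcing $\rho_m = \tau'$.

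Finally I would graft this chain onto the canonical path recorded by $w \defeq w_\mathscr{P}(\tau)$. Let $\sigma_0, \sigma_1, \ldots, \sigma_{|w|-1}$ be the unique path with $\dom(\sigma_i) = w_i$ and $\sigma_{|w|-1} = \tau$; then $\sigma_0, \ldots, \sigma_{|w|-1}, \rho_1, \ldots, \rho_m$ is again a path, now ending at $\tau'$, with domain sequence $u \defeq w \concat (\dom(\rho_1), \ldots, \dom(\rho_m))$, so $\tau_\mathscr{P}(u) = \tau'$. Maximality in the definition of $w_\mathscr{P}(\tau')$ gives $u \leq_{\operatorname{lex}} w_\mathscr{P}(\tau')$, while $w$ is a proper prefix of $u$ (as $m \geq 1$), so $w <_{\operatorname{lex}} u$; chaining the two yields $w_\mathscr{P}(\tau) <_{\operatorname{lex}} w_\mathscr{P}(\tau')$, as wanted. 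The only delicate point — and the main thing to be careful about — is to confirm that the grafted sequence really is a "path" in the exact sense used to define $\tau_\mathscr{P}$: in particular that it still emanates from the unique top element $\tau_0$ and is uniquely determined by its domain sequence $u$, so that $\tau_\mathscr{P}(u)$ is genuinely its last term $\tau'$. This is immediate from the uniqueness observation recorded just before the lemma, and the remaining verifications are routine bookkeeping with the two stated properties of $<_{\operatorname{lex}}$.
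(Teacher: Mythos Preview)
Your proposal is correct and follows essentially the same approach as the paper's proof: build a $\prec$-chain from $\tau$ down to $\tau'$, concatenate its domain sequence onto $w_\mathscr{P}(\tau)$, and invoke the maximality defining $w_\mathscr{P}(\tau')$. You are simply more explicit than the paper about constructing the chain (step by step via the pile axiom at the coordinate $x$) and about why proving one implication suffices (symmetry plus the observation that $\tau(x)\neq\tau'(x)$ and $w_\mathscr{P}(\tau)\neq w_\mathscr{P}(\tau')$ when $\tau\neq\tau'$).
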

	\begin{proof}
		Set $w \defeq w_{\mathscr{P}}(\tau)$ and $w' \defeq w_{\mathscr{P}}(\tau')$. For concreteness, assume $\tau'(x) < \tau(x)$. Then there exists a sequence $\tau_1$, \ldots, $\tau_k$ of elements of $\mathscr{P}$ such that $\tau_1 = \tau$, $\tau_k = \tau'$, and $\tau_k \prec \ldots \prec \tau_1$. Thus, the sequence \[v \defeq w \concat (\dom(\tau_2), \ldots, \dom(\tau_k))\] satisfies $\tau_{\mathscr{P}}(v) = \tau'$, which yields
		$w <_{\operatorname{lex}} v \leq_{\operatorname{lex}} w'$, as desired.
	\end{proof}
	
	Call a sequence $w \in \dom(\B)^\ast$ \emph{proper} if $w_i \cap w_{i+1} \neq \0$ for all $0 \leq i < |w| -1$. Note that if $\mathscr{P}$ is a neat pile with a unique top element, then $\tau_\mathscr{P}(w)$ can only be defined for proper $w$. A tree $\mathscr{T}$ over $\dom(\B)$ is \emph{proper} if every element of $\mathscr{T}$ is proper. For $S \in \dom(\B)$, let $\mathbf{Trees}(S)$ denote the set of all proper finite trees with root $(S)$.
			
	\begin{lemma}\label{lemma:injection}
		Let $S \in \dom(\B)$ and let $\mathscr{P} \in \mathbf{Piles}(S)$. Then
		\[
				\mathscr{T}_\mathscr{P} \defeq \set{w_{\mathscr{P}}(\tau)\,:\, \tau \in \mathscr{P}} \in \mathbf{Trees}(S).
		\]
		Furthermore, the map $\mathbf{Piles}(S) \to \mathbf{Trees}(S) \colon \mathscr{P} \mapsto \mathscr{T}_\mathscr{P}$ is injective.
	\end{lemma}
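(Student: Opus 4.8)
The plan is to prove both assertions by recovering as much information as possible from order-theoretic data. Throughout, write $\tau_0$ for the unique top element of a pile $\mathscr{P}\in\mathbf{Piles}(S)$, so $\dom(\tau_0)=S$, and recall that every $\tau$-path starts at $\tau_0$; consequently every sequence $w$ on which $\tau_\mathscr{P}$ is defined is proper and begins with $S$, and $\mathbf{tail}(w_\mathscr{P}(\tau))=\dom(\tau)$.

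First I would verify $\mathscr{T}_\mathscr{P}\in\mathbf{Trees}(S)$. Properness of every member of $\mathscr{T}_\mathscr{P}$ is automatic, since $\tau_\mathscr{P}$ is only defined on proper sequences. Finiteness is immediate: $\mathscr{T}_\mathscr{P}$ is the image of the finite set $\mathscr{P}$ under $\tau\mapsto w_\mathscr{P}(\tau)$. The only nontrivial axiom is closure under nonempty prefixes. Given $w=w_\mathscr{P}(\tau)\in\mathscr{T}_\mathscr{P}$ and a nonempty prefix $v\subseteq w$, put $\tau'\defeq\tau_\mathscr{P}(v)$, which is defined and satisfies $\tau'\neq\tau$ by the remarks preceding Lemma~\ref{lemma:tree_order}; I claim $v=w_\mathscr{P}(\tau')$, which gives $v\in\mathscr{T}_\mathscr{P}$. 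If not, there is $u>_{\operatorname{lex}}v$ with $\tau_\mathscr{P}(u)=\tau'$. Since $\tau_\mathscr{P}(v)\neq\tau_\mathscr{P}(u)$, neither of $u,v$ is a prefix of the other; writing $w=v\concat z$ and splicing the $\tau$-path witnessing $\tau_\mathscr{P}(u)=\tau'$ onto the tail $z$ of the $\tau$-path for $w$ yields a proper sequence $u\concat z$ with $\tau_\mathscr{P}(u\concat z)=\tau$, and by the quoted monotonicity of $<_{\operatorname{lex}}$ under concatenation (using that $v$ is not a prefix of $u$) one gets $u\concat z>_{\operatorname{lex}}w$, contradicting the maximality in the definition of $w_\mathscr{P}(\tau)$. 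Finally, every element of $\mathscr{T}_\mathscr{P}$ starts with $S$, and the only one of length $1$ is $w_\mathscr{P}(\tau_0)=(S)$ (any longer $\tau$-path ends below $\tau_0$), so $(S)$ is the unique root.

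For injectivity I would exhibit an explicit left inverse of $\mathscr{P}\mapsto\mathscr{T}_\mathscr{P}$. The map $\tau\mapsto w_\mathscr{P}(\tau)$ is a bijection $\mathscr{P}\to\mathscr{T}_\mathscr{P}$ (it has left inverse $\tau_\mathscr{P}$), and $\dom(\tau)=\mathbf{tail}(w_\mathscr{P}(\tau))$, so $\mathscr{T}_\mathscr{P}$ recovers all domains. For the values, fix $x\in\supp(\mathscr{P})$ and let $V_x$ be the set of $\tau(x)$ over $\tau\in\mathscr{P}$ with $x\in\dom(\tau)$: clause (2) in the definition of a pile forces $V_x=\set{0,1,\ldots,M_x}$ to be an initial segment of $\N$, while clause (1) (disjointness of graphs) forces each value in $V_x$ to be attained by exactly one such $\tau$. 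Hence for $\tau$ with $x\in\dom(\tau)$ there are exactly $\tau(x)$ functions $\tau'\in\mathscr{P}$ with $x\in\dom(\tau')$ and $\tau'(x)<\tau(x)$, and Lemma~\ref{lemma:tree_order} turns this into
\[
\tau(x)=\left|\set{w'\in\mathscr{T}_\mathscr{P}\,:\, x\in\mathbf{tail}(w')\text{ and } w'>_{\operatorname{lex}}w_\mathscr{P}(\tau)}\right|.
\]
The right-hand side depends only on $\mathscr{T}_\mathscr{P}$ and on the fixed order on $\dom(\B)$, so $\mathscr{P}$ is determined by $\mathscr{T}_\mathscr{P}$, which is exactly injectivity.

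The main obstacle, and the only step needing genuine care, is the prefix-closure argument: one must check that the spliced sequence $u\concat z$ is proper, that $\tau_\mathscr{P}$ is genuinely defined on it with value $\tau$, and that the strict lexicographic inequality really survives the concatenation (this is precisely where "$v$ is not a prefix of $u$" enters). Everything else is routine bookkeeping with the definitions of piles and of $w_\mathscr{P}$, together with the two facts already recorded before and in Lemma~\ref{lemma:tree_order}.
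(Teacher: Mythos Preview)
Your argument is essentially identical to the paper's own proof: the prefix-closure step proceeds by exactly the same splicing/contradiction, and the injectivity step recovers $\tau(x)$ via Lemma~\ref{lemma:tree_order} in the same way. One slip to fix: in the prefix-closure paragraph you write ``Since $\tau_\mathscr{P}(v)\neq\tau_\mathscr{P}(u)$, neither of $u,v$ is a prefix of the other,'' but in fact $\tau_\mathscr{P}(v)=\tau'=\tau_\mathscr{P}(u)$ by construction; the correct reasoning is that \emph{because} these values coincide while strict prefixes always yield distinct values of $\tau_\mathscr{P}$, neither of $u,v$ can be a prefix of the other---which is exactly what you then use.
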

	\begin{figure}[h]
		\centering
				\begin{tikzpicture}[scale=0.6]
				
				\begin{scope}[xshift=-10cm]
				\draw (0, -1) -- (0,6);
				\draw (-1, 0) -- (10,0);
				
				\node at (-0.5, 1) {$0$};
				\node at (-0.5, 3) {$1$};
				\node at (-0.5, 5) {$2$};
				
				\node[circle, draw, inner sep=1pt, outer sep=0](10) at (1,1) {$\tau_1$};
				\node[circle, draw, inner sep=1pt, outer sep=0](20) at (3,1) {$\tau_1$};
				\node[circle, draw, inner sep=1pt, outer sep=0](21) at (3,3) {$\tau_3$};
				\node[circle, draw, inner sep=1pt, outer sep=0](30) at (5,1) {$\tau_3$};
				\node[circle, draw, inner sep=1pt, outer sep=0](31) at (5,3) {$\tau_4$};
				\node[circle, draw, inner sep=1pt, outer sep=0](32) at (5,5) {$\tau_5$};
				\node[circle, draw, inner sep=1pt, outer sep=0](40) at (7,1) {$\tau_2$};
				\node[circle, draw, inner sep=1pt, outer sep=0](41) at (7,3) {$\tau_3$};
				\node[circle, draw, inner sep=1pt, outer sep=0](42) at (7,5) {$\tau_4$};
				\node[circle, draw, inner sep=1pt, outer sep=0](50) at (9,1) {$\tau_2$};
				\node[circle, draw, inner sep=1pt, outer sep=0](51) at (9,3) {$\tau_4$};
				
				\node at (1,-0.5) {$x_1$};
				\node at (3,-0.5) {$x_2$};
				\node at (5,-0.5) {$x_3$};
				\node at (7,-0.5) {$x_4$};
				\node at (9,-0.5) {$x_5$};
				
				\node[anchor=west] at (10, 0) {$X$};
				\node[anchor=south] at (0, 6) {$\N$};
				
				\draw (10) -- (20);
				\draw (21) -- (30) -- (41);
				\draw (31) -- (42) -- (51);
				\draw (40) -- (50);
				\end{scope}
				
				\begin{scope}[xshift=6cm]
					\node[draw=none, inner sep=2pt, outer sep=0] (1) at (0,6) {$\set{x_3}$};
					\node[draw=none, inner sep=2pt, outer sep=0] (2) at (0,4) {$\set{x_3, x_4, x_5}$};
					\node[draw=none, inner sep=2pt, outer sep=0] (3) at (-2,2) {$\set{x_2, x_3, x_4}$};
					\node[draw=none, inner sep=2pt, outer sep=0] (4) at (2,2) {$\set{x_4, x_5}$};
					\node[draw=none, inner sep=2pt, outer sep=0] (5) at (-2,0) {$\set{x_1, x_2}$};
					
					\draw (1) -- (2) -- (3) -- (5) (2) -- (4);
					
					\node at (-2.5, 5.5) {$\mathscr{T}_1$};
				\end{scope}
				
				\begin{scope}[xshift=13cm]
				\node[draw=none, inner sep=2pt, outer sep=0] (1) at (0,6) {$\set{x_3}$};
				\node[draw=none, inner sep=2pt, outer sep=0] (2) at (0,4) {$\set{x_3, x_4, x_5}$};
				\node[draw=none, inner sep=2pt, outer sep=0] (3) at (0,2) {$\set{x_2, x_3, x_4}$};
				\node[draw=none, inner sep=2pt, outer sep=0] (4) at (2,0) {$\set{x_4, x_5}$};
				\node[draw=none, inner sep=2pt, outer sep=0] (5) at (-2,0) {$\set{x_1, x_2}$};
				
				\draw (1) -- (2) -- (3) -- (5) (3) -- (4);
				
				\node at (-2.5, 5.5) {$\mathscr{T}_2$};
				\end{scope}
				\end{tikzpicture}
		\caption{A neat pile $\mathscr{P} = \set{\tau_1, \tau_2, \tau_3, \tau_4, \tau_5}$. Depending on whether $\set{x_2, x_3, x_4} < \set{x_4, x_5}$ or not, we either have $\mathscr{T}_\mathscr{P} = \mathscr{T}_1$ or $\mathscr{T}_\mathscr{P} = \mathscr{T}_2$.}
	\end{figure}
	
	\begin{proof}				
		Let $\mathscr{P} \in \mathbf{Piles}(S)$ and let $\mathscr{T} \defeq \mathscr{T}_\mathscr{P}$. Suppose that $w \in \mathscr{T}$ and $v \in \dom(\B)^\ast \setminus \set{\0}$ is an initial segment of $w$. We claim that $w_\mathscr{P}(\tau_\mathscr{P}(v)) = v$, and hence $v \in \mathscr{T}$. Indeed, let $w = v \concat u$ for $u \in \dom(\B)^\ast$. If $w_\mathscr{P}(\tau_\mathscr{P}(v)) \neq v$, then $v <_{\operatorname{lex}} w_\mathscr{P}(\tau_\mathscr{P}(v))$ and $v \not \subset w_\mathscr{P}(\tau_\mathscr{P}(v))$, so $w = v \concat u <_{\operatorname{lex}} w_\mathscr{P}(\tau_\mathscr{P}(v)) \concat u$. However, $\tau_\mathscr{P}(w_\mathscr{P}(\tau_\mathscr{P}(v)) \concat u) = \tau_\mathscr{P}(w)$, which contradicts the choice of $w$. Therefore, $\mathscr{T}$ is closed under taking nonempty initial segments. The rest of the proof that $\mathscr{T} \in \mathbf{Trees}(S)$ is straightforward.
		
		To see that the map $\mathscr{P} \mapsto \mathscr{T}_\mathscr{P}$ is injective, consider any $\tau \in \mathscr{P}$ and $x \in \dom(\tau)$. It is easy to see that
		\[
			\tau(x) = |\set{\tau' \in \mathscr{P} \,:\, x \in \dom(\tau') \text{ and } \tau'(x) < \tau(x)}|.
		\]
		Therefore, by Lemma~\ref{lemma:tree_order}, for any $w \in \mathscr{T}$ and $x \in \textbf{tail}(w)$,
		\begin{align*}
			\tau_\mathscr{P}(w)(x) &= |\set{\tau' \in \mathscr{P} \,:\, x \in \dom(\tau') \text{ and } \tau'(x) < \tau_\mathscr{P}(w)(x)}|\\
			&=|\set{w' \in \mathscr{T} \,:\, x \in \textbf{tail}(w') \text{ and } w <_{\operatorname{lex}} w'}|.
		\end{align*}
		The last expression only depends on $\mathscr{T}$; hence, $\mathscr{P}$ can be recovered from $\mathscr{T}$, as desired.
	\end{proof}

	Recall that for each $\mathscr{P} \in \mathbf{Piles}(S)$,
	\[
		\lambda^{\supp(\mathscr{P}) \times \N} \left(\mathbf{App}(\mathscr{P})\right) = \prod_{\tau \in \mathscr{P}} \mathbb{P}[\dom(\tau)] = \prod_{w \in \mathscr{T}_\mathscr{P}} \mathbb{P}[\mathbf{tail}(w)].
	\]
	Hence, due to Lemma~\ref{lemma:injection},
	\[
		\sum_{\mathscr{P} \in \mathbf{Piles}(S)} \lambda^{\supp(\mathscr{P})\times \N} \left(\mathbf{App}(\mathscr{P})\right) \leq \sum_{\mathscr{T} \in \mathbf{Trees}(S)} \, \prod_{w \in \mathscr{T}} \mathbb{P}[\mathbf{tail}(w)].
	\]
	Theorem~\ref{theo:MTcopy} now follows from the following lemma:
	\begin{lemma}
		Let $\omega \colon \B \to [0;1)$ be a function witnessing the correctness of $\B$ and let $S \in \dom(\B)$. Then
		\begin{equation*}
		\sum_{\mathscr{T} \in \mathbf{Trees}(S)} \, \prod_{w \in \mathscr{T}} \mathbb{P}[\mathbf{tail}(w)] \leq \sum_{\substack{B \in \B \,:\\\dom(B) = S}}\frac{\omega(B)}{1 - \omega(B)}.
		\end{equation*}
	\end{lemma}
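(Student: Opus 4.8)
The plan is to run the standard Moser--Tardos Galton--Watson branching-process estimate, preceded by a bookkeeping step that passes from trees labeled by domains to trees labeled by bad events. The bookkeeping is needed because several bad events of $\B$ may share a domain, whereas the correctness witness $\omega$ is a function on $\B$. Throughout I may assume every bad event under consideration has $\mathbb{P}[B]>0$ (and hence $\omega(B)>0$), since the contributions of the others vanish on both sides of every inequality below; for the surviving events the sums $\sum_{B'\in\Nbhd_\B(B)}\omega(B')$ are finite by inequality~\eqref{eq:finite}.

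\emph{Step 1: reduction to bad-event trees.} Since $\B$ is correct, $\set{B\in\B:\dom(B)=S'}$ is countable, so $\mathbb{P}[S']=\lambda^{S'}(\B_{S'})\leq\sum_{B:\dom(B)=S'}\mathbb{P}[B]$; plugging this in and expanding each product over a tree $\mathscr{T}\in\mathbf{Trees}(S)$ gives
\[
\sum_{\mathscr{T}\in\mathbf{Trees}(S)}\prod_{w\in\mathscr{T}}\mathbb{P}[\mathbf{tail}(w)]\ \leq\ \sum_{\mathscr{T}\in\mathbf{Trees}(S)}\ \sum_{(B_w)_w}\ \prod_{w\in\mathscr{T}}\mathbb{P}[B_w],
\]
where the inner sum runs over all ways of assigning to each node $w$ of $\mathscr{T}$ a bad event $B_w$ with $\dom(B_w)=\mathbf{tail}(w)$. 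Replacing in $\mathscr{T}$ each label $\mathbf{tail}(w)$ by $B_w$ turns the pair $(\mathscr{T},(B_w)_w)$ into a finite tree over $\B$: distinct children of a node of $\mathscr{T}$ carry distinct domains, hence distinct bad events, and if $v$ is the parent of $w$ then $\dom(B_v)\cap\dom(B_w)=\mathbf{tail}(v)\cap\mathbf{tail}(w)\neq\0$ by properness of $\mathscr{T}$. Thus we land in the set $\mathbf{Trees}^\ast(B_0)$ of finite trees over $\B$ whose root is labeled by some $B_0$ with $\dom(B_0)=S$, in which children of any node carry distinct labels and every non-root label $C$ satisfies $\dom(C)\cap\dom(C')\neq\0$ with $C'$ the label of its parent; the assignment $(\mathscr{T},(B_w)_w)\mapsto$ (this tree) is visibly injective and preserves the weight $\prod\mathbb{P}[\operatorname{label}(\cdot)]$. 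Hence it suffices to prove, for each $B_0\in\B$,
\[
\sum_{\mathscr{T}^\ast\in\mathbf{Trees}^\ast(B_0)}\ \prod_{v\in\mathscr{T}^\ast}\mathbb{P}[\operatorname{label}(v)]\ \leq\ \frac{\omega(B_0)}{1-\omega(B_0)}.
\]

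\emph{Step 2: the branching process.} I would consider the Galton--Watson process that starts from a single node labeled $B_0$ in which a node labeled $B$ independently spawns, for each $C\in\Nbhd_\B(B)\cup\set{B}$, one child labeled $C$ with probability $\omega(C)$ (a well-defined probability measure since each $\omega(C)\in[0;1)$ and the birth rates $\sum_{C}\omega(C)$ are finite). A direct computation, telescoping the factors $1-\omega(\cdot)$ contributed along parent--child edges, shows that for a fixed $\mathscr{T}^\ast\in\mathbf{Trees}^\ast(B_0)$,
\[
\Pr[\text{process}=\mathscr{T}^\ast]\ =\ \frac{1-\omega(B_0)}{\omega(B_0)}\ \prod_{v\in\mathscr{T}^\ast}\Bigl(\omega(\operatorname{label}(v))\prod_{C\in\Nbhd_\B(\operatorname{label}(v))}(1-\omega(C))\Bigr).
\]
By the correctness inequality $\mathbb{P}[B]\leq\omega(B)\prod_{B'\in\Nbhd_\B(B)}(1-\omega(B'))$, each bracketed factor is at least $\mathbb{P}[\operatorname{label}(v)]$, so $\prod_{v}\mathbb{P}[\operatorname{label}(v)]\leq\frac{\omega(B_0)}{1-\omega(B_0)}\Pr[\text{process}=\mathscr{T}^\ast]$. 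Summing over $\mathscr{T}^\ast\in\mathbf{Trees}^\ast(B_0)$ and using that the events $\set{\text{process}=\mathscr{T}^\ast}$ are pairwise disjoint (so their probabilities sum to at most $1$) yields the displayed bound; combined with Step~1 this finishes the proof.

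I expect the only genuinely delicate point to be Step~1: one must check carefully that the domain-to-bad-event lift is injective and really produces trees of the type counted by $\mathbf{Trees}^\ast(B_0)$, so that nothing is over- or under-counted, and that the remaining discarded trees (those touching a zero-probability label) contribute zero on both sides. The computation in Step~2 is exactly the classical Moser--Tardos / Galton--Watson argument; the only fine point there is the one already flagged — each node has countably but possibly infinitely many potential children, so one needs finiteness of the birth rates to be sure the process is a legitimate probability space and that $\sum_{\mathscr{T}^\ast}\Pr[\text{process}=\mathscr{T}^\ast]\leq 1$.
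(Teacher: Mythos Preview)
Your proof is correct but takes a different route from the paper. The paper works entirely at the level of domain-labeled trees: it first derives the inequality
\[
\sum_{B:\dom(B)=S}\rho(B)\ \geq\ \mathbb{P}[S]\prod_{B':\dom(B')\cap S\neq\0}(1+\rho(B')),\qquad \rho(B)\defeq\frac{\omega(B)}{1-\omega(B)},
\]
and then proves $\sum_{\mathscr{T}\in\mathbf{Trees}_{\leq n}(S)}\mathbf{w}(\mathscr{T})\leq\sum_{B:\dom(B)=S}\rho(B)$ by a direct induction on the height $n$, using the recursive decomposition of $\mathbf{Trees}_{\leq n+1}(S)$ into a root together with subtrees in $\mathbf{Trees}_{\leq n}(S')$ and the elementary bound $1+\sum_i a_i\leq\prod_i(1+a_i)$.

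Your approach instead first lifts domain-labeled trees to bad-event-labeled trees (your Step~1) and then invokes the original Moser--Tardos Galton--Watson argument (your Step~2). This is perfectly valid; the injectivity of the lift and the distinct-children property follow exactly as you say, and the branching-process computation is standard. The trade-offs: the paper's induction is more self-contained (no auxiliary probability space, no appeal to \eqref{eq:finite} for finite birth rates) and handles the many-events-per-domain issue in a single line via $1+\sum\rho\leq\prod(1+\rho)$, avoiding your bookkeeping Step~1 altogether. Your route, on the other hand, makes the connection to the classical Moser--Tardos proof transparent and would be immediately recognizable to a reader familiar with that paper.
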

	\begin{proof}
		For $B \in \B$, set
		\[
			\rho(B) \defeq \frac{\omega(B)}{1 - \omega(B)}. 
		\]
		By the correctness of $\B$, each $B \in \B$ satisfies
		\[
		\mathbb{P}[B] \leq \omega(B) \prod_{B' \in \Nbhd_\B(B)} (1 - \omega(B')) = \frac{\rho(B)}{1+\rho(B)} \prod_{B' \in \Nbhd_\B(B)} \frac{1}{1+\rho(B')}.
		\]
		which yields
		\[
		\rho(B) \geq \mathbb{P}[B] (1+\rho(B)) \prod_{B' \in \Nbhd_\B(B)} (1+\rho(B')) = \mathbb{P}[B]\prod_{\substack{B'\in\B\,:\\ \dom(B') \cap \dom(B) \neq \0}} (1+\rho(B')).
		\]
		This implies that for $S \in \dom(\B)$,
		\begin{equation}\label{eq:final}
		\sum_{\substack{B \in \B \,:\\\dom(B) = S}}\rho(B) \geq \sum_{\substack{B \in \B \,:\\\dom(B) = S}}\mathbb{P}[B]\prod_{\substack{B'\in\B\,:\\ \dom(B') \cap S \neq \0}} (1+\rho(B')) \geq \mathbb{P}[S]\prod_{\substack{B'\in\B\,:\\ \dom(B') \cap S \neq \0}} (1+\rho(B')).
		\end{equation}
		
		For a finite tree $\mathscr{T}$ over $\dom(\B)$, define its \emph{weight} to be
		\[
		\mathbf{w}(\mathscr{T}) \defeq \prod_{w \in \mathscr{T}} \mathbb{P}[\mathbf{tail}(w)].
		\]
		For each $n \geq 1$, let $\mathbf{Trees}_{\leq n}(S)$ denote the subset of $\mathbf{Trees}(S)$ consisting of all trees of height at most~$n$ (where the \emph{height} of a tree $\mathscr{T}$ is the largest length of its elements). Then $\mathbf{Trees}(S) = \bigcup_{n=1}^\infty \mathbf{Trees}_{\leq n} (S)$ and this union is increasing, so it suffices to show that for all $n \geq 1$,
		\begin{equation}\label{eq:induction}
			\sum_{\mathscr{T} \in \mathbf{Trees}_{\leq n}(S)} \mathbf{w}(\mathscr{T}) \leq \sum_{\substack{B \in \B \,:\\\dom(B) = S}}\rho(B).
		\end{equation}
		We prove~\eqref{eq:induction} by induction on $n$. The unique tree in $\mathbf{Trees}_{\leq 1}(S)$ is $\set{(S)}$, and, by~\eqref{eq:final}, we have
		\[
			\mathbf{w}(\set{(S)}) = \mathbb{P}[S]  \leq \sum_{\substack{B \in \B \,:\\\dom(B) = S}}\rho(B).
		\]
		Suppose that~\eqref{eq:induction} holds for some $n \geq 1$. For $\mathscr{T} \in \mathbf{Trees}_{\leq n + 1}(S)$ and $S' \in \dom(\B)$ with $S' \cap S \neq \0$, let
		\[
			\mathscr{T}_{S'} \defeq \set{v \in \dom(\B)^\ast \setminus \set{\0}\,:\, (S) \concat v \in \mathscr{T} \text{ and } v_0 = S'}.
		\]
		In other words, if $(S, S') \in \mathscr{T}$, then $\mathscr{T}_{S'}$ is the subtree of $\mathscr{T}$ rooted at $(S, S')$; and otherwise $\mathscr{T}_{S'} = \0$. This gives a bijection between $\mathbf{Trees}_{\leq n + 1}(S)$ and the set
		\[
			\prod_{\substack{S' \in \dom(\B):\\ S' \cap S \neq \0}} (\set{\0} \cup \mathbf{Trees}_{\leq n}(S')).
		\]
		Moreover, if we set $\mathbf{w}(\0) \defeq 1$, then
		\[
			\mathbf{w}(\mathscr{T}) = \mathbb{P}[S]\prod_{\substack{S' \in \dom(\B):\\ S' \cap S \neq \0}} \mathbf{w}(\mathscr{T}_{S'}).
		\]
		Thus, we obtain
		\begin{align}\label{eq:ind}
			\sum_{\mathscr{T} \in \mathbf{Trees}_{\leq n+1}(S)} \mathbf{w}(\mathscr{T}) &= \sum_{\mathscr{T} \in \mathbf{Trees}_{\leq n+1}(S)} \mathbb{P}[S]\prod_{\substack{S' \in \dom(\B):\\ S' \cap S \neq \0}} \mathbf{w}(\mathscr{T}_{S'})\nonumber\\
			&= \mathbb{P}[S] \prod_{\substack{S' \in \dom(\B):\\ S' \cap S \neq \0}} \left(1 + \sum_{\mathscr{T} \in \mathbf{Trees}_{\leq n}(S')} \mathbf{w}(\mathscr{T})\right).
		\end{align}
		By the induction hypothesis,
		\begin{align*}
			\prod_{\substack{S' \in \dom(\B):\\ S' \cap S \neq \0}} \left(1 + \sum_{\mathscr{T} \in \mathbf{Trees}_{\leq n}(S')} \mathbf{w}(\mathscr{T})\right) &\leq \prod_{\substack{S' \in \dom(\B):\\ S' \cap S \neq \0}} \left(1 +\sum_{\substack{B' \in \B \,:\\\dom(B') = S'}}\rho(B')\right) \\
			&\leq \prod_{\substack{S' \in \dom(\B):\\ S' \cap S \neq \0}} \prod_{\substack{B' \in \B \,:\\\dom(B') = S'}} \left(1 +\rho(B')\right) = \prod_{\substack{B'\in\B\,:\\ \dom(B') \cap S \neq \0}} (1+\rho(B')).
		\end{align*}
		It remains to plug this into~\eqref{eq:ind} and apply~\eqref{eq:final}.
	\end{proof}
	
	\section{Theorems of Kim, Johansson, and Kahn}\label{app:KJK}
	
	\noindent Our main results are designed so that many classical combinatorial arguments can be transferred to the measurable setting simply by replacing the~LLL with either Theorem~\ref{theo:approxLLL} or Theorem~\ref{theo:Thm1}, depending on the desired outcome. Almost no additional work is required; the relevant instances of the~LLL and the verification of their correctness remain unmodified, so the only things to check being the Borelness of the instances and their hereditary local finiteness (for Theorem~\ref{theo:approxLLL}) or invariance (for Theorem~\ref{theo:Thm1}), which usually are rather straightforward. In the introduction we mentioned Theorems~\ref{theo:col_large_g}, \ref{theo:FreeEdge}, \ref{theo:approxVertex}, and~\ref{theo:approxEdge} as particular examples of this approach; they are obtained by substituting Theorems~\ref{theo:approxLLL} and~\ref{theo:Thm1} into the proofs of Theorems~\ref{theo:Kim}, \ref{theo:Johansson}, and~\ref{theo:Kahn}.
	
	The classical proofs of Theorems~\ref{theo:Kim}, \ref{theo:Johansson}, and~\ref{theo:Kahn} themselves are quite technical and somewhat lengthy (and, arguably, exemplify some of the finest and most sophisticated known applications of the~LLL). The relevant instances of the~LLL are carefully engineered to ensure their correctness and yet to give their solutions sufficiently strong properties. An excellent exposition of all three proofs, along with the intuition that guides them, can be found in~\cite{MolloyReed}, and we do not attempt to reproduce it here. However, for the interested reader, we very briefly sketch in this appendix the main ideas of the method used to prove Theorems~\ref{theo:Kim}, \ref{theo:Johansson}, and~\ref{theo:Kahn}, omitting most of the technical details.
	
	Consider a graph $G$ on a set $X$ with finite maximum degree $d$. A useful, although typically not strictly necessary, observation (see~\cite[Section~1.5]{MolloyReed}) is that we can usually arrange $G$ to be $d$\=/regular by attaching to each vertex $x \in X$ with $\deg_G(x) < d$ an infinite rooted tree whose root has degree $d - \deg_G(x)$ and all of whose other vertices have degree~$d$.\footnote{This method of making a graph regular is different from the one described in~\cite[Section~1.5]{MolloyReed}, where the attention is focused on finite graphs only.} Formally, let $Y$ be the set of all pairs of the form $(x, s)$, where $x \in X$ and $s = (s_1, \ldots, s_n)$ is a finite sequence of integers such that 
	$1 \leq s_1 \leq d - \deg_G(x)$ and $1 \leq s_i \leq d-1$ for all $1 < i \leq n$ (with the possibility of $n=0$ and $s = \0$ allowed). 
	Define a graph $H$ on $Y$ as follows:
	\[
		(x,s) \, H \, (y, t) \,\vcentcolon\Longleftrightarrow\, (x\,G\,y\text{ and }s = t = \0) \text{ or } (x = y \text{ and } (s \subset t \text{ or } t \subset s)).
	\]
	Clearly, $H$ is $d$-regular and the map $x \mapsto (x, \0)$ is an isomorphic embedding of $G$ into~$H$ (denote its image by $G^\ast$). Any cycle in $H$ must be contained in $G^\ast$, so $g(H) = g(G^\ast) = g(G)$. Note that if $G$ is a Borel graph on a standard Borel space $X$, then $Y$ and $H$ are Borel subsets of $\HF(E_G)$.
	
	To illustrate the technique employed in the proofs of Theorems~\ref{theo:Kim}, \ref{theo:Johansson}, and~\ref{theo:Kahn}, we will outline here a proof of the following weakening of Johansson's theorem: \emph{There is $\epsilon > 0$ such that any triangle-free graph $G$ with maximum degree $d \in \N$ satisfies $\chi(G) \leq (1-\epsilon)d + o(d)$} (see~\cite[Theorem~10.2]{MolloyReed}).
	
	Fix $\epsilon > 0$, and let $G$ be a $d$-regular triangle-free graph on a set $X$, with $d$ sufficiently large. A coloring of $G$ is built in two steps. First, we apply the~LLL to produce a partial coloring of $G$ that exhibits the same local behavior as a ``typical'' random coloring. On the second step, we extend this partial coloring to a full coloring of $G$ using the fact that the uncolored part of the graph is ``sparse.'' Let us start by explaining the second step. Suppose that we are given a subset $X' \subseteq X$ and a proper coloring $f \colon X' \to \N$ of $G \vert X'$. We extend $f$ to a proper coloring of the whole graph $G$ in the following ``greedy'' way. Fix a proper coloring $c \colon X \setminus X' \to \N$ of $G \vert (X \setminus X')$ (which exists since $G$ is locally finite). Set $f_0 \defeq f$ and for all $n \in \N$, define $f_{n+1} \colon \dom(f_n) \cup c^{-1}(n) \to \N$ as follows:
	\[
	f_{n+1}(x) \defeq \begin{cases}
	f_n(x) &\text{if }x \in \dom(f_n);\\
	\min \set{i \in \N \,:\, f_n(y) \neq i \text{ for all } y \in G_x \cap \dom(f_n)} &\text{if } c(x) = n.
	\end{cases}
	\]
	Set $f_\infty \defeq \bigcup_{n=0}^\infty f_n$. By construction, $f_\infty$ is a proper coloring of $G$. How many colors does it use? For every $n \in \N$ and $x \in c^{-1}(n)$, there can be at most $d$ distinct colors assigned by $f_n$ to the neighbors of $x$, and so $f_{n+1}(x) \leq d$. Hence, $f_\infty(x) \leq d$ for all $x \in X \setminus X'$, i.e., $f_\infty \vert (X \setminus X')$ uses at most $d+1$ colors.
	
	This upper bound on $f_{n+1}(x)$ for $x \in c^{-1}(n)$ is sharp only if $f_n$ assigns distinct colors to all the neighbors of~$x$. This observation motivates the following definition: A partial proper coloring $f \colon X' \to \N$ is \emph{good} if for every $x \in X \setminus X'$, the following set has cardinality at least $\epsilon d$:
	\[
		\left\{i \in \N\,:\, |\{y \in G_x \cap X' \,:\, f(y) = i\}| \geq 2\right\}.
	\]
	If $f$ is good, then $f_\infty(x) \leq (1-\epsilon)d$ for all $x \in X \setminus X'$. Thus, to complete the proof, we only need to find a good partial coloring $f \colon X \rightharpoonup (1-\epsilon)d + o(d)$.
	
	Here the~LLL comes into play. For simplicity, assume that $d$ is even. For a function $f \colon X \to d/2$, let
	\[
		X_f \defeq \set{x \in X \,:\, f(x) \neq f(y) \text{ for all } y \in G_x}.
	\]
	We want to find a map $f \colon X \to d/2$ such that $f\vert X_f$ is a good partial coloring; the definition of $X_f$ ensures that this coloring is proper. This condition can be easily phrased as an instance of the~LLL, and it turns out that for sufficiently small~$\epsilon$, this instance is correct, and, therefore, a desired good partial coloring exists. The correctness of the instance follows, roughly speaking, from the observation that if all the $d$ neighbors of a vertex $x \in X$ are colored randomly using only $d/2$ colors, then one should expect many colors to be repeated, and since $G$ is triangle-free, shared colors between vertices in the neighborhood of $x$ do not force them to be removed from~$X_f$. The rigorous verification of this fact constitutes the most laborious part of the argument. For the details, see~\cite[Theorem~10.2]{MolloyReed}.

	The proofs of Theorems~\ref{theo:Kim}, \ref{theo:Johansson}, and~\ref{theo:Kahn} follow a similar strategy to the argument outlined above but with a number of clever technical twists. The main difference is that the~LLL is applied repeatedly to produce partial colorings of larger and larger subsets of $X$ (or, in the case of Theorem~\ref{theo:Kahn}, larger and larger subsets of the edge set of $G$). Another difference is that the final step, instead of completing the coloring ``greedily,'' also uses the~LLL, in the form of the following lemma:
	\begin{lemma}[{\cite[Theorem~4.3]{MolloyReed}}]\label{lemma:prob_col}
		Let $G$ be a locally finite graph with vertex set $X$. Let $k \in \N\setminus \set{0}$. Suppose that $L \colon X \to \fins{\N}$ is a function such that for all $x \in X$, $|L(x)| \geq k$ and for each $n \in L(x)$,
		\[
			|\set{y \in G_x \,:\, n \in L(y)}| \leq k/8.
		\]
		Then there exists a proper coloring $f \colon X \to \N$ with $f(x) \in L(x)$ for all $x \in X$.
	\end{lemma}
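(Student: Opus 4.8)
The plan is to derive the lemma from the Lov\'asz Local Lemma in the variable framework (Theorem~\ref{theo:LLLvbls}), applied to the instance that encodes coloring each vertex independently and uniformly at random from its own list. Before setting anything up I would dispose of the degenerate range $k \leq 7$: in that case $k/8 < 1$, so the hypothesis forces $\set{y \in G_x : n \in L(y)} = \0$ for every $x \in X$ and $n \in L(x)$; in particular $L(x) \cap L(y) = \0$ for every edge $\set{x,y}$ of $G$, and then any selection $f(x) \in L(x)$ is automatically a proper $L$-coloring. So from now on I assume $k \geq 8$.

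Next I would build the instance $\B$ over $X$. For each $x \in X$ fix a Borel map $g_x \colon [0;1] \to L(x)$ pushing $\lambda$ forward to the uniform measure on $L(x)$ (e.g. cut $[0;1]$ into $|L(x)|$ equal intervals); a function $f \colon X \to [0;1]$ then induces $\bar f \colon X \to \N$ by $\bar f(x) \defeq g_x(f(x))$, with $\bar f(x) \in L(x)$ always. For every edge $\set{x,y}$ of $G$ with $L(x) \cap L(y) \neq \0$, let $A_{xy}$ be the bad event with $\dom(A_{xy}) = \set{x,y}$ defined by $A_{xy} \defeq \set{w \colon \set{x,y} \to [0;1] \,:\, g_x(w(x)) = g_y(w(y))}$ (cf.\ Remark~\ref{remk:other_spaces}), and set $\B \defeq \set{A_{xy} \,:\, x\,G\,y \text{ and } L(x) \cap L(y) \neq \0}$. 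Since $f$ avoids $A_{xy}$ exactly when $\bar f(x) \neq \bar f(y)$, a solution to $\B$ yields a proper $L$-coloring of $G$; so it suffices to show $\B$ is correct for the General LLL and apply Theorem~\ref{theo:LLLvbls}. Note that $\Nbhd_\B(A_{xy}) \subseteq \set{A_{xz} \,:\, z \in G_x} \cup \set{A_{yz} \,:\, z \in G_y}$, which is finite because $G$ is locally finite.

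To check correctness I would compute $\mathbb{P}[A_{xy}] = |L(x)\cap L(y)|/(|L(x)|\,|L(y)|) \leq 1/\max(|L(x)|,|L(y)|) \leq 1/k$ and take the witness $\omega(A_{xy}) \defeq 2\,\mathbb{P}[A_{xy}] \in [0;1/4] \subseteq [0;1)$. The estimate that drives everything is
\[
	\sum_{z \in G_x} \mathbb{P}[A_{xz}] \;=\; \frac{1}{|L(x)|}\sum_{n \in L(x)}\ \sum_{\substack{z \in G_x \\ n \in L(z)}}\frac{1}{|L(z)|} \;\leq\; \frac{1}{|L(x)|}\sum_{n \in L(x)}\frac{k/8}{k} \;=\; \frac18,
\]
where I use $|L(z)| \geq k$ together with $|\set{z \in G_x : n \in L(z)}| \leq k/8$; the same bound holds at $y$. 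Hence $\sum_{A' \in \Nbhd_\B(A_{xy})}\omega(A') \leq 2\sum_{z\in G_x}\mathbb{P}[A_{xz}] + 2\sum_{z\in G_y}\mathbb{P}[A_{yz}] \leq 1/2$ (the right-hand side slightly over-counts, which only helps), so the elementary inequality $\prod_i(1-u_i) \geq 1 - \sum_i u_i$, valid since every $\omega(A') \in [0;1]$, gives $\prod_{A' \in \Nbhd_\B(A_{xy})}(1-\omega(A')) \geq 1/2$. Therefore $\omega(A_{xy})\prod_{A' \in \Nbhd_\B(A_{xy})}(1-\omega(A')) \geq 2\,\mathbb{P}[A_{xy}]\cdot\tfrac12 = \mathbb{P}[A_{xy}]$, which is exactly the correctness condition, and Theorem~\ref{theo:LLLvbls} delivers a solution $f$, whence $\bar f$ is the desired coloring.

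The one point that needs care is this last estimate: the crude bound $1 - u \geq e^{-2u}$ would only yield $\prod(1-\omega(A')) \geq e^{-1/2}$, forcing the unsatisfiable requirement $2e^{-1/2} \geq 1$, so it is essential to keep the total neighborhood weight strictly below $1/2$ and use the linear inequality $\prod_i(1-u_i)\geq 1-\sum_i u_i$ instead --- this is precisely what makes the constant $1/8$ in the hypothesis suffice for every $k$ rather than only for large $k$. Everything else (the per-vertex list encoding into the variable framework and the trivial range $k \leq 7$) is routine bookkeeping.
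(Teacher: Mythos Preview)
Your proof is correct and follows exactly the route the paper indicates (``a straightforward application of the~LLL''): encode uniform list sampling in the variable framework, take one bad event per edge, and verify correctness of the General~LLL via the per-vertex estimate $\sum_{z\in G_x}\mathbb P[A_{xz}]\le 1/8$. One inconsequential slip: in your closing remark, $2e^{-1/2}\approx 1.21\ge 1$, so the exponential bound would in fact also have worked; this does not affect the argument, which is complete as written.
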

	Lemma~\ref{lemma:prob_col} is established using a straightforward application of the~LLL. Note that the instance to which the~LLL is applied there is only invariant under those isomorphisms between connected components of $G$ that preserve the value of $L$. In the cases that we are considering, $L$ is defined using the outcomes of the previous applications of the~LLL, so Theorem~\ref{theo:Thm1} applies.
	
	As mentioned before, the precise intricate construction of the instances of the~LLL used in the proofs of Theorems~\ref{theo:Kim}, \ref{theo:Johansson}, and~\ref{theo:Kahn} lies outside the scope of this article. However, the general scheme described above already shows that the~LLL can be replaced by Theorem~\ref{theo:approxLLL} or Theorem~\ref{theo:Thm1}; the details of the proofs---verifying the correctness of the instances---do not require any modification.
	
\end{appendices}
	
\end{document}